\theoremstyle{plain}
      \newtheorem{theorem}{Theorem}
      \newtheorem{lemma}[theorem]{Lemma}
      \newtheorem{corollary}[theorem]{Corollary}
      \newtheorem{proposition}[theorem]{Proposition}
      \theoremstyle{definition}
      \newtheorem{definition}[theorem]{Definition}
      \theoremstyle{ex}
      \newtheorem{ex}[theorem]{Example}
 \theoremstyle{remark}
      \newtheorem{remark}[theorem]{Remark}
      \theoremstyle{proof}
\newcounter{Step}
\newenvironment{step}[0]{\bigskip\addtocounter{Step}{1}\noindent\textbf{Step \theStep :} }{\
  \begin{flushright} \end{flushright}}
\def\N{\mbox{I\hspace{-.15em}N} }
\def\R{\mbox{I\hspace{-.15em}R} }
\def\C{\hspace{.17em}\mbox{l\hspace{-.47em}C} }
\def\o{\otimes}
\begin{document}
\title[Functional properties of Generalized H\"ormander spaces of distributions I]{Functional properties of Generalized H\"ormander spaces of distributions I :\\
Duality theory, completions and bornologifications}

\author{Yoann Dabrowski}
\address{ Universit\'{e} de Lyon, 
Universit\'{e} Lyon 1,
Institut Camille Jordan UMR 5208,
43 blvd. du 11 novembre 1918,
F-69622 Villeurbanne cedex,
France
}
\email{ dabrowski@math.univ-lyon1.fr}
\subjclass[2010]{Primary 46F05; Secondary : 81T20, 46F12}
\keywords{normal spaces of distributions on manifolds, wave front set, bornological and topological duality, quasi-LB spaces, nuclear spaces.}
\date{}

\begin{abstract}
The space $\mathcal{D}'_\Lambda$  of distributions 
having their $C^\infty$ wavefront
set in a cone $\Lambda$ has become important in physics
because of its role in the formulation of quantum field theory
in curved spacetime. It is also a basic object in microlocal analysis, but not well studied from a functional analytic viewpoint. In order to compute its completion in the open cone case, we introduce generalized spaces $\mathcal{D}'_{\gamma,\Lambda}$ where we also control the union of $H^s$-wave front sets in a second cone $\gamma\subset \Lambda$. We can compute bornological and topological duals, completions and bornologifications of natural topologies for spaces in this class. All our topologies are nuclear, ultrabornological when bornological and we can describe when they are quasi-LB. We also give concrete microlocal representations of bounded and equicontinuous sets in those spaces and work with general support conditions including future compact or space compact support conditions on globally hyperbolic manifolds, as motivated by physics applications to be developed in a second paper.
\end{abstract}

\maketitle

\begin{center}\section*{\textsc{Introduction}}
\end{center}

In 1992, Radzikowski~\cite{Radzikowski-92-PhD,Radzikowski} showed 
the wavefront set of distributions 
to be  a key
concept to define quantum fields in curved spacetime.
This idea was fully developed into a renormalized
scalar field theory in curved spacetimes by
Brunetti and Fredenhagen~\cite{Brunetti2}, followed by
Hollands and Wald~\cite{Hollands2} and later extended to more general fields (e.g. recently \cite{RejznerYangMills},\cite{RejznerBF}).

Following those developments, the natural space where quantum field
theory seems to take place is not the space of distributions
$\mathcal{D}'$, but the space $\mathcal{D}'_\Gamma$ of distributions having
their wavefront set in a 
specified closed cone $\Gamma$.
This space and its simplest properties were described
by H{\"o}rmander in 1971~\cite{Hormander-71}. Since recent developments \cite{DutschBF}, most papers in algebraic quantum filed theory have used microcausal functionals where the natural space to control the wave front set is rather the dual $\mathcal{E}'_\Lambda$ of the previous space  with control of the wave front set by an open cone $\Lambda=-\Gamma^c$. Thus, we started recently \cite{BrouderDabrowski} the investigation of the functional analytic properties of these spaces. Among other properties, we showed that these spaces are both
nuclear and normal spaces of distributions but
$\mathcal{D}'_\Gamma$ and $\mathcal{E}'_\Lambda$ still have very contrasted
properties; (i) $\mathcal{D}'_\Gamma$ is semi-reflexive and complete while
$\mathcal{E}'_\Lambda$ is not even sequentially complete;
(ii) $\mathcal{E}'_\Lambda$ is barrelled,
and ultrabornological, while
$\mathcal{D}'_\Gamma$ is neither barrelled nor bornological.

From our viewpoint the lack of completeness of $\mathcal{E}'_\Lambda$ is the most annoying for applications, since it was used in the physics literature before we pointed out a counter-example. Beyond an easy abstract characterization we mentioned in the conclusion of \cite{BrouderDabrowski} that the completion has to be the bornological dual of $\mathcal{D}'_\Gamma$ (the strong dual of its bornologification), it is hard to get general results about those spaces without a concrete description.

The study of this series of papers started with finding such a natural characterization in terms of what we will call ``dual wave front set"  which is nothing but the union of the usual $H^s$-wave front sets (see e.g. \cite[p8]{Delort}) of  a distribution : $DWF(u)=\bigcup_{s >0}WF_s(u)$ so that $WF(u)=\overline{DWF(u)}$ is recovered as its closure. These are cones in what we write $\dot{T}^*U$, namely the cotangent bundle with zero section removed.  With this definition the completion $\widehat{\mathcal{E}'_\Lambda}$ is nothing but : $$\widehat{\mathcal{E}'_\Lambda}=\{u\in \mathcal{E}', DWF(u)\subset \Lambda\}.$$

Moreover, from a functional analytic point of view, studying spaces of multilinear operators or forms on $\mathcal{D}'_\Gamma,\mathcal{E}'_\Lambda$ is quite natural and enables to simplify technical wave front set computations in new contexts as the one of field dependent propagator in \cite{RibeiroBF,RejznerBF} (see part 2 of the second paper of this series \cite{Dab14b}). Motivated by this study, we will see that controlling the wave front set with cones that are neither open nor closed seems natural. Moreover  several geometrically natural conditions on wave front sets, appeared recently in \cite{Dang} (the notion of polarized wave front set is a conic condition) or \cite{ColliniH}, also motivates the study of slightly generalized H\"ormander spaces of distribution with wave front set controlled by any natural cone. Thus for $\gamma\subset \Lambda\subset \overline{\gamma}$ cones ($\overline{\gamma}$ will always be the closure of the cone $\gamma$ in $\dot{T}^*U$), we define :
$$\mathcal{E}'_{\gamma,\Lambda}(U)=\{u\in \mathcal{E}'(U) : DWF(u)\subset \gamma, WF(u)\subset \Lambda\},$$
$$\mathcal{D}'_{\gamma,\Lambda}(U)=\{u\in \mathcal{D}'(U) : DWF(u)\subset \gamma, WF(u)\subset \Lambda\}.$$
The  main result of the first paper of this series then states that we can understand the main properties of the most important among those spaces $\mathcal{E}'_{\gamma,\gamma}(U),\mathcal{E}'_{\gamma,\overline{\gamma}}(U)$ and their $\mathcal{D}'$ variants (with quite more general support conditions). This gives the following result (see proposition \ref{FAGeneral2} for a proof, generalized to bundles and other support conditions, and \ref{FAGeneral3} for complements on the general case $\mathcal{E}'_{\gamma,\Lambda}(U)$, 
see also section 1 in the second paper of this series \cite{Dab14b} for the stated approximation properties)
. Recall closures and complements are taken in $\dot{T}^*U$ and we denote $-\gamma=\{(x,-\xi): (x,\xi)\in\gamma\}.$

\medskip

 \begin{theorem}\label{FAMain}Let $\gamma\subset\dot{T}^*U$ 
any cone   and let $\lambda=-\gamma^c.$
 There is a duality pairing between  $\mathcal{D}'_{\gamma,{\gamma}}(U)$ and $\mathcal{E}'_{\lambda,{\lambda}}(U)$ (which is continuous for a natural (non countable) inductive limit topology $\mathcal{I}_{iii}$, see section 1.3). Their Mackey topology, let us call it $\mathcal{I}$ (which is $\mathcal{I}_{iii}$) is nuclear.
  If we call $i$  the canonical injection from a space to its completion and $b$ the canonical continuous map with bounded inverse between a bornologification and the original space (which is identity in the open cone case), we have the two following commuting diagrams :

$$\begin{diagram}[inline]
&&(\mathcal{D}'_{\gamma,\overline{\gamma}}(U),\hat{\mathcal{I}}^{born}=\widehat{\mathcal{I}^{born}})
\\
&\ldTo^b&\uInto^i 
\\
(\mathcal{D}'_{\gamma,\overline{\gamma}}(U),\hat{\mathcal{I}})& &(\mathcal{D}'_{\gamma,{\gamma}}(U),\mathcal{I}^{born})
\\
\uInto^i&\ldTo^b& 
\\
(\mathcal{D}'_{\gamma,{\gamma}}(U),\mathcal{I})& &
\end{diagram}\quad \quad \quad \begin{diagram}[inline]
(\mathcal{E}'_{\lambda,\overline{\lambda}}(U),\hat{\mathcal{I}}^{born}=\widehat{\mathcal{I}^{born}})&&
\\
\dTo^b& \luInto^i&
\\
(\mathcal{E}'_{\lambda,\overline{\lambda}}(U),\hat{\mathcal{I}})& &(\mathcal{E}'_{\lambda,{\lambda}}(U),\mathcal{I}^{born})
\\
&\luInto^i&\dTo^b
\\& & 
(\mathcal{E}'_{\lambda,{\lambda}}(U),\mathcal{I})
\end{diagram}$$

Spaces symmetric with respect to the middle vertical line are topological duals of one another equipped with their Mackey topology. All the spaces involved are nuclear, strictly normal spaces of distributions with the sequential approximation property, and in each of them, compact sets are metrizable.
Any bornological space among them is ultrabornological and the two top spaces are thus complete nuclear ultrabornological, thus Montel spaces.

 When $\lambda,\gamma$ are $\mathbf{\Delta_2^0}$-cones (i.e. both $F_\sigma$,$G_\delta$) all the spaces involved are moreover quasi-LB spaces (for which de Wilde's closed graph theorem applies \cite{ValdiviaQuasiLB}) of class $\mathfrak{G}$ \cite{CascalesOrihuela}.  
\end{theorem}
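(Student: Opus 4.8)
The plan is to realize every space appearing in the two diagrams through a single family of microlocal seminorms, indexed by triples $(K,V,s)$ where $K\subset U$ is compact, $V\subset\dot{T}^*U$ is a closed cone disjoint from the relevant bad set, and $s\in\mathbb{R}$ is a Sobolev order, the seminorm being built from $\sup_{(x,\xi)\in V}(1+|\xi|)^s|\widehat{\varphi u}(\xi)|$ for suitable cutoffs $\varphi$ supported near $K$. The condition $DWF(u)\subset\gamma$ (resp.\ $WF(u)\subset\Lambda$) is then encoded by finiteness of these seminorms for all $V$ avoiding $\gamma^c$ at every finite order $s$ (resp.\ at order $s=+\infty$). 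First I would fix $\mathcal{I}_{iii}$ as the natural (generally uncountable) inductive limit over the directing set of such data and check, by a bipolar computation, that it coincides with the Mackey topology $\mathcal{I}$; the duality pairing between $\mathcal{D}'_{\gamma,\gamma}(U)$ and $\mathcal{E}'_{\lambda,\lambda}(U)$ is the distributional one, and its non-degeneracy together with the identification of the dual rests on the polar-cone relation $\lambda=-\gamma^c$, generalizing the $\mathcal{E}'_\Lambda=(\mathcal{D}'_\Gamma)'$ duality of \cite{BrouderDabrowski,Hormander-71}.

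Next I would establish nuclearity, since almost everything downstream is leveraged from it. The building-block maps between the $(K,V,s)$-pieces are localized Sobolev embeddings composed with a cone-truncation; these are Hilbert--Schmidt (indeed rapidly decreasing) once one lowers $s$ and shrinks $V$, so each transition is nuclear, and nuclearity is inherited by the projective and countable-inductive steps and, with care, by the uncountable inductive limit defining $\mathcal{I}_{iii}$. From nuclearity and the continuous dense embedding of $\mathcal{D}(U)$ I would read off that the spaces are strictly normal spaces of distributions; the sequential approximation property is quoted from section 1 of \cite{Dab14b}; and metrizability of compact sets follows from nuclearity together with the countable-neighbourhood, (DF)-type structure of each piece.

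The heart of the argument is the simultaneous, concrete computation of completions and bornologifications. For the completion I would combine the abstract fact that $\widehat{\mathcal{E}'_\Lambda}$ is the bornological dual of the predual with a direct Cauchy-filter and approximation argument showing that passing to the completion fills in exactly the boundary of the cone at the level of the $C^\infty$ wave front set while preserving the finite-order constraint, i.e.\ $\gamma\rightsquigarrow\overline{\gamma}$ in $WF$ with $DWF\subset\gamma$ unchanged; this yields the top rows $\mathcal{D}'_{\gamma,\overline{\gamma}}(U)$ and $\mathcal{E}'_{\lambda,\overline{\lambda}}(U)$. For the bornologification I would use the microlocal description of bounded sets to present $\mathcal{I}^{born}$ as an inductive limit of Fr\'echet (hence ultrabornological) pieces, so that any bornological space in the diagrams is automatically ultrabornological; in the open-cone case the bounded sets already generate the topology, whence $b$ is the identity there. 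The two commuting diagrams are then assembled by checking that the completion maps $i$ and the bornologification maps $b$ compose as drawn, and the symmetry statement---that mirror-image spaces are Mackey duals---follows once equicontinuous subsets of one side are matched, via the bipolar theorem, with bounded subsets of the other in their microlocal form.

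Finally, the top spaces are complete by construction and ultrabornological by the previous step, hence barrelled; combined with nuclearity (which forces bounded sets to be relatively compact in the quasi-complete setting) this gives semi-Montel and therefore Montel. For the last clause I would exploit the hypothesis that $\gamma$ and $\lambda$ are $\mathbf{\Delta_2^0}$, writing each simultaneously as a countable union of closed cones and a countable intersection of open cones; this collapses the uncountable limits defining $\mathcal{I}_{iii}$ to countable (PLB/LF-type) limits, producing a quasi-LB representation and, through the associated web, membership in the class $\mathfrak{G}$. I expect the principal obstacle to be precisely the third paragraph: reconciling the abstract constructions of completion and bornologification with the microlocal $DWF$/$WF$ bookkeeping, and verifying throughout that $\mathcal{I}_{iii}$ remains the Mackey topology so that the duality and the diagrams stay internally consistent; the uncountable inductive limit also makes the permanence of nuclearity delicate and will require the quasi-LB and class-$\mathfrak{G}$ machinery already in the non-$\mathbf{\Delta_2^0}$ regime.
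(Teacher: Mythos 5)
Your outline has the right global architecture (duality, nuclearity, completion/bornologification, Montel, quasi-LB), but it passes over the points where the paper has to do genuinely hard work, and in two places the route you propose would fail. First, nuclearity: the topology $\mathcal{I}_{iii}$ is an \emph{uncountable} inductive limit (over $\Delta(\gamma)$ and over all closed subcones), and nuclearity is not preserved by uncountable inductive limits; saying it is inherited ``with care'' is not a proof. The paper's way out is to introduce a second, projective-limit topology $\mathcal{I}_{ppp}$ built only from countable inductive limits, prove nuclearity there, and then show $\mathcal{I}_{iii}=\mathcal{I}_{ppp}$ on the relevant spaces by computing equicontinuous sets in the duals (lemma \ref{bornologyIdentitiesEqui}). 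Similarly, presenting $\mathcal{I}^{born}$ as an inductive limit of Fr\'echet pieces would give ultrabornologicality but not nuclearity: the paper must prove by hand (proposition \ref{FAGeneral2}) that $(\mathcal{D}'_{\gamma,\overline{\gamma}}(U),\mathcal{I}_{ppp})$ is a \emph{nuclear convex bornological space}, via an explicit Pietsch absolutely-summing estimate comparing completant disks, precisely because no permanence result delivers nuclearity of the bornologification.

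Second, identifying $\mathcal{I}$ with the Mackey topology on the non-complete spaces (e.g.\ $\mathcal{E}'_{\lambda,\lambda}(U)$ with $\lambda$ not open) is not a bipolar computation: one must prove that every absolutely convex weakly compact set is equicontinuous, i.e.\ uniformly controlled by a single closed cone $\Pi\subset\lambda$. The paper does this by contradiction (proposition \ref{bornologyIdentitiesArens}): class $\mathfrak{G}$ makes the compact set metrizable, so one extracts a null \emph{sequence} $v_n$ with bad microlocal directions accumulating on $\lambda^c$, and the probabilistic construction of lemma \ref{SequenceWF} produces a series $S=\sum_n\lambda_n v_n$ lying in the completion but not in the space, contradicting compactness. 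Nothing in your outline supplies this step, and your appeal to a ``(DF)-type structure'' for metrizability of compact sets is also off (these spaces are not (DF); metrizability comes from nuclearity of the dual together with the equicontinuous/bounded identification, or from class $\mathfrak{G}$). Finally, your Cauchy-filter description of the completion presupposes the computation of the bornological dual, i.e.\ that every \emph{bounded} linear functional has $DWF\subset-\gamma^c$; this is lemma \ref{bornImbedding}, whose proof requires constructing, for each direction $(x_0,\xi_0)$ in the dual wave front set, an explicit bounded set in $\mathcal{D}'_{-X,-X}(U,\mathcal{K})$ on which the functional is unbounded. That construction is what makes $DWF$ the correct invariant and is the real content behind the identification $\widehat{\mathcal{E}'_\Lambda}=\{u: DWF(u)\subset\Lambda\}$; it is absent from your proposal.
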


Even in the closed and open case, it was not clear (from general reasons) that the bornologification of $\mathcal{D}'_\Gamma$ was nuclear or that the completion $\widehat{\mathcal{E}'_\Lambda}$ was ultrabornological. Those spaces fully restore the missing symmetry in \cite{BrouderDabrowski} (for which we refer for a reminder of most of the definitions) since they are both nuclear complete ultrabornological normal spaces of distributions. We give explicit descriptions of bounded and equicontinuous sets in all the above spaces during our proofs (see any computation of Mackey=Arens duals for equicontinuous sets that are also subsets of absolutely convex compact sets in any case of the previous theorem and see the regular inductive limit statement in proposition \ref{FAGeneral2} and section 1 in \cite{Dab14b}
in the non-complete cases for bounded sets). As in \cite{BrouderDabrowski} we also have the general convenient results that enable to prove sequential continuity and boundedness in these spaces weakly by duality.
 
\begin{corollary}With the same notation.
\begin{itemize}\item If $u_i$ is a sequence of elements of $D=(\mathcal{D}'_{\gamma,\overline{\gamma}}(U),\hat{\mathcal{I}}^{born})$ (resp. $E=(\mathcal{E}'_{\lambda,\overline{\lambda}}(U),\hat{\mathcal{I}}^{born})$) such that 
$\langle u_i,v\rangle$ converges to some number
$\lambda(v)$ in $\C$ for all $v\in E$ (resp $v\in D$), then 
$u_i$ converges to $\lambda$ in $D$ (resp $E$).
\item If $(u_\epsilon)_{0<\epsilon < \alpha}$ is a family of elements of $D$ (resp $E$). such that 
$\langle u_\epsilon,v\rangle$ converges to some number
$\lambda(v)$ in $\C$ as $\epsilon\to 0$
for all $v\in E$ (resp $v\in D$), then 
$u_\epsilon\to\lambda$ in $D$ (resp $E$)
as $\epsilon\to0$.
\item In $E,D,$ weakly bounded sets are strongly bounded.
\end{itemize}
\end{corollary}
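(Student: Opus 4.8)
The plan is to deduce all three assertions from the single structural fact, established in Theorem \ref{FAMain}, that $D$ and $E$ are complete nuclear ultrabornological Montel spaces placed in duality by a Mackey pairing. In particular each is barrelled (being ultrabornological) and each is the Mackey dual of the other, so everything reduces to two classical mechanisms: the Banach--Steinhaus theorem on a barrelled space, and the coincidence of the weak topology with the original topology on the compact subsets of a Montel space.

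I would treat the third bullet first, since it isolates the key point. Let $B\subset D$ be weakly bounded, i.e.\ $\sigma(D,E)$-bounded. Viewing $B$ as a family of continuous linear forms on the barrelled space $E$, pointwise boundedness together with Banach--Steinhaus shows that $B$ is equicontinuous, hence uniformly bounded on the bounded subsets of $E$, i.e.\ $\beta(D,E)$-bounded. Since $D$ is itself barrelled, its topology $\hat{\mathcal{I}}^{born}$ coincides with $\beta(D,E)$, so $B$ is strongly bounded; the argument for $E$ is identical. Equivalently, one may simply note that a Montel space is reflexive, whence Mackey's theorem already identifies weakly bounded with originally (hence strongly) bounded sets.

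For the first two bullets I would argue uniformly for a net $(u_\iota)$ — a sequence in the first bullet, the filtered family $(u_\epsilon)_{0<\epsilon<\alpha}$ in the second — whose pairings $\langle u_\iota,v\rangle$ converge to $\lambda(v)$ for every $v$ in the dual space. First I would verify that the limit form belongs to the space: pointwise convergence makes $(u_\iota)$ pointwise bounded, hence $\sigma$-bounded, hence equicontinuous by Banach--Steinhaus on the barrelled dual, so the pointwise limit $\lambda$ is again continuous and thus lies in $D$ (resp.\ $E$). Next I would upgrade weak convergence to convergence in the original topology: by the third bullet the equicontinuous, hence bounded, set $\{u_\iota\}\cup\{\lambda\}$ is relatively compact because the space is Montel, and on the resulting compact closure $K$ the coarser Hausdorff topology $\sigma(D,E)$ coincides with $\hat{\mathcal{I}}^{born}$ (the identity from $(K,\hat{\mathcal{I}}^{born})$ to $(K,\sigma)$ is a continuous bijection from a compact space onto a Hausdorff one, hence a homeomorphism). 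Since $u_\iota\to\lambda$ weakly and $\lambda$ lies in $K$, the net converges to $\lambda$ in $\hat{\mathcal{I}}^{born}$, exactly as in the analogous corollary of \cite{BrouderDabrowski}.

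The only point requiring genuine care is the passage from sequences to the filtered family of the second bullet. Here I would stress that nothing above used countability: Banach--Steinhaus on a barrelled space produces equicontinuity from the pointwise boundedness of an \emph{arbitrary} family, and the compactness argument governs nets exactly as it does sequences. I expect this to be the main, though mild, obstacle, namely ensuring each tool is invoked in its net-valued rather than sequential form; completeness of the two top spaces is in fact not needed, the Montel and barrelledness properties carrying the entire argument.
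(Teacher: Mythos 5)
Your overall strategy --- Banach--Steinhaus on the barrelled predual to obtain equicontinuity and membership of the limit form in the space, then the Montel property to upgrade weak convergence to convergence for $\hat{\mathcal{I}}^{born}$ on the compact closure of a bounded set --- is exactly the intended one (the paper defers to the analogous corollary of \cite{BrouderDabrowski}, which runs the same way, drawing on the barrelledness, completeness and Montel properties supplied by Theorem \ref{FAMain}), and your treatment of the first and third bullets is correct.

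The second bullet, however, contains a genuine gap at precisely the point you single out as unproblematic. For the filtered family $(u_\epsilon)_{0<\epsilon<\alpha}$, the hypothesis that $\langle u_\epsilon,v\rangle$ converges as $\epsilon\to 0$ does \emph{not} make the family pointwise bounded: a scalar function on $(0,\alpha)$ can have a limit at $0$ while being unbounded on $(0,\alpha)$, and no fixed tail $(0,\epsilon_0)$ is pointwise bounded either, since the threshold below which $\langle u_\epsilon,v\rangle$ stays near $\lambda(v)$ depends on $v$. Consequently Banach--Steinhaus cannot be applied to the family as a whole, $\{u_\epsilon\}$ need not be relatively compact, and the compactness argument collapses. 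The obstruction is not whether the tools admit net-valued versions --- they do --- but that the hypothesis is strictly weaker for such a family than for a sequence, for which convergence of $\langle u_i,v\rangle$ does force boundedness. The standard repair is a sequential reduction, available exactly because the filter $\epsilon\to 0$ has a countable basis: if $u_\epsilon\not\to\lambda$ in $D$, choose a neighbourhood $V$ of $\lambda$ and a sequence $\epsilon_n\to 0$ with $u_{\epsilon_n}\notin V$; the sequence $(u_{\epsilon_n})$ satisfies the hypothesis of the first bullet, hence converges to $\lambda$, a contradiction. With that one-line fix the proof is complete.
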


After this general study of generalized H\"ormander spaces of distributions which ends the first paper of this series, our second goal will be to provide tools to study spaces of functionals with wave front set conditions on their derivatives, as microcausal functionals appearing in algebraic quantum field theory \cite{DutschBF}. For this, the second paper of this series will gather  miscellaneous results on tensor products of our spaces and spaces of multilinear maps, and then study a general class of spaces of functionals. The third paper of this series will investigate more systematically those tensor products following the general advice of Grothendieck and mostly because general results don't give a full understanding of those tensor products. As for the completion above, there is a need for concrete (microlocal) representation to obtain a fully satisfactory functional analytic understanding. 


Let us now describe in more detail the content of this paper, as a guiding summary of our tools for the reader.

Part 1 deals with preliminary material, gives the definition of spaces and topologies and explain in our slightly more general context the results of \cite{BrouderDabrowski}. The results here are probably well known to experts (or may have been known to experts in the 60's) and we provide proofs only for the reader's convenience. 

First, in our applications in the second paper of this series, using support conditions more general than compact and full support will be crucial to make spaces of distributions stable by advanced/retarded propagators. As in \cite{Sanders} we thus need to treat future/past compact support on globally hyperbolic spacetimes (cf example \ref{enlargeable3}). Rather than dealing with them in an ad-hoc way, we give in section 1 a general family of support conditions having nice stability properties by product, duality, intersection and covering these examples. This also gives in a basic context of inductive limit on support the basic idea of producing alternative inductive/projective limit representation of topologies that will be one of our elementary tools. All our spaces of distributions with controlled support will be ultrabornological nuclear complete and will be the basis to control support in our generalized H\"ormander spaces (even when those nice functional analytic properties may be lost). 
A notion of openly generated support condition enables to get continuous injections between spaces of distributions and functions (lemma \ref{opengenlemma}) and then a notion of enlargeability enables to get nice density results. The dense continuous injection $\mathcal{E}(U,(\mathscr{O}_{\mathcal{C}})^{oo};E)\to \mathcal{D}'(U,\mathcal{C};E)$ will serve as a framework for a notion of ``normal" system controlling stability of support by duality. 

Section 2 gives some detail on the definition of dual wave front set and section 3 introduces the topologies we need. At the end, lots of them will be identified, and, as we said, this will provide a crucial tool to get functional analytic properties better preserved by inductive limit or projective limit. All the definitions should be thought of as various ideas of controlling from above (projective limit) or bellow (inductive limit)  the spaces we are interested in, starting from more basic spaces for which we already have a topology, where cones will be mostly open or closed, as studied in \cite{BrouderDabrowski}. Section 4 studies continuous injections and density in this context and generalize the topological duality results of \cite{BrouderDabrowski}, cf theorem \ref{DualityBD}. We are still here in the closed and open cone cases and support will be treated easily by our normal system technique once considered the duality of $\mathcal{D}'_\Lambda,\mathcal{E}'_\Gamma$ not obtained in   \cite{BrouderDabrowski}. Note that already here, non-countable inductive limits appear and the game of playing with equivalent projective limit is crucial to keep for instance nuclearity.

Part 2 contains the main new material to prove Theorem \ref{FAMain}. Section 5 gives topological and bornological duality results. In lemma \ref{bornImbedding}, we see that our notion of dual wave front set is the right one imposed to bounded linear mappings. This surprisingly reduces to the minimal case of a cone supported on one point and with only an half line in the cotangent bundle above that point. Then  Theorem \ref{duals} finishes the computation of duals by giving a bounded duality pairing in the missing cases.
Section 6 then explains the consequences in the open and closed case concerning bornologifications and completions. In lemma \ref{bornologyIdentities}, \ref{bornologyIdentitiesEqui} we use bornology techniques and identify topologies in computing equicontinuous sets in duals. The end of the subsection is technical, it uses crucially the notion of quasi-(LB) spaces of \cite{ValdiviaQuasiLB} and the dually related notion of class $\mathfrak{G}$ to provide metrizability of compact sets and then to identify absolutely convex compact sets in non-complete cases. As we will see, it is much more crucial than that and most of our non-trivial results are based on a variant of this argument. The argument in proposition \ref{bornologyIdentitiesArens} thus uses ideas from our proof of the non-completeness of $\mathcal{E}'_\Lambda$ in \cite{BrouderDabrowski} to compute absolutely convex compact sets and thus the remaining Mackey-Arens duals not yet known in the open/closed case. 

Section 6 almost completes the proof of theorem \ref{FAMain} in the case $\gamma$ closed or open (except for nuclearity of bornologifications). As we will see in the section 3 of definition of topologies, this case is the key to all other cases. In section 7, proposition \ref{FAGeneral} builds on the easy nuclearity computations that come easily from our definitions of topologies and explains consequences. The end of the section can then complete the proof of theorem \ref{FAMain}. It uses again the computation of absolutely convex compact sets to compute missing Mackey/Arens duals and it gets nuclearity of bornologifications by studying the dual notion of convex bornological space. The key here is to identify all (completant) bounded disks with an argument reminiscent from our computation of  absolutely convex compact sets again. Then the proof of bornological nuclearity will be standard but, by hand, no known stability result seem to be able to deduce it. 

Section 8 applies our techniques to generalize the pull-back theorem in our context, as we needed from the very beginning to deal with the manifold case. Of course, the whole paper can be developed in the open case in $\R^d$ without using this section, and then reread using this result in the general case.


\subsection*{Acknowledgments} The author is grateful to Katarzyna Rejzner and Christian Brouder for helpful discussions on the physical motivations. He also thanks Christian Brouder for many comments on previous versions of this paper that helped improving its exposition. 

Finally, the author acknowledges the support and hospitality of the Erwin Schr\"odinger Institute during the workshop ``Algebraic Quantum Field Theory: Its Status and Its Future" in May 2014. 

\part{Preliminaries on H\"ormander spaces of distributions and their generalization}
Let $U\subset M$ be an open set in a smooth connected manifold (implicitly assumed orientable 
 $\sigma$-compact without boundary) of dimension $d$.
We assume given on $M$ a complete Riemannian metric $D$ giving the topology (so that, by Hopf-Rinow theorem, closed balls for $D$ are compact.)

 Let $E\to U$ a smooth real vector bundle (with finite dimensional fiber of real dimension $e$).

We will use a fixed smooth partition of unity $f_i$, indexed by $I$, subordinated to a covering $U_i$, with $\overline{U_i}\subset U$ compact and $(\overline{U_i})_{i\in I}$ locally finite 
 $U_i$ smoothly isomorphic via a chart $\varphi_i:U_i\to \R^d$ to an open set in $\R^d$, extending homeomorphically to $\overline{U_i}$ and  trivializing  $E\to U$ (This will be fixed except when we will prove our spaces are independent of this choice). We don't write the fixed vector bundle trivialization above $\varphi_i$ and any expression $u\circ \varphi_i^{-1}=(\varphi_i^{-1})^*u$ always depend on this fixed trivialization.

 We consider $\mathcal{D}'(U;E)$ the space of distributional sections of $E$ with support in $U$ and $\mathcal{E}'(U;E)$ the space of distributional sections of $E$ with compact support in $U$. Note that $\mathcal{D}'(U_i,E)\simeq \mathcal{D}'(\varphi_i(U_i),\R^e)\simeq (\mathcal{D}'(\varphi_i(U_i)))^e $ (see e.g. \cite[(3.16) p 234]{GrosserKOS}), via the map written above $u\mapsto u\circ \varphi_i^{-1}$.
 
Likewise we call  $\mathcal{E}(U;E)$ the space of smooth sections of $E$ on $U$ and $\mathcal{D}(U;E)$ the space of smooth sections of $E$ with compact support in $U$. We write as usual $E'$ for the dual bundle, and $E^*=E'\otimes \R^*$ its version twisted by the density bundle $\R^*$ (cf e.g. \cite[chapter 3]{GrosserKOS}). We of course don't write $E$ in any notation when $E$ is the trivial line bundle over $\R$. Note that, since we don't make explicit our trivialization of vector bundles, we make the choice for those of $E,E^*$  so that for $g\in\mathcal{D}(U_i),v\in \mathcal{D}(U)$: 
$$\langle f_iu, gv\rangle=\langle (f_iu)\circ \varphi_i^{-1},(gv)\circ \varphi_i^{-1}\rangle.$$

This reduces duality pairings to those on $\R^n$ and the emphasis on the difficult analytic part of pullback rather than on the multiplication by smooth map part involved in change of bundle trivialization.
 
\section{Support properties}\label{support}
We first have to deal with variants of the above spaces of distributions with various support properties to model typical examples in applications, for instance, space compact support or future compact support on globally hyperbolic space-times. We start with a general support condition and add technical assumptions, namely what we call below openly generated and enlargeable, in order to get nice dense inclusions. We don't claim to consider the most general support condition useful in applications, especially, all our spaces will contain compactly supported smooth functions.

Generalizing slightly the classical notion of a normal space of distributions \cite{Schwarz}, we are looking for spaces of smooth functions $D,E$ with $D\subset E'$ such that we will be able to control the support of a space of distribution $F$ by  two dense continuous injections $D\subset F\subset E',$ in such a way that the dual will satisfy $E\subset F'\subset D'.$ In this subsection, we are thus looking for support conditions on smooth functions and distributions enabling to get our spaces  $D,E'$ having nice functional analytic properties. This will be mostly an exercise in standard distribution theory (for which we did not find any reference in the literature though), motivated by the need of using specific support conditions in applications.

Let $\mathcal{K}$ be the family of compact sets in $U$, $\mathcal{B}$ the family of sets with compact closure in $U$,  $\mathcal{T}$ the family of all open sets in $U$ and $\mathcal{F}$ the family of all closed sets in $U.$

\subsection{Polar support condition}

For duality purposes, we need a notion of polar of a support condition, one more notion of polar used in mathematics after orthogonality, commutants, etc. (cf. \cite[p 141]{Girard}). In our case, the idea is to get a condition of support $\mathcal{C}^o$ that can be paired with $\mathcal{C}$ to get a compact support condition that will allow a duality pairing, nothing more. In the terminology of \cite{Girard}, we use the set of compact sets $\mathcal{K}$ as the pole for our notion of polar.

For $\mathcal{O}$ a family of open sets in $U$, $\mathcal{C}$ a family of closed sets in $U$, we define $$\mathcal{O}^o=\{C\in \mathcal{F}, \forall W\in \mathcal{O}, \overline{C\cap {W}}\in \mathcal{K}\},\ \ \mathcal{C}^o=\{V\in \mathcal{T}, \forall C\in \mathcal{C}, \overline{V\cap C}\in \mathcal{K}\} .$$


As for any polarity relation, if $\mathcal{O}\subset \mathcal{P}$, then $\mathcal{P}^o\subset \mathcal{O}^o$ and similarly for families of closed sets, and $\mathcal{O}\subset (\mathcal{O}^o)^o=:\mathcal{O}^{oo}, \mathcal{C}\subset (\mathcal{C}^o)^o=:\mathcal{C}^{oo}$. We will be interested in what we call \textit{polar families} for which there is equality $\mathcal{O}=\mathcal{O}^{oo}, \mathcal{C}=\mathcal{C}^{oo}$. Moreover $\mathcal{C}^o$ and especially any polar family, is stable by intersection with a given open set, and stable by finite union, i.e. this is an ideal of open sets in an obvious sense which always contains $\mathcal{B}\cap \mathcal{T}$. 

We have for instance $(\mathcal{K})^o=\mathcal{T}$,$(\mathcal{T}^o)=\mathcal{K},$ $((\mathcal{B}\cap \mathcal{T})^o)=\mathcal{F}$,$(\mathcal{F}^o)=\mathcal{B}\cap \mathcal{T},$ which are all polar families.


\begin{ex}\label{countgenclosed}
If $\mathscr{F}=\{F_i\}_{i\in \N}$ is a countable family of closed sets we say $\mathcal{C}$ is countably generated  (by $\mathscr{F}$) if it is the polar family $\mathcal{C}=\mathcal{C}_{\mathscr{F}}:=\{F_i, i\in \N\}^{oo},$ for some such ${\mathscr{F}}.$ 
Without loss of generality one may assume $F_i$ increasing. Then, if $(K_j)_{j\in \N}$ is an increasing sequence of compacts in $U$, with their interiors exhausting $U$, $$\mathcal{C}_{\mathscr{F}}=\{F\in\mathcal{F}, \exists i F\subset F_i\cup K_i\}.$$

Indeed, for the only non obvious inclusion, assume that $F$ contains a sequence $x_i\not\in F_i\cup K_i$. From the definition of $K_i$'s any such sequence has no converging sub-sequence in $U$. One can construct a sub-sequence $y_i$ with the same property and moreover such that we have built a fixed open $V_i$ in $K_{k_i}\cap F_i^c\subset U$ containing $y_i$ and we impose at the next step $y_{i+1}\not\in \cup_{j\leq i}V_j.$ Then define the open set $W=\cup_{j\in \N} V_j$. Since $V_j\cap F_i\subset V_j\cap F_j=\emptyset$ if $j\geq i$, $W\cap F_i=\cup_{j<i} V_j\cap F_i\subset \cup_{j<i}K_{k_j}$ i.e. $\overline{W\cap F_i}$ is compact and thus $W\in \{F_i, i\in \N\}^{o}.$
But now, $y_j\in \overline{W\cap F}$ has no converging sub-sequence, thus  $\overline{W\cap F}$ is not compact, i.e. $F\not\in \mathcal{C}_{\mathscr{F}},$ as expected. 

Moreover it is obvious by definition that  $\mathcal{C}_{\mathscr{F}}^o=\{O\in\mathcal{T}, \forall i, \exists k_i, O\subset F_i^c\cup K_{k_i}\}.$

\end{ex}

\begin{ex}\label{countgenopen}
If $\mathscr{O}=\{O_i\}_{i\in \N}$ is a countable family of open sets we say $\mathcal{O}$ is countably generated (by $\mathscr{0}$) if it is the polar family $\mathcal{O}=\mathcal{O}_{\mathscr{O}}:=\{O_i, i\in \N\}^{oo},$ for some such ${\mathscr{O}}.$ 
Without loss of generality one may assume $O_i$ increasing. Then, if $(K_j)_{j\in \N}$ is an increasing sequence of compacts in $U$, with their interiors exhausting $U$, then, similarly to the previous example, we have  : $\mathcal{O}_{\mathscr{O}}=\{O\in\mathcal{T}, \exists i O\subset O_i\cup K_i\},\ \ \mathcal{O}_{\mathscr{O}}^o=\{F\in\mathcal{F}, \forall i, \exists k_i, F\subset O_i^c\cup K_{k_i}\}.$


\end{ex}

\begin{ex}\label{ProductCase}
If $U=U_1\times U_2$, $\mathcal{C}_i$ polar family of closed sets on $U_i$. Let us show that $(\mathcal{C}_1\times \mathcal{C}_2)^{oo}=\{F\subset U, F\subset F_1\times F_2, F_i\in  \mathcal{C}_i\}.$

Indeed, since polar families are stable by subsets, $\supset$ is obvious. Conversely, let us show that if $F\in (\mathcal{C}_1\times \mathcal{C}_2)^{oo}$ its projection $\overline{p_i(F)}\in \mathcal{C}_i$. Otherwise, take $O\in \mathcal{C}_i^o$ (say $i=1$) such that $\overline{\overline{p_1(F)}\cap O}=\overline{p_1(F)\cap O}$ is not compact, and there exists $x_n\in p_1(F)\cap O$ such that $\{x_n\}_{n\in \N}$ is not precompact. Take $(x_n,y_n)\in F$ and $\epsilon_n\to 0$ such that $B(x_n,\epsilon_n)\subset O,B(y_n,\epsilon_n)\subset U_2$ (with compact closure) and $B(x_n,\epsilon_n)$ disjoint for different $n$. Let us show $A=\cup_{n\in \N} B(x_n,\epsilon_n)\times B(y_n,\epsilon_n) \in (\mathcal{C}_1\times \mathcal{C}_2)^{o} $. Indeed, for $F_i\in \mathcal{C}_i$, if $(x,y)\in (F_1\times F_2)\cap A$, $x\in B(x_n,\epsilon_n)\cap F_1\subset \overline{O\cap F_1}$ which is compact, and has non empty intersection with only finitely many $B(x_n,\epsilon_n)$ (since $\{x_n, n\in \N\}$ not compact, thus so is any $\{x_n', n\in \N\}$ with $x_n'\in B(x_n,\epsilon_n)$ since $\epsilon_n\to 0$), thus $\overline{(F_1\times F_2)\cap A}\subset \overline{\cup_{n\leq N}B(x_n,\epsilon_n)\times B(y_n,\epsilon_n)}$ which is compact, as wanted. Finally $A \in (\mathcal{C}_1\times \mathcal{C}_2)^{o} $ is in contradiction with $A\cap F$ contains the non-precompact sequence $\{(x_n,y_n), n\in \N\}.$

Moreover, we obviously have  $(\mathcal{C}_1^o\times \mathcal{C}_2^o)^{oo}\subset (\mathcal{C}_1\times \mathcal{C}_2)^{o}$.
\end{ex}

\begin{ex}\label{IntersectionCase}
Let  $\mathcal{C}_i$  be polar families of closed sets on $U$.

 Let us show that $(\mathcal{C}_1\cap \mathcal{C}_2)=\{F\in\mathcal{T}, \exists F_i\in\mathcal{C}_i^o, F\subset F_1\cup F_2 \}^o$ so that it is also polar. For convenience we write $\mathcal{C}_1^o\vee \mathcal{C}_2^o=(\{F\in\mathcal{T}, \exists F_i\in\mathcal{C}_i^o, F\subset F_1\cup F_2, \})^{oo}$
 
 Indeed,  obviously $ (\mathcal{C}_1^o\vee \mathcal{C}_2^o)^o\subset \mathcal{C}_1^{oo}\cap \mathcal{C}_2^{oo}$,  since $\mathcal{C}_i^{o}\subset \mathcal{C}_1^o\vee \mathcal{C}_2^o.$ Conversely, if $C\in \mathcal{C}_1^{oo}\cap \mathcal{C}_2^{oo},$ and $F\subset F_1\cup F_2, F_i\in \mathcal{C}_i^o$ $\overline{F\cap C}\subset \overline{F_1\cap C}\cup \overline{F_2\cap C}$ which is compact as finite union of compacts.

Moreover, if $\mathcal{C}_i^o=\{F_{i,n}, n\in \N\}^{oo}$ are countably generated, it is easy to see that $\mathcal{C}_1^o\vee \mathcal{C}_2^o=\{F\in\mathcal{T}, \exists F_i\in\mathcal{C}_i^o, F\subset F_1\cup F_2 \}$ (which is already polar) since it is countably generated by $F_{1,n}\cup F_{2,m}, (n,m)\in \N^2$ and the stated equality follows from our description of this family. 

We also of course have  variants with open/closed sets families exchanged.

\end{ex}

\subsection{Natural topologies on distributions and smooth functions with support condition}
For $\mathcal{O}$ 
a 
family of open sets in $U$, $\mathcal{C}$ a family of closed sets, { stable by subsets}, we call $$\mathcal{E}(U,\mathcal{O};E^*)=\{ \varphi\in \mathcal{E}(U;E^*)\ : (Ker^{\infty}(\varphi))^c\in\mathcal{O} 
\},$$ $$\mathcal{D}'(U,\mathcal{C};E)=\{ u\in \mathcal{D}'(U;E)\ :  \text{supp}(u)\in\mathcal{C} \}.$$ 
Above, we wrote $(Ker^{\infty}(\varphi))^c:=\cup_{i\in I}\cup_{\alpha}\varphi_i^{-1}[(\partial_{\alpha}\varphi\circ\varphi_i)^{-1}(\R^e-\{(0,...,0)\})]$ where the union over $\alpha$ runs over all multi-indices for derivatives in $\R^d$, so that $\partial_{\alpha}\varphi\circ\varphi_i:\varphi_i(U_i)\to \R^e.$
Thus $(Ker^{\infty}(\varphi))\supset (\text{supp}(\varphi))^c$ is the set of points where in our fixed chart system all the derivatives of $\varphi$ vanish, set which is of course independent of the chosen chart system.

Note also that $\overline{(Ker^{\infty}(\varphi))^c}=\text{supp}(\varphi),$ since $Int( Ker^{\infty}(\varphi))=(\text{supp}(\varphi))^c$ since on a open set vanishing of all derivatives or the function alone coincide.

Thus  we have $\mathcal{D}'(U,\mathcal{K};E)=\mathcal{E}'(U;E),\mathcal{E}(U,\mathcal{B}\cap \mathcal{T};E^*)=\mathcal{D}(U;E^*).$

We put on $\mathcal{E}(U,\mathcal{O};E^*)$ the inductive limit topology (with $\mathcal{O} $ ordered by inclusion, {and where its stability by subsets is used to identify the two sets}), $$\mathcal{E}_i(U,\mathcal{O};E^*)=\text{ind} \lim_{O\in \mathcal{O}} \mathcal{E}(U,\{P\in \mathcal{T}, P\subset O\};E^*),$$ this last space being given the topology of convergence of all derivatives on compact sets, i.e. the topology induced by $\mathcal{E}(U;E^*)$ on this closed subspace defined by the relation $O^c\subset Ker^\infty(\varphi)$. This is a separated  strict inductive limit continuously embedded in $\mathcal{E}(U;E^*)$ of Fr\'echet spaces, especially, it is always ultrabornological \cite[Prop 6.2.1]{PerrezCarreras}. If $\mathcal{O}$ is countably generated as in example \ref{countgenopen}, this inductive limit is equivalent to a strict inductive sequence which is complete, regular and nuclear (as a locally convex space, not as an inductive limit, we never use the notion of nuclear inductive limit, see \cite{Bierstedt}) by general results. Since this sequence comes from a cofinal sequence, the original inductive limit is then also regular.

When $\mathcal{O}=\mathcal{C}^o$ is polar, there is an a priori different topology which is interesting. The idea is to approximate the support condition from above to get a projective limit using the polarity. This idea will be much more crucial for spaces of distributions with a wave front set condition. Here it will enable to get an alternative projective limit description.

 Since $\mathcal{C}^o\subset \{F\}^o,$ for $F\in \mathcal{C}$, there is a continuous map $$\mathcal{E}_i(U,\mathcal{O};E^*)\to \text{proj} \lim_{F\in \mathcal{C}}\mathcal{E}_i(U,\{F\}^o;E^*)=:\mathcal{E}_p(U,\mathcal{C};E^*)$$ where $\mathcal{E}_i(U,\{F\}^o;E^*)$ is given the previous complete nuclear topology (since by example \ref{countgenclosed}, it is countably generated). This projective limit is again by definition $\mathcal{E}(U,\mathcal{O};E^*)$ but we write it with a index $p$ with this topology at this point. By the general remark above and standard facts about separated projective limits, $\mathcal{E}_p(U,\mathcal{O};E^*)$ is complete and nuclear.

Likewise, we put on $\mathcal{D}'(U,\mathcal{C};E)$ the inductive limit topology (with $\mathcal{C}$ ordered by inclusion), $$\mathcal{D}_i'(U,\mathcal{C};E)=\text{ind}\lim_{C\in \mathcal{C}} \mathcal{D}'(U,\{F\in \mathcal{F}, F\subset C\};E),$$ this last space being given the topology induced by the strong topology of $\mathcal{D}'(U;E)$ (from its duality with $\mathcal{D}(U;E^*)$) on this closed subspace. This is a separated strict inductive limit continuously embedded in $\mathcal{D}'(U;E)$. If $\mathcal{C}$ is countably generated as in example \ref{countgenclosed}, this inductive limit is equivalent to a strict inductive sequence which is complete, regular and nuclear. Similarly, if $\mathcal{C}=\mathcal{O}^o$ is polar, we have a different candidate projective  topology :
$$\mathcal{D}_i'(U,\mathcal{C};E)\to \text{proj} \lim_{O\in \mathcal{O}}\mathcal{D}_i'(U,\{O\}^o;E)=:\mathcal{D}_p'(U,\mathcal{O};E)\to \mathcal{D}_p'(U,\mathcal{O}^{oo};E)$$ with  as before $\mathcal{D}_p'(U,\mathcal{O};E) $ nuclear complete. 

The following fact giving computation of duals is mainly a reformulation of well-known results :

\begin{lemma}\label{DualitySupport}
Let $\mathcal{O},\mathcal{C}$ families of open and closed sets respectively, {stable by subsets and containing $\mathcal{B}\cap\mathcal{T}$ and $\mathcal{K}$ respectively}. The dual of  
$(\mathcal{E}_i(U,\mathcal{O};E^*))'=\mathcal{D}'(U,\mathcal{O}^o;E)$ and $(\mathcal{D}_i'(U,\mathcal{C};E))'=\mathcal{E}(U,\mathcal{C}^o;E^*)$. Moreover, 
we also have $(\mathcal{E}_p(U,\mathcal{C}^{oo};E^*))'=\mathcal{D}'(U,\mathcal{C}^{oo};E)$ and $(\mathcal{D}_p'(U,\mathcal{O}^{oo};E))'=\mathcal{E}(U,\mathcal{O}^{oo};E^*)$. Finally strongly bounded sets in $(\mathcal{E}_i(U,\mathcal{O};E^*))_b'$ (resp. $(\mathcal{D}_i'(U,\mathcal{C};E))'_b$)  are bounded in some term of the defining inductive limit $\mathcal{D}_{i}'(U;\mathcal{O}^o;E)$ (resp. $\mathcal{E}_i(U,\mathcal{C}^o;E^*)$ , i.e. they are bounded for the inductive limit bornology).


\end{lemma}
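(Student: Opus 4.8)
The plan is to deduce everything from three ingredients: the classical dualities $(\mathcal{D}(U;E^*))'=\mathcal{D}'(U;E)$ and $(\mathcal{E}(U;E^*))'=\mathcal{E}'(U;E)$ (the pairings being the ones fixed above, so that the density twist is harmless), the permanence of duality under the locally convex (co)limits defining our spaces, and a single analytic continuity criterion for the pairing of a distribution against a smooth section. Throughout I use that $\mathcal{D}(U;E^*)$ sits as a dense subspace of every step $\mathcal{E}(U,\{P\in\mathcal{T},P\subset O\};E^*)$ (cut off by compactly supported plateaus), so that any continuous functional on $\mathcal{E}_i(U,\mathcal{O};E^*)$ is determined by, and identified with, its restriction to $\mathcal{D}(U;E^*)$, i.e. with a distribution in $\mathcal{D}'(U;E)$; symmetrically $\mathcal{D}(U;E^*)$ is weakly dense in each step of $\mathcal{D}_i'(U,\mathcal{C};E)$, so functionals there are smooth sections.

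The analytic core is the following local statement, which I would prove by a plateau-function argument in one direction and an escaping-mass argument in the other: for $u\in\mathcal{D}'(U;E)$ and an open set $O$, the functional $\varphi\mapsto\langle u,\varphi\rangle$ is continuous on $\mathcal{E}(U,\{P\subset O\};E^*)$ (for the topology of convergence of all derivatives on compacta) if and only if $\overline{\text{supp}(u)\cap O}$ is compact; dually, for $\varphi\in\mathcal{E}(U;E^*)$ and a closed $C$, the functional $u\mapsto\langle u,\varphi\rangle$ is continuous on $\mathcal{D}'(U,\{F\subset C\};E)$ if and only if $\overline{(Ker^{\infty}(\varphi))^c\cap C}$ is compact. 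Granting this, since a functional on a locally convex inductive limit is continuous exactly when its restriction to every step is, the dual of $\mathcal{E}_i(U,\mathcal{O};E^*)$ is precisely the set of distributions $u$ with $\overline{\text{supp}(u)\cap O}$ compact for all $O\in\mathcal{O}$; but this is by definition $\text{supp}(u)\in\mathcal{O}^o$, giving $(\mathcal{E}_i(U,\mathcal{O};E^*))'=\mathcal{D}'(U,\mathcal{O}^o;E)$, and symmetrically $(\mathcal{D}_i'(U,\mathcal{C};E))'=\mathcal{E}(U,\mathcal{C}^o;E^*)$.

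For the two projective statements I would use that the projective limits are reduced (again because $\mathcal{D}(U;E^*)$ has dense image in every factor), so that the dual of $\mathcal{E}_p(U,\mathcal{C}^{oo};E^*)=\mathrm{proj}\lim_{F\in\mathcal{C}^{oo}}\mathcal{E}_i(U,\{F\}^o;E^*)$ is the union $\bigcup_{F\in\mathcal{C}^{oo}}(\mathcal{E}_i(U,\{F\}^o;E^*))'$ of the duals of the factors. By the first part each such dual is $\mathcal{D}'(U,\{F\}^{oo};E)$, and the polar calculus already set up (namely $\{F\}^{oo}=\{G\in\mathcal{F}:G\subset F\cup K,\ K\in\mathcal{K}\}$ as in example \ref{countgenclosed}, together with stability of the polar family $\mathcal{C}^{oo}$ under subsets, finite unions and $\mathcal{K}$) shows $\bigcup_{F\in\mathcal{C}^{oo}}\{F\}^{oo}=\mathcal{C}^{oo}$, since every $G\in\mathcal{C}^{oo}$ is its own witness $F=G$. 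Hence $(\mathcal{E}_p(U,\mathcal{C}^{oo};E^*))'=\mathcal{D}'(U,\mathcal{C}^{oo};E)$, and the same argument with open and closed sets interchanged gives $(\mathcal{D}_p'(U,\mathcal{O}^{oo};E))'=\mathcal{E}(U,\mathcal{O}^{oo};E^*)$.

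The last assertion is where the real work lies. The relevant inductive limits are barrelled ($\mathcal{E}_i(U,\mathcal{O};E^*)$ being ultrabornological as noted above), so every strongly bounded subset $B$ of the dual is equicontinuous; it then remains to localise, i.e. to show that an equicontinuous $B\subset\mathcal{D}'(U,\mathcal{O}^o;E)$ is contained and bounded in a single step $\mathcal{D}'(U,\{F\subset C\};E)$, which is exactly boundedness for the inductive limit bornology. The expected obstacle is precisely this uniformity of support: assuming no single $C\in\mathcal{O}^o$ traps all the supports, I would extract $u_n\in B$ together with points $x_n\in\text{supp}(u_n)$ escaping to infinity along the support family, and build test functions $\varphi_n\to 0$ in $\mathcal{E}_i(U,\mathcal{O};E^*)$ with $\langle u_n,\varphi_n\rangle\not\to 0$, contradicting equicontinuity; this is the escaping construction of example \ref{countgenclosed} transplanted to the dual. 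For countably generated $\mathcal{O}$ this is the classical regularity of complete strict $(LF)$-spaces, and the general case reduces to a cofinal countable subfamily. The dual statement for $\mathcal{D}_i'(U,\mathcal{C};E)$ is entirely symmetric.
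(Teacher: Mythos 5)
Your proposal is correct in substance and its analytic core (plateau functions to define the pairing, escaping sequences of points to force the support conditions, restriction to $\mathcal{D}(U;E^*)$ resp. $\mathcal{E}'(U;E)$ to identify functionals with distributions resp. sections) coincides with the paper's, but two of your organizational choices genuinely differ. For the inductive-limit dualities you isolate a per-step iff-criterion (continuity of $\langle u,\cdot\rangle$ on $\mathcal{E}(U,\{P\subset O\};E^*)$ iff $\overline{\mathrm{supp}(u)\cap O}$ compact), whereas the paper proves the "only if" half by multiplying by a fixed $\varphi\geq 0$ with $\varphi^{-1}(]0,\infty[)=W$ and $Ker^\infty(\varphi)^c=W$, which converts the question into compact-supportedness of $u\varphi\in\mathcal{E}'(U;E)$; your escaping-mass version works too but the multiplication trick pays off again in the bounded-set statement, where the paper gets uniformity for free from boundedness of $B\varphi$ in $\mathcal{E}'(U;E)$, while you must run the disjoint-bumps construction against a strongly bounded set (this does work: bumps with disjoint, escaping supports form a bounded set in one step of $\mathcal{E}_i(U,\mathcal{O};E^*)$ whatever the rescaling, and it is in fact the paper's own mechanism on the $\mathcal{D}_i'$ side). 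For the projective dualities you invoke the general identity $(\mathrm{proj}\lim E_\alpha)'=\bigcup E_\alpha'$ for reduced directed projective limits plus the polar computation $\bigcup_{F\in\mathcal{C}^{oo}}\{F\}^{oo}=\mathcal{C}^{oo}$; the paper instead sandwiches $\mathcal{D}'(U,\mathcal{C};E)\subset(\mathcal{E}_p(U,\mathcal{C};E^*))'\subset\mathcal{D}'(U,\mathcal{C}^{oo};E)$ and concludes by bipolarity. Your route is cleaner but you should record why the limit is reduced (truncation gives density of $\mathcal{D}(U;E^*)$ in every factor) and why the index set is directed (polar families of closed sets are stable under finite unions).

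One aside is false and should be dropped: the claim that "the general case reduces to a cofinal countable subfamily" of $\mathcal{O}$. A polar family of open sets has no countable cofinal subfamily in general, and regularity of the strict $(LF)$-limit is in any case not the statement being proved (the assertion concerns strongly bounded sets of the \emph{dual}, whose strong topology is only identified with the inductive limit topology later, in Proposition \ref{FApropertiesSupport}). This does not damage your proof, because the escaping construction you describe first handles arbitrary $\mathcal{O}$ directly; but as written the sentence suggests a reduction that is not available.
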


\begin{proof}
Let $F\in \mathcal{O}^o$ (resp $F\in \mathcal{C}$), $W\in \mathcal{O}$ (resp $W\in \mathcal{C}^o$), $u\in \mathcal{D}'(U,\mathcal{O}^o;E)$ (resp. $u\in \mathcal{D}'(U,\mathcal{C};E)$), $\varphi\in \mathcal{E}(U,\mathcal{O};E^*)$  (resp. $\varphi\in \mathcal{E}(U,\mathcal{C}^o;E^*)$) let $f\in \mathcal{D}(U)$  equal to $1$ on the compact $\overline{F\cap  W}$ and assume $\text{supp}(u)\subset F$ and  $Ker^\infty(\varphi)\subset {W}$.
Let us understand the support of $u\varphi\in (\mathcal{D}(U))',$ defined by $\langle u\varphi, f\rangle= \langle u, \varphi f\rangle$ since $\varphi f\in \mathcal{D}(U,E^*).$

 Let $g\in \mathcal{D}(U)$ of support in 
$(\overline{F\cap  W})^c,$ then $g\varphi$ and all its derivatives are zero on (a neighborhood of) $\overline{F\cap  W}$ and on $W^c$, the first because of the support of $g$ the second because of the assumption on $\varphi$ and then by Leibniz formula. Thus $g\varphi$ and all its derivatives are zero on $\text{supp}(u)\subset F\subset W^c\cup (F\cap W)$ thus by (an obvious bundle variant, obtained in coordinates, of) \cite[Th 2.3.3]{Hormander}   $\langle u,g\varphi \rangle=0$ (replacing first $u$ by a distribution of compact support by multiplication by a smooth function valued 1 on a neighborhood of $\text{supp}(g)$.) We deduce $\text{supp}(u\varphi)\subset \overline{F\cap  W}.$

 Thus $\langle u,\varphi\rangle:=\langle u\varphi,f \rangle=\langle u,\varphi f \rangle$ is independent of $f$ and obviously a continuous duality pairing, first for the topologies  with support properties fixed  by $F,W$ and then for the inductive limit topologies  $(\mathcal{E}_i(U,\mathcal{O};E^*),\mathcal{D}_i'(U,\mathcal{O}^o;E))$,$(\mathcal{E}_i(U,\mathcal{C}^o;E^*),\mathcal{D}_i'(U,\mathcal{C};E))$. Especially, this shows $\mathcal{D}'(U,\mathcal{O}^o;E)\subset (\mathcal{E}_i(U,\mathcal{O};E^*))', \mathcal{E}(U,\mathcal{C}^o;E^*)\subset (\mathcal{D}_i'(U,\mathcal{C};E))'$ (the inclusion as usual by a dense inclusion of $\mathcal{D}(U;E^*),\mathcal{E}'(U;E)$ in the spaces of which we consider duals since we assumed $\mathcal{B}\cap\mathcal{T}\subset \mathcal{O}$ and $\mathcal{K}\subset \mathcal{C}$, otherwise we still have built a not necessarily injective map).

Moreover, for each $\varphi\in \mathcal{E}(U,\mathcal{O};E^*)$ as above, the only requirement for the argument above is $F\in\{W\}^o$, i.e. we have the continuity on $\mathcal{D}_i'(U,\{W\}^o;E)$ and by composition on the projective limit on $\mathcal{D}_p'(U,\mathcal{O};E).$ Thus,
 we have $\mathcal{E}(U,\mathcal{O};E^*)\subset (\mathcal{D}_p'(U,\mathcal{O};E))',$ in particular  $\mathcal{E}(U,\mathcal{O}^{oo};E)\subset (\mathcal{D}_p'(U,\mathcal{O}^{oo};E^*))'.$ Likewise this shows $\mathcal{D}'(U,\mathcal{C}^{oo};E)\subset (\mathcal{E}_p(U,\mathcal{C}^{oo};E^*)'.$

Conversely, to identify the duals, take a continuous linear form $T$ on $\mathcal{E}_i(U,\mathcal{O};E^*)\supset \mathcal{D}(U;E^*)$ (with continuous dense inclusion), it gives by restriction an element  $u\in(\mathcal{D}(U;E^*))'=\mathcal{D}'(U;E)$. Note that a linear form on $\mathcal{E}_p(U,\mathcal{C};E^*)$ in the polar case $\mathcal{O}=\mathcal{C}^o$ would have given this too. It suffices to prove $\text{supp}(u)\in\mathcal{O}^o$ since then it will coincide with the above pairing by the stated density of $\mathcal{D}(U;E^*)$. 
Thus take $W\in \mathcal{O}$. By a partition of unity like argument, there exists a non-negative function $\varphi\in \mathcal{E}(U),$ with $\varphi^{-1}(]0,\infty[)=W,$  and $\varphi$ vanishing with all its derivatives on $W^c$, i.e. $Ker^\infty(\varphi)^c={W}.$
For any $f\in \mathcal{E}(U;E^*), f\varphi\in \mathcal{E}(U,\mathcal{O};E^*)$ {(since $Ker^\infty(f\varphi)\supset Ker^\infty(\varphi)= W^c$ and $\mathcal{O}$ stable by subset thus $(Ker^\infty(f\varphi))^c\in \mathcal{O}$)} so that $f\mapsto T(f\varphi)$ is a continuous linear form, coinciding with $u\varphi$ on $\mathcal{D}(U;E)$, i.e. $u\varphi\in \mathcal{E}'(U;E^*)$, and $\text{supp}(u\varphi)=\overline{\text{supp}(u)\cap W}$ is compact, as wanted. (Note that for the equality of support above, $\subset$ is similar to what was explained at the beginning of our proof, and $\supset$, which is the really interesting case here comes from the fact that for $x\in \text{supp}(u)\cap W$ one can take a compactly supported function $f$ in $W$, $f(x)=\xi\in E^*-\{0\}$ supported as close to $x$ as we want, such that $\langle u,f\rangle\neq 0$ and, since $\varphi$ is strictly positive on its support included in $W$, $f\varphi^{-1}$ satisfies the same support assumption and $\langle u\varphi ,f\varphi^{-1}\rangle\neq 0$. Thus we got $x\in \text{supp}(u\varphi)$.)

We have thus deduced the equalities $(\mathcal{E}_i(U,\mathcal{O};E^*))'=\mathcal{D}'(U,\mathcal{O}^o;E)$  and $\mathcal{D}'(U,\mathcal{C};E)\subset (\mathcal{E}_p(U,\mathcal{C};E^*))'\subset\mathcal{D}'(U,\mathcal{C}^{oo};E)$ and thus $(\mathcal{E}_p(U,\mathcal{C}^{oo};E^*))'=\mathcal{D}'(U,\mathcal{C}^{oo};E).$

Assume now given a strongly bounded set $B \subset (\mathcal{E}_i(U,\mathcal{O};E^*))'$ and it is of course in $\mathcal{D}'(U,\{C\in \mathcal{F}, C\subset F\};E^*)$ for  $F=\overline{\cup_{u\in B}\text{supp}(u)}$. Since $\mathcal{D}(U;E^*)\subset\mathcal{E}_i(U,\mathcal{O};E^*)$ continuously, $B$ is of course bounded in $\mathcal{D}'(U,\{C\in \mathcal{F}, C\subset F\};E^*)\subset\mathcal{D}'(U;E^*)$, since the bornology is the one induced from the strong topology by the strong bornology of $\mathcal{D}'(U;E^*).$ It thus suffices to prove $F\in \mathcal{O}^{o}$ to get the stated conclusion. Let $G=\cup_{u\in B}\text{supp}(u)$. Let us first check it suffices to show that for any $W\in \mathcal{O}$, $\overline{G\cap W}$ is compact. Indeed, if $F\not\in \mathcal{O}^{o}$ there is such a $W$ with $\overline{F\cap W}$ not compact, thus there exists a sequence $x_n\in F\cap W$ which has no converging sub-sequence in $U$, i.e. ,by $\sigma$-compactness of $M$ and modulo extraction, either is unbounded or converges outside $U$ in $M$. Modulo taking a sub-sequence there is a sequence $B(x_n,\epsilon_n)\subset W$ of disjoint balls, with $\epsilon_n\to 0$ and  $x_n$ either with $||x_n||\to \infty$ or converging outside $U$. But there is $y_n\in B(x_n,\epsilon_n)\cap G$ by definition of a closure, thus $(y_n)\in W\cap G$ is a sequence with no converging sub-sequence in $U$ and $\overline{G\cap W}$ is not compact as expected.


Thus take $W\in \mathcal{O}$ and $\varphi$ as before. By definition, if $C$ is bounded in $\mathcal{E}(U;E^*)$, $\varphi C$ is bounded in $\mathcal{E}_i(U,\mathcal{O};E^*)$ (actually even in the inductive limit bornology because of the induction of the bornology of the pieces of the inductive limit by the one of $\mathcal{E}(U,E^*)$). We deduce from the definition of the strong bornology that $\sup_{u\in B,f\in C}|\langle u,f\varphi\rangle|<\infty$ i.e. $B\varphi$ is bounded in $\mathcal{E}'(U;E).$ 
 Thus there is a given compact set $K$ such that $\text{supp}(u)\cap W\subset \text{supp}(u\varphi)\subset K$ for all $u\in B,$ and thus we deduce $G\cap W\subset K$ as expected.



Similarly, take now a continuous linear form $T$ on $\mathcal{D}_i'(U,\mathcal{C};E)\supset \mathcal{E}'(U;E)$ defining by restriction an element $\varphi\in \mathcal{E}(U;E^*)=(\mathcal{E}'(U;E))'$ and similarly, it suffices to check the support condition on $\varphi$, i.e. $(Ker^\infty\varphi)^c\in \mathcal{C}^o$. Assume for contradiction that there is $F\in \mathcal{C}$ with $F\cap (Ker^\infty\varphi)^c$ not relatively compact so that there exists $x_n=\varphi_{i_n}(y_n)\in  F, \partial_{\alpha_n}(\varphi\circ \varphi_{i_n})(y_n)=\xi_n\neq 0$ with either $(x_n)$ tending to infinity or $x_n$ converges outside $U$ in $M$. Fix $\eta_n\in E_{x_n}$  with $\langle\eta_n, \xi_n\rangle= 1$ and write $\delta_{y_n,\eta_n}\in \mathcal{D}'(\varphi_{i_n}(U_{i_n});E\circ \varphi_n^{-1})$ the distribution defined by $\delta_{y_n,\eta_n}(f)=\langle \eta_n,f(y_n)\rangle$. Then the sum $\sum_n(-1)^{|\alpha_n|}\varphi_{i_n}^*(\partial_{\alpha_n}\delta_{y_n,\eta_n})$ has support in $\cup_n\{x_n\}\subset F$ and converges in  $\mathcal{D}'(U,\{C\in \mathcal{F}, C\subset F\};E)$ since we put the topology induced by $\mathcal{D}'(U;E)$ and the sum is finite against compactly supported sections in $U$. However, $T(\sum_{n\leq N}(-1)^{|\alpha_n|}\varphi_{i_n}^*(\partial_{\alpha_n}\delta_{y_n,\eta_n}))$ cannot converge when $N\to \infty$ since it equals $\langle \sum_{n\leq N}(-1)^{|\alpha_n|}\varphi_{i_n}^*(\partial_{\alpha_n}\delta_{y_n,\eta_n})),\varphi\rangle= \sum_{n\leq N}\langle \eta_{n}, \partial_{\alpha_n}(\varphi\circ\varphi_{i_n})(y_n)\rangle=N$ by construction, contradicting the continuity of $T$. As a conclusion $(Ker^\infty\varphi)^c\in  \mathcal{C}^{o}.$
We thus concluded to the two remaining duality equalities  $(\mathcal{D}_i'(U,\mathcal{C};E))'=\mathcal{E}(U,\mathcal{C}^o;E^*)$ and $(\mathcal{D}_p'(U,\mathcal{O}^{oo};E)'=\mathcal{E}(U,\mathcal{O}^{oo};E^*)$.

Assume finally given a strongly bounded set $B$ in $(\mathcal{D}_i'(U,\mathcal{C};E))'_b$, it is of course bounded in $\mathcal{E}(U,\{P\in \mathcal{T}, P\subset W\};E^*)$ with $W=\cup_{\varphi_\in B}(Ker^\infty\varphi)^c$. It suffices to show $W\in  \mathcal{C}^o.$
For, take $F\in \mathcal{C}$ and assume for contradiction $\overline{W\cap F}$ is not compact and get a sequence $x_n=\varphi_{i_n}(y_n)\in F$ as before diverging to infinity or converging outside $U$ with $\varphi^{(n)}\in B,\alpha_n$ such that $\partial_{\alpha_n}(\varphi^{(n)}\circ\varphi_{i_n})(y_n)=\xi_n\neq 0,$ and then $\eta_n$ as before.

Take $C=\{v_n=n(-1)^{|\alpha_n|}\varphi_{i_n}^*(\partial_{\alpha_n}\delta_{y_n,\eta_n}), n\in \N\}$. Then all elements of $C$ have support in $F$ and $C$ is bounded in $\mathcal{D}'(U;E)$ since on any compact, only finitely many $x_n$ appear and thus $v_n$ are non zero. Thus $C$ is bounded in $\mathcal{D}_i'(U,\mathcal{C};E)$.
But $\langle\varphi^{(n)},v_n\rangle=n$ thus $\sup_{v\in C,\varphi\in B}|\langle\varphi,v\rangle|=\infty$ giving the desired contradiction.
\end{proof}

\subsection{Functional analytic properties}
We refer to \cite{Domanski} and \cite[p.~96]{Wengenroth} for (PLN) and (PLS) spaces.
\begin{proposition}\label{FApropertiesSupport}
With the assumptions of the previous lemma, we have identifications of strong and inductive topologies $(\mathcal{E}_i(U,\mathcal{O};E^*))_b'=\mathcal{D}_i'(U,\mathcal{O}^o;E)$ and $(\mathcal{D}_i'(U,\mathcal{C};E))_b'=\mathcal{E}_i(U,\mathcal{C}^o;E^*)$ so that $\mathcal{E}_i(U,\mathcal{O}^{oo};E^*),\mathcal{D}_i'(U,\mathcal{C}^{oo};E)$ are always reflexive. 

 Moreover, 
  $\mathcal{D}_i'(U,\mathcal{C}^{oo};E)=\mathcal{D}_p'(U;\mathcal{C}^o;E)$ (resp. $\mathcal{E}_i(U,\mathcal{O}^{oo};E^*)=\mathcal{E}_p(U,\mathcal{O}^o;E^*)$.) so that they are ultrabornological complete nuclear 
  spaces.  If $\mathcal{C}^o$ is countably generated $\mathcal{D}_i'(U,\mathcal{C}^{oo};E)$ is a (PLN) space.
  
  We will write them $\mathcal{D}'(U,\mathcal{C}^{oo};E)$ and $\mathcal{E}(U,\mathcal{O}^{oo};E^*)$ (without mentioning the two agreeing topologies).


\end{proposition}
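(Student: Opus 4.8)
The plan is to promote the purely linear duality identities of Lemma~\ref{DualitySupport} to topological identities of strong duals, and then to read off reflexivity by iterating them and the ``moreover'' part by recognizing the same strong dual computed in two ways. I begin with $(\mathcal{E}_i(U,\mathcal{O};E^*))_b'=\mathcal{D}_i'(U,\mathcal{O}^o;E)$: the underlying vector spaces already coincide by the lemma, so only the topologies must be compared. One inclusion I would do by hand. Every strong seminorm $p_B(u)=\sup_{f\in B}|\langle u,f\rangle|$, with $B$ bounded in $\mathcal{E}_i$ and hence bounded in a single Fr\'echet step $\mathcal{E}(U,\{P\subset O\};E^*)$, restricts to a continuous seminorm on each piece $\mathcal{D}'(U,\{C\subset F\};E)$: exactly as in the proof of the lemma one has $\langle u,f\rangle=\langle u,f\chi\rangle$ with a fixed $\chi\in\mathcal{D}(U)$ equal to $1$ near the compact $\overline{F\cap O}$, and $\{f\chi:f\in B\}$ is bounded in $\mathcal{D}(U;E^*)$, so $p_B$ is dominated by a strong $\mathcal{D}'(U;E)$-seminorm on the step. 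By the universal property of the inductive limit, $p_B$ is continuous on $\mathcal{D}_i'(U,\mathcal{O}^o;E)$, i.e.\ the strong topology is coarser than the inductive one.

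The reverse inclusion is the crux. Here I would use that $\mathcal{E}_i(U,\mathcal{O};E^*)$ is barrelled (indeed ultrabornological) together with the regularity of the inductive limit $\mathcal{D}_i'$. A base of inductive $0$-neighborhoods consists of absolutely convex $V$ meeting each step in a neighborhood; the polar $V^\circ$ is then equicontinuous, hence strongly bounded in $(\mathcal{D}_i'(U,\mathcal{O}^o;E))_b'$, and the last assertion of Lemma~\ref{DualitySupport} (strongly bounded sets lie in one term of the defining inductive limit) forces $V^\circ$ to be bounded in an $\mathcal{E}_i$-step. Thus $V=V^{\circ\circ}$ contains the strong $0$-neighborhood $(V^\circ)^\circ$, so the inductive topology is coarser too and the two agree; the symmetric computation gives $(\mathcal{D}_i'(U,\mathcal{C};E))_b'=\mathcal{E}_i(U,\mathcal{C}^o;E^*)$. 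The polar case $\mathcal{O}=\mathcal{O}^{oo}$ is the clean one; a general $\mathcal{O}$ reduces to it because the pairing only feels $\mathcal{O}^o$ and the bounded disks of $\mathcal{E}_i(U,\mathcal{O};E^*)$ and of $\mathcal{E}_i(U,\mathcal{O}^{oo};E^*)$ have the same polars in $\mathcal{D}'(U,\mathcal{O}^o;E)$. Reflexivity is then immediate: applying the two identities in turn (with $\mathcal{O}$ replaced by $\mathcal{O}^{oo}$, resp.\ $\mathcal{C}$ by $\mathcal{C}^{oo}$) returns the strong bidual of $\mathcal{E}_i(U,\mathcal{O}^{oo};E^*)$ as itself, and likewise for $\mathcal{D}_i'(U,\mathcal{C}^{oo};E)$; combined with barrelledness (ultrabornological on the smooth side, strong dual of a semireflexive space on the distribution side) this is reflexivity.

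For the ``moreover'' part I would identify $\mathcal{D}_i'(U,\mathcal{C}^{oo};E)$ and $\mathcal{D}_p'(U,\mathcal{C}^o;E)$ as two computations of one and the same strong dual. On the one hand the first identity with $\mathcal{O}=\mathcal{C}^o$ gives $(\mathcal{E}_i(U,\mathcal{C}^o;E^*))_b'=\mathcal{D}_i'(U,\mathcal{C}^{oo};E)$. On the other hand $\mathcal{E}_i(U,\mathcal{C}^o;E^*)=\text{ind}\lim_{O\in\mathcal{C}^o}\mathcal{E}(U,\{P\subset O\};E^*)$ is a regular inductive limit of Fr\'echet spaces, so its strong dual is the projective limit of the strong duals of the steps; each such strong dual is $\mathcal{D}_i'(U,\{O\}^o;E)$ (again the first identity, for the single Fr\'echet space with top element $O$), whence the strong dual equals $\mathcal{D}_p'(U,\mathcal{C}^o;E)$. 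Equating the two descriptions yields $\mathcal{D}_i'(U,\mathcal{C}^{oo};E)=\mathcal{D}_p'(U,\mathcal{C}^o;E)$, and symmetrically $\mathcal{E}_i(U,\mathcal{O}^{oo};E^*)=\mathcal{E}_p(U,\mathcal{O}^o;E^*)$. Completeness and nuclearity are read off the projective side as separated projective limits of the complete nuclear spaces $\mathcal{D}_i'(U,\{O\}^o;E)$; and when $\mathcal{C}^o$ is countably generated this is a cofinal countable projective limit of the $(DFN)$ spaces $\mathcal{D}_i'(U,\{O\}^o;E)$, i.e.\ a $(PLN)$ space by definition.

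I expect ultrabornologicality of the distribution spaces to be the main obstacle, since $\mathcal{D}_i'(U,\mathcal{C}^{oo};E)$ presents itself as a strong dual and as a projective limit, and neither projective limits nor closed subspaces of $\mathcal{D}'(U;E)$ preserve that property. I would instead use the other, inductive description $\mathcal{D}_i'(U,\mathcal{C}^{oo};E)=\text{ind}\lim_{F\in\mathcal{C}^{oo}}\mathcal{D}'(U,\{C\subset F\};E)$ together with the fact that a locally convex inductive limit of ultrabornological spaces is ultrabornological; the delicate point to check is that each step $\mathcal{D}'(U,\{C\subset F\};E)$ is itself ultrabornological, which I would establish by exhibiting it, via Hörmander's support theorem and the first duality identity, as the strong dual of a nuclear Fr\'echet space, hence a $(DFN)$ space. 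The remaining care concerns the non-countably-generated case, where every appeal to ``regular inductive limit'' and to the strong dual commuting with the inductive limit must be justified by the regularity of these non-metrizable inductive limits; modulo that, $\mathcal{E}_i(U,\mathcal{O}^{oo};E^*)$ is already ultrabornological from its strict-inductive-limit-of-Fr\'echet structure, which completes the package ``ultrabornological complete nuclear''.
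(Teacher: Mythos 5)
Your architecture matches the paper's at the top level: establish that the strong and inductive topologies agree (one direction by a seminorm estimate, the other by duality), then use regularity of the inductive limits to rewrite strong duals as projective limits, and read off completeness, nuclearity and the (PLN) property from the projective side. Two steps, however, do not go through as written. The main one is ultrabornologicality. Your inductive-limit route requires each step $\mathcal{D}'(U,\{C\in \mathcal{F},\, C\subset F\};E)$ to be ultrabornological, and you propose to exhibit it as the strong dual of a nuclear Fr\'echet space, i.e.\ a (DFN) space. For non-compact $F$ this fails: the step carries the topology induced by the strong topology of $\mathcal{D}'(U;E)$, so it is a closed subspace of a projective limit of (DF) spaces and is not (DF) at all (already $F$ with $Int(F)$ exhausting $U$ essentially gives back $\mathcal{D}'(U;E)$), and its natural predual $\mathcal{E}_i(U,\{F\}^o;E^*)$ is a countable strict inductive limit of Fr\'echet spaces, not a Fr\'echet space. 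Since closed subspaces of ultrabornological spaces need not be ultrabornological, your argument for the steps collapses and the inductive-limit argument with it. The theorem you are missing --- and which the paper uses --- is Schwartz's: the strong dual of a \emph{complete Schwartz} locally convex space is ultrabornological (\cite[p.15]{HogbeNlendMoscatelli}). Once you have $\mathcal{D}_i'(U,\mathcal{C}^{oo};E)=(\mathcal{E}_i(U,\mathcal{C}^o;E^*))'_b$ together with $\mathcal{E}_i(U,\mathcal{C}^o;E^*)=\mathcal{E}_p(U,\mathcal{C}^{oo};E^*)$ complete and nuclear, ultrabornologicality is immediate; the strong-dual presentation you dismiss as unhelpful is precisely the one that works.

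The secondary gap is in the ``by hand'' direction of the first identity. You take $B$ bounded in $\mathcal{E}_i(U,\mathcal{O};E^*)$ and assert it is bounded in a single Fr\'echet step; that is regularity of an inductive limit over the possibly uncountable family $\mathcal{O}$, which is exactly what is not available in general (regularity is only known for countably generated families). The device in the paper is to fix $F\in\mathcal{O}^o$, note $\mathcal{O}\subset\{F\}^o$, push $B$ through the continuous map $\mathcal{E}_i(U,\mathcal{O};E^*)\to\mathcal{E}_i(U,\{F\}^o;E^*)$, and use that $\{F\}^o$ \emph{is} countably generated (example \ref{countgenclosed}), so that this second inductive limit is regular and $B$ lands in a single step there --- which is all the estimate with a single cut-off $f$ needs. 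You flag the regularity issue at the end but do not supply this reduction, and without it the first direction is not proved for general $\mathcal{O}$. By contrast, your bipolar argument for the reverse inclusion (the paper instead uses the Mackey--Arens chain) and your derivation of $\mathcal{D}_i'(U,\mathcal{C}^{oo};E)=\mathcal{D}_p'(U,\mathcal{C}^o;E)$ as two computations of one strong dual of a regular inductive limit are sound and essentially the paper's.
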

\begin{proof}
Note that once the duality result will be proved $(\mathcal{E}_i(U,\mathcal{C}^o;E^*))_b'=\mathcal{D}_i'(U,\mathcal{C}^{oo};E)$ will be a strong (and even Mackey) dual of a countable inductive limit of Fréchet nuclear spaces, thus a countable projective limit of their Mackey=strong duals which are (DFN) spaces, namely, we will have the (PLN) property by definition, in the stated case. 

We show that the identity map of $\mathcal{D}'(U,\mathcal{O}^o;E)$ with the inductive limit topology to $(\mathcal{E}_i(U,\mathcal{O};E^*))_b'$, i.e. itself with the strong topology is continuous. Thus take $B$ bounded in $(\mathcal{E}_i(U,\mathcal{O};E^*))$, it suffices to show $\sup_{\varphi\in B}|\langle u, \varphi\rangle|$ is bounded on each $\mathcal{D}'(U,\{C\in \mathcal{F}, C\subset F\};E),$ for $F\in \mathcal{O}^o$ by a seminorm of $\mathcal{D}'(U;E^*)$ since we put on this space the induced topology.
But $\mathcal{O}\subset \mathcal{O}^{oo}\subset \{F\}^o$, thus $\mathcal{E}_i(U,\mathcal{O};E^*)\to \mathcal{E}_i(U,\{F\}^o;E^*)$ is continuous, so that $B$ is bounded in the latter. By example 
\ref{countgenclosed}, $\{F\}^o$ is countably generated, so that the inductive limit defining $\mathcal{E}_i(U,\{F\}^o;E^*)$ is regular as we already noted, thus $B$ is bounded in some $\mathcal{E}(U,\{P\in \mathcal{T}, P\subset F^c\cup V\};E^*)$ with some $V\in \mathcal{B}\cap\mathcal{T}$, and especially in $\mathcal{E}(U;E^*).$ Take $W= F^c\cup V$ so that $\overline{W\cap F} \subset \overline{V}$ is compact, and we are in the situation of the beginning of the proof of lemma \ref{DualitySupport}, and we can take $f$, uniformly for all $\varphi\in B.$

Thus $\sup_{\varphi\in B}|\langle u, \varphi\rangle|\leq \sup_{\psi\in fB}|\langle u,\psi\rangle|$ which is a seminorm in $\mathcal{D}'(U;E)$ since $fB$ is bounded in $\mathcal{D}(U;E^*)$. This concludes to the stated continuity and thus the inductive limit topology on $\mathcal{D}'(U,\mathcal{O}^o)$ is stronger than the strong topology, which is stronger than Mackey, which is by lemma \ref{DualitySupport}, stronger than the inductive limit topology, and thus they all coincide.

One shows similarly that the identity map of $\mathcal{E}(U,\mathcal{C}^o;E^*)$ with the inductive limit topology to $(\mathcal{D}'_i(U,\mathcal{C};E))_b'$, i.e. itself with the strong topology is continuous and thus that both topologies coincide. 


Considering now the polar case, by identifying the strong topologies in the dual, we saw that the bounded sets are the same in 
$\mathcal{D}_i'(U,\mathcal{O}^o;E),(\mathcal{E}_i(U,\mathcal{O};E^*))'_b$, thus from lemma \ref{DualitySupport}, $\mathcal{D}_{i}'(U,\mathcal{O}^o)$ is a regular inductive limit so that we can even write another equivalent regular inductive limit description since $\mathcal{K}\subset \mathcal{O}^o$ :
$$\mathcal{D}_{i}'(U,\mathcal{O}^o)=\text{ind}\lim_{C\in \mathcal{O}^{o}} (\mathcal{D}_i'(U,\{F\in \mathcal{F}, F\subset C\}\cup \mathcal{K};E)).$$

By a general result on regular inductive limits (see e.g. the discussion of \cite[p292-293]{Kothe} or \cite[p57]{Bierstedt}) on deduces :\begin{align*}(\mathcal{D}_{i}'(U,\mathcal{O}^o;E))'_b&=\text{proj}\lim_{C\in \mathcal{O}^{o}} (\mathcal{D}_i'(U,\{F\in \mathcal{F}, F\subset C\}\cup \mathcal{K};E))'_b)\\&=\text{proj}\lim_{C\in \mathcal{O}^{o}} (\mathcal{E}_i(U,\{C\}^o;E^*))=\mathcal{E}_p(U,\mathcal{O}^{o};E^*)\end{align*}
with the second equality coming from the first part of the proof, applicable since $\{F\in \mathcal{F}, F\subset C\}\cup \mathcal{K}$ is stable by subset and contain compact sets.
Likewise, from the coincidence of bounded sets in $\mathcal{E}_i(U,\mathcal{C}^o;E^*),(\mathcal{D}'_i(U,\mathcal{C};E))'_b$, the inductive limit $\mathcal{E}_i(U,\mathcal{C}^o;E^*)$ is regular and~:
$$(\mathcal{E}_{i}'(U,\mathcal{C}^o,E^*
))'_b
=\text{proj}\lim_{O\in \mathcal{C}^{o}} (\mathcal{E}_i(U,\{O\}^o;E))=\mathcal{D}'_p(U;\mathcal{C}^{o};E)$$
For the functional analytic properties, we already saw the projective limits were complete and nuclear, their strong dual are thus ultrabornological \cite[p15]{HogbeNlendMoscatelli}, and barrelled as strong dual of semi-reflexive spaces  \cite[$\S 23.3.(4)$]{Kothe}.
\end{proof}

\subsection{Continuous dense injections : openly generated and enlargeable support conditions}
\begin{remark}\label{opengen}
The asymmetry between the support condition on distributions and functions in this section would be technically annoying for us when we interpolate between functions and distributions by putting wave front set conditions. Instead of developing vanishing condition for distribution similar to those appearing in the duality above for functions, we restrict again to a class of $\mathcal{C}$ where the problem won't appear and that will be sufficient for our purposes.  Let $\mathscr{O}_{\mathcal{C}}=\{U=Int(C) : C\in\mathcal{C}\}, \mathscr{C}_{\mathcal{O}}=\{C=\overline{U}:  U\in \mathcal{O}\}.$ We say a polar family of closed cone is \emph{openly generated} if $\mathcal{C}= (\mathscr{C}_{\mathscr{O}_{\mathcal{C}}})^{oo}.$\footnote{Note that a countably generated family $\mathcal{C}=\{F_i\}^{oo}$ is openly generated if 
 it is countably generated by regular closed sets, i.e. if $F_i=\overline{Int(F_i)}$. Indeed, if $F_i$ verifies this property $F_i\in\mathscr{C}_{\mathscr{O}_{\mathcal{C}}}$ i.e. $(\mathscr{C}_{\mathscr{O}_{\mathcal{C}}})^{oo}\supset \mathcal{C}$ and this is the only non obvious inclusion.
} Now if $C\in \mathscr{C}_{\mathscr{O}_{\mathcal{C}}}$ and $W$ open, if $\overline{W\cap C}$ is not compact, there is a sequence $x_n\in  W\cap C$ converging outside of $U$ maybe to infinity and since $C=\overline{Int(C)}$ by assumption (and a standard result namely $C\mapsto\overline{Int(C)}$ is involutive), then one can find $y_n\in W\cap Int(C)$ of the same kind and thus $\overline{W\cap Int(C)}$ is not compact. As a consequence, $\mathcal{C}^o=\{W\in \mathcal{T}:\ \forall U\in \mathscr{O}_{\mathcal{C}} ,\overline{W\cap U}\in \mathcal{K}\},$ when $\mathcal{C}$ is \emph{openly generated}. But if $W\in \mathcal{C}^o$ and since $ \overline{W}\cap U\subset \overline{W\cap U}$ when $U$ is open, we have $\overline{\overline{W}\cap U}=\overline{W\cap U},$ and thus :
$$\mathcal{C}^o=\{W\in \mathcal{T}\ :\ \forall U\in \mathscr{O}_{\mathcal{C}} ,\overline{\overline{W}\cap U}\in \mathcal{K}\}=\{W\in \mathcal{T}, \overline{W}\in (\mathscr{O}_{\mathcal{C}})^o\}$$
from which one deduces half of the next : 
\end{remark}
\begin{lemma}\label{opengenlemma} If $\mathcal{C}$ is \emph{openly generated}, so is $(\mathscr{O}_{\mathcal{C}})^o$ and we have :
$$\mathcal{E}(U,\mathcal{C}^o;E^*)= \mathcal{E}(U;E^*)\cap\mathcal{D}'(U,(\mathscr{O}_{\mathcal{C}})^o;E^*)\ \ \text{and}\ \ \mathcal{C}=(\mathscr{O}_{(\mathscr{O}_{\mathcal{C}})^o})^o.$$

Finally, we have continuous embeddings $\mathcal{E}(U,\mathcal{C}^o;E^*)\to \mathcal{D}'(U,(\mathscr{O}_{\mathcal{C}})^o;E^*),\mathcal{E}(U,(\mathscr{O}_{\mathcal{C}})^{oo};E^*)\to \mathcal{D}'(U,\mathcal{C};E^*).$
\end{lemma}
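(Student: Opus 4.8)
The plan is to fix the abbreviation $\mathcal{G}:=(\mathscr{O}_{\mathcal{C}})^o$ (a polar family, stable under closed subsets) and to run everything off the identity proved in Remark \ref{opengen}, namely $\mathcal{C}^o=\{W\in\mathcal{T}:\overline{W}\in\mathcal{G}\}$, together with the elementary transfer fact isolated in its proof: for $S$ regular closed and $V$ open, compactness of $\overline{V\cap Int(S)}$ forces (hence is equivalent to) compactness of $\overline{V\cap S}$. First I would dispatch the equality $\mathcal{E}(U,\mathcal{C}^o;E^*)=\mathcal{E}(U;E^*)\cap\mathcal{D}'(U,\mathcal{G};E^*)$, which is the ``half'' already announced after the Remark: for $\varphi\in\mathcal{E}(U;E^*)$ the open set $W=(Ker^\infty(\varphi))^c$ satisfies $\overline{W}=\text{supp}(\varphi)$, so $\varphi\in\mathcal{E}(U,\mathcal{C}^o;E^*)$ means $W\in\mathcal{C}^o$, which by the Remark is exactly $\text{supp}(\varphi)=\overline{W}\in\mathcal{G}$, i.e. $\varphi\in\mathcal{D}'(U,\mathcal{G};E^*)$.

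For the equality $\mathcal{C}=(\mathscr{O}_{\mathcal{G}})^o$ I would argue purely at the level of support families. Since $\mathcal{C}$ is polar, $\mathcal{C}=\mathcal{C}^{oo}=(\mathcal{C}^o)^o=(\{W:\overline{W}\in\mathcal{G}\})^o$ by the Remark, so it suffices to prove $(\{W:\overline{W}\in\mathcal{G}\})^o=(\mathscr{O}_{\mathcal{G}})^o$. One inclusion comes from $\mathscr{O}_{\mathcal{G}}\subset\{W:\overline{W}\in\mathcal{G}\}$, valid because $\overline{Int(D)}\subset D\in\mathcal{G}$ and $\mathcal{G}$ is stable under closed subsets; the other comes from the fact that any open $W$ with $\overline{W}\in\mathcal{G}$ satisfies $W\subset Int(\overline{W})\in\mathscr{O}_{\mathcal{G}}$, so that $\overline{C\cap W}\subset\overline{C\cap Int(\overline{W})}$ is compact for $C\in(\mathscr{O}_{\mathcal{G}})^o$. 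Taking polars gives the claim.

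The heart of the statement is that $\mathcal{G}$ is again \emph{openly generated}, and here I would resist the temptation to enclose each $D\in\mathcal{G}$ in a regular closed member of $\mathcal{G}$: the naive ``fatten $D$'' constructions stumble on the whiskers $D\setminus\overline{Int(D)}$ and on the regularization $\overline{Int(R)}$ dropping points of $D$, and this is precisely where open generation of $\mathcal{C}$ is indispensable (without it one builds genuine counterexamples where the regular closed members of $\mathcal{G}$ miss a whisker). Instead I would prove the Remark's identity for $\mathcal{G}$ itself, $(\star)\ \mathcal{G}^o=\{W:\overline{W}\in\mathcal{C}\}$, and read off open generation. For $\supset$ in $(\star)$: if $\overline{W}\in\mathcal{C}$ then $Int(\overline{W})\in\mathscr{O}_{\mathcal{C}}$, so $\overline{D\cap Int(\overline{W})}$ is compact for $D\in\mathcal{G}$, and the transfer fact with $S=\overline{W}$ upgrades this to $\overline{D\cap\overline{W}}$ compact, whence $\overline{W\cap D}$ is compact. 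For $\subset$ (the one step genuinely consuming the hypothesis, and the main obstacle): given $W\in\mathcal{G}^o$, open generation gives $\mathcal{C}=(\mathscr{C}_{\mathscr{O}_{\mathcal{C}}})^{oo}$ and $\mathcal{C}^o=(\mathscr{C}_{\mathscr{O}_{\mathcal{C}}})^o$, so I must check $\overline{\overline{W}\cap W'}$ compact for each $W'\in\mathcal{C}^o$; by the Remark $\overline{W'}\in\mathcal{G}$, hence $\overline{W\cap\overline{W'}}$ is compact, and the trivial chain $\overline{\overline{W}\cap W'}\subset\overline{W\cap\overline{W'}}$ (legitimate since $W'$ is open) finishes it, giving $\overline{W}\in\mathcal{C}$. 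Finally, observing that $\mathscr{C}_{\mathscr{O}_{\mathcal{G}}}=\{\overline{W'}:W'\in\mathcal{C}^o\}$ is exactly the set of regular closed members of $\mathcal{G}$, the same chain shows $(\mathscr{C}_{\mathscr{O}_{\mathcal{G}}})^o\subset\mathcal{G}^o$; the reverse inclusion is automatic from $\mathscr{C}_{\mathscr{O}_{\mathcal{G}}}\subset\mathcal{G}$, so $(\mathscr{C}_{\mathscr{O}_{\mathcal{G}}})^{oo}=\mathcal{G}^{oo}=\mathcal{G}$, i.e. $\mathcal{G}$ is openly generated.

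For the two continuous embeddings, the first, $\mathcal{E}(U,\mathcal{C}^o;E^*)\to\mathcal{D}'(U,\mathcal{G};E^*)$, is the set inclusion identified in the first paragraph, and its continuity holds because on smooth functions the $\mathcal{E}$-topology dominates the seminorms induced from $\mathcal{D}'(U;E^*)$ while the two support conditions correspond by that equality, the target carrying the well-behaved topology of Proposition \ref{FApropertiesSupport}. The second embedding I would obtain by bootstrapping: apply the first embedding to the openly generated family $\mathcal{G}$ in place of $\mathcal{C}$, yielding $\mathcal{E}(U,\mathcal{G}^o;E^*)\to\mathcal{D}'(U,(\mathscr{O}_{\mathcal{G}})^o;E^*)$, and rewrite the endpoints via $\mathcal{G}^o=(\mathscr{O}_{\mathcal{C}})^{oo}$ and the already proven $(\mathscr{O}_{\mathcal{G}})^o=\mathcal{C}$ to read exactly $\mathcal{E}(U,(\mathscr{O}_{\mathcal{C}})^{oo};E^*)\to\mathcal{D}'(U,\mathcal{C};E^*)$.
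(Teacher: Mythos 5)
Your overall route is essentially the paper's: everything is driven by the Remark's identity $\mathcal{C}^o=\{W\in\mathcal{T}:\overline{W}\in(\mathscr{O}_{\mathcal{C}})^o\}$, the equality $\mathcal{C}=(\mathscr{O}_{\mathcal{G}})^o$ is obtained by the two polar inclusions via $\overline{Int(D)}\subset D$ and $W\subset Int(\overline{W})$, open generation of $\mathcal{G}$ is obtained by comparing $(\mathscr{C}_{\mathscr{O}_{\mathcal{G}}})^o$ with $\mathcal{G}^o$, and the second embedding is bootstrapped from the first by substituting $\mathcal{G}$ for $\mathcal{C}$ — all exactly as in the paper. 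Your packaging through the single identity $(\star)\ \mathcal{G}^o=\{W:\overline{W}\in\mathcal{C}\}$ is a slightly cleaner organization of the same computation, and your derivation of the set equality $\mathcal{E}(U,\mathcal{C}^o;E^*)=\mathcal{E}(U;E^*)\cap\mathcal{D}'(U,\mathcal{G};E^*)$ directly from $\overline{(Ker^\infty\varphi)^c}=\mathrm{supp}(\varphi)$ and the Remark is more direct than the paper's (which gets one inclusion from the continuous embedding). The continuity of the embeddings is only sketched; the paper's proof actually does the inductive/projective limit bookkeeping, the key point being that $\overline{O}\in\{W\}^o$ for $O\in\mathcal{C}^o$ and $W\in\mathscr{O}_{\mathcal{C}}$, so that each fixed-support piece of the source maps into the correct piece of the target with induced topologies.

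There is one genuinely wrong step, though it does not sink the lemma. In the $\supset$ direction of $(\star)$ you invoke the ``transfer fact'' with $S=\overline{W}$ and the role of the open set played by $D\in\mathcal{G}$. That fact needs the other set to be \emph{open}: its proof replaces an escaping point $x_n$ of $V\cap S$ by a nearby point of $Int(S)$, which stays in $V$ only because $V$ is open. With $D$ closed the intermediate claim ``$\overline{D\cap\overline{W}}$ compact'' is simply false in general: take $\mathcal{C}=\{H\}^{oo}$ with $H$ the closed upper half-plane in $\R^2$ (openly generated, being generated by a regular closed set); then the $x$-axis $L$ lies in $\mathcal{G}=(\mathscr{O}_{\mathcal{C}})^o$ since $L\cap Int(H\cup K)\subset K$ for every compact $K$, yet $\overline{L\cap H}=L$ is not compact while $\overline{L\cap Int(H)}=\emptyset$ is. Fortunately what you actually need is only $\overline{W\cap D}$ compact, and that follows in one line from $W\subset Int(\overline{W})$ and $\overline{D\cap Int(\overline{W})}$ compact, with no transfer at all; so replace the appeal to the transfer fact by this direct inclusion and the direction $\supset$ of $(\star)$, and with it the rest of your argument, is correct.
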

\begin{proof}Indeed, for the last equality, if $U\in \mathscr{O}_{(\mathscr{O}_{\mathcal{C}})^o},$ $U=Int(C)$ with $C\in (\mathscr{O}_{\mathcal{C}})^o$, then $\overline{U}\subset C$ since $C$ closed thus since $(\mathscr{O}_{\mathcal{C}})^o$ is polar thus stable by subset $\overline{U}\in (\mathscr{O}_{\mathcal{C}})^o$ and thus $U\in \mathcal{C}^o,$ by the previously established equation and by polarity $\mathcal{C}\subset (\mathscr{O}_{(\mathscr{O}_{\mathcal{C}})^o})^o.$
Conversely if $W\in \mathcal{C}^o$ then  $\overline{W}\in (\mathscr{O}_{\mathcal{C}})^o$ and $W\subset Int(\overline{W})\in \mathscr{O}_{(\mathscr{O}_{\mathcal{C}})^o}$, thus $W\in (\mathscr{O}_{(\mathscr{O}_{\mathcal{C}})^o})^{oo}$ and $(\mathscr{O}_{(\mathscr{O}_{\mathcal{C}})^o})^o\subset \mathcal{C}^{oo}=\mathcal{C}.$ It remains to check $(\mathscr{O}_{\mathcal{C}})^o$ is openly generated, if $C\in \mathscr{C}_{\mathscr{O}_{(\mathscr{O}_{\mathcal{C}})^o}},$ $C=\overline{W}$ with $W\in \mathscr{O}_{(\mathscr{O}_{\mathcal{C}})^o}\subset \mathcal{C}^o,$ 
and for $U\in \mathscr{O}_{\mathcal{C}}$ thus $U=Int(D),$ $D\in \mathcal{C}$, then $\overline{\overline{W}\cap U}=\overline{{W}\cap U}\subset \overline{{W}\cap D}$ which is compact by definition, thus $C\in (\mathscr{O}_{\mathcal{C}})^o$, i.e. $\mathscr{C}_{\mathscr{O}_{(\mathscr{O}_{\mathcal{C}})^o}}\subset (\mathscr{O}_{\mathcal{C}})^o,$ and $(\mathscr{C}_{\mathscr{O}_{(\mathscr{O}_{\mathcal{C}})^o}})^{oo}\subset (\mathscr{O}_{\mathcal{C}})^o$. Conversely take $V\in (\mathscr{C}_{\mathscr{O}_{(\mathscr{O}_{\mathcal{C}})^o}})^o$, so that $\overline{V\cap C}=\overline{V\cap W}=\overline{\overline{V}\cap W}$ is compact for any $C$ as above, i.e. $\overline{V}\in (\mathscr{O}_{(\mathscr{O}_{\mathcal{C}})^o})^o=\mathcal{C}$. Thus $V\subset Int(\overline{V})\in \mathscr{O}_{\mathcal{C}}$ and any $F\in (\mathscr{O}_{\mathcal{C}})^o$ satisfies $\overline{F\cap Int(\overline{V})}\supset \overline{F\cap V}$ compact, i.e. $F\in (\mathscr{C}_{\mathscr{O}_{(\mathscr{O}_{\mathcal{C}})^o}})^{oo}$ which concludes.

For the continuous embedding, since we checked that the projective limit and injective limit are the same, we choose to prove $\mathcal{E}_i(U,\mathcal{C}^o;E^*)\to \mathcal{D}_p'(U,\mathscr{O}_{\mathcal{C}};E^*),$ is continuous. From general properties of inductive and projective limits, it suffices to show for $O\in\mathcal{C}^o,W=Int(F)\in \mathscr{O}_{\mathcal{C}}, F\in \mathcal{C}$ the continuity of  $\mathcal{E}(U,\{P\in \mathcal{T}, P\subset O\};E^*)\to \mathcal{D}_i'(U,\{W\}^o;E^*).$ But since $\overline{\overline{O}\cap W}=\overline{O\cap W}\subset \overline{O\cap F}$ is compact, we have $\overline{O}\in  \{W\}^o$ and an embedding : $$\mathcal{E}(U,\{P\in \mathcal{T}, P\subset O\};E^*)\to \mathcal{D}'(U,\{C\in \mathcal{F}, C\subset \overline{O}\};E^*)\to \mathcal{D}_i'(U,\{W\}^o;E^*)$$ and since the topologies of the first two  spaces are induced by  $\mathcal{E}(U;E^*),\mathcal{D}'(U;E^*),$ the continuity follows. By the equality, the second stated continuous map is a special case with $\mathcal{C}$ replaced by $(\mathscr{O}_{\mathcal{C}})^{o}$. From this also follows $\mathcal{E}(U,\mathcal{C}^o;E^*)\subset \mathcal{E}(U;E^*)\cap\mathcal{D}'(U,(\mathscr{O}_{\mathcal{C}})^o;E^*)$. For the reversed inclusion, note that if $\varphi\in \mathcal{E}(U;E^*)\cap\mathcal{D}'(U,(\mathscr{O}_{\mathcal{C}})^o;E^*)$, then $\overline{Ker^\infty(\varphi)^c}\in (\mathscr{O}_{\mathcal{C}})^o$, and from  $(Ker^\infty(\varphi))^c\subset Int(\overline{Ker^\infty(\varphi)^c})$ one deduces $(Ker^\infty(\varphi))^c\in (\mathscr{O}_{(\mathscr{O}_{\mathcal{C}})^o})^{oo}=\mathcal{C}^o$ as expected.
\end{proof}
{Our next task is to obtain some density of smooth functions with controlled support in a convenient way. We start by a first attempt we call uniform enlargeability, which depends strongly on a riemannian metric and won't be such convenient to give interesting examples on globally hyperbolic spacetimes. We then adapt it with a notion of enlargeability to get a more local version  that will enable to reach our motivating example \ref{enlargeable3} and various stability properties.}

\begin{remark}\label{uniformlyenlargeablermk}
Even with an openly generated family, the support condition could constrain some wave front sets and thus interact with the computations of duals in a complicating way. In the typical examples we are interested in compact support, full support, future or past compact support in globally hyperbolic space time, and products of those conditions on product spaces, these difficulties don't appear. We introduce a condition taking care of suppressing these effects. 
Recall we fixed a complete  Riemannian metric $D$ (with compact closed balls). Then  the uniform epsilon enlargement of a closed set $C$, is the closed set $C_{u \epsilon}=\{x\in U,\ \exists y\in C,\ D(x,y)\leq \min(\epsilon,D(y,U^c)(1-e^{-\epsilon})/2)\}$ (closedness follows from compactness of balls). Note also that $C_{u \epsilon}$ is compact when $C$ is.
We say a family of closed sets is \emph{uniformly enlargeable} (for the metric $D$) if $$(C\in \mathcal{C} \Rightarrow \exists \epsilon>0,\ C_{u\epsilon}\in \mathcal{C}).$$
Of course a polar uniformly enlargeable family $\mathcal{C}$ is openly generated  since $\overline{Int(C_{u\epsilon})}=C_{u\epsilon}\in \mathscr{C}_{\mathscr{O}_{\mathcal{C}}}$ if $\epsilon>0$.
Note also that, using geodesics, one checks $C_{u(2\epsilon)}\subset ((C_{u\epsilon})_{u\epsilon})_{u\epsilon}.$ 

Let us check that if $\mathcal{C}$ is uniformly enlargeable (not necessarily polar)
 then  $(\mathscr{O}_{\mathcal{C}})^o$ is also uniformly enlargeable and thus so is $\mathcal{C}^{oo}$. Take $V\in (\mathscr{O}_{\mathcal{C}})^o$, thus for any $C\in \mathcal{C}, \epsilon>0$ with $C_{2\epsilon}\in \mathcal{C}$ we have $\overline{C_{\epsilon}\cap V}\subset \overline{Int(C_{2\epsilon})\cap V}$ is compact. Note that if  $x\in C\cap V_{u\epsilon}$, we have $y\in V$, with $D(x,y)\leq \eta=\min(\epsilon,D(y,U^c)(1-e^{-\epsilon})/2)$, and $B(y,\eta)\subset U$ so that for $z\in U^c$, $D(x,z)\geq D(y,z)-D(x,y)\geq  D(y,U^c)-D(y,U^c)(1-e^{-\epsilon})/2=D(y,U^c)(1+e^{-\epsilon})/2$ so that $D(x,U^c)\geq D(y,U^c)(1+e^{-\epsilon})/2$, and finally $D(x,y)\leq \min(\epsilon,2D(x,U^c)(1-e^{-\epsilon})/2(1+e^{-\epsilon}))$ so that for $\epsilon < \ln(3)$, $D(x,y)\leq \min(\epsilon,D(x,U^c)(1-e^{-\delta(\epsilon)})/2)$ with $\delta(\epsilon)=\ln\left(\frac{1+e^{-\epsilon}}{3e^{-\epsilon}-1}\right)\to_{\epsilon\to 0} 0$ so that $y\in C_{\delta(\epsilon)}$ and thus, to sum up, for $\epsilon < \ln(3)$, $\overline{C\cap V_{u\epsilon}}\subset (\overline{C_{u\delta(\epsilon)}\cap V})_{u\epsilon}$ which is again compact for $\epsilon$ small enough (since an $\epsilon$ enlargements of a compact is compact since $D$ has compact closed balls). 
Thus indeed $V_{u\epsilon}\in (\mathscr{O}_{\mathcal{C}})^o$ as expected for $\epsilon$ small enough, implying  that $(\mathscr{O}_{\mathcal{C}})^o$ is uniformly enlargeable.
Moreover, if $F\in \mathcal{C}^{oo}$, $V\in (\mathscr{O}_{\mathcal{C}})^o$, the reasoning above implies $Int(V)\in \mathcal{C}^{o}$ and 
$\overline{F\cap Int(V)}$ compact, i.e. $\mathcal{C}^{oo}\subset (\mathscr{O}_{(\mathscr{O}_{\mathcal{C}})^o})^o.$ Conversely, if $F\in (\mathscr{O}_{(\mathscr{O}_{\mathcal{C}})^o})^o$ and let $V\in \mathcal{C}^{o}$, then for any $C\in \mathcal{C}$, for $\epsilon$ small enough $\overline{V\cap C_\epsilon}$ and ${C\cap \overline{V}}\subset{C\cap (\overline{V})_\epsilon}\subset (C_{\delta(\epsilon)}\cap \overline{V})_\epsilon\subset (\overline{C_{2\delta(\epsilon)}\cap V})_{2\epsilon}$ are compact, thus $\overline{V}\in (\mathscr{O}_{\mathcal{C}})^o$ and $\overline{V\cap F}\subset \overline{Int(\overline{V})\cap F}$ is compact and thus $F\in \mathcal{C}^{oo}$. We deduce $\mathcal{C}^{oo}= (\mathscr{O}_{(\mathscr{O}_{\mathcal{C}})^o})^o$ and its uniform enlargeability from the first result.

Finally, we also note that for any closed $C$, $\{C_\epsilon, \epsilon>0\}^{oo}$ is a polar uniformly enlargeable family, since it is the bipolar of an uniformly enlargeable family (the family  of closed subsets of $C_\epsilon$).


Indeed one checks that if $\epsilon_2\leq e^{-\epsilon_1}\ln(4)/9,$ we have $(C_{u\epsilon_1})_{u\epsilon_2}\subset C_{u(\epsilon_1+3e^{\epsilon_1}\epsilon_2)}.$ 


\end{remark}

\begin{ex}\label{enlargeable1}
On any $U$, $\mathcal{F},\mathcal{K}=(\mathscr{O}_\mathcal{F})^o$ are uniformly enlargeable, and if $\mathcal{C}_1,\mathcal{C}_2$ are polar uniformly enlargeable families on $U_1\subset M_1,U_2\subset M_2$, so is  $(\mathcal{C}_1\times \mathcal{C}_2)^{oo}$ on $U=U_1\times U_2$, with the product Riemannian metric on $M=M_1\times M_2$. Indeed from example \ref{ProductCase}, if $C\in (\mathcal{C}_1\times \mathcal{C}_2)^{oo},$ $C\subset C_1\times C_2$, $C_i\in \mathcal{C}_i$, and $(C_1\times C_2)_{u\epsilon}\subset (C_1\times U_2)_\epsilon\cap (U_1\times C_2)_{u\epsilon}$ and if $x=(x_1,x_2)\in (C_1\times U_2)_{u\epsilon}$, let $y=(y_1,y_2)\in C_1\times U_2$  as in the definition with  $D(x,y)\leq \min(\epsilon,D(y,U^c)(1-e^{-\epsilon})/2)$ 
note that $D(x,y)
\geq D_1(x_1,y_1)$ and $D(y,U^c)\leq D_1(y_1,U_1^c)$, using $U_1^c\times \{y_2\}\subset U^c$, 
and thus $D_1(x_1,y_1)\leq \min(\epsilon,D_1(y_1,U_1^c)(1-e^{-\epsilon})/2)$ i.e. $(C_1\times U_2)_{u\epsilon}\subset (C_1)_{u\epsilon}\times U_2$, and thus by symmetry we have the concluding inclusion~:$(C_1\times C_2)_{u\epsilon}\subset(C_1)_{u\epsilon}\times(C_2)_{u\epsilon}.$
\end{ex}

If uniform enlargeability is natural if $M=\R^n$, it is not local enough in general to prove it easily for our motivating example \ref{enlargeable3}, we will thus use a variant we now discuss.
 The  $\epsilon$ enlargement, for $\epsilon\in ]0,\infty[^I$ of a closed set $C$ (depending of the metric $D$ and of the locally finite covering $(U_i)$), is the closed set $$C_{\epsilon}=\bigcup_{i\in I} \overline{U_i}\cap 
 (\overline{U_i}\cap C)_{u\epsilon_i}$$  (closedness follows from local finiteness of the covering). Note also that $C_{\epsilon}$ is compact when $C$ is (since then the union is finite by local finiteness of $(\overline{U_i})$). The same is true for $C_{w\epsilon}=\bigcup_{i\in I}  
 (\overline{U_i}\cap C)_{u\epsilon_i}\supset C_{\epsilon}.$
\begin{definition}\label{enlargeable} We say a family of closed sets is \emph{enlargeable} (for the metric $D$) if $$(C\in \mathcal{C} \Rightarrow \exists \epsilon\in ]0,\infty[^I,\ C_{\epsilon}\in \mathcal{C}).$$
\end{definition}
Note that changing the locally finite cover does not change the notion as soon as the collection $\mathcal{C}$ is stable by subsets. Indeed for each term of the new cover  $V_i$ consider  the terms of the previous cover $U_j, j\in J_i$ finite, intersecting its closure i.e. $\overline{V_i}\subset \cup_{j\in J_i}U_j$ thus $\overline{V_i}\cap C\subset\cup_{j\in J_i}U_j\cap C\subset W_i=\cup_{j\in J_i}U_j\cap Int((\overline{U_j}\cap C)_{u\epsilon_i})$ which is open so that we let $\eta_i=d(\overline{V_i}\cap C,W_i^c)/2>0$ by compactness of $\overline{V_i}\cap C$ and thus $(\overline{V_i}\cap C)_{u\eta_i}\subset W_i\subset \cup_{j\in J_i}\overline{U_j}\cap((\overline{U_j}\cap C)_{u\epsilon_i})\subset C_\epsilon$ and $C_{\eta}^{(V)}$, computed with the new covering, is in  $C_\epsilon$ thus in $C_{\eta}^{(V)}\in\mathcal{C}$ by stability by subset. This argument shows basically how more local the notion of enlargeability is, as we will use later.

\begin{proposition}\label{EnlargeableStable}
A polar enlargeable family $\mathcal{C}$ is openly generated. Moreover, for any enlargeable family $\mathcal{C},$ $(\mathscr{O}_{\mathcal{C}})^o$ is also enlargeable and thus so is $\mathcal{C}^{oo}.$
\end{proposition}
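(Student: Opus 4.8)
The plan is to transpose, almost verbatim, the argument given for uniform enlargeability in Remark~\ref{uniformlyenlargeablermk}, the essential new ingredient being that the pieces $(\overline{U_i}\cap C)_{u\epsilon_i}$ defining the local enlargement $C_\epsilon$ are uniform enlargements of the \emph{compact} sets $\overline{U_i}\cap C$, hence compact; this is what localizes every estimate to a single chart where the ambient set has compact closure.

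For the first assertion I would first note that for any $C\in\mathcal{C}$ one has $C\subset Int(C_\epsilon)$ for every $\epsilon\in]0,\infty[^I$: if $x\in C$ then $x\in U_i$ for some chart, and since the uniform enlargement by $\epsilon_i>0$ contains a ball $B(x,\rho)$ with $\rho=\min(\epsilon_i,D(x,U^c)(1-e^{-\epsilon_i})/2)>0$ (because $x\in U$), the open set $B(x,\rho)\cap U_i$ is a neighbourhood of $x$ contained in $\overline{U_i}\cap(\overline{U_i}\cap C)_{u\epsilon_i}\subset C_\epsilon$. Choosing $\epsilon$ with $C_\epsilon\in\mathcal{C}$ by enlargeability, the regular closed set $C':=\overline{Int(C_\epsilon)}$ satisfies $C\subset C'\subset C_\epsilon$, so $C'\in\mathcal{C}$ by stability under closed subsets of the polar family, and, since $C'\in\mathcal{C}$, $C'=\overline{Int(C')}\in\mathscr{C}_{\mathscr{O}_{\mathcal{C}}}$. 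Hence $C\in(\mathscr{C}_{\mathscr{O}_{\mathcal{C}}})^{oo}$, giving $\mathcal{C}\subset(\mathscr{C}_{\mathscr{O}_{\mathcal{C}}})^{oo}$; the reverse inclusion is formal since $\mathscr{C}_{\mathscr{O}_{\mathcal{C}}}\subset\mathcal{C}$ and $\mathcal{C}=\mathcal{C}^{oo}$. Thus $\mathcal{C}$ is openly generated.

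For the second assertion, fix $V\in(\mathscr{O}_{\mathcal{C}})^o$; I must produce a single vector $\eta$ with $V_\eta\in(\mathscr{O}_{\mathcal{C}})^o$, i.e. with $\overline{V_\eta\cap Int(C)}$, a fortiori $\overline{V_\eta\cap C}$, compact for every $C\in\mathcal{C}$. The strategy is, for a given $C$, to run the distance computation of Remark~\ref{uniformlyenlargeablermk} inside each chart: if $x\in\overline{U_i}\cap(\overline{U_i}\cap V)_{u\eta_i}\cap C$ and $y\in\overline{U_i}\cap V$ realises $x\in(\overline{U_i}\cap V)_{u\eta_i}$, then the same estimate relating $D(x,U^c)$ and $D(y,U^c)$ shows, for $\eta_i$ small enough compared with the enlargement parameter $\epsilon^C_i$ of $C$, that $y\in(\overline{U_i}\cap C)_{u\epsilon^C_i}\subset C_{\epsilon^C}$, so $y\in V\cap C_{\epsilon^C}$ and $x$ lies in the $\eta_i$-enlargement of this set. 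Since $C_{\epsilon^C}\in\mathcal{C}$ is regular closed and $V\in(\mathscr{O}_{\mathcal{C}})^o$, the set $\overline{V\cap C_{\epsilon^C}}$ is compact, and an enlargement of a compact set is compact; as every $\overline{U_i}$ is compact this bounds each slice and prevents escape to infinity, yielding $\overline{V_\eta\cap C}\in\mathcal{K}$.

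The main obstacle is precisely the \emph{bookkeeping of the enlargement vectors}: $\eta$ is chosen once and for all for $V$, whereas the admissible $\epsilon^C$ depend on $C$ and may have components tending to $0$ along the charts. The role of the local, vector-valued enlargement, together with the compactness of each $\overline{U_i}$ and the local finiteness of the cover, is exactly to let me tune $\eta_i\to0$ chart by chart so that a single $\eta$ controls the escape to infinity for every $C$ simultaneously; here I expect to invoke the cover-independence of enlargeability noted after Definition~\ref{enlargeable} to refine the estimates locally. A secondary, routine point is the passage from $Int(C_{\epsilon^C})$ to $C_{\epsilon^C}$, which uses that $C_{\epsilon^C}$ is regular closed exactly as in Remark~\ref{opengen}. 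Finally, ``thus so is $\mathcal{C}^{oo}$'' follows by applying the statement just proved to the enlargeable family $(\mathscr{O}_{\mathcal{C}})^o$, giving enlargeability of $(\mathscr{O}_{(\mathscr{O}_{\mathcal{C}})^o})^o$, and then identifying this family with $\mathcal{C}^{oo}$ through the polarity identities of Lemma~\ref{opengenlemma}.
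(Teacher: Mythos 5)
Your overall strategy --- transposing the uniform-enlargeability argument of Remark~\ref{uniformlyenlargeablermk} chart by chart --- is exactly the paper's, and your treatment of the first assertion is correct, in fact slightly more careful than the paper's one-line justification (you prove $C\subset Int(C_\epsilon)$ rather than asserting $\overline{Int(C_\epsilon)}=C_\epsilon$). The problem is the second assertion, and it is precisely the point you flag but do not close. The chain of inclusions only yields $\overline{V_\eta\cap C}\subset(\overline{C_{\delta(\eta)}\cap V})_{w\eta}$, and compactness of $\overline{C_{\delta(\eta)}\cap V}$ is only available when $\delta(\eta)\le\epsilon^C$ componentwise, so that $C_{\delta(\eta)}\subset Int(C_{(2\epsilon^C)})$ with $C_{(2\epsilon^C)}\in\mathcal{C}$ and the hypothesis $V\in(\mathscr{O}_{\mathcal{C}})^o$ applies. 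Membership of $V_\eta$ in $(\mathscr{O}_{\mathcal{C}})^o$ requires one $\eta$ working for \emph{every} $C\in\mathcal{C}$ simultaneously, whereas $\epsilon^C$ varies with $C$ and its components may decay arbitrarily fast along the charts: the quantifiers are in the wrong order. The cover-independence observation after the definition of enlargeability cannot repair this, since it lets you change the covering for a fixed enlargement but does not convert ``$\forall C\,\exists\eta$'' into ``$\exists\eta\,\forall C$''. Finitely many exceptional charts per $C$ are harmless by local finiteness, so for a countably generated $\mathcal{C}$ --- which covers all the applications in the paper, cf.\ Example~\ref{enlargeable3} --- a diagonal choice $\eta_i\le\min_{k\le i}\epsilon^{C_k}_i$ over a cofinal sequence of generators closes the gap; for a general enlargeable family you must supply an argument, and you should be aware that the paper's own proof also fixes $C$ first and then takes ``$\epsilon$ small enough'', so it will not hand you the missing uniformization.

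Two smaller points. Compactness of $\overline{V\cap C_{\epsilon^C}}$ does not follow from regular-closedness of $C_{\epsilon^C}$ alone, since $V$ may meet the boundary of $Int(C_{\epsilon^C})$ in an unbounded set while missing the interior; you need $C_{\epsilon^C}\subset Int(C_{(2\epsilon^C)})$, i.e.\ the factor-of-two device the paper uses when it writes $\overline{C_\epsilon\cap V}\subset\overline{Int(C_{(2\epsilon)})\cap V}$. And the final identification $\mathcal{C}^{oo}=(\mathscr{O}_{(\mathscr{O}_{\mathcal{C}})^o})^o$ is not an instance of Lemma~\ref{opengenlemma} when $\mathcal{C}$ is merely enlargeable but not polar (the lemma's hypothesis ``openly generated'' presupposes polarity, which is exactly what you do not have for the intermediate family); the paper reproves this identity inside the proposition by the same enlargement estimates, and you should do likewise rather than cite the lemma.
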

\begin{proof}
Of course a polar enlargeable family $\mathcal{C}$ is openly generated  since $\overline{Int(C_{\epsilon})}=C_{\epsilon}\in \mathscr{C}_{\mathscr{O}_{\mathcal{C}}}$.

Let us now check that if $\mathcal{C}$ is enlargeable (not necessarily polar) 
 then  $(\mathscr{O}_{\mathcal{C}})^o$ is also enlargeable and thus so is $\mathcal{C}^{oo}$. Take $V\in (\mathscr{O}_{\mathcal{C}})^o$, thus for any $C\in \mathcal{C}, \epsilon_i>0$ with $C_{(2\epsilon)}\in \mathcal{C}$ we have $\overline{C_{\epsilon}\cap V}\subset \overline{Int(C_{(2\epsilon)})\cap V}$ is compact. 
 Let $\epsilon_i < \ln(3)$, using the uniform case above 
 one gets :\begin{align*}\overline{C\cap V_\epsilon}&=\overline{\bigcup_i\overline{(\overline{U_i}\cap C)\cap (\overline{U_i}\cap V)_{u\epsilon_i}}}\\&\subset\overline{\bigcup_i(\overline{(C\cap \overline{U_i})_{u\delta(\epsilon_i)}\cap (\overline{U_i}\cap V)})_{u\epsilon_i}}\\&\subset\overline{\bigcup_i(\overline{C_{\delta(\epsilon)}\cap (\overline{U_i}\cap V)})_{u\epsilon_i}}\\&= (C_{\delta(\epsilon)}\cap V)_{w\epsilon}\end{align*} which is again compact for $\epsilon$ small enough. 
Thus indeed $V_\epsilon\in (\mathscr{O}_{\mathcal{C}})^o$ as expected for $\epsilon$ small enough, implying  that $(\mathscr{O}_{\mathcal{C}})^o$ is enlargeable.
Moreover, if $F\in \mathcal{C}^{oo}$, $V\in (\mathscr{O}_{\mathcal{C}})^o$, the reasoning above implies $Int(V)\in \mathcal{C}^{o}$ and 
$\overline{F\cap Int(V)}$ compact, i.e. $\mathcal{C}^{oo}\subset (\mathscr{O}_{(\mathscr{O}_{\mathcal{C}})^o})^o.$ Conversely, if $F\in (\mathscr{O}_{(\mathscr{O}_{\mathcal{C}})^o})^o$ and let $V\in \mathcal{C}^{o}$, then for any $C\in \mathcal{C}$, for $\epsilon$ small enough, $\overline{V\cap C_\epsilon}$ and ${C\cap \overline{V}}\subset{C\cap (\overline{V})_\epsilon}\subset (C_{\delta(\epsilon)}\cap \overline{V})_{w\epsilon}\subset (\overline{C_{2\delta(\epsilon)}\cap V})_{w(2\epsilon)}$ are compact, thus $\overline{V}\in (\mathscr{O}_{\mathcal{C}})^o$ and $\overline{V\cap F}\subset \overline{Int(\overline{V})\cap F}$ is compact and thus $F\in \mathcal{C}^{oo}$. We deduce $\mathcal{C}^{oo}= (\mathscr{O}_{(\mathscr{O}_{\mathcal{C}})^o})^o$ and its enlargeability from the first result.
\end{proof}
Finally, we also note that for any closed $C$, $\eta\in]0,1[^I$, $\{C_\epsilon, \epsilon<\eta\}^{oo},\{C_{(1-1/n)\eta}, n\in \N\}^{oo}$ are polar enlargeable families, since they are the bipolar of an enlargeable family (the family  of closed subsets of some $C_\epsilon$).


Indeed one deduces from the uniform case that if $\epsilon_2\leq e^{-\epsilon_1}\ln(4)/9,$ we have $(C_{\epsilon_1})_{\epsilon_2}\subset C_{\epsilon_1+3e^{\epsilon_1}\epsilon_2}$ (all the operations applied pointwise in $i\in I$). 



\begin{ex}\label{enlargeable2}
Of course uniformly enlargeable families are enlargeable.
If $\mathcal{C}_1,\mathcal{C}_2$ are polar (uniformly) enlargeable families on $U$, so are  $\mathcal{C}_1\cap\mathcal{C}_2,\mathcal{C}_1\vee\mathcal{C}_2.$ Recall the last notation has been introduced in example \ref{IntersectionCase}, and we use the relation proved there and proposition \ref{EnlargeableStable} to deduce the second case from the easy first case. 

Moreover if $\mathcal{C}_1,\mathcal{C}_2$ are  polar enlargeable families on $U_1\subset M_1,U_2\subset M_2$, so is  $(\mathcal{C}_1\times \mathcal{C}_2)^{oo}$ on $U=U_1\times U_2$, with the product Riemannian metric on $M=M_1\times M_2$. It suffices to consider the product covering say $U_1\times V_j$ indexed by $I_1\times I_2$ and note that for $\eta_i\leq 1,\epsilon_j\leq 1$, if we write $(\epsilon\eta)_{(i,j)}=\epsilon_i\eta_j$ and use mostly the uniform case in the third line, one gets : \begin{align*}(C_1\times C_2)_{\epsilon\eta}&=\cup_{(i,j)\in I_1\times I_2}\overline{U_i\times V_j}\cap (\overline{U_i\times V_j}\cap C_1\times C_2)_{u(\epsilon_{i}\eta_j)}\\&=\cup_{(i,j)\in I_1\times I_2}\overline{U_i}\times \overline{V_j}\cap ( (\overline{U_i}\cap C_1)\times (\overline{V_j}\cap C_2))_{u(\epsilon_{i}\eta_j)} \\&\subset\cup_{(i,j)\in I_1\times I_2}\overline{U_i}\times \overline{V_j}\cap((\overline{U_i}\cap C_1)_{u\epsilon_i\eta_j}\times(\overline{V_j}\cap C_2)_{u\epsilon_i\eta_j})\\&\subset\cup_{(i,j)\in I_1\times I_2}[\overline{U_i}\cap (\overline{U_i}\cap C_1)_{u\epsilon_i}]\times[ \overline{V_j}\cap(\overline{V_j}\cap C_2)_{u\eta_j}]
\\& =(C_1)_\epsilon\times (C_2)_\eta .\end{align*}  
Thus for any $C_i\in\mathcal{C}_i$, for $\epsilon, \eta$ small enough $(C_1\times C_2)_{\epsilon\eta}\in (\mathcal{C}_1\times \mathcal{C}_2)^{oo}.$ This concludes.
\end{ex}

\begin{ex}\label{enlargeable3}
 If $U=M$ is globally hyperbolic for some time oriented Lorentzian metric (and connected 
  $\sigma$-compact as before), with $\mathfrak{C}(M)$ the set of Cauchy surfaces, then the following family of closed sets are all polar enlargeable families (for any complete Riemannian metric): Timelike-compact closed sets $\mathcal{K}_T=\mathcal{K}_F\cap \mathcal{K}_P$, future-compact closed sets $$\mathcal{K}_F=\{F\in\mathcal{F}, \exists \Sigma\in \mathfrak{C}(M),\  F\subset J^-(\Sigma)\}=\{F\in\mathcal{F}, \forall K\in\mathcal{K} ,\  F\cap J^+(K)\in \mathcal{K}\},$$ past-compact closed sets $$\mathcal{K}_P=\{F\in\mathcal{F}, \exists \Sigma\in \mathfrak{C}(M),\  F\subset J^+(\Sigma)\}=\{F\in\mathcal{F}, \forall K\in\mathcal{K} ,\  F\cap J^-(K)\in \mathcal{K}\},$$ spacelike-compact closed sets $$\mathcal{SK}=\{F\in\mathcal{F}, \forall \Sigma\in \mathfrak{C}(M),\  F\cap \Sigma\in \mathcal{K}\}=\{F\in\mathcal{F}, \exists K\in\mathcal{K} ,\  F\subset J^+(K)\cup J^-(K) \},$$ future-spacelike-compact 
   closed sets $$\mathcal{SK}_F=\{F\in\mathcal{F}, \forall \Sigma\in \mathfrak{C}(M),\  F\cap J^+(\Sigma)\in \mathcal{K}\}=\{F\in\mathcal{F}, \exists K\in\mathcal{K} ,\  F\subset J^-(K)\}$$ and  past-spacelike-compact 
    closed sets $$\mathcal{SK}_P=\{F\in\mathcal{F}, \forall \Sigma\in \mathfrak{C}(M),\  F\cap J^-(\Sigma)\in \mathcal{K}\}=\{F\in\mathcal{F}, \exists K\in\mathcal{K} ,\  F\subset J^+(K)\}$$ (cf. \cite{Sanders} for the equivalent definitions). Indeed, from the first definitions above one gets : $\mathcal{SK}_F= (\mathscr{O}_{\mathcal{K}_P})^o$, $\mathcal{SK}_P= (\mathscr{O}_{\mathcal{K}_F})^o.$
 From the second formulas, we also have $\mathcal{K}_P= (\mathscr{O}_{\mathcal{SK}_F})^o$, $\mathcal{K}_F= (\mathscr{O}_{\mathcal{SK}_P})^o.$
 
  Note that by $\sigma$-compactness of $M$, one sees from the second definition that $\mathcal{SK},\mathcal{SK}_F,\mathcal{SK}_P$ are all countably generated and thus from example \ref{IntersectionCase}, one deduces with the notation there that $\mathcal{SK}=\mathcal{SK}_F\vee \mathcal{SK}_P,$ $(\mathscr{O}_{\mathcal{SK}})^{oo}=(\mathscr{O}_{\mathcal{SK}_F})^{oo}\vee (\mathscr{O}_{\mathcal{SK}_P})^{oo}$ (using $(\mathscr{O}_{\mathcal{SK}_P})^{oo}=\{O\in\mathcal{T}, \exists K\in\mathcal{K} ,\  T\subset I^+(K)\}$ and similar variants giving countable generatedness). We also have (using \cite[Th 2.2]{Sanders}) $(\mathscr{O}_{\mathcal{K}_T})^{oo}=(\mathscr{O}_{\mathcal{K}_F})^{oo}\cap (\mathscr{O}_{\mathcal{K}_P})^{oo}$. 
   Thus again from example \ref{IntersectionCase}, $(\mathscr{O}_{\mathcal{SK}})^{o}=(\mathscr{O}_{\mathcal{SK}_F})^{o}\cap (\mathscr{O}_{\mathcal{SK}_P})^{o}=\mathcal{K}_T,(\mathscr{O}_{\mathcal{K}_T})^{o}=\mathcal{SK}_F\vee \mathcal{SK}_P=\mathcal{SK}.$

From these identities, the previous example and proposition \ref{EnlargeableStable}, it only remains to show that $\mathcal{K}_F$ (and the analogous $\mathcal{K}_P$) is enlargeable. Indeed, take $F \subset J^{-}(\Sigma)$ and a Cauchy hypersurface $\Sigma'\subset I^{+}(\Sigma) $ (by global hyperbolicity of this set).
By compactness of $\overline{U_i}\cap F\subset I^{-}(\Sigma')$, this set (when non empty) is at distance $2\epsilon_i$ of $\Sigma'$ (in the empty case one can take $\epsilon_i=1/2$) and thus $(\overline{U_i}\cap F)_{u\epsilon_i}\subset J^{-}(\Sigma')$ thus by union, $F_\epsilon\subset J^{-}(\Sigma')$ so that $F_\epsilon\in\mathcal{K}_F$ as expected.

Note that in this case our proposition \ref{FApropertiesSupport} (and lemma \ref{opengenlemma} for the identification of definitions) recover \cite[Th 4.3]{Sanders}, with a corrected proof. Indeed, without mentioning the sketchy treatment of topologies, it is claimed in the proof there that if one takes $\varphi\in (\mathcal{E}(M,\mathcal{K}_P,E))'$, say for instance the one given from a smooth function non vanishing  on some $J^{-}(K)$ supported on some $J^{-}(K')$, then its support should satisfy $\text{supp}(\varphi)\cap I^+(\Sigma)$ compact for any Cauchy surface $\Sigma$, which is clearly not the case in our example since the intersection is even often not closed and contains $I^+(\Sigma)\cap J^{-}(K)$ ! The issue is that the ``restriction" to $I^+(\Sigma)$ of a distribution does not have the relation it claims to have with the restriction of functions, one has to work with multiplication by smooth functions. Anyways our statement confirms 
the topological properties of the intuitive and true statement of \cite{Sanders}.
\end{ex}

Technically, this condition gives a convenient density result using a standard convolution trick we will detail more later for distributions with wave front set conditions. It suffices to approximate $u$ by $\sum (\rho_i*[(uf_i)\circ{\varphi_i}^{-1}])\circ \varphi_i$ in choosing the support of $\rho_i$ small enough so that the convolution does not leave $\varphi_i(U_i)$ and moreover the support stays in some $C_\epsilon.$ We thus got :

\begin{lemma}\label{enlargeable}
If $\mathcal{C}$ is enlargeable, then the inclusions $\mathcal{E}(U,\mathcal{C}^o;E^*)\to \mathcal{D}'(U,(\mathscr{O}_{\mathcal{C}})^o;E^*)$,

\noindent$\mathcal{E}(U,(\mathscr{O}_{\mathcal{C}})^{oo};E^*)\to \mathcal{D}'(U,\mathcal{C};E^*),$ have sequentially dense images.
\end{lemma}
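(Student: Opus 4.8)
The plan is to prove the two statements by an explicit mollification, and first to reduce the second embedding to the first. By Proposition~\ref{EnlargeableStable} the family $(\mathscr{O}_{\mathcal{C}})^o$ is again enlargeable, and the identities of Lemma~\ref{opengenlemma} (namely $((\mathscr{O}_{\mathcal{C}})^o)^o=(\mathscr{O}_{\mathcal{C}})^{oo}$ and $(\mathscr{O}_{(\mathscr{O}_{\mathcal{C}})^o})^o=\mathcal{C}$) exhibit the map $\mathcal{E}(U,(\mathscr{O}_{\mathcal{C}})^{oo};E^*)\to \mathcal{D}'(U,\mathcal{C};E^*)$ as the \emph{first} embedding applied to the enlargeable family $(\mathscr{O}_{\mathcal{C}})^o$. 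Hence it suffices to prove sequential density of $\mathcal{E}(U,\mathcal{C}^o;E^*)\to \mathcal{D}'(U,(\mathscr{O}_{\mathcal{C}})^o;E^*)$ for an arbitrary enlargeable $\mathcal{C}$. Since the two spaces depend only on $\mathcal{C}^o$ and $(\mathscr{O}_{\mathcal{C}})^o$, I may and will assume $\mathcal{C}$ polar, hence openly generated by Proposition~\ref{EnlargeableStable}; the point of this is to have at my disposal the characterization $\mathcal{E}(U,\mathcal{C}^o;E^*)=\mathcal{E}(U;E^*)\cap\mathcal{D}'(U,(\mathscr{O}_{\mathcal{C}})^o;E^*)$ of Lemma~\ref{opengenlemma}, which is exactly what will let me recognize a smooth approximant of the correct support as lying in the source space.

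Next I would build the approximating sequence following the indicated recipe. Given $u\in\mathcal{D}'(U,(\mathscr{O}_{\mathcal{C}})^o;E^*)$ with $C=\text{supp}(u)\in(\mathscr{O}_{\mathcal{C}})^o$, I first fix, by enlargeability of $(\mathscr{O}_{\mathcal{C}})^o$, a multiradius $\epsilon\in\,]0,\infty[^I$ with $C_\epsilon=\bigcup_{i\in I}\overline{U_i}\cap(\overline{U_i}\cap C)_{u\epsilon_i}\in(\mathscr{O}_{\mathcal{C}})^o$. I then set $u_n=\sum_{i\in I}(\rho_{i,n}*[(uf_i)\circ\varphi_i^{-1}])\circ\varphi_i$, where $\rho_{i,n}$ are mollifiers supported in Euclidean balls of radius $\delta_{i,n}\downarrow 0$, chosen for each fixed $n$ small enough that the convolution stays inside $\varphi_i(U_i)$ (so the push-forward by $\varphi_i$ makes sense) and that the support of the $i$-th term lands in $\overline{U_i}\cap(\overline{U_i}\cap C)_{u\epsilon_i}$, whence $\text{supp}(u_n)\subset C_\epsilon$. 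Because $uf_i$ is compactly supported in $U_i$, each pulled-back piece is a compactly supported distribution on $\varphi_i(U_i)$ whose mollification is smooth, and by local finiteness of the cover the sum is locally finite; thus $u_n\in\mathcal{E}(U;E^*)$. Together with $\text{supp}(u_n)\subset C_\epsilon\in(\mathscr{O}_{\mathcal{C}})^o$ and the Lemma~\ref{opengenlemma} identity recalled above, this gives $u_n\in\mathcal{E}(U,\mathcal{C}^o;E^*)$, so the approximants indeed lie in the source space.

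For convergence I would argue that, since every $u_n$ and $u$ are supported in the fixed set $C_\epsilon$, it suffices to show $u_n\to u$ for the strong topology of $\mathcal{D}'(U;E^*)$: this induces the topology of the step $\mathcal{D}'(U,\{F\in\mathcal{F},F\subset C_\epsilon\};E^*)$, which maps continuously into the target (Proposition~\ref{FApropertiesSupport}). Testing against a bounded set $B\subset\mathcal{D}(U;E)$, such a $B$ is supported in a common compact $K$ meeting only finitely many $U_i$, so only finitely many terms of $u_n-u=\sum_i[(\rho_{i,n}*[(uf_i)\circ\varphi_i^{-1}])\circ\varphi_i-uf_i]$ act on $B$ (here I use $\sum_i f_i=1$). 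For each such term the standard identity $\langle\rho*w,\phi\rangle=\langle w,\check\rho*\phi\rangle$ together with $\check\rho_{i,n}*\phi\to\phi$ uniformly in $\phi\in B$ (mollification converges uniformly, with all derivatives, on the equicontinuous set $B$) yields $\sup_{\phi\in B}|\langle u_n-u,\phi\rangle|\to 0$. This establishes the sequential density in both cases.

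The hard part, and the place where the hypothesis is genuinely used, is the \emph{simultaneous} choice of the radii $\delta_{i,n}$ keeping the whole support of $u_n$ inside one fixed member $C_\epsilon$ of $(\mathscr{O}_{\mathcal{C}})^o$: this is precisely why the chart-local enlargement $C_\epsilon=\bigcup_i\overline{U_i}\cap(\overline{U_i}\cap C)_{u\epsilon_i}$ is the right object, since convolution by a ball of radius $\delta$ in the chart $\varphi_i$ spreads support only within $U_i$ and by a controlled amount, matching exactly the definition of enlargeability. The secondary subtlety is the upgrade from weak to strong convergence, which rests on bounded subsets of $\mathcal{D}$ being supported in a common compact and on the uniformity of mollification there; everything else is routine distribution theory and bookkeeping of the fixed trivializations.
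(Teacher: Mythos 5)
Your proposal is correct and follows exactly the paper's own argument, which is the one-sentence mollification recipe stated just before the lemma ($u\mapsto\sum_i(\rho_i*[(uf_i)\circ\varphi_i^{-1}])\circ\varphi_i$ with chart-local radii small enough to keep the support in some $C_\epsilon$), the details being deferred in the paper to the proof of Lemma \ref{normality}. Your additional bookkeeping — reducing the second inclusion to the first via Proposition \ref{EnlargeableStable} and Lemma \ref{opengenlemma}, and using $\mathcal{E}(U,\mathcal{C}^o;E^*)=\mathcal{E}(U;E^*)\cap\mathcal{D}'(U,(\mathscr{O}_{\mathcal{C}})^o;E^*)$ to place the approximants in the source space — is exactly what the paper leaves implicit.
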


\section{Wave front set and Dual wave front set}
Let $\Gamma$  a closed cone (resp $\Lambda$ an open cone) in $T^*U-\{0\}$, we introduce H\"ormander spaces with conditions on wave front set (cf. e.g. \cite{BrouderDabrowski}) : $\mathcal{D}'_\Gamma(U;E)=\{u\in \mathcal{D}'(U;E)\ | \ WF(u)\subset \Gamma\}$,  
$\mathcal{E}'_\Lambda(U;E)=\{u\in \mathcal{E}'(U;E)\ | \ WF(u)\subset \Lambda\}$. 

Recall that $WF(u)=\bigcup_{i\in I}d\varphi_i^*[\cup_{j=1}^eWF([(\varphi_i^{-1})^*(f_iu)]_j)]\subset \dot{T}^*U,$ i.e. the union of all wave front sets of coordinates  $[(\varphi_i^{-1})^*(f_iu)]_j$ of the distributional section when pushed forward by the trivializing chart, and then pulled back to the manifold.
(Note we may also write $(\varphi_i^{-1})^*(f_iu)=(f_iu)\circ \varphi_i^{-1}$.)

 More generally, for $\mathcal{C}=\mathcal{C}^{oo}$ a polar family of closed sets, $\Gamma,\Lambda$  a closed or open cone $$\mathcal{D}'_\Gamma(U,\mathcal{C};E)=\{u\in \mathcal{D}'(U,\mathcal{C};E)\ | \ WF(u)\subset \Gamma\}.$$

As in \cite{BrouderDabrowski} we want to put strong variants of H\"ormander topology on them. On $\mathcal{D}'_\Gamma(U;E)=\mathcal{D}'_\Gamma(U,\mathcal{F};E)$, for $\Gamma$ closed, we consider the locally convex topology $\mathcal{I}_H$ defined by the following family of seminorms : For any $B\subset \mathcal{D}(U;E^*)$ a strongly bounded subset we consider the seminorm :
$$P_B(u)=\sup_{f\in B}|\langle u,f\rangle|$$
and as usual for $i\in I, k\in \N^*,f\in  \mathcal{D}(U), \text{supp} f\subset U_i, d\varphi_i^*(\text{supp} (f\circ\varphi_i^{-1})\times V)\cap \Gamma=\emptyset$:
$$P_{i,k,f,V}(u)=\sup_{\xi\in V}(1+|\xi|)^k\sum_{l=1}^e|\mathcal{F}[[(f u)\circ \varphi_i^{-1}]_l](\xi)|.$$
where $\mathcal{F}$ is the Fourier transform on $\mathcal{E}'(\R^d).$ There won't be any confusion with the set of all closed sets $\mathcal{F}$ as used in the previous section.




Then we define on $\mathcal{D}'_\Gamma(U,\mathcal{C};E)$ the inductive  topology $\mathcal{I}_{H,i}$ as the inductive limit topology~:
$$(\mathcal{D}'_\Gamma(U,\mathcal{C}),\mathcal{I}_{H,i})=\text{ind}\lim_{C\in \mathcal{C}} (\mathcal{D}_{\Gamma}'(U,\{F\in \mathcal{F}, F\subset C\};E),\mathcal{I}_{H}).$$
The topology $\mathcal{I}_{H}$ is the induced topology from the one above. As in subsection \ref{support}.2, we have a projective variant\footnote{ Note that, contrary to the definition before lemma \ref{DualitySupport}, we don't index the set by $\mathcal{C}^o$ as the projective limit to simplify notations and since we by now only use polar families for which $\mathcal{C}^o,\mathcal{C}$ are determined by one another. This is thus more important to write with the same notation the same space, and put only topological notations in the topology $\mathcal{I}_{H,p}$.}~:
$$(\mathcal{D}'_\Gamma(U,\mathcal{C};E),\mathcal{I}_{H,i})\to (\mathcal{D}'_\Gamma(U,\mathcal{C};E),\mathcal{I}_{H,p}):= \text{proj} \lim_{O\in \mathcal{C}^o}(\mathcal{D}'_\Gamma(U,\{O\}^o;E),\mathcal{I}_{H,i}).$$
These topologies will be generalized in the next section.

In order to study a class of spaces with good stability properties for (topological and bornological) duality 
we are led to introduce a refinement 
of the wave front set, we call 
it dual wave front set for reasons that will become clear later.

\begin{definition}\label{DWFDef}
For $u\in\mathcal{D}'(U;E)$, we call dual wave front set of $u$ and write $DWF(u)\subset\dot{T}^*U$ the set such that  $(x,\xi)\in (DWF(u))^c$ if and only if for any $k\in \N,i\in I :x\in U_i$, there is an open neighborhood of $x$, $V_k\subset U_i$ (a defining open of a chart of $U$)  and an open conic neighborhood $\Gamma_k$ of $(d(\varphi_i)^{-1})_{\varphi_i(x)})^*(\xi)$ such that : for any $f\in \mathcal{D}(U)$ with $\text{supp}(f)\subset V_k$ we have 
$P_{i,k,f,\Gamma_k}(u)<\infty.$
\end{definition}
Said otherwise $DWF(u)=\cup_{s>0}, WF_s(u)$ is the union of $H^s$-wave front sets (see e.g. \cite[p 8]{Delort}).

\begin{remark}\label{DWF}
Fix a bump function $\chi\in\mathcal{D}(\R^d)$ such that $\chi(x)=1$ if $|x|\leq 1/2$, $\text{supp}(\chi)\subset B(0,1)$ and $\chi:\R^n\to [0,1]$. Let $\chi_{x,n}(y)=\chi(n(y-x))$ with $\text{supp}(\chi_{x,n})\subset B(x,1/n)$. Then, using the usual tricks of wave front set theory to control the wave front set of a product with a smooth function (see e.g. \cite[(8.1.3)' p 253]{Hormander}) it is easy to see $(x,\xi)\in (DWF(u))^c$ if and only if for any $k\in \N,i\in I: x\in U_i$, there is an integer $n_k$ and an open conic neighborhood $\Gamma_k$ of $(d(\varphi_i)^{-1})_{\varphi_i(x)})^*(\xi)$ such that  we have 
$P_{i,k,\chi_{\varphi_i(x),n_k}\circ \varphi_i,\Gamma_k}(u)<\infty.$ Note also that for $f\in\mathcal{D}(U)$, $DWF(fu)\subset DWF(u).$

Of course, if $(DWF_k(u))^c$ is the set satisfying the above property for a fixed $k$ instead of all $k$, it is the union of some $V_k\times [(d\varphi_i)_x]^*\Gamma_k$ which are open, so that $(DWF(u))^c$ is a $G_\delta$ set, i.e. a countable intersection of the above open sets. In this way $DWF(u)$ is an $F_\sigma$ cone and moreover,  $$DWF(u)\subset WF(u)= \overline{DWF(u)} .$$ The first inclusion is obvious and the second using compactness to cover a product in a open cone at each order. One deduces equality from the characterization of the closure.

\end{remark}

We now introduce the various spaces of distributions generalizing H\"ormander ones we will be interested in.
Thus let $\Lambda$ be a cone, 
 $\gamma\subset \Lambda\subset \overline{\gamma}$ 
 another cone 
 and  $\mathcal{C}=\mathcal{C}^{oo}$ an enlargeable (thus openly generated) polar family of closed sets in $U$, we call :
$$\mathcal{D}'_{\gamma,\Lambda}(U,\mathcal{C};E)=\{u\in \mathcal{D}'(U;E)\ | \ WF(u)\subset \Lambda , DWF(u)\subset \gamma, \text{supp}(u)\in \mathcal{C}\}.$$ 

Note that the assumption $\gamma\subset \Lambda\subset \overline{\gamma}$ is not restrictive by replacing $\gamma$ by $\gamma\cap\Lambda$ 
and $\Lambda$ by $\Lambda\cap \overline{\gamma}$. 
As an example for $\Gamma$ a closed cone we recover the spaces defined before as  $\mathcal{D}'_\Gamma(U;E)=\mathcal{D}'_{\Gamma,\Gamma}(U,\mathcal{F};E)$, and $\mathcal{E}'_\Lambda(U;E)=\mathcal{D}'_{\Lambda,\Lambda}(U,\mathcal{K};E),\mathcal{D}'_\Gamma(U,\mathcal{C};E)=\mathcal{D}'_{\Gamma,\Gamma}(U,\mathcal{C};E)$.

\section{Various topologies on generalized H\"ormander spaces of distributions}

For $\Lambda$ a cone intersection of a closed and an open cone, we start by writing it as an increasing union of closed cones. This will reduce some uncountable inductive limits to countable ones.  {What matters is not so much that one can define a topology with a countable inductive limit (as we could do taking any $F_\sigma$-cone) but rather that the canonical uncountable inductive limit will be equivalent to a countable one.

 More generally, most of the definitions of topologies in this subsection will be motivated by either having lots of inductive limits enabling some duality computation, or having only countable inductive limits and huge projective limits, as required by preservation of nuclearity, of course.

With all our projective and inductive limits, we will reduce general controlling cones for $DWF$, $WF$ at the end to a pair $\Lambda,\overline{\Lambda}$ with $\Lambda$ open, and in a intermediate step to $\lambda_1, \overline{\lambda_1}\cap \lambda_2$ with both $\lambda_i$ open. This explains why we need to study the intersection of an open and closed cone.}

\begin{lemma}\label{openclosed}
Any  $\Lambda\subset T^*U-\{0\}$  which is the intersection of an open cone and a closed cone can be written as a union of a sequence of closed cone $\Lambda_n$ with compact first projection $p_1(\Lambda_n)$ and such that $\Lambda_{n}$ is contained  in the interior of $\Lambda_{n+1}$ and any closed cone $\Gamma\subset \Lambda$ (closed for the induced topology) with its projection $p_1(\Gamma)\subset U$ compact is included in one $\Lambda_n.$ 
\end{lemma}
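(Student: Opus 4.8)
The plan is to reduce everything to a compact exhaustion of the base $U$. I would fix once and for all a sequence of compacts $K_1\subset\operatorname{int}(K_2)\subset K_2\subset\cdots$ with $\bigcup_n K_n=U$ (available by $\sigma$-compactness of $U$; the increasing sequence of compacts with interiors exhausting $U$ already used in Example~\ref{countgenclosed} will do), and simply set
$$\Lambda_n:=\Lambda\cap p_1^{-1}(K_n)=\{(x,\xi)\in\Lambda\ :\ x\in K_n\}.$$
Since $\Lambda$ is a cone and the condition $x\in K_n$ is invariant under the scaling $(x,\xi)\mapsto(x,t\xi)$, each $\Lambda_n$ is again a cone, and $p_1(\Lambda_n)\subset K_n$ is relatively compact.

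Next I would verify the required properties, all of which are immediate. As $p_1$ is continuous and $K_n$ is closed in $U$, the set $p_1^{-1}(K_n)$ is closed in $\dot{T}^*U$, so each $\Lambda_n$ is closed for the topology induced on $\Lambda$. Since $p_1^{-1}(\operatorname{int}(K_{n+1}))$ is open and contains $\Lambda_n$ (because $K_n\subset\operatorname{int}(K_{n+1})$) while being contained in $\Lambda_{n+1}$, the set $\Lambda\cap p_1^{-1}(\operatorname{int}(K_{n+1}))$ is an open neighbourhood of $\Lambda_n$ inside $\Lambda_{n+1}$; hence $\Lambda_n\subset\operatorname{int}(\Lambda_{n+1})$, the interior being computed in $\Lambda$. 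Every $x\in U$ lies in some $\operatorname{int}(K_n)$, so $\bigcup_n\Lambda_n=\Lambda$. Finally, for the cofinality clause, let $\Gamma\subset\Lambda$ be a cone with $p_1(\Gamma)$ compact: the increasing open sets $\operatorname{int}(K_n)$ cover the compact $p_1(\Gamma)$, so $p_1(\Gamma)\subset\operatorname{int}(K_N)\subset K_N$ for some $N$, and therefore $\Gamma\subset\Lambda\cap p_1^{-1}(K_N)=\Lambda_N$. Note that the relative closedness of $\Gamma$ is not even needed here, only that its projection is compact.

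The one point that must be handled with care, and which I expect to be the only real obstacle, is the meaning of \emph{closed} and \emph{interior}: these have to be read in the topology induced on $\Lambda$, not in $\dot{T}^*U$. Indeed, writing $\Lambda=A\cap B$ with $A$ an open cone and $B$ a closed cone, any point of $A\cap\partial B\subset\Lambda$ fails to be an interior point in $\dot{T}^*U$ of any $\Lambda_{n+1}\subset B$, because $\operatorname{int}_{\dot{T}^*U}(\Lambda_{n+1})\subset\operatorname{int}_{\dot{T}^*U}(B)$ avoids $\partial B$; so with the ambient interior the two requirements $\bigcup_n\Lambda_n=\Lambda$ and $\Lambda_n\subset\operatorname{int}(\Lambda_{n+1})$ would be incompatible as soon as $\Lambda$ is not open. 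Passing to the induced topology removes this obstruction and makes the elementary construction above go through. The same picture is transparent on the cosphere bundle $S^*U$, where cones correspond bijectively to their traces, $p_1$ restricts to a proper map, and $\Lambda\cap S^*U$ is a locally compact, $\sigma$-compact, Hausdorff space — this local closedness being exactly where the hypothesis that $\Lambda$ is an intersection of an open and a closed cone is felt — so that the $\Lambda_n$ realise a fundamental system of relatively closed, compactly projecting subsets.
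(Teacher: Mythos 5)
There is a genuine gap here, and it is the central point of the lemma rather than a side issue. Your sets $\Lambda_n=\Lambda\cap p_1^{-1}(K_n)$ are not closed cones in $\dot{T}^*U$ unless $\Lambda$ itself is closed: you truncate only the base variable, so over a fixed $x\in K_n$ the fiber of $\Lambda_n$ is the whole fiber of $\Lambda$ and accumulates at every boundary covector of $\Lambda$ above $x$. The parenthetical ``closed for the induced topology'' refers to the topology that $\dot{T}^*U=T^*U-\{0\}$ inherits from $T^*U$ (no nonempty cone is closed in $T^*U$ itself, since the zero section lies in its closure); it does not authorize reading ``closed'' relative to $\Lambda$. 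Only the word ``interior'' must be taken relative to $\Lambda$, and that is perfectly compatible with the $\Lambda_n$ being genuinely closed. Two symptoms in your own write-up signal that you have proved a different, much weaker statement: the closedness of $\Gamma$ is never used, and the hypothesis that $\Lambda$ is the intersection of an open and a closed cone is never used either. The actual content of the lemma is that the \emph{closed} subcones of $\Lambda$ with compact base projection admit a countable cofinal subfamily, and this is exactly how it is used just afterwards: the $\Lambda_m$ must themselves be closed subcones of $\Lambda$ (the text invokes ``$\Lambda_m\subset\Lambda_{(n,1)}\subset\Lambda$ is a closed cone'') so that the spaces $\mathcal{D}'_{(\gamma\cap\Lambda_m)_{(n)},(\overline{\gamma\cap\Lambda_m})_{(n)}}$ occur among the terms of the uncountable inductive limit over closed $\Pi\subset\Lambda$. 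With your $\Lambda_m$ this fails, and the two inductive limits genuinely differ: for $\Lambda$ open, a distribution whose wave front set above a single point of $K_m$ touches the boundary of $\Lambda$ belongs to $\mathcal{D}'_{\gamma\cap\Lambda_m,\overline{\gamma\cap\Lambda_m}}$ but to no $\mathcal{D}'_{\gamma\cap\Pi,\overline{\gamma\cap\Pi}}$ with $\Pi\subset\Lambda$ closed.

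The missing idea is that one must shrink in the fiber directions as well, staying at positive distance from the boundary of the open factor on the cosphere bundle. Writing $\Lambda=\gamma\cap\lambda$ with $\gamma$ closed and $\lambda$ open, the paper takes $\Lambda_n=\gamma\cap\lambda_n$, where $(\lambda_n)$ is the standard exhaustion of the open cone $\lambda$ by closed cones with compact base projection (roughly: $x\in K_n$ and $(x,\xi/|\xi|)$ at distance at least $1/n$ from $\lambda^c$; this is the construction recalled from the beginning of section 3.1 of \cite{BrouderDabrowski}). Each $\Lambda_n$ is then closed in $\dot{T}^*U$ as an intersection of closed sets; $\Lambda_n\subset\gamma\cap Int(\lambda_{n+1})$ is open in $\gamma$, hence in $\Lambda$, giving the relative-interior condition; and cofinality holds because a closed cone $\Gamma\subset\lambda$ with $p_1(\Gamma)$ compact has compact trace on the cosphere bundle, hence lies at positive distance from $\lambda^c$ and therefore in some $\lambda_n$, so that $\Gamma\subset\gamma\cap\lambda_n=\Lambda_n$. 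Your truncation by $K_n$ is one ingredient of this construction, but it is the easy one; the fiber-direction shrinking is what you would need to add.
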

\begin{proof}
Let $\Lambda$ be
the intersection of a closed cone $\gamma$ and an open cone $\lambda$.

To reduce  to the open case, consider $\Lambda_n=\gamma\cap \lambda_n$, where $\lambda_n$ built for $\lambda$. If $\Gamma$ is a closed cone in $\gamma\cap \lambda$ with compact projection, it is in $\lambda$, thus $\lambda_n$, and $\Gamma$, thus in $\Lambda_n$ as expected. 
In the open case, the proof is detailed at the beginning of section 3.1 in \cite{BrouderDabrowski}.
\end{proof}

We start by defining a topology $\mathcal{I}_H$ on $\mathcal{D}'_{\gamma,\overline{\gamma}}(U,\mathcal{F};E).$
In this case, we consider the family $\Delta(\gamma)=\{\delta=(\Gamma_n)_{n\in \N}, \ \Gamma_n\ \text{closed cone}, \ \Gamma_n\subset\Gamma_{n+1}, \cup_{n\in \N}\Gamma_n{\subset}\gamma\}$.

For $\delta=(\Gamma_n)\in \Delta(\gamma)$, we consider the space $$\mathcal{D}'_{\overline{\gamma}}(U,\mathcal{F}:\delta;E)=\{u\in \mathcal{D}'(U;E)\ | \ 
\forall n ,DWF_n(u)\subset \Gamma_n \}.$$ 
We put on this space the natural topology with the seminorm of $\mathcal{D}'_{\overline{\gamma}}(U;E)$ and the seminorms $P_{i,k,f,V_k}(u)$ for any $i\in I,f\in \mathcal{D}(U)$ with $\text{supp} f\subset U_i, d\varphi_i^*(\text{supp} (f\circ\varphi_i^{-1})\times V_k)\cap \Gamma_k=\emptyset.$

Finally, we thus call inductive topology of $\mathcal{D}'_{\gamma,\overline{\gamma}}(U,\mathcal{F};E)$ the following inductive limit topology with $\Delta(\gamma)$ of course ordered by simultaneous inclusions of all the cones ($\delta=(\Gamma_n)\subset \delta'=\Gamma'_n$ iff $\forall n \Gamma_n\subset \Gamma'_n$) : $$(\mathcal{D}'_{\gamma,\overline{\gamma}}(U,\mathcal{F};E),\mathcal{I}_{i})=\underrightarrow{\lim}_{\delta\in \Delta(\gamma)}(\mathcal{D}'_{\overline{\gamma}}(U,\mathcal{F}:\delta;E),\mathcal{I}_H).$$

All the continuous inclusions required to define the above inductive limit are obvious. Note that if $\gamma=\overline{\gamma}$ then $\delta_{\gamma}=(\gamma)_{n\in \N}\in \Delta(\gamma)$ and the inductive limit is trivial : $\mathcal{D}'_{\overline{\gamma},\overline{\gamma}}(U,\mathcal{C};E)=\mathcal{D}'_{\overline{\gamma}}(U,\mathcal{C}:\delta_\gamma;E)$
and the topology is the same as defined before.
We also define a projective variant, involving a projective limit over $\Lambda\supset \gamma$ open cone ordered by 
inclusion :
$$(\mathcal{D}'_{\gamma,\overline{\gamma}}(U,\mathcal{F};E),\mathcal{I}_{p})=\underleftarrow{\lim}_{\Lambda\supset \gamma}(\mathcal{D}'_{\Lambda,\overline{\Lambda}}(U,\mathcal{F};E),\mathcal{I}_{i}).$$

It will thus be crucial for us that $(\mathcal{D}'_{\Lambda,\overline{\Lambda}}(U,\mathcal{F};E),\mathcal{I}_{i})$ is a basic object we will interpret as a completion and for which we will know plenty of functional analytic properties we will then extend to the non-open case.


Towards the general support case, we fix a point $x_0\in U$  once and for all, we define for a cone $\Gamma$ and $R> 0$, its $R$-compact variant, $$\Gamma_{(R)}:=\Gamma_{(R,1)}\cup T_{(R,2)},\Gamma_{(R,1)}= \{(x,\xi)\in \dot{T}^*U, D(x_0,x)\leq R, D(x,U^c)\geq 1/R, (x,\xi)\in \Gamma\},$$ with $T_{(R,2)} =\{(x,\xi)\in \dot{T}^*U, D(x_0,x)\geq R \ \text{or}\  D(x,U^c)\leq 1/R\}.$
So that we have $\Gamma=\cap_{n>0}\Gamma_{(n)}$, and $\Gamma_{(n)}$ is a closed (resp. $F_\sigma$) cone, as soon as $\Gamma$ is. $\Gamma_{(R,1)}$ is also the intersection of a closed and an open cone as soon as $\Gamma$ is.

We consider a mixed topology 
on $\mathcal{D}'_{\gamma,\Lambda}(U,\mathcal{F};E)$ a projective limit over $n>0$ of inductive limits over $\Pi$ closed subcone (directed by inclusion) for $\alpha=p$ (resp. $\alpha=i$
):$$(\mathcal{D}'_{\gamma,\Lambda}(U,\mathcal{F};E),\mathcal{I}_{\alpha m})=\underleftarrow{\lim}_{n\to \infty }\left(\underrightarrow{\lim}_{\Pi\subset \Lambda}(\mathcal{D}'_{(\gamma\cap \Pi)_{(n)},(\overline{\gamma\cap \Pi})_{(n)}}(U,\mathcal{F};E),\mathcal{I}_{\alpha})\right).$$

 The continuity of inclusions is detailed at the beginning of the next section. The projective limit could have been avoided, but in this way, the second inductive limit is equivalent to a countable one when $\Lambda$ is the intersection of a closed and an open cone using our lemma \ref{openclosed}.
Indeed, for any $\Pi\subset \Lambda$ if we take an exhaustion $\Lambda_m$ of $\Lambda_{(n,1)}$ as in lemma \ref{openclosed}, $\Lambda_{(n,1)}$ being also the intersection of a closed and an open cone as noted after its definition, and $\Pi_{(n,1)}\subset\Lambda_{(n,1)}$ which is a closed subcone with compact first projection, we have $\Pi_{(n,1)}\subset \Lambda_m$ and thus $\Pi_{(n)}\subset (\Lambda_m)_{(n)}$ and likewise $(\gamma\cap \Pi)_{(n)}\subset (\gamma\cap\Lambda_m)_{(n)}$ $(\overline{\gamma\cap \Pi})_{(n)}\subset (\overline{\gamma\cap \Lambda_m})_{(n)}$ so that one gets  (using the continuous inclusions proved at the beginning of the next section) a map $$\left(\underrightarrow{\lim}_{\Pi\subset \Lambda}(\mathcal{D}'_{(\gamma\cap \Pi)_{(n)},(\overline{\gamma\cap \Pi})_{(n)}}(U,\mathcal{F};E),\mathcal{I}_{\alpha}))\right)\to \left(\underrightarrow{\lim}_{m\to \infty}(\mathcal{D}'_{(\gamma\cap \Lambda_m)_{(n)},(\overline{\gamma\cap \Lambda_m})_{(n)}}(U,\mathcal{F};E),\mathcal{I}_{\alpha}))\right).$$ Since the converse map giving an isomorphism is obvious using $\Lambda_m\subset \Lambda_{(n,1)}\subset \Lambda$ is a closed cone, we conclude to the stated equivalence to a countable inductive limit.

 We thus define 
an inductive 
topology of $\mathcal{D}'_{\gamma,\Lambda}(U,\mathcal{F};E)$ using only an inductive limit on closed cones avoiding the use of the projective limit above, (for $\alpha\in\{p,i\}$)$$(\mathcal{D}'_{\gamma,\Lambda}(U,\mathcal{F};E),\mathcal{I}_{\alpha i})=\underrightarrow{\lim}_{\Pi\subset \Lambda}\left(\mathcal{D}'_{\gamma\cap \Pi,\overline{ \gamma\cap\Pi}}(U,\mathcal{F};E),\mathcal{I}_{\alpha}\right),$$
 Note again that when $\Lambda=\overline{\gamma}$ the inductive limits disappear and $\mathcal{I}_{ii}=\mathcal{I}_{i}$, and in the case $\Lambda=\overline{\gamma}=\gamma=\Gamma$, we even have on $\mathcal{D}'_{\Gamma,{\Gamma}}(U,\mathcal{F};E)$, $\mathcal{I}_{im}=\mathcal{I}_{ii}=\mathcal{I}_{H}$ since there is no more inductive limits and in this way the added intermediate projective limit is equivalent to the original one. 
 Since the mixed variant above didn't really achieved the goal of having only countable inductive limits in all cases, we exploit those cases to define 
 projective 
topologies of $\mathcal{D}'_{\gamma,\Lambda}(U,\mathcal{F};E)$ using  projective limit on open cones or pairs of open cones, so for $\alpha\in\{p,i\}$~:
$$(\mathcal{D}'_{\gamma,\Lambda}(U,\mathcal{F};E),\mathcal{I}_{\alpha P})=\underleftarrow{\lim}_{\lambda\supset \Lambda}\left(\mathcal{D}'_{\gamma,\overline{\gamma}\cap \lambda}(U,\mathcal{F};E),\mathcal{I}_{\alpha m}\right),$$
$$(\mathcal{D}'_{\gamma,\Lambda}(U,\mathcal{F};E),\mathcal{I}_{\alpha p})=\underleftarrow{\lim}_{ \lambda_1\supset \gamma}\underleftarrow{\lim}_{\lambda_2\supset \lambda_1\cup \Lambda}\left(\mathcal{D}'_{\lambda_1,\overline{\lambda_1}\cap \lambda_2}(U,\mathcal{F};E),\mathcal{I}_{\alpha m}\right).$$
Note that the projective limit over open cones indeed forces the wave front set to be in the intersection of $\overline{\gamma}\cap \lambda$, i.e. $\overline{\gamma}\cap \Lambda=\Lambda$ as expected. Moreover, $\overline{\gamma}\cap \lambda$ is the intersection of a closed and an open cone so that the inductive limits involved in the definition of $\left(\mathcal{D}'_{\gamma,\overline{\gamma}\cap \lambda}(U,\mathcal{F};E),\mathcal{I}_{\alpha m}\right)$
 are equivalent to countable inductive limits as explained above. If $\Lambda$ is the intersection of a closed cone $\Gamma$ and an open cone $\lambda$, so that $\overline{\gamma}\subset \Gamma$ we can even assume $\overline{\gamma}= \Gamma$ (since $\Lambda\subset \overline{\gamma}$)  so that   the projective limit is terminated at $\lambda$ since if $\Lambda\subset \lambda'\subset \lambda$ $\Lambda=\overline{\gamma}\cap \Lambda\subset \overline{\gamma}\cap \lambda'\subset \overline{\gamma}\cap \lambda\subset \Gamma\cap \lambda=\Lambda$ and we have equality so that $(\mathcal{D}'_{\gamma,\Lambda}(U,\mathcal{F};E),\mathcal{I}_{\alpha P})=(\mathcal{D}'_{\gamma,\Lambda}(U,\mathcal{F};E),\mathcal{I}_{\alpha m}).$ 
Note also that \begin{equation}\label{EqualConep}(\mathcal{D}'_{\gamma,\gamma}(U,\mathcal{F};E),\mathcal{I}_{\alpha p})=\underleftarrow{\lim}_{ \lambda\supset \gamma}\left(\mathcal{D}'_{\lambda,\lambda}(U,\mathcal{F};E),\mathcal{I}_{\alpha m}\right),\end{equation} which is similar to the dual wave front set analogue used to define $\mathcal{I}_{p}.$
 
At this stage, we also define a variant motivated by the explicit computation of various bornologifications. As will be intuitive to any expert, we require a polynomial boundedness of Fourier transforms and take appropriately inductive limits on the degree (and then require this only uniformly on some compact and take a projective limit in this compact).
$$(\mathcal{D}'_{\gamma,\Lambda}(U,\mathcal{F};E),\mathcal{I}_{\alpha b})=\underleftarrow{\lim}_{K\in \mathcal{K}}\underrightarrow{\lim}_{k\in\N}(\mathcal{D}_{\gamma,\Lambda}^{\prime(K,k)}(U,\mathcal{F};E),\mathcal{I}_{\alpha m}^{+}),$$
where $(\mathcal{D}_{\gamma,\Lambda}^{\prime(K,k)}(U,\mathcal{F};E),\mathcal{I}_{\alpha m}^{+}):=\{u\in (\mathcal{D}_{\gamma,\Lambda}(U,\mathcal{F};E), \forall f\in \mathcal{D}(U),\text{supp}(f)\subset K\cap U_i, \sup_{\xi}(1+|\xi|)^{-k}\sum_{l=1}^e|\mathcal{F}[[(f u)\circ \varphi_i^{-1}]_l](\xi)|<\infty\}$ is given the topology induced by the seminorms of $((\mathcal{D}_{\gamma,\Lambda}(U,\mathcal{F};E),\mathcal{I}_{\alpha m})$
and moreover $P_{i,-k,f,\R^{d}}(u)=\sup_{\xi}(1+|\xi|)^{-k}\sum_{l=1}^e|\mathcal{F}[[(f u)\circ \varphi_i^{-1}]_l](\xi)|.$
 
 
 
Finally, we introduce topologies for spaces with support conditions with $\alpha=ii$ or $\alpha=im$ or $\alpha=iP$ or $\alpha=ip$ or $\alpha=ib$ 
or $\alpha=pi$ or $\alpha=pm$ or $\alpha=pP$ or $\alpha=pp$ or $\alpha=pb$~:  
 \begin{align*}(\mathcal{D}'_{\gamma,\Lambda}(U,\mathcal{C};E),\mathcal{I}_{\alpha i})&=\underrightarrow{\lim}_{C\in \mathcal{C}} (\mathcal{D}'_{\gamma,\Lambda}(U,\{F\in \mathcal{F}, F\subset C\};E),\mathcal{I}_{\alpha})
 \\&\to (\mathcal{D}'_{\gamma,\Lambda}(U,\mathcal{C};E),\mathcal{I}_{\alpha p}):=\underleftarrow{\lim}_{O\in \mathcal{C}^o}(\mathcal{D}'_{\gamma,\Lambda}(U,\{O\}^o;E),\mathcal{I}_{\alpha i}).\end{align*}

The first one is for $\mathcal{C}$ any family of closed sets stable by subset, the second one a polar family of closed sets.

To summarize, we introduced first inductive limits for dual wave front sets and a projective variant 
 then inductive/projective limits on wave front sets (and a mixed variant not really achieving the goal of only involving countable inductive limits, it is thus only an intermediate to define the projective one, really only involving countable inductive limits), and then finally, on support which was easier but still requires inductive/projective variants. At each step, there are two main schemes, one of which containing only huge inductive limits, and the second one smaller (often countable) inductive limits associated to projective limits.

Part of the work later will be to identify various of these topologies to benefit from stability of functional analytic properties by inductive or projective limits.
The most important ones, when they are  not identified, will be emphasized in the main results later.

\section{Continuous injections, densities and known results}

We start by detailing the various continuous injections among the previous spaces. Note we already used them to define the inductive and projective limits appearing before. We then explain a density result obtained as usual by truncation and convolution. They will play a crucial role to reduce most of the basic properties of the above spaces to the best known cases for which we then explain what is already known mainly from \cite{BrouderDabrowski}.  Note that, for two topologies, $\mathcal{I},\mathcal{J}$ we write as usual $\mathcal{I}\supset\mathcal{J}$ is the first $\mathcal{I}$ is finer than the second $\mathcal{J}$. We also denote $L(E,F)$ the space of continuous linear maps between two locally convex spaces $E,F$.

\begin{lemma}\label{contuinuousdenseInj}
Let $\gamma\subset \gamma',$ 
$\Lambda\subset \Lambda'$ 
cones with $\gamma\subset \Lambda\subset\overline{\gamma},\gamma'\subset \Lambda'\subset\overline{\gamma'}$ and $\mathcal{C}\subset\mathcal{C}'$ 
 families of closed sets on $U$ (polar or stable by subset depending of what is needed in the definition) then  $(\mathcal{D}'_{\gamma,\Lambda}(U,\mathcal{C};E),\mathcal{I}_{\alpha})$   embeds continuously in $(\mathcal{D}'_{\gamma',\Lambda'}(U,\mathcal{C}';E),\mathcal{I}_{\alpha})$ (for $\alpha=\alpha_1\alpha_2\alpha_3, \alpha_1,\alpha_3\in\{
i,p\},\alpha_2\in\{i,m,P,p,b\}$). 
 Moreover we always have for any possible $\alpha,\beta,$ $$
 \mathcal{I}_{i\alpha}\supset \mathcal{I}_{p\alpha}, \ \  \mathcal{I}_{\alpha i\beta}\cap \mathcal{I}_{\alpha b\beta}\supset \mathcal{I}_{\alpha m\beta}\supset\mathcal{I}_{\alpha P\beta}\supset \mathcal{I}_{\alpha p\beta}\ \textrm{and} \ \ \mathcal{I}_{\alpha i}\supset \mathcal{I}_{\alpha p}.$$
 
 Moreover, for any $\chi\in \mathcal{D}(U)$, if we define $L_\chi:u\mapsto \chi u$ then for any $\alpha$ $$ L_\chi\in L((\mathcal{D}'_{\gamma,\Lambda}(U,\mathcal{C};E),\mathcal{I}_{\alpha p}), (\mathcal{D}'_{\gamma,\Lambda}(U,\{F, F\subset \text{supp}(\chi)\};E),\mathcal{I}_{\alpha})).$$
 
Likewise, fix for each $i\in I$, $\rho_i\in \mathcal{D}(\R^d)$ positive   supported in $B(0,\delta_i)\subset \R^d$ equal to 1 on $B(0,\delta_i/2)$ with  $\text{supp}(f_i\circ \varphi_i^{-1})+B(0,2\delta_i)\subset \varphi_i(U_i)$. Thus for $\rho\in \mathcal{D}(\R^d)$ one can define $S_\rho(u)=S(\rho,u)=\sum_{i\in I}([\rho_i\rho]*[(uf_i)\circ \varphi_i^{-1}])\circ\varphi_i $, then  $$S_\rho\in L((\mathcal{D}'_{\gamma,\Lambda}(U,\mathcal{K};E),\mathcal{I}_{\alpha}), \mathcal{D}(U;E)).$$

 Finally, for any  polar family of closed sets $\mathcal{C}$, $i\in I$ and $g\in\mathcal{D}(U_i)$ the application $M_g:u\mapsto ([(gu)\circ\varphi_i^{-1}]_l)$ gives a continuous map for any possible $\alpha$ $$M_g:(\mathcal{D}'_{\gamma,\Lambda}(U,\mathcal{C};E),\mathcal{I}_{\alpha})\to (\mathcal{D}'_{(d\varphi_i^{-1})^*\gamma_{|U_i},(d\varphi_i^{-1})^*\Lambda_{|U_i}}(\varphi_i(U_i),\mathcal{K};\R^e),\mathcal{I}_{\alpha})$$ and conversely, $N_i:(v_l)\mapsto (v_l)\circ\varphi_i$ gives a continuous map, for any $\alpha$ in the trivial case $|I|=1$ or when $\gamma$ is open or closed, or for $\alpha=p\beta$ and any $\beta$ otherwise,  $$N_i:(\mathcal{D}'_{(d\varphi_i^{-1})^*\gamma_{|U_i},(d\varphi_i^{-1})^*\Lambda_{|U_i}}(\varphi_i(U_i),\mathcal{K};\R^e),\mathcal{I}_{\alpha})\to(\mathcal{D}'_{\gamma,\Lambda}(U,\mathcal{C};E),\mathcal{I}_{\alpha})$$
\end{lemma}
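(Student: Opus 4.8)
The plan is to reduce every assertion to two ingredients: a single \emph{base case}, namely continuity of the inclusion $(\mathcal{D}'_{\Gamma}(U;E),\mathcal{I}_H)\hookrightarrow(\mathcal{D}'_{\Gamma'}(U;E),\mathcal{I}_H)$ for closed cones $\Gamma\subset\Gamma'$, and the \emph{universal properties} of inductive and projective limits, through which continuity then propagates across the three layers (dual wave front set, then wave front set, then support) out of which every $\mathcal{I}_\alpha$ is assembled. For the base case I would simply compare seminorms: a seminorm $P_{i,k,f,V}$ entering the definition of the target topology satisfies $d\varphi_i^*(\text{supp}(f\circ\varphi_i^{-1})\times V)\cap\Gamma'=\emptyset$, hence a fortiori $\cap\,\Gamma=\emptyset$, so it already belongs to the family defining $\mathcal{I}_H$ on $\mathcal{D}'_\Gamma$; since $P_B$ is common to both, the source topology is finer and the inclusion is continuous. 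Granting this, the continuous injection of the first paragraph follows layer by layer: at each stage a monotone (indeed cofinal) map of the relevant index set ($\Delta(\gamma)$, the closed subcones $\Pi$, the integers $n$, the open cones $\lambda$, or the opens $O\in\mathcal{C}^o$), together with componentwise continuity of the inclusions, yields a continuous map of the corresponding limits.

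The topology comparisons are of the same formal nature, each being the statement that one limit is coarser than another over the same vector space. I would argue $\mathcal{I}_{i\alpha}\supset\mathcal{I}_{p\alpha}$, $\mathcal{I}_{\alpha m}\supset\mathcal{I}_{\alpha P}\supset\mathcal{I}_{\alpha p}$ and $\mathcal{I}_{\alpha i}\supset\mathcal{I}_{\alpha p}$ (support) uniformly: in each case the right-hand topology is by construction a projective limit whose terms are reached continuously from the left-hand one through a single projection (using the inclusions of paragraph~1), so the identity factors continuously through every projection and the projective-limit topology, being the coarsest such, is coarser. For $\mathcal{I}_{\alpha i}\supset\mathcal{I}_{\alpha m}$ I would use that $\Gamma\subset\Gamma_{(n)}$ gives continuous inclusions $\mathcal{D}'_{\gamma\cap\Pi,\overline{\gamma\cap\Pi}}\hookrightarrow\mathcal{D}'_{(\gamma\cap\Pi)_{(n)},(\overline{\gamma\cap\Pi})_{(n)}}$, so the plain inductive limit maps continuously into the mixed projective-then-inductive one. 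Finally $\mathcal{I}_{\alpha b}\supset\mathcal{I}_{\alpha m}$ is immediate, since the spaces $\mathcal{D}_{\gamma,\Lambda}^{\prime(K,k)}$ carry by definition the $\mathcal{I}_{\alpha m}$-seminorms together with the extra seminorms $P_{i,-k,f,\R^d}$, a finer topology preserved under the limits; hence $\mathcal{I}_{\alpha m}$ is coarser than both $\mathcal{I}_{\alpha i}$ and $\mathcal{I}_{\alpha b}$, which is exactly $\mathcal{I}_{\alpha i\beta}\cap\mathcal{I}_{\alpha b\beta}\supset\mathcal{I}_{\alpha m\beta}$.

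For the three operator statements I would invoke the standard wave front calculus. Multiplication by $\chi\in\mathcal{D}(U)$ enlarges neither $WF$ nor, by remark~\ref{DWF}, $DWF$, and by \cite[(8.1.3)']{Hormander} each $P_{i,k,f,V}(\chi u)$ is dominated by finitely many $P_{i,k',f',V'}(u)$; since moreover $\text{supp}(\chi u)\subset\text{supp}(\chi)$, the map $L_\chi$ is continuous on a single term $\mathcal{D}'_{\gamma,\Lambda}(U,\{O\}^o;E)$ of the support-projective limit $\mathcal{I}_{\alpha p}$ into the fixed compact-support target, and a map out of a projective limit is continuous once it factors continuously through one projection. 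The map $M_g$ is the same estimate read in the fixed chart $\varphi_i$, where the target seminorms are literally the $P_{i,k,f,V}$ and the covariance $WF((gu)\circ\varphi_i^{-1})=(d\varphi_i^{-1})^*(WF(gu)_{|U_i})$ makes continuity tautological. For $S_\rho$ I would note that on $\mathcal{K}$-supported $u$ the sum is finite and each summand is a convolution of a compactly supported distribution with a test function, hence smooth with support in a fixed compact set; every seminorm of $\mathcal{D}(U;E)$ evaluates $S_\rho(u)$ and its derivatives as pairings of $uf_i$ against a bounded set $B\subset\mathcal{D}(U;E^*)$, so it is dominated by $P_B(u)$, a seminorm continuous for every $\mathcal{I}_\alpha$ (all of which are finer than the topology induced by the strong topology of $\mathcal{D}'(U;E)$), giving continuity uniformly in $\alpha$.

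The delicate point, and the one I expect to be the main obstacle, is the reverse chart map $N_i$. When $|I|=1$ it is the inverse of the isomorphism $M_{\mathrm{id}}$ and continuity is immediate; when $\gamma$ is open or closed the inductive limit over $\Delta(\gamma)$ collapses (trivially for $\gamma=\overline\gamma$, the open case being the one of \cite{BrouderDabrowski}) and the statement reduces to covariance of $WF$ under the single fixed chart. The genuinely hard case is $\gamma$ neither open nor closed with $|I|>1$: the pullback $(v_l)\circ\varphi_i$ is supported in $U_i$, but its global $DWF$- and $WF$-seminorms are indexed through \emph{every} chart $\varphi_j$ meeting $U_i$, and controlling them forces one to transport wave front set information across chart transitions. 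This is exactly where the restriction to a dual-wave-front-projective leading index $\alpha=p\beta$ is needed: an admissible inductive datum $\delta=(\Gamma_n)$ in one chart need not pull back to an admissible datum in another, whereas the projective description \emph{from above} by open cones $\lambda\supset\gamma$ is stable under the transition maps. My plan is therefore to express $N_i$ on the open-cone projective limits and to invoke the diffeomorphism invariance of the construction — ultimately the pullback theorem of Section~8 — to see that the transition maps act continuously there, yielding continuity of $N_i$ for $\alpha=p\beta$.
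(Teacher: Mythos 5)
Your proposal is correct and follows essentially the same route as the paper: a seminorm-level base case for closed cones propagated through the universal properties of the inductive and projective limits, the direct estimates $P_{i,k,f,V}(\chi u)\le P_{i,k,\chi f,V}(u)$ and $P_{k,f,V}\circ M_g=P_{i,k,g(f\circ\varphi_i),V}$ for $L_\chi$ and $M_g$, convolution continuity for $S_\rho$, and the reduction of $N_i$ (the one genuinely delicate point, which you correctly isolate) to the pullback/diffeomorphism-invariance statement of Section~8 in the projective, open-cone setting. The only cautions are to keep the appeal to Section~8 non-circular --- the paper does this by first proving the pullback in the open case in $\R^d$, where $N_i$ is not needed --- and to add the explicit bound on the supplementary seminorms $P_{i,-k,f,\R^d}$ under chart transitions when the middle index is $b$, which your ``tautological'' argument covers for $M_g$ but not for $N_i$.
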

We will write later $M_i=M_{f_i}.$
\begin{proof}
The comparisons of topologies come from general results on inductive and projective limits 
sometimes using the continuous injections built below and stated in the first part.

We start with the case $\Lambda=\overline{\gamma},\Lambda'=\overline{\gamma'}$. 
It suffices to note $\Delta(\gamma)\subset \Delta(\gamma')$.
Thus, for $\delta\in \Delta(\gamma)$  the continuity of the injection  $\mathcal{D}'_{\overline{\gamma}}(U,\mathcal{F}:\delta;E)
\to \mathcal{D}'_{\gamma',\Lambda'}(U,\mathcal{F};E)$ is obvious, one concludes by the definition of an inductive limit in the case $\mathcal{C}=\mathcal{C}'=\mathcal{F}$. Then applying inductive or projective limits 
on both sides, the more general case $\mathcal{C}\neq\mathcal{C}'$ is easy and also the case for $ \alpha\neq iii.$

For the general case note that  for a closed cone $\Pi\subset\Lambda\subset \Lambda'$, the previous result gives continuity of $\mathcal{D}'_{\gamma\cap \Pi,\overline{\gamma\cap \Pi}}(U,\mathcal{F};E)\to \mathcal{D}'_{\gamma'\cap \Pi,\overline{\gamma'\cap\Pi}}(U,\mathcal{F};E),$ for $\mathcal{I}_{\alpha}, \alpha\in\{i,p\}.$ The definitions of inductive  limits conclude again in the case $\mathcal{C}=\mathcal{C}'=\mathcal{F}$.
For the case with mixed normal topologies note that  for a closed cone $\Pi\subset\Lambda\subset \Lambda'$, the previous result also gives continuity of $\mathcal{D}'_{(\gamma\cap \Pi)_{(n)},(\overline{\gamma\cap\Pi})_{(n)}}(U,\mathcal{F};E)\to \mathcal{D}'_{(\gamma'\cap \Pi)_{(n)},(\overline{\gamma'\cap\Pi})_{(n)}}(U,\mathcal{F};E).$ Again, the definitions of inductive and projective limits conclude in the case $\mathcal{C}=\mathcal{C}'=\mathcal{F}$. Finally, the support inductive/projective limits are dealt with similarly. The cases with $\alpha_2\in\{b,p,P\}$ are dealt with similarly starting from $\alpha_2=m.$

For the continuity of $L_\chi$ since the support condition is known, arguing similarly with inductive and projective limits, we are mostly reduced to prove for $\delta=(\Gamma_n)\in \Delta(\gamma)$ the continuity of $L_\chi :(\mathcal{D}'_{\overline{\gamma}}(U,\mathcal{F}:\delta;E),\mathcal{I}_H)\to (\mathcal{D}'_{\overline{\gamma}}(U,\mathcal{F}:\delta;E),\mathcal{I}_H).$
For $B\in \mathcal{D}(U;E^*)$ strongly bounded, it is well known that so is $\chi B$ thus $P_B(L_\chi(u))\leq P_{\chi B}(u)$ and there is of course no problem with these seminorms. Moreover $P_{i,k,f,V}(L_\chi(u))\leq P_{i,k,\chi f,V}(u)$ and since $\text{supp}(\chi f)\subset \text{supp}( f)$, the second is a seminorm when the first is. Formally, we also have to treat the supplementary seminorms for $\mathcal{I}_{\alpha b\beta}$ similarly.

For $S_\rho$, it suffices to note using the previous continuous injection that $S_\rho\in L((\mathcal{E}'(U;E)), \mathcal{D}(U;E))$ and by the inductive limit topology on the source it suffices for $\chi$ fixed to get  $S_\rho L_\chi\in L((\mathcal{D}'(U;E)), \mathcal{D}(U;E))$, but since this is a finite sum, it suffices to note $u\mapsto ([\rho_i\rho]*[(u\chi f_i)\circ \varphi_i^{-1}])\circ\varphi_i \in L((\mathcal{D}'(U_i;E)), \mathcal{D}(U_i;E))$ i.e. via the isomorphism induced by $\varphi_i$,  $u\mapsto ([\rho_i\rho]*[u[(\chi f_i)\circ \varphi_i^{-1}]]) \in L((\mathcal{D}'(\varphi_i(U_i);E)), \mathcal{D}(\varphi_i(U_i);E))$ which comes for instance from \cite[lemma 7.2]{BrouderDangHelein}.

For the definition of $M_g$, the case of the topology $\mathcal{I}_{iii}$ in the case $\mathcal{C}=\mathcal{F}$, $\Lambda=\overline{\gamma}$ one gets a map at each level of the inductive limit $(d\varphi_i^{-1})^*(\Gamma_k)_{|U_i})\in \Delta(d\varphi_i^{-1})^*\gamma_{|U_i})$ are indeed closed cones for $\delta=(\Gamma_k)\in \Delta(\gamma)$, since the continuity for the strong topology is well-known (definition of distributions on manifolds) and the extra seminorms of the target composed with $M_g$, e.g. $P_{k,f,V_k}\circ M_g$ for any $f\in \mathcal{D}(\varphi_i(U_i))$ $ (\text{supp} (f)\times V_k)\cap (d\varphi_i^{-1})^*(\Gamma_k)_{|U_i}=\emptyset,$  are explicitly seminorms of the source  $P_{k,f,V_k}\circ M_g= P_{i,k,g(f\circ\varphi_i),V_k}$. Note that varying $g$ enables to get any seminorm of the source in this way.

For the definition of $N_i$ in the same case, the only problematic seminorms are those with index $j\neq i,$ (when they exist, i.e. when $U\not\subset U_i$ is not trivial). 
Using the remark above, fix $f\in\mathcal{D}(U_j)$, $g\in \mathcal{D}(U_i)$ equal to $1$ on some compact $\varphi_i^{-1}(K)$ on which we first restrict the support of elements of the domain of $N_i$. It suffices to prove $M_{gf}\circ N_i$ is continuous $(\mathcal{D}'_{(d\varphi_i^{-1})^*\gamma_{|U_i},(d\varphi_i^{-1})^*\Lambda_{|U_i}}(\varphi_i(U_i\cap U_j),\mathcal{K};\R^e), \mathcal{I}_{iii})\to (\mathcal{D}'_{(d\varphi_j^{-1})^*\gamma_{|U_i},(d\varphi_i^{-1})^*\Lambda_{|U_i}}(\varphi_j(U_i\cap U_j),\mathcal{K};\R^e), \mathcal{I}_{iii})$. But $M_{gf}\circ N_i(v)= (gf (v \circ \varphi_i))\circ \varphi_j^{-1}= (\varphi_i\circ\varphi_j^{-1})^*((gf)\circ \varphi_i^{-1} v)$. The pullback being relevant on  $\varphi_i(U_j\cap U_i)\supset \text{supp}([(gf)\circ \varphi_i^{-1}] v)$. Since this only involves the open set case that does not use this argument (trivial open case above and below), we can use proposition \ref{pullback} below in this case (proved first in the open case without the manifold case) when $\gamma$ open in which case $\mathcal{I}_{iii}=\mathcal{I}_{ppp},$ case for which this proposition applies (note it is mostly based on \cite{BrouderDangHelein} and some functional analysis). When $\Gamma$ closed, one can apply \cite{BrouderDangHelein} directly with $\mathcal{I}_{iii}$ giving the same conclusion. Note also that for the supplementary seminorms $P_{i,-k,f,\R^d}$ when there is an index $b$, one needs to use some supplementary basic argument for $N_i$ (the case $M_i$ is similar). As above, it boils done to checking a bound for pull-back by smooth isomorphisms $\varphi=\varphi_i\circ\varphi_j^{-1}$, i.e. to bound $P_{-k-d-1,f,\R^d}(u\circ \varphi)$ by $P_{-k,f,\R^d}(u)$. As usual, writing $\mathcal{F}(f  u\circ \varphi)(\xi)=\frac{1}{(2\pi)^d}\int d\eta T_{f,\varphi}(\eta,\xi)\mathcal{F}(u)(\eta)$ with $T_{f,\varphi}(\eta,\xi)=\int dx f(x)e^{i\langle\varphi(x),\eta\rangle- \langle(x),\xi\rangle}$ thus the bound is obtained in using the standard bound on integral operators  (see e.g. \cite[lemma 4]{stromaier} role of $\xi,\eta$ exchanged with respect to the usual application):\begin{align*}|T_{f,\varphi}(\eta,\xi)|&\leq C||f e^{i\langle .,\xi\rangle}||_{C^{k+d+1}}(1+|\eta|)^{-(k+d+1)}\sup_{x\in \text{supp}(f)} ||(d\varphi(x))^{-1}||^{2(k+d+1)} \\&\leq C'(1+|\xi|)^{k+d+1}(1+|\eta|)^{-(k+d+1)},\end{align*}
using  that $\varphi$ is a diffeomorphism.

 Since $(d\varphi_i^{-1})^*:\dot{T}^*U_i\to \dot{T}^*(\varphi_i(U_i))$ is an isomorphism of cotangent bundles, it preserves open cones and closed cones thus the inductive/projective limits on wave front sets or dual wave front sets are dealt with by universal properties. (Note that for $N_i$ because of the restriction $\gamma$ open above, the topology index  needs to start with a $p$, except of course when $\gamma$ open or closed or when $|I|=1.$ Moreover, for inductive limits on closed cones, ones needs to restrict first to a compact support 
 for which the inductive limit restricts to $\Pi$ well inside $U_i$ and thus having an inverse image which is also a closed cone in $M$ and not only $U_i$). The inductive/projective limits on support are dealt with easily since $\mathcal{C}$ is polar and thus contains $\mathcal{K}.$
\end{proof}

The next lemma gives for now a density result but will be also the source of all approximation properties we will use in the second paper of this series \cite{Dab14b}.
\begin{lemma}\label{normality}
Let $\gamma\subset \Lambda\subset\overline{\gamma}$ 
cones  and $\mathcal{C}$ 
 polar enlargeable family of closed sets on $U$  then there exists a sequence $(L_n)_{n\in \N},$ $L_n\in L((\mathcal{D}'_{\gamma,\Lambda}(U,\mathcal{C};E),\mathcal{I}_{iii}), \mathcal{D}(U;E))$   
 such that for any $u\in  \mathcal{D}'_{\gamma,\Lambda}(U,\mathcal{C};E)$, $L_n(u)$  has a subsequence Mackey-converging to $u$ in $(\mathcal{D}'_{\gamma,\Lambda}(U,\mathcal{C};E),\mathcal{I}_{iii})$. Moreover, for any $B$ in the inductive limit bornology associated to $\mathcal{I}_{iii}$, there is a subsequence $L_{k_n(B)}$ such that $L_{k(B)}(B)=\{L_{k_n(B)}(u), u\in B, n\in \N\}$ is bounded in this same bornology and  $L_{k_n(B)}(u)$ converges to $u$ for any $u\in B.$  Finally, if $\mathcal{C}$ is countably generated, the subsequence can be taken independent of $B$.

Especially $\mathcal{D}(U;E)\hookrightarrow(\mathcal{D}'_{\gamma,\Lambda}(U,\mathcal{C};E),\mathcal{I}_{\alpha})\hookrightarrow\mathcal{D}'(U;E)$ is a strictly normal system for any $\alpha$ considered above.
\end{lemma}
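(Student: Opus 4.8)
The plan is to realise the $L_n$ by the truncation-and-mollification (``convolution trick'') announced before the statement. I would fix an increasing compact exhaustion of $U$ together with cut-offs $\chi_n\in\mathcal{D}(U)$, $0\le\chi_n\le1$, equal to $1$ on a neighbourhood of the $n$-th compact, and a mollifier $\rho_0\in\mathcal{D}(\R^d)$ of integral $1$, setting $\rho^{(n)}(x)=t_n^{-d}\rho_0(x/t_n)$ with $t_n\downarrow0$, and then $L_n=S_{\rho^{(n)}}\circ L_{\chi_n}$. Continuity $L_n\in L((\mathcal{D}'_{\gamma,\Lambda}(U,\mathcal{C};E),\mathcal{I}_{iii}),\mathcal{D}(U;E))$ is then immediate from Lemma~\ref{contuinuousdenseInj}: $L_{\chi_n}$ sends $\mathcal{I}_{iii}$ (finer than $\mathcal{I}_{iip}$) continuously into $(\mathcal{D}'_{\gamma,\Lambda}(U,\{F,F\subset\text{supp}(\chi_n)\};E),\mathcal{I}_{ii})$, and $S_{\rho^{(n)}}$ is continuous from the latter, a step of the support inductive limit over $\mathcal{K}$, into $\mathcal{D}(U;E)$.

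\textbf{Convergence with a rate, and the rôle of enlargeability.} For a fixed $u$ I would first localise all three inductive limits: the closed cone $\Pi=WF(u)\subset\Lambda$ for the wave front limit, a sequence $\delta=(\Gamma_k)\in\Delta(\gamma\cap\Pi)$ with $DWF_k(u)\subset\Gamma_k$ for the dual wave front limit, and a support set $C_1\in\mathcal{C}$ for the support limit. Since $L_n(u)$ is smooth it has empty wave front set and lies in every such step, so matters reduce to convergence in the single base space $\mathcal{D}'_{\overline{\gamma\cap\Pi}}(U,\{F\subset C_1\}\!:\!\delta;E)$ with its topology $\mathcal{I}_H$. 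There $L_n(u)\to u$ is the classical truncation-plus-mollification statement, $(\chi_n-1)u\to0$ and $S_{\rho^{(n)}}(v)\to v$ both in the strong seminorms $P_B$ and, on cones avoiding $\Gamma_k$, in the seminorms $P_{i,k,f,V}$, by the standard estimates (cf. \cite{Hormander},\cite{BrouderDangHelein},\cite{BrouderDabrowski}). The point I would stress is that these estimates are \emph{quantitative}: $P((\chi_n-1)u)=0$ for $n$ large and the mollification error is $O(t_n)$ in each such seminorm, so that $\{(L_n(u)-u)/\omega_n\}_n$, for a suitable $\omega_n\downarrow0$, is \emph{bounded} in the base space — exhibiting such a single bounded disk is exactly Mackey convergence. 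Here enlargeability (Proposition~\ref{EnlargeableStable}) is indispensable for the support localisation: one has $\text{supp}(L_n(u))\subset(\text{supp}(u))_{\epsilon_n}$ with $\epsilon_n\to0$ governed by $t_n$, and since $\mathcal{C}$ is enlargeable there is $\epsilon>0$ with $(\text{supp}(u))_\epsilon\in\mathcal{C}$; thus for $n$ beyond some $N(u)$ depending on $\text{supp}(u)$ — whence a \emph{subsequence/tail} — all these supports sit in the one step $C_1=(\text{supp}(u))_\epsilon$. This is precisely why only a subsequence Mackey converges.

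\textbf{Bounded sets and the countably generated case.} For $B$ bounded in the inductive-limit bornology, $B$ is contained and bounded in one step, so a single $C_0\in\mathcal{C}$, a single $\Pi$ and a single $\delta$ serve all $u\in B$. Enlargeability applied once to $C_0$ furnishes $\epsilon$, and choosing the subsequence $k_n(B)$ with $\epsilon_{k_n}\le\epsilon$ confines $\{L_{k_n}(u):u\in B,n\in\N\}$ to the step of $(C_0)_\epsilon$; equiboundedness of the families $\{L_{\chi_n}\}$ and $\{S_{\rho^{(n)}}\}$ (each sending bounded sets to bounded sets, uniformly) then gives boundedness there, while $L_{k_n}(u)\to u$ holds pointwise by the previous step. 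When $\mathcal{C}$ is countably generated I would run the same argument along a cofinal increasing sequence $C^{(0)}\subset C^{(1)}\subset\cdots$ (Example~\ref{countgenclosed}); putting $\tilde\epsilon^{(m)}=\min_{j\le m}\epsilon(C^{(j)})$ and selecting the radii with $\epsilon_{k_n}\le\tilde\epsilon^{(n)}$, a diagonal argument makes a \emph{single} subsequence work for every $B$, the finitely many initial terms being absorbed using that $\mathcal{C}$ is stable under subsets and finite unions.

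\textbf{Strict normality, and the main obstacle.} Finally, since $L_n(u)\in\mathcal{D}(U;E)$ and $L_n(u)\to u$ in $\mathcal{I}_{iii}$, the image of $\mathcal{D}(U;E)$ is dense for $\mathcal{I}_{iii}$, hence for every coarser $\mathcal{I}_\alpha$; the topologies carrying a $b$ in the middle slot are handled by the same approximants, the extra seminorms $P_{i,-k,f,\R^d}$ converging because $L_n(u)\to u$ already holds in the stronger $P_B$. Combined with the continuous dense inclusions $\mathcal{D}(U;E)\hookrightarrow(\mathcal{D}'_{\gamma,\Lambda}(U,\mathcal{C};E),\mathcal{I}_\alpha)\hookrightarrow\mathcal{D}'(U;E)$ this yields the strictly normal system. \emph{The hard part} is the second step: reconciling the support-enlarging effect of mollification with the requirement of remaining inside one step of the (uncountable) support inductive limit, and upgrading topological convergence to Mackey convergence uniformly enough to treat bounded sets — it is exactly here that enlargeability together with the quantitative mollification estimates are essential.
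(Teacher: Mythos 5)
Your proposal follows essentially the same route as the paper: the same operators $L_n=S_{\rho}\circ L_{\chi}$ built from truncation and chart-wise mollification, the same localisation to one step of each of the three inductive limits, enlargeability to keep the mollified supports inside a single $C_\epsilon\in\mathcal{C}$ (whence only a subsequence/tail), and a diagonal extraction along a cofinal sequence when $\mathcal{C}$ is countably generated. The one place where you are more optimistic than the paper is the claimed direct $O(t_n)$ rate in every seminorm $P_{i,k,f,V}$: the paper only obtains plain convergence, and only for the shifted sequence $\delta'=(\Gamma_{k+1})_k$ (the order-$k$ estimate costs one order of polynomial growth plus the commutator terms handled by H\"ormander's (8.1.3)'), and then upgrades to Mackey convergence by a further extraction as in \cite[lemma 22]{BrouderDabrowski} — but since the statement only asks for a Mackey-convergent subsequence, this does not affect correctness.
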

\begin{proof}
This is an easy variant of \cite[Th 8.2.3 p. 262]{Hormander} inspired by \cite{Schwarz}. We fix the map $S_\rho$ of the previous lemma. Take $\chi_n\in \mathcal{D}(U)$, 
$\chi_n$ equal to $1$ on each compact set for $n$ large enough. Take also $\bar{\rho}_n\in \mathcal{D}(R^d)$ of integral $1$, non-negative,  with $\text{supp}(\bar{\rho}_n)\subset B(0,1/n) $ and let $$\mathcal{N}=\{ (n,m)\in \N^2 :\text{supp}(\bar{\rho}_n)\subset B(0,\min\{\delta_i/3: i\in I_m\})\}$$ with $I_m=\{i\in I: \overline{U_i}\cap \text{supp}(\chi_m)\neq \emptyset\}$ (note that for an $m$, there exists $N_m$, such that for $n\geq N_m, (n,m)\in \mathcal{N}$). Recall $\delta_i$ is defined in lemma \ref{contuinuousdenseInj} and is chosen such that the $\rho_i$ (made for compatibility with the chart system) in the definition of $L_\rho$, are such that $\rho_i$ are equal to $1$ on $B(0,\delta_i/2)$ explaining the $\delta_i/3$ appearing above. Thus we have $\rho_i\bar{\rho}_n=\bar{\rho}_n$, for $(n,m)\in \mathcal{N}, i\in I_m$ and we don't have to consider $\rho_i$ anymore.

Fixing a bijection $b:\N\to \mathcal{N}$ we then let $L_n=S_{\bar{\rho}_{b_1(n)}}\circ L_{\chi_{b_2(n)}}.$ The continuity follows from the previous lemma.
Thus take a bounded set as stated, i.e. it is uniformly supported in $C\in\mathcal{C}$, $\forall u\in B,\ WF(u)\subset \overline{\gamma\cap \Pi}$, for $\Pi\subset \Lambda$ closed and there is $\delta=(\Gamma_n)\in \Delta(\gamma\cap \Pi)$ such that $B$ is bounded in $(\mathcal{D}'_{\overline{\gamma}}(U,\mathcal{F}:\delta;E).$ By enlargeability let $\epsilon $ with $C_\epsilon\in \mathcal{C}$, define $\mathcal{N}_\epsilon=\{ (n,m)\in \mathcal{N}: \forall i\in I_m,([\text{supp}(\bar{\rho}_n)]+\varphi_i(C\cap \text{supp}(f_i)))\subset \varphi_i(\overline{U_i}\cap (\overline{U_i}\cap C)_{u\epsilon_i})\}$ which contains again for each $m$ eventually all $n$ since $I_m$  finite 
  $ \varphi_i(C\cap \text{supp}(f_i)))$ are compact contained in the open
$\varphi_i({U_i}\cap Int((\overline{U_i}\cap C)_{u\epsilon_i/2})$ thus at positive distance of its complement. In this way for $(n,m)\in \mathcal{N}_\epsilon$, $S_{n}\circ L_{m}(B)$ is uniformly supported in $C_\epsilon.$ Take $\{k_n,n\in \N\}\subset b^{-1}(\mathcal{N}_\epsilon)$ increasing with $b_i(k_n)$ also increasing. We want to show that this $k(B)$ is satisfactory, i.e. first $L_{k}(B)$ bounded in $\mathcal{D}'_{\overline{\gamma}}(U,\mathcal{F}:\delta;E).$ First to check it is bounded in $\mathcal{D}'(U,E)$ it suffices to check it is weakly bounded, i.e. $L_{k_n}^*(v)$ bounded in $\mathcal{D}(U,E^*)$ for any $v$ in this space,this is well known since our conditions guaranty $L_{k_n}^*(v)\to v$ by standard results on truncation and convolution. This also shows $L_{k_n}(u)\to u$ weakly in $\mathcal{D}'(U,E).$ Since the choice of $k_n$ depends only of $C$, if $\mathcal{C}$ is countably generated, one can extract diagonally, to work for an exhausting sequences of $C$ and obtain the same $k_n$ working for any $B$, i.e. which is for $n$ large enough with $b(k_n)\in \mathcal{N}_\epsilon$ above.

It remains to bound $P_{i,k,f,V_k}(L_{k_n}(u))\leq\sum_{l=1}^eP_{k,f\circ\varphi_i^{-1},V_k}([\rho_i\overline{\rho}_{b_1(k_n)}]*[(\chi_{b_2(k_n)}uf_i)\circ \varphi_i^{-1}]_l)$ for any $i\in I,f\in \mathcal{D}(U)$ with $\text{supp} f\subset U_i, d\varphi_i^*(\text{supp} (f\circ\varphi_i^{-1})\times V_k)\cap \Gamma_k=\emptyset.$ We can now argue as in \cite[Th 8.2.3 p. 262]{Hormander}, take $g\in \mathcal{D}(U_i)$ equal to 1 on a neighborhood of $\text{supp} f$ and $W_k\supset Int(W_k)\supset V_k$ with  $d\varphi_i^*(\text{supp} (g\circ\varphi_i^{-1})\times W_k)\cap \Gamma_k=\emptyset.$ Then for $n$ large enough, uniform in $u\in B$ since depending only on the support condition, $(f\circ\varphi_i^{-1})([\rho_i\rho_{b_1(k_n)}]*[(\chi_{b_2(k_n)}uf_i)\circ \varphi_i^{-1}]_l)=(f\circ\varphi_i^{-1})([\rho_{b_1(k_n)}]*[(guf_i)\circ \varphi_i^{-1}]_l)$ and use his estimate (8.1.3)' to conclude to the expected boundedness uniform in $u\in B$ of $P_{i,k,f,V_k}(L_{k_n}(u))$, using only the bound on $P_{i,k,g,W_k}(u)$ and the uniform polynomial boundedness of $u\in B$. Then the finite number of small $n$ are treated by continuity of each $L_{k_n}$ and thus boundedness of each $L_{k_n}(B).$

Finally, if $\delta'=(\Gamma_k'), \Gamma_{k}'=\Gamma_{k+1}\supset \Gamma_k$, 
 the convergence  $L_{k_n}(u)\to u$ in $\mathcal{D}'_{\overline{\gamma}}(U,\mathcal{F}:\delta';E),$ follows from the strong convergence in $\mathcal{D}'(U,E)$ implying uniform convergence on compacts of Fourier transforms, the boundedness in $\mathcal{D}'_{\overline{\gamma}}(U,\mathcal{F}:\delta;E),$ and the argument showing equivalence of condition (ii) and (ii)' in Hormander's definition 8.2.2 p262 \cite{Homander}.


 The Mackey convergence (after another extraction) follows as in \cite[lemma 22]{BrouderDabrowski} because the topology of $\mathcal{D}'_{\overline{\gamma}}(U,\mathcal{F}:\delta';E),$ is based on the strong seminorm of $\mathcal{D}'(U;E)$ and countably many supplementary seminorms.

The condition on strict normality is easy since the continuous injections have been obtained in the previous lemma and we just checked  the only missing strict density since $\mathcal{I}_{iii}$ is the strongest topology (or rather strongest bornology to include also $\mathcal{I}_{ibi}$ and using the established Mackey density).
\end{proof}

Here is a  summary of the main results in
\cite{BrouderDabrowski}, slightly generalized to vector bundles and general support conditions. For quasi-LB-spaces (which are strictly webbed) we refer to \cite{ValdiviaQuasiLB}.

Recall the Arens topology on a dual is the topology of uniform convergence on balanced compact convex subsets.
\begin{theorem}\label{DualityBD}
If $\Gamma$ is a closed cone and $\Lambda=-\Gamma^c$, $\mathcal{C}$ an enlargeable polar family of closed cones in $U$, then $(\mathcal{D}'_{\Gamma,\Gamma}(U,\mathcal{C};E),\mathcal{I}_{\alpha})$ and $(\mathcal{D}'_{\Lambda,\Lambda}(U,(\mathscr{O}_\mathcal{C})^o;E^*),\mathcal{I}_{\alpha})$ are duals of one another (for $\alpha=iii,imi,iPi,iPp,imp,iip,$). $\mathcal{D}'_{\Gamma,\Gamma}(U,\mathcal{C};E)$ is Hausdorff and  semi-reflexive  in all cases, and complete, nuclear for the projective topology $\mathcal{I}_{iip}=\mathcal{I}_{iPp}=\mathcal{I}_{ipp}=\mathcal{I}_{imp}$, and for the inductive $\mathcal{I}_{iii}=\mathcal{I}_{ipi}=\mathcal{I}_{imi}=\mathcal{I}_{iPi}$ too if $\mathcal{C}$ is countably generated (see better results at the end of lemma \ref{bornologyIdentities} below), 
$\mathcal{D}'_{\Lambda,\Lambda}(U,(\mathscr{O}_\mathcal{C})^o;E^*)$ is Hausdorff, nuclear, ultrabornological and barrelled for its topologies $\mathcal{I}_{iii}=\mathcal{I}_{iip}=\mathcal{I}_{iPp}=\mathcal{I}_{iPi}=\mathcal{I}_{ipp}=\mathcal{I}_{ipi}=\mathcal{I}_{imi}=\mathcal{I}_{imp}$ which coincide with strong, Arens and Mackey topologies from its dual with topology $\mathcal{I}_{iii}$. 
Finally, if either $\mathcal{C}$ or $(\mathscr{O}_\mathcal{C})^o$ is countably generated, then $(\mathcal{D}'_{\Lambda,\Lambda}(U,(\mathscr{O}_\mathcal{C})^o;E^*),\mathcal{I}_{iii})$ is a quasi-LB-space 
 and so is $(\mathcal{D}'_{\Gamma,\Gamma}(U,\mathcal{C};E),\mathcal{I}_{\alpha})$ with $\alpha=iii$ in the first case and $\alpha=iip$ in the second case.
\end{theorem}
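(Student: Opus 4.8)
The plan is to deduce the whole statement from the closed/open cone case on an open subset of $\R^d$ with trivial bundle (which is exactly \cite{BrouderDabrowski}), upgraded in three independent directions: to vector bundles and manifolds, to the polar enlargeable support condition, and to the single duality pairing absent from \cite{BrouderDabrowski}. First I would invoke the localization maps of Lemma \ref{contuinuousdenseInj}. Because $\Gamma$ is closed and $\Lambda=-\Gamma^c$ is open, the maps $N_i$ are continuous for \emph{every} index $\alpha$, so $M=(M_i)_{i\in I}$ embeds $(\mathcal{D}'_{\Gamma,\Gamma}(U,\mathcal{C};E),\mathcal{I}_\alpha)$ (and its $\Lambda$-counterpart) into the countable product $\prod_{i\in I}$ of the local model spaces, while $N=\sum_{i\in I}N_i$ is a continuous left inverse since $\sum_i N_i\circ M_i(u)=\sum_i f_i u=u$. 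Thus each of our spaces is a complemented subspace (a topological retract) of a countable product of local $\R^d$-models, so nuclearity, semi-reflexivity, completeness, ultrabornologicality and the quasi-LB/class $\mathfrak{G}$ structure, all stable under countable products and complemented subspaces, transfer from the local case treated in \cite{BrouderDabrowski}.

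Next I would set up and identify the duality pairing. The support families $\mathcal{C}$ and $(\mathscr{O}_{\mathcal{C}})^o$ are designed (via the openly-generated machinery of Lemma \ref{opengenlemma}) to be polar to one another, so for $u\in\mathcal{D}'_{\Gamma,\Gamma}(U,\mathcal{C};E)$ and $v\in\mathcal{D}'_{\Lambda,\Lambda}(U,(\mathscr{O}_{\mathcal{C}})^o;E^*)$ the set $\overline{\text{supp}(u)\cap\text{supp}(v)}$ is compact, exactly as in the proof of Lemma \ref{DualitySupport}. On that compact set the relation $\Lambda=-\Gamma^c$ forces $WF(u)+WF(v)$ to avoid the zero section, so the product is a well-defined compactly supported distribution and $\langle u,v\rangle$ is obtained by pairing with a cutoff equal to $1$ near the common support, as in \cite{BrouderDabrowski}. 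This pairing is separating because $\mathcal{D}(U;E)$ is (Mackey) dense on both sides by Lemma \ref{normality}. To pin down the duals exactly, I would compute the dual of each defining limit termwise: the support (inductive/projective) limits by the polarity computations of Lemma \ref{DualitySupport} and Proposition \ref{FApropertiesSupport}, and the wave-front limits over closed subcones of $\Lambda$ (and over $\Delta(\Lambda)$ on the open side) by the open/closed duality of \cite{BrouderDabrowski}, supplemented by the one genuinely new $\mathcal{D}'_\Gamma$--$\mathcal{E}'_\Lambda$ pairing; matching equicontinuous sets then yields that the two spaces are Mackey and strong duals of one another.

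For the functional-analytic conclusions I would read completeness and nuclearity off the projective descriptions $\mathcal{I}_{iip}=\mathcal{I}_{iPp}=\mathcal{I}_{ipp}=\mathcal{I}_{imp}$, since a projective limit of the complete nuclear building blocks of Section 3 is again complete and nuclear; being complete and nuclear, $\mathcal{D}'_{\Gamma,\Gamma}$ is semi-Montel, hence semi-reflexive, and semi-reflexivity (a property of the dual pair and its bounded sets) then persists for every topology in the chain because they are all compatible with the duality and share the same bounded sets. Dually, $\mathcal{D}'_{\Lambda,\Lambda}$ is the strong dual of this complete nuclear space, so it is ultrabornological and barrelled by exactly the argument used at the end of the proof of Proposition \ref{FApropertiesSupport}. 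The delicate point, and the one I expect to be the main obstacle, is the asserted coincidence of the long chains of topologies: nuclearity and completeness are visible only on the projective side, whereas ultrabornologicality, barrelledness and the identity $\mathcal{I}_{iii}=\text{strong}=\text{Arens}=\text{Mackey}$ live on the inductive side, so the two descriptions must be forced to agree. This requires showing the uncountable inductive limits (over all closed $\Pi\subset\Lambda$ and all $\delta\in\Delta(\Lambda)$) are \emph{regular}, so their bounded sets coincide with those of the projective picture and they can be traded for equivalent countable inductive limits via Lemma \ref{openclosed}; the boundedness-coincidence arguments of Lemma \ref{DualitySupport} and Proposition \ref{FApropertiesSupport} are the template here.

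Finally, for the quasi-LB statement I would observe that when $\mathcal{C}$ (resp.\ $(\mathscr{O}_{\mathcal{C}})^o$) is countably generated the corresponding support inductive limit is countable, and combining this with the reduction of the wave-front inductive limits to countable ones (Lemma \ref{openclosed}) and with the countable-product reduction of the first paragraph exhibits a countable web of Fréchet/Banach pieces, i.e.\ a quasi-LB representation in the sense of \cite{ValdiviaQuasiLB}. The index $\alpha=iii$ or $\alpha=iip$ in the two cases is dictated by which of the two spaces carries the requisite countable inductive limit directly, and which carries it only after passing to the dual projective description.
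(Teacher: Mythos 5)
Your overall architecture (localize via $M_i,N_i$, build the pairing from H\"ormander's product theorem with a cutoff supplied by the polar support machinery, read completeness and nuclearity off the projective description, get ultrabornologicality of the $\Lambda$-side as the strong dual of a complete nuclear space, and reduce the bounded-set/regularity question to the template of Lemma \ref{DualitySupport}) does match the paper's. But your first paragraph contains a step that fails. The map $N=\sum_{i\in I}N_i$ is not defined on the whole countable product with values in $\mathcal{D}'_{\Gamma,\Gamma}(U,\mathcal{C};E)$: for an arbitrary family $(v_i)_{i\in I}$ in the product, $\sum_i N_i(v_i)$ has support $\overline{\bigcup_i \mathrm{supp}\,N_i(v_i)}$, which is an arbitrary closed subset of $U$ and need not belong to $\mathcal{C}$ (already for $\mathcal{C}=\mathcal{K}$ the sum of infinitely many nonzero compactly supported pieces is not compactly supported). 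So the spaces are only closed subspaces, not retracts, of the product, and ultrabornologicality does not pass to closed subspaces; your transfer of that property (and of barrelledness) via the product collapses. You partially recover it later by the strong-dual-of-a-complete-nuclear-space argument, which is indeed what the paper uses, but the retract claim as stated is wrong. Worse, the local pieces $M_i(u)=(f_iu)\circ\varphi_i^{-1}$ are \emph{compactly supported}, so for $\Gamma$ closed they live in $\mathcal{D}'_{\Gamma,\Gamma}(\varphi_i(U_i),\mathcal{K})=\mathcal{E}'_{\Gamma}$-type spaces, which are precisely \emph{not} among the spaces treated in \cite{BrouderDabrowski} (that reference handles $\mathcal{D}'_\Gamma$ with full support and $\mathcal{E}'_\Lambda$ with $\Lambda$ open). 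Your reduction is therefore circular unless you first establish the compact-support/closed-cone case independently.

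That case is the second, and more substantive, gap: the duality $(\mathcal{D}'_{\Gamma,\Gamma}(U,\mathcal{K};E),\mathcal{I}_{iii})'=\mathcal{D}'_{\Lambda,\Lambda}(U,\mathcal{F};E^*)$, i.e.\ the pairing of $\mathcal{E}'_\Gamma$ with $\mathcal{D}'_\Lambda$, is the one piece genuinely absent from \cite{BrouderDabrowski} (you name the new pairing as ``$\mathcal{D}'_\Gamma$--$\mathcal{E}'_\Lambda$'', which is the \emph{old} one), and it is exactly the dual of the first nontrivial term of the support inductive limit, so your ``termwise'' computation cannot start without it. The paper proves it by composing a continuous functional $T$ with the multiplications $L_f$ to get elements $T_f\in\mathcal{D}'_{\Lambda,\Lambda}(U,\mathcal{K};E^*)$ and then invoking locality of the wave front set to glue them into an element of $\mathcal{D}'_{\Lambda,\Lambda}(U,\mathcal{F};E^*)$; some argument of this kind must be supplied. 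Relatedly, ``compute the dual of each defining limit termwise'' is not automatic on the projective side (the dual of a projective limit is the inductive limit of duals only for reduced limits), which is why the paper has to route the argument through the density statements of Lemma \ref{normality} and through explicit uniform estimates (the analogue of \cite[Lemma 10]{BrouderDabrowski}) to identify $\mathcal{I}_{iii}$ with the strong topology. Your remaining points --- semi-reflexivity from completeness plus nuclearity plus coincidence of bounded sets, and the quasi-LB statement from countable webs --- are sound once these two gaps are filled.
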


\begin{proof}We already explained in the previous section why $\mathcal{I}_{imp}=\mathcal{I}_{iPp},\mathcal{I}_{imi}=\mathcal{I}_{iPi}$ in this case (when the condition on wave front is intersection of closed and open, especially when it is either closed or open). From equation \eqref{EqualConep}, one also deduces 
$\mathcal{I}_{imp}=\mathcal{I}_{ipp},\mathcal{I}_{imi}=\mathcal{I}_{ipi}$ in the open cone case from a terminated projective limit and in the closed cone case from the identification of $(\mathcal{D}'_{\Lambda,\Lambda}(U,\mathcal{F};E),\mathcal{I}_{im})$ with a strong=Mackey dual as proved later here below and from the identification of $(\mathcal{D}'_{\Gamma,\Gamma}(U,\mathcal{F};E),\mathcal{I}_{ii})$ as a bornological dual topology of the bornological inductive limit $\underrightarrow{\lim}_{\Pi\subset -\Gamma^c}\mathcal{D}'_{ \Pi,\Pi}(U, \mathcal{K};E^*)$  in theorem \ref{duals} below or in \cite{BrouderDabrowski}. Indeed, the inductive limit over $\Pi\subset -\Gamma^c$ converts into the expected projective limit on $\lambda=\lambda_1=\lambda_2=-\Pi^c\supset \Gamma$, namely   $\underleftarrow{\lim}_{\lambda\supset \Gamma}[\left(\mathcal{D}'_{ \lambda,\lambda}(U, \mathcal{F};E\right),\mathcal{I}_{im}]$.
We thus don't consider the topologies with middle index $p,P$ in this proof anymore.

 When $|I|=e=1,$ $\mathcal{C}=\mathcal{F}$ and thus $(\mathscr{O}_\mathcal{C})^o=\mathcal{K}$ and for the normal topologies the result is exactly \cite[Prop 7, 9,27, 28]{BrouderDabrowski} and all topologies $\mathcal{I}_{iii},\mathcal{I}_{iip},\mathcal{I}_{imi},\mathcal{I}_{imp}$  coincide with those called normal/inductive topology in \cite{BrouderDabrowski}. We explain the agreement of those topologies even when we don't have $|I|=e=1.$ Indeed, we already treated the case $\mathcal{F}$ with closed cones in the remarks above, and first note that for inductive limits over compacts $\mathcal{I}_{\alpha i}=\mathcal{I}_{\alpha p}$ for $\alpha$ having two letters, since the projective limit is then terminated because any $C\in\mathcal{F}$ is included in the complement of a compact $K^c$, for which $\{Int(K^c)\}^o=\mathcal{K}$. We then note that the topologies $\mathcal{I}_{ii},\mathcal{I}_{im}$ coincide on $\mathcal{D}'_{\gamma,\Lambda}(U,\{F\in \mathcal{F}, F\subset C\};E^*)$ for $C$ compact, since we can compare the induced topologies of the projective limit and the projective limit of induced topologies, and this one is terminated at some $n$ after which $C\subset K_n=\{x\in U: d(x,U^c)\leq 1/n, |x|\leq n\}$. After this same $n$ or a larger one, fix an $f\in\mathcal{D}(U)$ equal to $1$ on $C$ and equal to $0$ on $K_n^c$, multiplication by $f$ define the identity map  on $\mathcal{D}'_{\gamma,\Lambda}(U,\{F\in \mathcal{F}, F\subset C\};E^*)$ and moreover defines a continuous map $\underrightarrow{\lim}_{\Pi\subset \Lambda}(\mathcal{D}'_{(\gamma\cap \Pi)_{(n)},(\overline{\gamma\cap\Pi})_{(n)}}(U,\mathcal{F};E^*),\mathcal{I}_{i}))\to \underrightarrow{\lim}_{\Pi\subset \Lambda}(\mathcal{D}'_{(\gamma\cap \Pi),(\overline{\gamma\cap\Pi})}(U,\mathcal{F};E^*),\mathcal{I}_{i}))$ (See in lemma \ref{contuinuousdenseInj} above). Thus one gets  by restriction the expected missing  map $(\mathcal{D}'_{\gamma,\Lambda}(U,\{F\in \mathcal{F}, F\subset C\};E^*),\mathcal{I}_{im})\to (\mathcal{D}'_{\gamma,\Lambda}(U,\{F\in \mathcal{F}, F\subset C\};E^*),\mathcal{I}_{ii})$. This concludes to the equality $\mathcal{I}_{ii}=\mathcal{I}_{im}$ on $\mathcal{D}'_{\gamma,\Lambda}(U,\{F\in \mathcal{F}, F\subset C\};E^*)$ for $C$ compact and thus to all the identifications of topologies stated above in this case (the identification of $\mathcal{I}_{iii}$ with the topology of \cite{BrouderDabrowski} is obvious by inductive limit generalities). 

For the manifold/vector bundle case (i.e. when we don't have $|I|=e=1$) we still have to prove the expected properties for $\mathcal{I}_{iii}$. 
 The duality pairing is built in adding one more partition of unity for the covering at the beginning. Thus fix $f_i'\in\mathcal{D}(U_i)$  equal to one on a neighborhood of $\text{supp}{f_i}$. From this and $\sum f_i=1$ the duality pairing needs to be for $v\in (\mathcal{D}'_{\Lambda,\Lambda}(U,(\mathscr{O}_\mathcal{C})^o;E^*),\mathcal{I}_{iii}),u\in(\mathcal{D}'_{\Gamma,\Gamma}(U,\mathcal{C};E),\mathcal{I}_{iii})$:
 
 \begin{equation}\label{dualitymanifold}\langle u,v\rangle=\sum_{i\in I}\langle f_iu,f_i'v \rangle=\sum_{i\in I}\langle M_{f_i}(u),M_{f_i'}(v) \rangle.\end{equation}
 From the continuity of $M_{f_i}$ in lemma \ref{contuinuousdenseInj}, finiteness of the sum by compactness of one support, one deduces the continuity of the pairing from the one of the last bracket, which is the one from \cite{BrouderDabrowski}. (We will still refer in this way about this slightly new bracket.) One also deduces from that the conclusion of \cite[lemma 10]{BrouderDabrowski} namely $\sup_{u\in B}|\langle u, v\rangle|$ is continuous in $v$ and we will use this later. To compute the duals take $T\in (\mathcal{D}'_{\Lambda,\Lambda}(U,(\mathscr{O}_\mathcal{C})^o;E^*)'$ (resp. $T\in (\mathcal{D}'_{\Lambda,\Lambda}(U,(\mathscr{O}_\mathcal{C})^o;E^*)'$) giving a distribution $v\in\mathcal{D}'(U,\mathcal{C};E)$ (resp $u\in \mathcal{D}'(U,\mathcal{C};E^*)$) and it suffices to check their wave front set. But $T\circ L_{f_i}\circ N_i$ is a continuous linear form on the corresponding space on $\varphi_i(U_i)$ and applying the results of \cite[Prop 7, 9]{BrouderDabrowski}, they correspond to distributions $u_i$ in the domain of $N_i$ in such a way that $vf_i=N_i(u_i)$ (resp. $uf_i=N_i(u_i)$) so that summing over $i$, one gets the conclusion on the wave front sets of $u,v.$ 
 
 We will explain functional analytic properties of this case with the general support case below.
 

Next we deal with the case $\mathcal{C}=\mathcal{K}$ and thus $(\mathscr{O}_\mathcal{C})^o=\mathcal{F}$. One can note that in the present case $\mathcal{I}_{\alpha i}=\mathcal{I}_{\alpha p}$, for any $\alpha=ii,im$ for obvious reasons (essentially terminated inductive or projective limits). 
A continuous linear functional on $(\mathcal{D}'_{\Lambda,\Lambda}(U,\mathcal{F};E^*),\mathcal{I}_{ii})$ induces, by definition of inductive limit, a  continuous linear functional on $(\mathcal{D}'_{\Pi,\Pi}(U,\mathcal{F};E^*),\mathcal{I}_{i})$, so that $(\mathcal{D}'_{\Lambda,\Lambda}(U,\mathcal{F};E^*),\mathcal{I}_{i})'$ embeds into $\mathcal{D}'_{-\Pi^c,-\Pi^c}(U,\mathcal{K};E)$ for any $\Pi\subset \Lambda$ closed cone. Since the intersection of these spaces is $\mathcal{D}'_{\Gamma,\Gamma}(U,\mathcal{K};E),$ one gets one direction of the computation of the dual. 

Likewise, if $T$ is a continuous linear form on $(\mathcal{D}'_{\Gamma,\Gamma}(U,\mathcal{K};E),\mathcal{I}_{iii})$ (the stronger and thus only relevant topology) then for $f\in\mathcal{D}(U)$, $T_f=T\circ L_f:u\mapsto T(uf)$ is a continuous linear map on $\mathcal{D}'_{\Gamma,\Gamma}(U,\mathcal{F};E),$
(See again lemma \ref{contuinuousdenseInj} above)
Thus $T_f\in \mathcal{D}'_{\Lambda,\Lambda}(U,\mathcal{K};E^*).$ Since $T$ defines by restriction a distribution, taking $f\in \mathcal{D}(U)$ constant equal to $1$ on large open sets, one checks by locality of the wave front set that $T\in \mathcal{D}'_{\Lambda,\Lambda}(U,\mathcal{F};E^*).$ The density of smooth functions then implies that the restriction map gives indeed an injection from $(\mathcal{D}'_{\Gamma,\Gamma}(U,\mathcal{K};E),\mathcal{I}_{iii}))'\to \mathcal{D}'_{\Lambda,\Lambda}(U,\mathcal{F};E^*).$ 

But the duality pairing between these spaces is obvious to build, for $v\in \mathcal{D}'_{\Lambda,\Lambda}(U,\mathcal{F};E^*)$ and $u\in \mathcal{D}'_{\Gamma,\Gamma}(U,\mathcal{K};E)$ with support in $K$, take $f\in \mathcal{D}(U)$ constant equal to $1$ on a neighborhood of $K$, $v\in \mathcal{D}'_{\Lambda,\Lambda}(U,\mathcal{F};E^*)$, then define $\langle u,v\rangle =\langle u,vf\rangle,$ with the bracket of the duality built in \cite{BrouderDabrowski}. Since the identity $(\mathcal{D}'_{\Gamma,\Gamma}(U,\mathcal{K};E),\mathcal{I}_{\alpha})\to (\mathcal{D}'_{\Gamma,\Gamma}(U,\mathcal{F};E),\mathcal{I}_{H})$ is continuous (for any $\alpha=iii,iip,imi,imp$ which coincides in this case), and the former is induced by an inductive limit on compact support (for each element of which on can fix $f$) the continuity in $u$ follows for the inductive topology by the results in \cite{BrouderDabrowski}. 
The continuity in $v$ follows from the one of $L_f:(\mathcal{D}'_{\Lambda,\Lambda}(U,\mathcal{F};E^*),\mathcal{I}_{\alpha})\to (\mathcal{D}'_{\Lambda,\Lambda}(U,\mathcal{K};E^*),\mathcal{I}_{\alpha}),$ (for any $\alpha=iii,iip,imi,imp$ which coincides at the target space in this case). 

\medskip

 We are now ready to deal with the general case of the duality for the inductive topologies. From the continuous maps $(\mathcal{D}'_{\Gamma,\Gamma}(U,\mathcal{K};E),\mathcal{I}_{\alpha})\to (\mathcal{D}'_{\Gamma,\Gamma}(U,\mathcal{C};E),\mathcal{I}_{\alpha}),(\mathcal{E}(U,(\mathscr{O}_\mathcal{C})^{oo};E)\to (\mathcal{D}'_{\Gamma,\Gamma}(U,\mathcal{C};E),\mathcal{I}_{\alpha})$
with dense ranges (for any $\alpha=iii,iip,imi,imp$ which coincides at the source space in this case), a continuous linear functional on  $(\mathcal{D}'_{\Gamma,\Gamma}(U,\mathcal{C};E),\mathcal{I}_{\alpha}) $ restricts to a distribution 
  in $(\mathcal{D}'_{\Lambda,\Lambda}(U,(\mathscr{O}_\mathcal{C})^o;E^*)$ (from the case above and from proposition \ref{FApropertiesSupport}). 
 
  Similarly, from the injections $$(\mathcal{D}'_{\Lambda,\Lambda}(U,\mathcal{K};E^*),\mathcal{I}_{\alpha})\to (\mathcal{D}'_{\Lambda,\Lambda}(U,(\mathscr{O}_\mathcal{C})^o;E^*),\mathcal{I}_{\alpha}),(\mathcal{E}(U,\mathcal{C}^{o};E^*)\to (\mathcal{D}'_{\Lambda,\Lambda}(U,(\mathscr{O}_\mathcal{C})^o;E^*),\mathcal{I}_{\alpha})$$ (for any $\alpha=iii,iip,imi,imp$ which coincides at the source space in this case too) one deduces $(\mathcal{D}'_{\Lambda,\Lambda}(U,(\mathscr{O}_\mathcal{C})^o;E^*),\mathcal{I}_{\alpha})'\hookrightarrow\mathcal{D}'_{\Gamma,\Gamma}(U,\mathcal{C};E).$ 

It remains, to finish the computation of duals, to build the duality pairing in the general case 
 and prove continuity with respect to the topologies $\mathcal{I}_{\alpha p}$ ($\alpha=ii,im$).  Take $v\in \mathcal{D}'_{\Lambda,\Lambda}(U,(\mathscr{O}_\mathcal{C})^o;E^*))$ with support in $D\in (\mathscr{O}_\mathcal{C})^o$ and $u\in \mathcal{D}'_{\Gamma,\Gamma}(U,\mathcal{C};E)$ with support in $C\in \mathcal{C}$. Since by enlargeability $C_\epsilon\in \mathcal{C},D_\epsilon\in (\mathscr{O}_\mathcal{C})^o$ for some $\epsilon_i<1$ we have by proposition \ref{EnlargeableStable} $\overline{C_\epsilon\cap D_\epsilon}$ is compact and especially in this way $Int(D_\epsilon)\in  \mathcal{C}^o,Int(C_\epsilon)\in (\mathscr{O}_\mathcal{C})^{oo}.$ 

Thus fix $f\in \mathcal{D}(U)$  equal to $1$ on $\overline{C_\epsilon\cap D_\epsilon}\supset \text{supp}(u)\cap\text{supp}(v).$ By H\"ormander's Thm. $uv$ exists and $uv=uvf$ and thus we define the duality pairing $\langle u,v\rangle=\langle u,vf\rangle,$ and the above pairing is the one from  \cite{BrouderDabrowski}. To prove continuity in $u$ (resp. $v$), it suffices to find an $O\in \mathcal{C}^o$ (resp. $(\mathscr{O}_\mathcal{C})^{oo}$)  say $O=Int(D_\epsilon)$ (resp. $O=Int(C_\epsilon)$) such that the bracket is continuous in the inductive topology $\mathcal{I}_{\alpha i}$ with $\mathcal{C}$ replaced by $\{O\}^o\ni C$ (resp. $(\mathscr{O}_\mathcal{C})^{o}$ replaced by $\{O\}^o\ni D$). Thus it suffices, by definition of the inductive limit as soon as the support is fixed by $C$ (resp. $D$)
, to prove continuity for $(\mathcal{D}'_{\Gamma,\Gamma}(U,\mathcal{F};E),\mathcal{I}_{\alpha })$ (resp. $(\mathcal{D}'_{\Lambda,\Lambda}(U,\mathcal{F};E^*),\mathcal{I}_{\alpha })$) which is done in \cite{BrouderDabrowski} or in the paragraph above.\footnote{\label{TechnicalNote}Note that if we replace $\mathcal{C}$ by $\{F\in \mathcal{F}, F\subset C\}$ and $(\mathscr{O}_\mathcal{C})^o$ by $Int(C_\epsilon)^o$ for $\epsilon_i >0$ we still have $\overline{C\cap D}\subset \overline{Int(C_\epsilon)\cap D}$ compact. Thus we can still build the duality map for the topologies  $\mathcal{I}_{\alpha i}$.}



To prove $\mathcal{D}'_{\Gamma,\Gamma}(U,\mathcal{C};E)$ semi-reflexive, we compute its strong dual as $(\mathcal{D}'_{\Lambda,\Lambda}(U,(\mathscr{O}_\mathcal{C})^o;E^*),\mathcal{I}_{imi})$ (from this we also deduce $\mathcal{I}_{imi}=\mathcal{I}_{iii}$ and equality with strong and Arens topologies from Arens-Mackey Thm and computations of duals). First note that the bounded sets are the same in both projective and inductive topologies $\mathcal{I}_{iii}=\mathcal{I}_{imi},\mathcal{I}_{iip}=\mathcal{I}_{imp}$. Indeed, since the inductive topology is stronger, take a bounded set $B$ for the projective topology $\mathcal{I}_{iip}$. Thus for any $O\in \mathcal{C}^o$ it is bounded in $(\mathcal{D}'_{\Gamma,\Gamma}(U,\{O\}^o;E),\mathcal{I}_{iii}),$ but this (countable) inductive limit is strict with each embedding having closed image, thus it is regular, and thus $B$ is bounded in $\mathcal{D}'_{\Gamma}(U;E)$ with support uniformly in some $C_{O,B}\in\{O\}^o.$ Thus the support of the bounded set is in $C_B=\cap_{O\in \mathcal{C}^o}C_{O,B}\in \mathcal{C}^{oo}=\mathcal{C}$ by definition of the polar, and $B$ is thus bounded in $(\mathcal{D}'_{\Gamma,\Gamma}(U,\mathcal{C};E),\mathcal{I}_{iii})$ as expected. It thus suffices to compute the strong dual of the (regular) inductive limit $(\mathcal{D}'_{\Gamma,\Gamma}(U,\mathcal{C};E),\mathcal{I}_{iii})$. Reasoning as in proposition \ref{FApropertiesSupport} using Arens-Mackey theorem and the previous computation of duals, we only prove that the identity map from $(\mathcal{D}'_{\Lambda,\Lambda}(U,(\mathscr{O}_\mathcal{C})^o;E^*),\mathcal{I}_{imi})$ to $(\mathcal{D}'_{\Gamma}(U,\mathcal{C};E),\mathcal{I}_{iii})_b'$ is continuous. By definition of the inductive limit, it suffices to prove continuity on $(\mathcal{D}'_{\Lambda,\Lambda}(U,\{F\in\mathcal{F}, F\subset D\};E^*),\mathcal{I}_{im})$ for $D\in (\mathscr{O}_\mathcal{C})^o$. But a bounded set $B$ in $(\mathcal{D}'_{\Gamma,\Gamma}(U,\mathcal{C};E),\mathcal{I}_{iii})$ is in some $(\mathcal{D}'_{\Gamma}(U,\{F\in\mathcal{F}, F\subset C\});E)$, $C\in \mathcal{C}$, thus we can fix $f$ as above to compute the duality pairing and we have to bound $\sup_{u\in B}|\langle u,vf\rangle|$, but by \cite[Lemma 10]{BrouderDabrowski} (and its bundle entension explained above) this is uniformly bounded by the seminorms of $vf$ in $\mathcal{D}'_{\Lambda,\Lambda}(U,\mathcal{K};E^*)$ and the continuity of $L_f:v\mapsto vf,$ from $(\mathcal{D}'_{\Lambda,\Lambda}(U,\mathcal{F};E^*),\mathcal{I}_{im})\to \mathcal{D}'_{\Lambda,\Lambda}(U,\mathcal{K};E^*)$ we already checked concludes.

Moreover, note that from polarity of $\mathcal{C},$  $(\mathcal{D}'_{\Gamma,\Gamma}(U,\mathcal{C};E),\mathcal{I}_{iii}))=\underrightarrow{\lim}_{C\in \mathcal{C}} (\mathcal{D}'_{\Gamma,\Gamma}(U,\{C\}^{oo};E),\mathcal{I}_{iii})$ which is a regular inductive limit and
 we can compute the same strong dual by the general result for regular inductive limits so that we obtain $$(\mathcal{D}'_{\Gamma,\Gamma}(U,\mathcal{C};E),\mathcal{I}_{iii}))'_b=\underleftarrow{\lim}_{C\in \mathcal{C}} ((\mathcal{D}'_{\Gamma,\Gamma}(U,\{C\}^{oo};E),\mathcal{I}_{iii}))_b'.$$
 
For $C\in  \mathcal{C}$ openly generated, let $O=Int(C)$ and $F=\overline{V}, V\in \mathscr{O}_{(\mathscr{O}_\mathcal{C})^o}$, thus since $O,V$ open: $\overline{O\cap F}=\overline{O\cap V}=\overline{\overline{O}\cap V}$ which is compact since $\overline{O}\subset C\in \mathcal{C}$ and $V\in (\mathscr{O}_{(\mathscr{O}_\mathcal{C})^o})^{oo}=\mathcal{C}^o$ by lemma \ref{opengenlemma}, i.e. we got $O=Int(C)\in  (\mathscr{C}_{\mathscr{O}_{(\mathscr{O}_\mathcal{C})^o}})^o=(\mathscr{O}_\mathcal{C})^{oo}$
 since $(\mathscr{O}_\mathcal{C})^{o}$ is openly generated.

 We now want to build a continuous canonical map, with $O'=Int(C_\epsilon),$ from $$(\mathcal{D}'_{\Lambda,\Lambda}(U,\{O'\}^o;E^*),\mathcal{I}_{imi})\to  (\mathcal{D}'_{\Gamma}(U,\{C\}^{oo};E),\mathcal{I}_{iii})_b'.$$ 
 
 Note that if $\text{supp}(u)\subset F\in \{C\}^{oo},\text{supp}(v)\subset V\in \{O'\}^o$, $u\in \mathcal{D}'_{\Gamma}(U,\{C\}^{oo};E), v\in \mathcal{D}'_{\Lambda,\Lambda}(U,\{O'\}^o;E^*)$, we have for $\epsilon$ small enough (by the proof of proposition \ref{EnlargeableStable})  $V_{\eta}\cap C\subset (\overline{V\cap C_{\delta(\eta)}})_{w\eta}\subset (\overline{V\cap Int(C_\epsilon)})_{w\eta}$ is compact so that $Int(V_{\eta})\in \{C\}^{o}$ and $V\cap F\subset \overline{Int(V_{\eta})\cap F}$ is compact. We thus define $f\in\mathcal{D}(U)$ equal to $1$ in a neighborhood of $\overline{Int(V_{\eta})\cap F}.$ The duality bracket is thus defined as above by $\langle u,v\rangle=\langle u,vf\rangle$ (even though we are not in an enlargeable case as before, the flexibility in terms of choice of $O'$, using enlargeability of $\mathcal{C}$ took care of the issue). To get the stated continuity, one needs to consider $B$ of support in $F$, bounded in $\mathcal{D}'_{\Gamma,\Gamma}(U;\mathcal{F};E)$ (from the regularity of the inductive limit), and thus the continuity in $v$ follows similarly as before. Gathering the result in a projective limit we got continuous maps :
\begin{align*}((\mathcal{D}'_{\Lambda,\Lambda}(U,(\mathscr{O}_\mathcal{C})^{o};E^*),\mathcal{I}_{imp}))&=\underleftarrow{\lim}_{O\in(\mathscr{O}_\mathcal{C})^{oo}} ((\mathcal{D}'_{\Lambda,\Lambda}(U,\{O\}^{o};E^*),\mathcal{I}_{imi}))\\&\to \underleftarrow{\lim}_{C\in \mathcal{C}} ((\mathcal{D}'_{\Gamma,\Gamma}(U,\{C\}^{oo};E),\mathcal{I}_{iii}))_b'=(\mathcal{D}'_{\Gamma,\Gamma}(U,\mathcal{C};E),\mathcal{I}_{iii}))'_b
\\&=((\mathcal{D}'_{\Lambda,\Lambda}(U,(\mathscr{O}_\mathcal{C})^{o};E^*),\mathcal{I}_{iii}))\to ((\mathcal{D}'_{\Lambda,\Lambda}(U,(\mathscr{O}_\mathcal{C})^{o};E^*),\mathcal{I}_{imp})),\end{align*}
and thus  $\mathcal{I}_{imp}=\mathcal{I}_{iii}=\mathcal{I}_{iip}=\mathcal{I}_{imi}$ as stated (we proved equality of the stronger and the weaker topologies among the list). 
Since distributions are separated by smooth compactly supported functions, which are in the computed duals, all the topology involved are clearly Hausdorff.

We already noted that $\mathcal{I}_{imp}$ is built with projective limits and inductive limits equivalent to countable ones in our too cases, all the stated nuclearity then follows from the one of $\mathcal{D}'_\Gamma(U;E)$ proved in \cite{BrouderDabrowski} by stability properties of nuclearity, see e.g. \cite{Treves}.

On $\mathcal{D}'_{\Gamma,\Gamma}(U,\mathcal{C};E)$,  $\mathcal{I}_{imp}$ is a projective limit of countable strict inductive limits of  closed subspaces of $\mathcal{D}'_{\Gamma}(U;E)$ with the complete nuclear (normal) topology studied in \cite[proposition 12]{BrouderDabrowski} (the bundle case is similar, the topology its still defined as a closed subspace of a countable product of complete nuclear spaces. Again the completeness can be checked independently by the bornological dual computation  in section 2). Since the inductive limit involved is strict countable and consist of complete spaces, it is complete by \cite[§ 19.6.(3)]{Kothe}, since it based on nuclear spaces, it is nuclear (see e.g. \cite{Treves}). Then completeness is kept by projective limits \cite[§ 19.10.(2)]{Kothe}, and nuclearity too. Completeness and nuclearity of 
   $\mathcal{I}_{imi}$ follows similarly when $\mathcal{C}$ is countably generated.

Finally, $(\mathcal{D}'_{\Lambda,\Lambda}(U,(\mathscr{O}_\mathcal{C})^{o};E^*),\mathcal{I}_{iii})$ is barrelled as the strong dual of a semi-reflexive space, ultrabornological, as the strong dual of a complete nuclear space.

For the results about quasi-LB-spaces, $(\mathcal{D}'_{\Gamma,\Gamma}(U,\mathcal{F};E),\mathcal{I}_{H})$ is known from \cite[Corollary 13]{BrouderDabrowski} to be a closed subspace of a countable product of Fr\'echet Schwartz spaces and dual Fr\'echet Schwartz spaces and since those spaces are quasi-LB-spaces, and quasi-LB-spaces are stable by countable products, countable sums, closed subspaces and separated quotients we have the result for this case (again in the bundle case the argument is identical). Then, in all the cases, we can reduce all inductive/projective limits of $\mathcal{I}_{imp}$ or $ \mathcal{I}_{imi}$ to countable ones and the same stability properties concludes. 
  
\end{proof}

\part{Duality among Generalized H\"ormander spaces of distributions and functional analytic consequences.}
\medskip
 \section{General bornological and topological duality results}
The goal of this section is to compute topological and bornological duals in the general case. We start by checking that our notion of dual wave front set is the right one to compute bornological duals.  
 
\begin{lemma}\label{bornImbedding}
Let 
$\gamma\subset \Lambda\subset \overline{\gamma}$ cones 
and  $\mathcal{C}=\mathcal{C}^{oo}$ an enlargeable polar family of closed sets in $U$. Let $\lambda=-\gamma^c$ then the bornological dual
$(\mathcal{D}'_{\gamma,\Lambda}(U,\mathcal{C};E),\mathcal{I}_{iii})^{\times}$ embeds in $\mathcal{D}'_{\lambda,\overline{\lambda}}(U,(\mathscr{O}_\mathcal{C})^o;E^*)$, and the topological dual  $(\mathcal{D}'_{\gamma,\Lambda}(U,\mathcal{C};E),\mathcal{I}_{iii})'$ embeds in $\mathcal{D}'_{\lambda,{\lambda}}(U,(\mathscr{O}_\mathcal{C})^o;E^*)$. 
\end{lemma}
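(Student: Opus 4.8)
The plan is to identify a (bounded or continuous) functional $T$ with the distribution $v\in\mathcal{D}'(U;E^*)$ it induces by restriction to $\mathcal{D}(U;E)$, and then to verify for $v$ the three defining conditions of the target space: $\text{supp}(v)\in(\mathscr{O}_\mathcal{C})^o$, $DWF(v)\subset\lambda$, and $WF(v)\subset\overline{\lambda}$ (resp. $WF(v)\subset\lambda$ in the continuous case). The assignment $T\mapsto v$ is injective: by Lemma \ref{normality} the operators $L_n$ produce $L_n(u)\in\mathcal{D}(U;E)$ with a subsequence Mackey-converging to $u$, and a bounded linear form is sequentially continuous along Mackey-convergent sequences (if $u_n-u\in\epsilon_n B$ with $B$ bounded and $\epsilon_n\to 0$, then $|T(u_n)-T(u)|\le\epsilon_n\sup_{b\in B}|T(b)|\to 0$), so $T$ is determined by $v=T|_{\mathcal{D}(U;E)}$; a fortiori this holds for continuous $T$. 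The essential novelty, and the main obstacle, is the dual wave front condition $DWF(v)\subset\lambda=-\gamma^c$; the remaining conditions are reached by the same circle of ideas using smooth witnesses.

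For the support condition I would argue by contradiction, mimicking the converse direction of Lemma \ref{DualitySupport} but with purely smooth test functions, so as to stay inside the source space. If $\text{supp}(v)\notin(\mathscr{O}_\mathcal{C})^o$ then, since $(\mathscr{O}_\mathcal{C})^o$ is openly generated (Proposition \ref{EnlargeableStable} and Lemma \ref{opengenlemma}), there is $U'\in\mathscr{O}_\mathcal{C}$ with $\overline{\text{supp}(v)\cap U'}$ non-compact, hence a sequence $z_n\in\text{supp}(v)\cap U'$ escaping every compact of $U$. Choosing disjoint smooth bumps $g_n$ near $z_n$ with $\langle v,g_n\rangle\ne 0$ and scalars $c_n$, the family $\{c_n g_n\}$ has empty wave front set, is uniformly supported in a fixed member of $\mathcal{C}$, and is bounded in $\mathcal{D}'(U;E)$ (only finitely many $g_n$ meet any compact); it is therefore bounded for $\mathcal{I}_{iii}$, while the $c_n$ can be tuned so that $|\langle c_n g_n,v\rangle|\to\infty$, contradicting the boundedness of $T$.

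The heart of the proof is $DWF(v)\subset\lambda$, equivalently $DWF(v)\cap(-\gamma)=\emptyset$. Suppose to the contrary that $(x_0,\xi_0)\in DWF(v)$ with $(x_0,-\xi_0)\in\gamma$, and fix a chart index $i$ with $x_0\in U_i$. By Definition \ref{DWFDef} and Remark \ref{DWF} there is an integer $k$ and, after diagonalising over shrinking conic neighbourhoods of $\xi_0$, a sequence $\eta_m\to\infty$ with $\eta_m/|\eta_m|\to\xi_0/|\xi_0|$ and $(1+|\eta_m|)^{k}\,|\mathcal{F}[\psi\,(\chi v)\circ\varphi_i^{-1}](\eta_m)|\to\infty$, where $\chi$ is a bump equal to $1$ near $x_0$ and $\psi$ a fixed cutoff equal to $1$ near $\varphi_i(x_0)$. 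Let $u_m$ be the compactly supported section near $x_0$ whose expression in the chart $\varphi_i$ is $(1+|\eta_m|)^{k}\,\psi\, e^{-i\langle\cdot,\eta_m\rangle}$. Its chart Fourier transform is $(1+|\eta_m|)^{k}\hat\psi(\cdot+\eta_m)$, concentrated at $-\eta_m$, whose direction tends to $-\xi_0\in\gamma$; hence for any cone $V$ avoiding the direction $-\xi_0$ the rapid decay of $\hat\psi$ gives $P_{i,l,f,V}(u_m)\le C(1+|\eta_m|)^{k+l-N}\to 0$ for $N$ large, and the strong $\mathcal{D}'(U)$-seminorms $P_B(u_m)$ likewise tend to $0$. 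Choosing $\delta=(\Gamma_l)\in\Delta(\gamma)$ so that the ray over $(x_0,-\xi_0)$ lies in $\cup_l\Gamma_l\subset\gamma$, the family $\{u_m\}$ (all of empty wave front set and uniformly compactly supported) is bounded in $\mathcal{D}'_{\overline{\gamma}}(U,\mathcal{F}:\delta;E)$, hence for the $\mathcal{I}_{iii}$-bornology. On the other hand $\langle u_m,v\rangle=(1+|\eta_m|)^{k}\mathcal{F}[\psi\,(\chi v)\circ\varphi_i^{-1}](\eta_m)$ has modulus $\to\infty$, contradicting the boundedness of $T$. This is exactly the announced reduction to ``one point and one half-line in $\gamma$'', and it is the step I expect to demand the most care: the normalisation, the quantifier structure of $DWF$, and the uniformity of the seminorm estimates.

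Once $DWF(v)\subset\lambda$ is established the bornological statement follows, since $WF(v)=\overline{DWF(v)}\subset\overline{\lambda}$ by Remark \ref{DWF}, and the support condition was obtained above. For the topological dual one must upgrade $WF(v)\subset\overline{\lambda}$ to the sharper $WF(v)\subset\lambda$, and this is the only place where continuity (rather than mere boundedness) is used. Here I would localise, restricting $T$ to each closed-cone piece of the inductive limit defining $\mathcal{I}_{iii}$ and observing, as in the proof of Theorem \ref{DualityBD} via the maps $M_i,N_i,L_\chi$ of Lemma \ref{contuinuousdenseInj}, that for each $f_i$ the form $w\mapsto T(L_{f_i}N_i(w))$ is continuous on the corresponding chart space; the open/closed duality of Theorem \ref{DualityBD} then forces each localised piece $f_i v$ to have wave front set in $\lambda$, and summing over $i$ gives $WF(v)\subset\lambda$. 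Assembling the three conditions yields the asserted continuous embeddings of $(\mathcal{D}'_{\gamma,\Lambda}(U,\mathcal{C};E),\mathcal{I}_{iii})^{\times}$ and $(\mathcal{D}'_{\gamma,\Lambda}(U,\mathcal{C};E),\mathcal{I}_{iii})'$ into $\mathcal{D}'_{\lambda,\overline{\lambda}}(U,(\mathscr{O}_\mathcal{C})^o;E^*)$ and $\mathcal{D}'_{\lambda,\lambda}(U,(\mathscr{O}_\mathcal{C})^o;E^*)$ respectively.
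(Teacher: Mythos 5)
Your overall architecture (restrict $T$ to $\mathcal{D}(U;E)$, get injectivity from the Mackey density of Lemma \ref{normality}, verify the three defining conditions of the target space, and obtain $WF(v)\subset\lambda$ in the continuous case by restricting to closed subcones of $\gamma$ and invoking Theorem \ref{DualityBD}) is the paper's. Your support step is a mild variant: the paper gets $\text{supp}(v)\in(\mathscr{O}_\mathcal{C})^o$ for free from the embedding $\mathcal{E}(U,(\mathscr{O}_\mathcal{C})^{oo};E)\hookrightarrow(\mathcal{D}'_{\gamma,\Lambda}(U,\mathcal{C};E),\mathcal{I}_{iii})$ together with the already computed bornological dual of Proposition \ref{FApropertiesSupport}, whereas you rerun the escaping-bumps contradiction of Lemma \ref{DualitySupport}; both work, the paper's route just avoids redoing the estimate.

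The gap is in the step you yourself single out as the heart of the matter, the boundedness of the test family $\{u_m\}$ witnessing $DWF(v)\subset\lambda$. With a \emph{fixed} cutoff $\psi$, the chart Fourier transform of $u_m=(1+|\eta_m|)^k\psi e^{-i\langle\cdot,\eta_m\rangle}$ fails to decay in the direction $-\eta_m\to-\xi_0$ over the \emph{whole} of $\text{supp}(\psi)$, not just at $x_0$. Concretely, take $f\in\mathcal{D}(U_i)$ with $x_0\notin\text{supp}(f)$ but $f\psi\not\equiv 0$ (possible since $\text{supp}(\psi)$ is a fixed neighbourhood of $x_0$), and a closed cone $V$ with $-\xi_0\in Int(V)$. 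The pair $(f,V)$ satisfies $(\text{supp}(f)\times V)\cap(-X)=\emptyset$ for $-X$ the ray over $(x_0,-\xi_0)$, so $P_{i,l,f,V}$ is a legitimate seminorm of $\mathcal{D}'_{\overline{\gamma}}(U,\mathcal{F}:\delta;E)$ whenever $\cup_l\Gamma_l$ is only that ray; yet $P_{i,l,f,V}(u_m)\geq (1+|\eta_m|)^{l+k}\,|\mathcal{F}[f\psi\circ\varphi_i^{-1}](0)|\to\infty$ as soon as $-\eta_m\in V$. Since $\gamma$ may contain nothing more than the single ray $-X$ (this ``minimal case'' is exactly what the lemma must handle, as the introduction stresses), there is no admissible $\delta\in\Delta(\gamma)$ for which your family is bounded, and your rapid-decay estimate $P_{i,l,f,V}(u_m)\le C(1+|\eta_m|)^{k+l-N}$ only covers the cones $V$ avoiding $-\xi_0$. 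The repair is the paper's construction: replace $\psi$ by cutoffs $g_n=\chi_{\varphi_i(x_0),n}\circ\varphi_i$ with supports shrinking to $\{x_0\}$, so that $fg_n=0$ eventually whenever $x_0\notin\text{supp}(f)$ and those seminorms vanish on the tail of the family; this in turn forces the damping factors $e^{-n}/C_n$ in the definition of the family, because $\sup_\xi(1+|\xi|)^l|\mathcal{F}[\chi_{x,n}](\xi)|$ grows like $n^{l-d}$ and the convolution estimates for the remaining seminorms (with $x_0\in\text{supp}(f)$ and $-\xi_0\notin V$) produce factors $n^{2k+l-d}$ that must be absorbed. Your version, as written, proves only $DWF(v)\cap(-\gamma)=\emptyset$ under the additional (unwarranted) hypothesis that $\gamma$ contains a full product neighbourhood $\text{supp}(\psi)\times\R_+^*(-\xi_0)$.
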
 
 Note there is no topological/bornological statement about the dual at this stage, the ``embedding" is merely an injective linear map. In general, when we don't specify a topology in a statement (except when there is a unique one we always consider in a section our in the whole paper as for $\mathcal{D}'(U,E)$), this means that the statement is purely algebraic. In the proofs though, we will sometimes not repeat the topology specified in the statements.
\begin{proof}
Take $(x,\xi)\in \gamma$, we will reduce everything to arguing with the closed cone $X=(x,\R_+^*\xi)$. Then 
$(\mathcal{D}'_{X,X}(U,\mathcal{C};E),\mathcal{I}_{iii})\hookrightarrow(\mathcal{D}'_{\gamma,\Lambda}(U,\mathcal{C};E),\mathcal{I}_{iii})$, thus a continuous linear form on $(\mathcal{D}'_{\gamma,\Lambda}(U,\mathcal{C};E),\mathcal{I}_{iii})$
 when restricted,  gives  a distribution $u$ in $\mathcal{D}'_{-X^c,-X^c}(U,(\mathscr{O}_\mathcal{C})^o;E^*)$ by theorem \ref{DualityBD} for any $(x,\xi)\in \gamma$, thus  $WF(u)\subset\{(x,\R_-^*\xi)\}^c$ (the condition on $DWF$ is a consequence) so that $WF(u)\subset \cap_{(x,\xi)\in\gamma}\{(x,\R_-^*\xi)\}^c
 \subset -\gamma^c.$ 

The injection property comes from (Mackey)-sequential density of $\mathcal{D}(U)$ by lemma \ref{normality}.

Since lemma \ref{contuinuousdenseInj} says $\mathcal{E}(U,(\mathscr{O}_\mathcal{C})^{oo};E)$ embeds continuously in $(\mathcal{D}'_{\gamma,\Lambda}(U,\mathcal{C};E),\mathcal{I}_{iii}),$ (from the identification of $(\mathcal{D}'_{\emptyset,\emptyset}(U,\mathcal{C};E),\mathcal{I}_{iii})\simeq \mathcal{E}(U,(\mathscr{O}_\mathcal{C})^{oo};E)$, see e.g. \cite[section 10.2]{BrouderDangHelein}) the restriction map gives a map $(\mathcal{D}'_{\gamma,\Lambda}(U,\mathcal{C};E),\mathcal{I}_{iii})^{\times}\to (\mathcal{E}(U,(\mathscr{O}_\mathcal{C})^{oo};E))^{\times}=\mathcal{D}'(U,(\mathscr{O}_\mathcal{C})^o;E^*)$ by Proposition \ref{FApropertiesSupport} since we computed there the dual and showed the space is bornological. This is an embedding by the sequential density of the inclusion $\mathcal{E}(U,(\mathscr{O}_\mathcal{C})^{oo};E)$ from lemma \ref{normality} since a bounded functional is Mackey-sequentially continuous. 

{We will reason similarly as before for the bornological duality statement. To check that the dual wave front set of a bounded linear functional is in $\lambda=-\gamma^c$ we will take a direction $X$ as above in $DWF$, get a bounded set $B\subset\mathcal{D}'_{-X,-X}(U,\mathcal{K})$ on which the linear form is not bounded so that $-X$ cannot be in $\gamma$ since otherwise the linear form would be bounded on $B$, implying $DWF\subset -\gamma^c$. We now explain this in more detail.}

\medskip
Thus let $u\in \mathcal{D}'(U,(\mathscr{O}_\mathcal{C})^o;E^*)$ the image of $T\in (\mathcal{D}'_{\gamma,\Lambda}(U,\mathcal{C};E), \mathcal{I}_{i
 ii})^{\times}$ by our previous embedding
 , it remains to check $DWF(u)\subset \lambda.$
Let $(x_0,\xi_0)\in DWF(u)$, say $x_0\in \text{supp}(f_i)$, generating a cone $X=\{(x_0,\mu \xi_0), \mu>0\}$. By definition and remark \ref{DWF}, there exists a $k$ (and a $i$ say the previous one) such that for all $n\geq N$ and open conical neighborhood of $(d(\varphi_i)^{-1})_{\varphi_i(x)})^*(\xi_0)=\xi'_0$, say $|\xi_0'|=1,$ for instance $\Gamma_n$ the cone generated by $B(\xi_0',1/n)$, $P_{i,k,\chi_{\varphi_i(x),n}\circ \varphi_i,\Gamma_n}(u)=\infty$, and especially, if we let $g_n=\chi_{\varphi_i(x),n}\circ \varphi_i$ and write $h\in\mathcal{D}(U_i)$ equal to $1$ on the support of all $g_n$ then there is $\xi_n\in \Gamma_n$, $l_n\in[1,e]$ such that  
$(1+|\xi_n|)^k|\mathcal{F}[(g_n u)\circ \varphi_i^{-1}]_{l_n}(\xi_n)|\geq n e^{n}C_n,$ with $C_n=\max(1,\sup_{\xi}|\mathcal{F}[\chi_{\varphi_i(x),n}](\xi)|(1+|\xi|)^k).$

Define for $u\in \mathcal{D}'(U;E^*),$ $\langle u, e_{\xi_n,l_n}\rangle=\mathcal{F}[(h u)\circ \varphi_i^{-1}]_{l_n}(\xi_n)$ so that $e_{\xi_n,l_n}\in (\mathcal{D}'(U;E^*))'= \mathcal{D}(U;E).$

Consider $B=\{e^{-n}\frac{g_n}{C_n} e_{\xi_n,l_n}(1+|\xi_n|)^k, n\in \N^*\}\subset\mathcal{D}(U;E) $ so that $\sup_{\{\varphi\in B\}}|\langle u,\varphi\rangle|=\infty$ 
and let us check $B$ is bounded in $(\mathcal{D}'_{-X,-X}(U,\mathcal{K};E),\mathcal{I}_{iii}
)$
. First we have uniformly compact support bounded by $
\text{supp}(h)\subset \overline{U_i}$, thus it suffices to check boundedness 
in $(\mathcal{D}'_{-X,-X}(U,\mathcal{F};E),\mathcal{I}_{iii})$.

First consider $C$ bounded in $\mathcal{D}(U;E^*),$ and $g\in C$, we bound as usual :
\begin{align*}|\langle& g_ne_{\xi_n,l_n}, g\rangle|=\mathcal{F}[(g_n g)\circ \varphi_i^{-1}]_{l_n}(\xi_n)=|\frac{1}{(2\pi)^d}\int_{\R^d}\mathcal{F}(g_n\circ \varphi_i^{-1})(\xi_n-\xi) [\mathcal{F}((hg)\circ \varphi_i^{-1})]_{l_n}(\xi)|\\&\leq \frac{C_n}{(2\pi)^d}\sup_{\eta\in \R^d}|[\mathcal{F}((hg)\circ \varphi_i^{-1})]_{l_n}(\eta)(1+|\eta|)^{d+1+k}|\times\int_{\R^d}(1+|\xi_n-\xi|)^{-k}(1+|\xi|)^{-k-d-1}
\\& \leq \frac{C_n}{(2\pi)^d}\sup_{\eta\in \R^d}|[\mathcal{F}((hg)\circ \varphi_i^{-1})]_{l_n}(\eta)(1+|\eta|)^{d+1+k}|\times(1+|\xi_n|)^{-k}\int_{\R^d}(1+|\xi|)^{-d-1}\\&=C_nc_d\ P_{i,k+1+d,h,\R^d}(g)(1+|\xi_n|)^{-k}.
\end{align*}
Thus $P_C(\varphi)$ is indeed bounded on $B.$ Then consider $g\in \mathcal{D}(U)$. 

If $x\not\in\text{supp}(g)\subset U_i$, take any closed cone $V$ (any cone satisfies $(\text{supp}(g)\times V)\cap (-X)=\emptyset$) but for any $n$ large enough $g_ng=0$ (as soon as $\varphi_n^{-1}(B(x,1/n))\cap \text{supp}(g)=\emptyset$) and thus $P_{i,l,g,V}(g_ne_{\xi_n,l_n})=0$ for $n$ large enough explaining why $P_{i,l,g,V}(\varphi)$ is bounded on $B$. A fortiori $P_{k,l,g,V}(g_ne_{\xi_n,l_n})=0$ for $k\neq i,$ if $\text{supp}(g)\subset U_k.$

If $x\in \text{supp}(g)\subset U_i$, we only have to consider $V$ with $-\xi_0\not\in V,$ so that for $n$ large enough, since $\xi_n/|\xi_n|\to \xi_0'$ we have $-\xi_n\not\in V,$ and there even exists $c\in(0,1]$ such that $|\xi_n+\xi|\geq c|\xi|$ for any $\xi\in V$, $n\geq N'$. 
Note that  for $x'=\varphi_i(x),$ 
$|\mathcal{F}[\chi_{x',n}](\xi)|=|\int_{\R^d}\chi(n(y-x'))e_{\xi}(y)dy|=|\frac{1}{n^d}\int_{\R^d}\chi(y)e_{\xi}(y/n+x')dy|=\frac{1}{n^d}|\mathcal{F}[\chi](\frac{\xi}{n})|$ so that 
$\sup_{\xi}|\mathcal{F}[\chi_{x',n}](\xi)|(1+|\xi|)^l\leq n^{l-d} D_l$ where $D_l=\sup_{\xi}|\mathcal{F}[\chi](\xi)|(1+|\xi|)^l.$
We can bound as in the previous boundedness for $P_B$ :
\begin{align*}P_{il,g,V}(f_ne_{\xi_n})&=\sup_{\xi\in V}(1+|\xi|)^l|\mathcal{F}[(g_ng)\circ\varphi_i^{-1}]_{l_n}(\xi+\xi_n)|\\&\leq n^{2k+l-d}D_{2k+l}c_d\ P_{i,2k+l+1+d,h,\R^d}(g)\ \ \sup_{\xi\in V}(1+|\xi|)^l(1+|\xi_n+\xi|)^{-l-2k}
\\&\leq n^{2k+l-d}D_{ 2k+l}c_d\ P_{i, 2k+l+1+d,h,\R^d}(g)c^{-l-k}(1+|\xi_n|)^{-k}\end{align*}
where in the last inequality, one uses $n\geq N$ to get $-\xi_n$ far from $V$ as specified above, and the usual $(1+|\xi_n+\xi|)^{-k}\leq (1+|\xi_n|)^{-k}(1+|\xi|)^{k}$. Multiplying by $e^{-n}/C_n(1+|\xi_n|)^{k}\leq e^{-n}(1+|\xi_n|)^{k}$, one gets the desired bound since the only remaining dependence in $n$ is $e^{-n}n^{2k+l-d}$ which is bounded. Thus we got boundedness of $M_{f_i}(B)$ and thus of $f_iB=N_i(M_{f_i}(B))\in(\mathcal{D}'_{-X,-X}(U,\mathcal{K};E),\mathcal{I}_{iii})$ from lemma \ref{contuinuousdenseInj} (note it is crucial we reduced to $-X$ closed to apply this with $\mathcal{I}_{iii}$). From the support property we also deduce the boundedness of $B$ (only finitely many terms may be changed by multiplication by $f_i$).

This shows $(x_0,\xi_0) \not\in(-\gamma)$ since, using $\mathcal{D}'_{-X,-X}(U,\mathcal{K};E)\subset \mathcal{D}'_{\gamma,\Lambda}(U,\mathcal{C};E)$ continuously, one otherwise contradicts the boundedness of $B$.
\end{proof}

 \medskip
 
 It now only remains to build the duality pairings to identify the duals (again algebraically at this stage).
 
\begin{theorem}\label{duals}
Let $\gamma\subset \Lambda\subset \overline{\gamma}$ cones  and  $\mathcal{C}=\mathcal{C}^{oo}$ an enlargeable polar family of closed sets in $U$. Let $\lambda=-\gamma^c$ then the bornological dual
$(\mathcal{D}'_{\gamma,\Lambda}(U,\mathcal{C};E),\mathcal{I}_{\alpha})^{\times}=\mathcal{D}'_{\lambda,\overline{\lambda}}(U,(\mathscr{O}_\mathcal{C})^o;E^*)$, and the topological dual  $(\mathcal{D}'_{\gamma,\Lambda}(U,\mathcal{C};E),\mathcal{I}_{\alpha})'=\mathcal{D}'_{\lambda,{\lambda}}(U,(\mathscr{O}_\mathcal{C})^o;E^*)$, for any $\alpha=\alpha_1\alpha_2\alpha_3$ $ \alpha_1,\alpha_3\in\{i,p\},\alpha_2\in\{i,m,P,p\}$. 
Moreover, $(\mathcal{D}'_{\gamma,\Lambda}(U,\mathcal{C};E),\mathcal{I}_{pbp})'=(\mathcal{D}'_{\gamma,\Lambda}(U,\mathcal{C};E),\mathcal{I}_{pbp})^\times=\mathcal{D}'_{\lambda,\overline{\lambda}}(U,(\mathscr{O}_\mathcal{C})^o;E^*).$

Finally if $\gamma$ closed, $(\mathcal{D}'_{\gamma,\gamma}(U,\mathcal{C};E), \mathcal{I}_{iii})$ is a bornological dual of the bornological inductive limit :
 $$(\mathcal{D}'_{\lambda,\lambda}(U,(\mathscr{O}_\mathcal{C})^{o};E^*))=\underrightarrow{\lim}_{C\in (\mathscr{O}_\mathcal{C})^{o}} \underrightarrow{\lim}_{\Pi\subset \lambda}\left(\mathcal{D}'_{ \Pi,\Pi}(U,\{F\in \mathcal{F}, F\subset C\};E^*\right)$$
with the bornology of the later space being the 
bornology induced by $(\mathcal{D}'_{ \Pi,\Pi}(U,\mathcal{F};E^*), \mathcal{I}_{H}).$ 
\end{theorem}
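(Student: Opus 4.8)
The plan is to compute the bornological dual $G^\times$ of the bornological inductive limit
$$G=\underrightarrow{\lim}_{C\in(\mathscr{O}_\mathcal{C})^o}\ \underrightarrow{\lim}_{\Pi\subset\lambda}\big(\mathcal{D}'_{\Pi,\Pi}(U,\{F\in\mathcal{F},F\subset C\};E^*),\mathcal{I}_H\big)$$
as a nested projective limit of the bornological duals of its building blocks. Since a linear form is bounded on a bornological inductive limit exactly when it is bounded on the bornology of each term (\cite[p.15]{HogbeNlendMoscatelli}), one gets $G^\times=\underleftarrow{\lim}_{C}\underleftarrow{\lim}_{\Pi}\big(\mathcal{D}'_{\Pi,\Pi}(U,\{F\subset C\};E^*),\mathcal{I}_H\big)^\times$, the natural topology of uniform convergence on the bounded sets of $G$ being the corresponding projective limit topology. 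I would first record that $\gamma$ closed makes $\lambda=-\gamma^c$ open, so the closed subcones $\Pi\subset\lambda$ are genuine building blocks to which the closed cone case of Theorem~\ref{DualityBD} applies.

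The key input is the block duality. Applying Lemma~\ref{bornImbedding} together with the equalities already established in this theorem (and \cite{BrouderDabrowski}) to the closed cone $\Pi$ gives $(\mathcal{D}'_{\Pi,\Pi}(U,\{F\subset C\};E^*),\mathcal{I}_H)^\times=\mathcal{D}'_{-\Pi^c,\overline{-\Pi^c}}(U,\cdot\,;E)$, with the support condition dual to $\{F\subset C\}$ and the complete topology $\mathcal{I}_i$ dual to the block bornology; note the crucial appearance of the \emph{closure} $\overline{-\Pi^c}$, which is exactly the bornologification/completion effect central to the paper. The pairing is the bounded bracket built earlier. For the easy inclusion $D\hookrightarrow G^\times$ I would check that each $u\in\mathcal{D}'_{\gamma,\gamma}(U,\mathcal{C};E)$ pairs boundedly with every block: because $\Pi\subset\lambda=-\gamma^c$ forces $\gamma\subset -\Pi^c$, and the supports are compatible by enlargeability, $u$ lies in the \emph{topological} dual $\mathcal{D}'_{-\Pi^c,-\Pi^c}(U,\cdot\,;E)$ of the block, so $\sup_{v\in B}|\langle u,v\rangle|<\infty$ for $B$ bounded in the block, i.e. $u\in G^\times$.

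For the reverse inclusion I would take $T\in G^\times$ and glue its block restrictions: these are restrictions of a single $u\in\mathcal{D}'(U;E)$ with $DWF(u)\subset -\Pi^c$ for every closed $\Pi\subset\lambda$ and with support controlled by $C$ for every $C\in(\mathscr{O}_\mathcal{C})^o$. The heart of the matter is that the projective limit cancels all the closures: since $\bigcap_{\Pi\subset\lambda}(-\Pi^c)=-\lambda^c=\gamma$, one gets $DWF(u)\subset\gamma$, and then $WF(u)=\overline{DWF(u)}\subset\overline{\gamma}=\gamma$ by Remark~\ref{DWF} using that $\gamma$ is closed (equivalently, geometrically $\bigcap_\Pi\overline{-\Pi^c}=\gamma$, because for $p\notin\gamma$ one has $-p\in\lambda$ open, so a closed $\Pi\subset\lambda$ with $-p\in\mathrm{Int}(\Pi)$ puts $p$ in $\mathrm{Int}(-\Pi)$, away from $\overline{-\Pi^c}$). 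Hence $WF(u),DWF(u)\subset\gamma$, while the polarity relations of Lemma~\ref{DualitySupport}, Lemma~\ref{opengenlemma} and Proposition~\ref{FApropertiesSupport} collapse the support projective limit over $(\mathscr{O}_\mathcal{C})^o$ to $\mathcal{C}$; thus $u\in\mathcal{D}'_{\gamma,\gamma}(U,\mathcal{C};E)=D$ and $G^\times=D$ as vector spaces.

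Finally, for the topology I would note that as $\Pi$ runs through the closed subcones of $\lambda$ the cones $-\Pi^c$ run cofinally (in fact bijectively) through the open cones containing $\gamma$, so that $\underleftarrow{\lim}_{\Pi}\underleftarrow{\lim}_{C}(\mathcal{D}'_{-\Pi^c,\overline{-\Pi^c}},\mathcal{I}_i)$, carrying the bornological-dual topology, is by the very definition of $\mathcal{I}_p$ in Section~3 and its support analogue one of the projective topologies $\mathcal{I}_\alpha$ on $\mathcal{D}'_{\gamma,\gamma}(U,\mathcal{C};E)$; these all coincide with $\mathcal{I}_{iii}$ in the closed cone case by Theorem~\ref{DualityBD}, finishing the identification. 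The main obstacle I anticipate is precisely this interplay of closures: the $\Pi$-inductive limit is \emph{not} regular (this non-regularity being exactly the source of the non-completeness of $\mathcal{E}'_\Lambda$ in \cite{BrouderDabrowski}), so the bornology of $G$ is strictly finer than the von Neumann bornology of $(\mathcal{D}'_{\lambda,\lambda}(U,(\mathscr{O}_\mathcal{C})^o;E^*),\mathcal{I}_{iii})$ and each block dual genuinely enlarges the wave front to $\overline{-\Pi^c}$; the entire content is that the projective limit cancels these enlargements back to $\gamma$, so that $G^\times$ does not overshoot $D$ into its bornological dual $\mathcal{D}'_{\lambda,\overline{\lambda}}$, a collapse that would fail were $\gamma$ not closed.
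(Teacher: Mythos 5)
Your proposal addresses only the final claim of the theorem. The first two claims --- the identification of $(\mathcal{D}'_{\gamma,\Lambda}(U,\mathcal{C};E),\mathcal{I}_{\alpha})^{\times}$ with $\mathcal{D}'_{\lambda,\overline{\lambda}}(U,(\mathscr{O}_\mathcal{C})^o;E^*)$ and of the topological dual with $\mathcal{D}'_{\lambda,\lambda}(U,(\mathscr{O}_\mathcal{C})^o;E^*)$ for general $\gamma\subset\Lambda\subset\overline{\gamma}$, together with the $\mathcal{I}_{pbp}$ statement --- are not proved; worse, you invoke them (``the equalities already established in this theorem'') as the input for your block duality. Since Lemma \ref{bornImbedding} only provides the two embeddings of the duals, the entire remaining content of these claims is the construction of the pairing and the proof that every $v\in\mathcal{D}'_{\lambda,\overline{\lambda}}$ acts continuously (resp.\ boundedly) on $(\mathcal{D}'_{\gamma,\Lambda},\mathcal{I}_{ppp})$: one reduces to compact supports by enlargeability, fixes the polynomial order $m$ of $\mathcal{F}(M_{f_i'}(vf))$, and in the bounded case extracts the closed cone $\lambda_l=DWF_{l+d+1}(vf)\subset\lambda$ from the dual wave front set condition before running the estimates \eqref{BDcountinuous} and \eqref{BDbounded}. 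None of this analytic core appears in your proposal, and it cannot be bypassed.

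For the last claim your strategy (nested projective limit of block bornological duals) is reasonable in outline, and your geometric observation $\bigcap_{\Pi\subset\lambda}\overline{-\Pi^c}=\gamma$ is correct, but the argument rests on two inputs that are not available at this stage. First, you need the \emph{equibounded topology} on each block dual $\mathcal{D}'_{-\Pi^c,\overline{-\Pi^c}}$ to be $\mathcal{I}_i$; the theorem's first part is purely algebraic (the paper stresses after Lemma \ref{bornImbedding} that the ``embedding'' is merely an injective linear map), and this topological identification is exactly the content of Corollary \ref{FAClosedOpen2} and Lemma \ref{bornologyIdentities}, which are proved later \emph{using} Theorem \ref{duals}. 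Second, the projective limit you obtain over the open cones $-\Pi^c\supset\gamma$ is a first-index-$p$ topology, and its coincidence with $\mathcal{I}_{iii}$ for $\gamma$ closed is \emph{not} in Theorem \ref{DualityBD}, which only treats indices $\alpha$ beginning with $i$; it is Lemma \ref{bornologyIdentitiesEqui}, whose proof again depends on Theorem \ref{duals} --- indeed the proof of Theorem \ref{DualityBD} itself forward-references precisely this last statement of Theorem \ref{duals} to obtain $\mathcal{I}_{imi}=\mathcal{I}_{ipi}$ in the closed cone case. As written your argument is therefore circular. The paper avoids this by a direct seminorm comparison: by the proof of \cite[Prop.~24]{BrouderDabrowski} each $P_{k,f,V}$ equals $P_{B'}$ for an explicit bounded set $B'$ of the inductive limit, so via \eqref{dualitymanifold} the seminorms $P_{i,k,g(f\circ\varphi_i),V}=P_{k,f,V}\circ M_g$ of $\mathcal{I}_{iii}$ are norms of uniform convergence on the bounded sets $gN_i(B')$, while \eqref{dualitymanifold} and \cite[(9)]{BrouderDabrowski} give continuity from $\mathcal{I}_{iii}$ to the equibounded topology. (Your closing assertion that the inductive bornology of $G$ is strictly finer than the von Neumann bornology of $\mathcal{I}_{iii}$ is also at odds with what the paper establishes; the non-completeness of $\mathcal{E}'_\Lambda$ does not entail non-regularity of the inductive limit.)
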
 
 
\begin{proof}  

From our previous lemma, it only remains to build the duality pairing and prove the  continuity (resp. boundedness) with respect to $\mathcal{I}_{ppp}$ which is the weakest topology. 
Thus take $v\in \mathcal{D}'_{\lambda,\overline{\lambda}}(U,(\mathscr{O}_\mathcal{C})^o;E^*)$, $u\in \mathcal{D}'_{\gamma,\Lambda}(U,\mathcal{C};E)$, with $\text{supp}(v)\subset D\subset D_\epsilon\in (\mathscr{O}_\mathcal{C})^o, \text{supp}(u)\subset C\subset C_\epsilon\in \mathcal{C}$, so that $C\in\{O\}^o,$ $O=Int(D_\epsilon)$, and thus as in the proof of \ref{DualityBD}, one can take $f\in \mathcal{D}(U)$ with $f=1$ on a neighborhood of $\overline{C_\epsilon\cap D_\epsilon}$ and replacing $v$ by $vf,$ we are reduced to the case $v$ with compact support, i.e. $\mathcal{C}=\mathcal{F}$. Indeed, to prove continuity/boundedness in $u$ for $\mathcal{I}_{ppp}$ it suffices to prove the variant for $\mathcal{I}_{ppi}$ on $\mathcal{D}'_{\gamma,\Lambda}(U,\{O\}^o;E)$ with $O$ above, and then, after fixing $C$ by definition of the inductive limit topology, it suffices to prove continuity on $(\mathcal{D}'_{\gamma,\Lambda}(U,\mathcal{F};E),\mathcal{I}_{pp}).$
Fixing also $g\in \mathcal{D}(U)$ equal to $1$ on a neighborhood of the support of $f$, one could also replace $u$ by $ug$ and assume it with compact support.

We now fix $m$ for which $\mathcal{F}(M_{f_i'}(vf))$ is polynomially bounded of order $m$ (same for all finitely many relevant $i\in I$), let $\Gamma=WF(vf)\subset \overline{\lambda}.$

In order to build the continuous pairing, we can assume $\Gamma\subset \lambda$, thus $\gamma\subset -\Gamma^c$ and it suffices to get continuity for $u\in (\mathcal{D}'_{-\Gamma^c,\overline{-\Gamma^c}}(U,\mathcal{F};E),\mathcal{I}_{pm}),$ case $\lambda_1=-\Gamma^c, \lambda_2=\dot{T}^*U$ of the projective limit for $\mathcal{I}_{pp}$.
Fix $n$ such that $\text{supp}(f)\subset\text{supp}(g)\subset K_n=\{x\in Ud(x,U^c)\geq 1/n,|x|\leq n\}$ ($f,g$, and thus $n$ depending only of $v,C$), we will prove definition/ continuity 
on  $(\mathcal{D}'_{(-\Gamma^c)_{(n)},\overline{-\Gamma^c}_{(n)}}(U,\mathcal{F};E),\mathcal{I}_{p})$ which is mapped continuously via ${L_g}$ to $ 
 (\mathcal{D}'_{-\Gamma^c,\overline{-\Gamma^c}}(U,\mathcal{F};E),\mathcal{I}_{i}=\mathcal{I}_{p})$ and it suffices to prove continuity in $gu$ on the later space by definition of the projective topologies. Thus for any $\delta=(\Gamma_n)\in\Delta(-\Gamma^c)$ it suffices to prove continuity in $ug\in (\mathcal{D}'_{-\Gamma^c}(U,\mathcal{F}:\delta),\mathcal{I}_{H})$ by definitions of inductive limits. For $vf$ fixed of order $m$, we are only interested in $\Gamma_{m+d+1}\subset -\Gamma^c$.

Since $ \Gamma\cap -\Gamma_{m+d+1}=\emptyset$, the proof then follows exactly as in \cite[lemma 3]{BrouderDabrowski} since the key estimate (3) there, only depends on $m$, and once $m$ fixed, we only use the seminorms of $u$ of order $M=m+d+1$. The key estimate here we want to reproduce is :(with the notation of \cite{BrouderDabrowski} except for seminorms) :

\begin{align}\label{BDcountinuous}
|\langle M_{f_i}(ug)&,M_{f_i'}(fv)\rangle|  \le  \sum_j\Big(
|\langle\psi_jf_j,M_{f_i}(ug)\rangle|\\&\nonumber 
 + P_{M,{ U_{\alpha j}},\psi_j}(M_{f_i}(ug)) C
I_n^{M-m} 
+ P_{M,{ U_{\alpha j}},\psi_j}( M_{f_i}(ug))
   P_{N,{ U_{\beta j}},\psi_j}( M_{f_i'}(vf)
  I_n^{N+M} \Big).
\end{align}

We thus got the duality $(\mathcal{D}'_{\gamma,\Lambda}(U,\mathcal{C};E),\mathcal{I}_{ppp})'=(\mathcal{D}'_{\gamma,\Lambda}(U,\mathcal{C};E),\mathcal{I}_{\alpha})'=\mathcal{D}'_{\lambda,{\lambda}}(U,(\mathscr{O}_\mathcal{C})^o;E^*)$.
Note for further use that, as in \cite[(9)]{BrouderDabrowski} and replacing the first term by $P_{B_j}(u)$, $B_j$ bounded in $\mathcal{D}(\varphi_i(U_i))$, this can be made uniform in $vf\in B$ bounded set in $\mathcal{D}'_{\Gamma,\Gamma}(U,\mathcal{K};E^*).$

Let us now explain the bornological dual case in which we only have $\Gamma=WF(vf)\subset \overline{\lambda}$, but where we also know $DWF(vf)\subset \lambda$. Fix $l$ the order of polynomial boundedness of $\mathcal{F}(M_{f_i}(ug))$ uniform in $i\in I$, what we can always do in a bounded set $B$ of $\mathcal{D}'(U;E) $ and a fortiori in one of $\mathcal{D}'_{\gamma,\Lambda}(U,\mathcal{C};E)$. We even fix the closed cone $DWF_{l+d+1}(vf)=\lambda_l\subset \lambda$ so that $u\in (\mathcal{D}'_{\gamma,\overline{\gamma}}(U,\mathcal{F};E),\mathcal{I}_{p})\subset (\mathcal{D}'_{-\lambda_l^c,\overline{-\lambda_l^c}}(U,\mathcal{F};E),\mathcal{I}_{i}) $ and from the definition of the projective limit, we only have to prove boundedness on this space, thus for any  $\delta=(\Gamma_n)\in\Delta(-\lambda_l^c)$ it suffices to prove continuity in $u\in (\mathcal{D}'_{\overline{-\lambda_l^c}}(U,\mathcal{F}:\delta;E),\mathcal{I}_{H}).$ We only have to consider $\lambda_l\subset -\Gamma_{m+d+1}^c.$ 

We are in position to reason again as in \cite[lemma 3]{BrouderDabrowski} ($\lambda_l$ replacing $WF(v)$, $\Gamma_{m+d+1}$  replacing $WF(u)$ ) with the first term in the right hand side of the key estimate (3) replaced by the bound on $|I_{2j}|$ in the proof of \cite[proposition 9]{BrouderDabrowski}, giving the expected boundedness on $B$ with $M=m+d+1, N=l+d+1$. The key summarizing estimate, uniform on $u\in B$, in the notation of \cite{BrouderDabrowski} (except for seminorms) is thus :
\begin{align}\label{BDbounded}
&|\langle M_{f_i}(ug),M_{f_i'}(fv)\rangle|   \le C \sum_j\Big(
P_{N,\psi_j,{ U_{\beta j}}}(M_{f_i'}(fv)) P_{-l,g,\R^d}( M_{f_i}(ug)) I^{N-l}_d+ 
 \\&  P_{M,\psi_j,{ U_{\alpha j}}}( M_{f_i}(ug)) P_{-m,f,\R^d}(M_{f_i'}(fv))
I_d^{M-m} 
+ P_{M,\psi_j,{ U_{\alpha j}}}( M_{f_i}(ug))
   P_{N,\psi_j,{ U_{\beta j}}}(M_{f_i'}(fv)) 
  I_d^{N+M} \Big).\nonumber
\end{align}

Note finally that the previous expression also shows $v$ defines a continuous map on $(\mathcal{D}'_{\gamma,\Lambda}(U,\mathcal{C};E),\mathcal{I}_{pbp})$. 
Since bounded sets there are obviously the same as in $(\mathcal{D}'_{\gamma,\Lambda}(U,\mathcal{C};E),\mathcal{I}_{pmp})$, 
a continuous linear form on $(\mathcal{D}'_{\gamma,\Lambda}(U,\mathcal{C}),\mathcal{I}_{pbp})$ is a bounded form on $(\mathcal{D}'_{\gamma,\Lambda}(U,\mathcal{C}),\mathcal{I}_{pmp})$ thus we actually got the expected identification $(\mathcal{D}'_{\gamma,\Lambda}(U,\mathcal{C}),\mathcal{I}_{pbp})'=(\mathcal{D}'_{\gamma,\Lambda}(U,\mathcal{C}),\mathcal{I}_{pbp})^\times=\mathcal{D}'_{\lambda,\overline{\lambda}}(U,(\mathscr{O}_\mathcal{C})^o).$

Note the last bornological duality statement is a slight extension  of \cite[Proposition 24]{BrouderDabrowski}. Since the map from the inductive bornology to the von Neumann bornology of $\mathcal{I}_{iii}$ is bounded and conversely, the maps we used to compute bornological duals in the last lemma are also bounded with value the inductive bornology, we have already identified the bornological dual, it remains to identify topologies.
From equation \eqref{dualitymanifold}, continuity of $M_{f_i}$ and \cite[(9)]{BrouderDabrowski}, one gets continuity from $\mathcal{I}_{iii},$ to the bornological dual. For the converse, it suffices to check that the seminorms $P_{i,k,g(f\circ\varphi_i),V}=P_{k,f,V}\circ M_g$ (as in lemma \ref{contuinuousdenseInj}) are induced by the equibounded convergence topology, and since by the proof of \cite[Proposition 24]{BrouderDabrowski}, $P_{k,f,V}=P_{B'}$, for a bounded set in the corresponding space on $\varphi_i(U_i)$, $P_{i,k,g(f\circ\varphi_i),V}$ is the norm of uniform convergence on $gN_i(B').$ This concludes.
\end{proof} 


\section{Consequences in the open and closed case}

\subsection{General functional analytic properties in the open and closed case}
\begin{corollary}\label{FAClosedOpen2}
Let $\gamma= \Lambda= \overline{\gamma}$ a closed cone  and  $\mathcal{C}=\mathcal{C}^{oo}$ an enlargeable polar family of closed sets in $U$. Let $\lambda=-\gamma^c$
an open cone 
 $(\mathcal{D}'_{\lambda,\overline{\lambda}}(U,(\mathscr{O}_\mathcal{C})^o;E^*),\mathcal{I}_{iii})$
  is bornological, the strong, Arens and Mackey dual of $(\mathcal{D}'_{\gamma,\gamma}(U,\mathcal{C};E),\mathcal{I}_{iii}^{born})$ (the bornologification of the previously introduced topology)  and is also the completion of $(\mathcal{D}'_{\lambda,\lambda}(U,(\mathscr{O}_\mathcal{C})^o;E^*),\mathcal{I}_{iii})$, thus is nuclear and barrelled. Moreover, both $(\mathcal{D}'_{\lambda,\overline{\lambda}}(U,(\mathscr{O}_\mathcal{C})^o;E^*),\mathcal{I}_{iii})$ and $(\mathcal{D}'_{\gamma,\gamma}(U,\mathcal{C};E),\mathcal{I}_{iii}^{born})$ are Montel spaces, the later being also the strong, Arens  and Mackey dual of the former.
\end{corollary}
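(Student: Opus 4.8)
Throughout, abbreviate $X=(\mathcal{D}'_{\gamma,\gamma}(U,\mathcal{C};E),\mathcal{I}_{iii})$, $Y=(\mathcal{D}'_{\lambda,\lambda}(U,(\mathscr{O}_{\mathcal C})^o;E^*),\mathcal{I}_{iii})$ and $\widehat Y=(\mathcal{D}'_{\lambda,\overline{\lambda}}(U,(\mathscr{O}_{\mathcal C})^o;E^*),\mathcal{I}_{iii})$, and write $X^{born}$ for $X$ carrying $\mathcal{I}_{iii}^{born}$. The plan is to read off every algebraic duality from Theorem \ref{duals} and the known functional-analytic facts from Theorem \ref{DualityBD}, then upgrade them to topological statements. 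Since $\gamma$ is closed we have $\overline\gamma=\gamma$ and $-\lambda^c=\gamma$; hence Theorem \ref{duals} gives $X^\times=\widehat Y$ and $X'=Y$ (so $(X^{born})'=X^\times=\widehat Y$). Applying the same theorem to $\widehat Y$, whose $DWF$-cone is $\lambda$ with $-\lambda^c=\gamma=\overline\gamma$, yields the crucial identity $\widehat Y^{\times}=\widehat Y'=X$ (the two coincide precisely because $\overline\gamma=\gamma$), and applied to $Y$ it gives $Y^\times=Y'=X$. From Theorem \ref{DualityBD}, $Y=X'_\beta$ with $\mathcal{I}_{iii}$ equal to its strong, Arens and Mackey topologies, $Y$ nuclear, ultrabornological and barrelled, and $X$ semi-reflexive; note $Y$ itself is not complete, which is exactly why $\widehat Y$ is the interesting object.

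The heart of the matter, and the step I expect to be the main obstacle, is to identify $\mathcal{I}_{iii}$ on $\widehat Y$ with the strong dual topology $\beta(\widehat Y,X^{born})$ (which equals $\beta(\widehat Y,X)$ since bornologification preserves bounded sets). I would run the symmetric analogue of the seminorm computation at the end of the proof of Theorem \ref{duals}: the uniform microlocal estimate \eqref{BDbounded} shows that each strong seminorm $v\mapsto\sup_{u\in B}|\langle u,v\rangle|$, with $B$ bounded in $X$, is dominated by finitely many $\mathcal{I}_{iii}$-seminorms of $v$, giving $\beta(\widehat Y,X)\subset\mathcal{I}_{iii}$; conversely each generating seminorm $P_{i,k,f,V}$ of $\mathcal{I}_{iii}$ is, via $P_{i,k,f,V}=P_{k,f,V}\circ M_g$ together with the identity $P_{k,f,V}=P_{B'}$ of \cite[Prop.~24]{BrouderDabrowski}, a seminorm of uniform convergence on a bounded subset $gN_i(B')$ of $X$, giving $\mathcal{I}_{iii}\subset\beta(\widehat Y,X)$. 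Hence $\widehat Y=(X^{born})'_\beta$, and since $X^{born}$ is bornological its strong dual is complete, so $\widehat Y$ is complete. This identification (and with it completeness) is the delicate point, because it rests on the explicit microlocal description of the bounded sets of $X$; everything below is then formal.

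Next I would show $\widehat Y$ is the completion of $Y$. The continuity and density of $Y\hookrightarrow\widehat Y$ follow from Lemma \ref{contuinuousdenseInj} (as $\lambda\subset\overline\lambda$) and Lemma \ref{normality}, by which $\mathcal{D}(U;E^*)\subset Y$ is Mackey-dense in $\widehat Y$. The subspace topology matches because $\beta(\widehat Y,X^{born})|_{Y}=\beta(Y,X)=\mathcal{I}_{iii}^{Y}$ by Theorem \ref{DualityBD} (the same bounded sets in $X$ and $X^{born}$). Together with the previous paragraph this proves $\widehat Y=\widehat{(Y,\mathcal{I}_{iii})}$. Nuclearity of $\widehat Y$ then follows because the completion of the nuclear space $Y$ is nuclear, and barrelledness because the completion of the barrelled space $Y$ is barrelled. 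Being nuclear (hence Schwartz, so with precompact bounded sets), complete (so those bounded sets are relatively compact) and barrelled, $\widehat Y$ is a Montel space.

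Finally, $\widehat Y$ is bornological: being barrelled it carries its Mackey topology $\tau(\widehat Y,X)$, while its bornologification has dual $\widehat Y^\times=X=\widehat Y'$ (first paragraph), so $\widehat Y\subset\widehat Y^{born}\subset\tau(\widehat Y,\widehat Y')=\widehat Y$, forcing $\widehat Y^{born}=\widehat Y$; being in addition complete it is even ultrabornological. For the reverse duality, $\widehat Y$ is reflexive (Montel), and $X^{born}$ is bornological with dual $(X^{born})'=X^\times=\widehat Y$, hence carries $\tau(X,\widehat Y)$; since $\widehat Y$ is semi-reflexive its bounded sets are relatively $\sigma(\widehat Y,X)$-compact, so $\beta(X,\widehat Y)=\tau(X,\widehat Y)$ and $(\widehat Y)'_\beta=X^{born}$, i.e. $X^{born}$ is the strong, Arens and Mackey dual of $\widehat Y$ and is Montel as the strong dual of the Montel space $\widehat Y$. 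Symmetrically, as $X^{born}$ is Montel its bounded and relatively compact sets coincide, so the strong, Arens and Mackey topologies on $\widehat Y=(X^{born})'$ all coincide with $\mathcal{I}_{iii}$, which completes the statement.
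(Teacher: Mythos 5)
Your proposal follows the paper's skeleton — read the algebraic dualities off Theorem \ref{duals}, identify $\mathcal{I}_{iii}$ on $\mathcal{D}'_{\lambda,\overline{\lambda}}$ with the strong dual topology of $(\mathcal{D}'_{\gamma,\gamma},\mathcal{I}_{iii}^{born})$ by a uniform microlocal estimate, then derive everything else formally — but two of your downstream steps are genuinely different and cleaner than the paper's. For completeness you invoke ``the strong dual of a bornological space is complete'' and then get the completion of $Y$ by density; the paper instead identifies the completion via \cite[p.~140 Corollary 3]{HogbeNlendMoscatelli} (completion of a nuclear space as bornological dual of its equicontinuous dual), which forces it to first prove the bornology identifications of Lemma \ref{bornologyIdentities}. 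For the reverse duality you use semi-reflexivity of the Montel space $\widehat Y$ to get $\beta(X,\widehat Y)=\tau(X,\widehat Y)$ together with ``bornological implies Mackey'' for $X^{born}$; the paper goes through Schwartz's theorem (strong dual of a complete Schwartz space is ultrabornological) and again a comparison of bounded sets from Lemma \ref{bornologyIdentities}. Your shortcuts are valid and let you bypass that lemma for the purposes of this corollary (the paper needs it later anyway).

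Two points to fix. First, the estimate you cite for the crucial inclusion $\beta(\widehat Y,X)\subset\mathcal{I}_{iii}$ is the wrong one: \eqref{BDbounded} contains the factor $P_{-m,f,\R^d}(M_{f_i'}(fv))$, which is \emph{not} a continuous seminorm of $v$ for $\mathcal{I}_{iii}$ (it is one of the supplementary seminorms defining $\mathcal{I}_{ibi}$); that estimate is tailored to fixed $v$ and $u$ ranging over a bounded set, i.e.\ to the bornological duality. What you need is the other estimate \eqref{BDcountinuous} (equivalently \cite[lemma 10, (7)]{BrouderDabrowski}) made uniform over $u\in B$, which requires first using regularity of the inductive limit on support to fix a common cutoff $f$ and a uniform polynomial order $l$ for $B$, and then only genuine $\mathcal{I}_{iii}$-seminorms of $v$ (attached to the closed cone $\Gamma_{l+d+1}$ of the piece of the inductive limit containing $v$) appear. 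This is exactly what the paper does. Second, your converse inclusion $\mathcal{I}_{iii}\subset\beta(\widehat Y,X)$ via ``generating seminorms $P_{i,k,f,V}$'' is both unnecessary and slightly off, since $\mathcal{I}_{iii}$ on $\widehat Y$ is an inductive limit, not presented by those seminorms; once $(\widehat Y,\mathcal{I}_{iii})'=X$ is known, Mackey--Arens gives $\mathcal{I}_{iii}\subset\tau(\widehat Y,X)\subset\beta(\widehat Y,X)$ for free, which is how the paper argues.
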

\begin{proof}
From the computation of bornological duals $(\mathcal{D}'_{\gamma,\overline{\gamma}}(U,\mathcal{C},E),\mathcal{I}_{iii}^{born})'=(\mathcal{D}'_{\gamma,\overline{\gamma}}(U,\mathcal{C};E),\mathcal{I}_{iii})^\times=\mathcal{D}'_{\lambda,\overline{\lambda}}(U,(\mathscr{O}_\mathcal{C})^o;E^*)$ and $(\mathcal{D}'_{\lambda,\overline{\lambda}}(U,(\mathscr{O}_\mathcal{C})^o;E^*),\mathcal{I}_{iii})'=\mathcal{D}'_{\gamma,\overline{\gamma}}(U,\mathcal{C};E^*)$ so that they form a compatible pair, and thus by Mackey-Arens thm, the strong topology induced by this duality on $(\mathcal{D}'_{\lambda,\overline{\lambda}}(U,(\mathscr{O}_\mathcal{C})^o;E^*)$ is stronger than Mackey which is stronger than the original topology $\mathcal{I}_{iii}$. To conclude to the identification of the strong dual, it suffices to prove $\mathcal{I}_{iii}$ is stronger than the strong topology, i.e. identity from $\mathcal{I}_{iii}$ to the strong dual is continuous. 
This identification especially implies $\mathcal{I}_{iii}$ is bornological on $\mathcal{D}'_{\lambda,\overline{\lambda}}(U,(\mathscr{O}_\mathcal{C})^o;E^*)$ which is claimed above since it is the Mackey topology and the previous bornological and ordinary dual are known to agree.


Fix $C\in (\mathscr{O}_\mathcal{C})^o$. Since the inductive limit on support is regular, take a bounded set $B$ in $\mathcal{D}'_{\gamma,\gamma}(U,\mathcal{F};E)$ with for any $u\in B$ $\text{supp}(u)\subset D$,  for some $D\in\mathcal{C}$. By definition and enlargeability as before, $D\cap C$ is compact, thus taking $f$ with compact support equal to $1$ here, $\mathcal{F}(fu)$ are all polynomially bounded of order at most $l$ uniformly in $u\in B.$ 
We have now to prove continuity of $v\mapsto \sup_{u\in B} |\langle u,fv\rangle|$ on $\mathcal{D}'_{\overline{\lambda}}(U,\mathcal{F}:\delta;E^*)$ for $\delta\in \Delta(\lambda).$
 We reason  again as in \cite[lemma 10
]{BrouderDabrowski} ($\gamma$ replacing $WF(u)$, $\Gamma_{l+d+1}\subset -\gamma^c$  replacing $WF(v)$ ). Of course we first decompose as in 
 equation \eqref{dualitymanifold} as a sum of terms  $\langle M_{f_i}(u),M_{f_i'}(fv)\rangle$ which reduces the problem to the trivial case $|I|=1.$ We then  bound as in \cite[(7)
]{BrouderDabrowski}.



What follows then requires various identifications of bornologies we gather in the following lemma (after the end of the proof) which is technical but of independent interest  and we now use.

The identification of the completion, then comes from the already known nuclearity of $(\mathcal{D}'_{\lambda,\lambda}(U,(\mathscr{O}_\mathcal{C})^o;E^*),\mathcal{I}_{iii}),$ from the previous duality computations and from \cite[p.~140 Corollary 3]{HogbeNlendMoscatelli}. Note that one also needs to use that the equicontinuous bornology on $\mathcal{D}'_{\gamma,\gamma}(U,\mathcal{C};E)$ is (by our next lemma) the same as the von Neumann bornology of its original topology $\mathcal{I}_{iii},$ since the bornology in the cited result on the dual is this equicontinuous bornology and we computed the bornological dual for $\mathcal{I}_{iii}$.

Now the completion $(\mathcal{D}'_{\lambda,\overline{\lambda}}(U,(\mathscr{O}_\mathcal{C})^o;E^*),\mathcal{I}_{iii})$ is barrelled ( as any completion of a quasi-barrelled space, see e.g. K\"othe \cite[$\S 27.1.(2)$]{Kothe}), it is nuclear (as any completion of a nuclear space, see e.g. Treves \cite[(50.2) Proposition 50.1 p514]{Treves}) and of course complete, thus it is a Montel space (see e.g. Treves \cite[Corollary 2 p520]{Treves}). 
$\mathcal{D}'_{\gamma,\gamma}(U,\mathcal{C};E)$ with the strong topology coming from the pairing with $(\mathcal{D}'_{\Lambda,\overline{\Lambda}}(U,(\mathscr{O}_\mathcal{C})^o;E^*),\mathcal{I}_{iii})$ 
 is also a Montel space (see e.g. K\"othe \cite[$\S 27.2.(2)$]{Kothe} or Treves \cite[Proposition 34.6 p357]{Treves}). 

It only remains to identify this strong topology with the bornologification $\mathcal{I}_{iii}^{born}$. Since we saw $(\mathcal{D}'_{\lambda,\overline{\lambda}}(U,(\mathscr{O}_\mathcal{C})^o;E^*),\mathcal{I}_{iii})$ is nuclear complete, its strong dual is (ultra)bornological. Indeed by a Theorem of Schwartz (\cite{Schwarz} see also \cite[VII.6 Th 1 p 62]{HogbeNlend}), the strong dual of a complete Schwartz locally convex space is bornological in its strong topology.
In order to identify this strong dual with $\mathcal{I}_{iii}^{born}$, it thus suffices to check $\mathcal{I}_{iii}$ has the same bounded sets as this strong dual, what is again done in the next lemma.

Finally this strong dual coincides with the Mackey dual from Arens-Mackey Theorem, since we computed the dual of this strong dual to be the original space, so that the strong topology is compatible with the duality. The identification of Mackey and Arens duals is known for semi-Montel spaces \cite{Horvath}.

\end{proof}
\subsection{Computation of bornologies and identification of topologies by equicontinuous set computations.}
The proof of our previous result will be completed once done the following computation of bornologies. 
\begin{lemma}\label{bornologyIdentities}With the notation of the previous corollary, especially, $\gamma$ a closed cone. 
 On $\mathcal{D}'_{\gamma,\gamma}(U,\mathcal{C};E)$, the bornology of its strong topology  from its duality with $(\mathcal{D}'_{\lambda,\overline{\lambda}}(U,(\mathscr{O}_\mathcal{C})^o;E^*),\mathcal{I}_{iii})$ or $(\mathcal{D}'_{\lambda,{\lambda}}(U,(\mathscr{O}_\mathcal{C})^o;E^*),\mathcal{I}_{iii})$ equals its equicontinuous parts bornology coming again from either  $(\mathcal{D}'_{\lambda,\overline{\lambda}}(U,(\mathscr{O}_\mathcal{C})^o;E^*),\mathcal{I}_{iii})$ or $(\mathcal{D}'_{\lambda,{\lambda}}(U,(\mathscr{O}_\mathcal{C})^o;E^*),\mathcal{I}_{iii})$ and the von Neumann bornology of its original topology $\mathcal{I}_{iii}$.

 The von Neumann bornology of $\mathcal{D}'_{\lambda,\overline{\lambda}}(U,(\mathscr{O}_\mathcal{C})^o;E^*)$ of its strong topology coming from duality with $(\mathcal{D}'_{\gamma,\gamma}(U,\mathcal{C};E),\mathcal{I}_{iii})$, 
 is also its equicontinuous bornology as a strong dual and its equibounded topology as a bornological dual.

\end{lemma}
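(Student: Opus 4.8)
The plan is to settle both assertions by soft duality and bornology arguments, writing for brevity $F=\mathcal{D}'_{\gamma,\gamma}(U,\mathcal{C};E)$, $G_1=\mathcal{D}'_{\lambda,\lambda}(U,(\mathscr{O}_\mathcal{C})^o;E^*)$ and $G_2=\mathcal{D}'_{\lambda,\overline{\lambda}}(U,(\mathscr{O}_\mathcal{C})^o;E^*)$, all carrying $\mathcal{I}_{iii}$ (recall $\gamma$ is closed here). By Theorem \ref{duals} the topological dual is $F'=G_1$ and the bornological dual is $F^\times=G_2$, while Theorem \ref{DualityBD} gives that $G_1$ is barrelled and that $F$ is semi-reflexive with $\mathcal{I}_{iii}=\tau(G_1,F)=\beta(G_1,F)$ on $G_1$. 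Lemmas \ref{contuinuousdenseInj} and \ref{normality} furnish a continuous dense inclusion $G_1\hookrightarrow G_2$ with common dual $F=G_1'=G_2'$.

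For the first assertion I would label the five bornologies on $F$ as (a) the $\beta(F,G_2)$-von Neumann bornology, (b) the $\beta(F,G_1)$-von Neumann bornology, (c) the $G_2$-equicontinuous bornology, (d) the $G_1$-equicontinuous bornology, (e) the $\mathcal{I}_{iii}$-von Neumann bornology, and prove them equal by running the cycle (c)$\subseteq$(a)$\subseteq$(b)$\subseteq$(e)$\subseteq$(d)$\subseteq$(c). Each inclusion except the last is a one-line generality: (c)$\subseteq$(a) because equicontinuous sets are strongly bounded; (a)$\subseteq$(b) because $G_1\subseteq G_2$ forces $\beta(F,G_2)\ge\beta(F,G_1)$; (b)$\subseteq$(e) because $\mathcal{I}_{iii}$ is compatible with $\langle F,G_1\rangle$, so by Mackey--Arens $\mathcal{I}_{iii}\le\tau(F,G_1)\le\beta(F,G_1)$; and (e)$\subseteq$(d) because $\sigma(F,G_1)\le\mathcal{I}_{iii}$ together with barrelledness of $G_1$ identifies the $\sigma(F,G_1)$-bounded subsets of $F=G_1'$ with the $G_1$-equicontinuous ones.

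The last inclusion (d)$\subseteq$(c), that $G_1$-equicontinuity implies $G_2$-equicontinuity, is the only substantial point and is exactly the completion-invariance of the equicontinuous bornology; I expect it to be the main obstacle. I would prove it by hand: given $B\subseteq F$ with $B\subseteq W^\circ$ for an absolutely convex $0$-neighbourhood $W$ of $G_1$, the embedding property lets me take $W=W'\cap G_1$ with $W'$ an open absolutely convex $0$-neighbourhood of $G_2$; density of $G_1$ in $G_2$ makes $W'\cap G_1$ dense in $W'$, and since every $u\in B$ is continuous on $G_2$ the estimate $|\langle u,v\rangle|\le 1$ on $W'\cap G_1$ propagates to all of $W'$, so $B\subseteq (W')^\circ$ is $G_2$-equicontinuous. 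The delicate input is precisely that $\mathcal{I}_{iii}$ on $G_1$ coincides with the topology induced from $(G_2,\mathcal{I}_{iii})$ and that $G_1$ is dense there, which is where Lemmas \ref{contuinuousdenseInj} and \ref{normality} are essential; granting this, the equicontinuous sets seen from the topological dual $G_1$ and from the bornological dual $G_2$ coincide and the cycle closes.

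For the second assertion, on $G_2=F^\times$, I would first invoke the general identity $(F,\mathcal{I}_{iii}^{born})'=F^\times=G_2$, so $G_2$ is the continuous dual of the bornological space $(F,\mathcal{I}^{born})$. The equibounded bornology of a bornological dual is by definition the family of subsets bounded on every bounded subset of $F$; since bornologification preserves bounded sets, this is exactly the von Neumann bornology of $\beta(G_2,F)$ (uniform convergence on the bounded subsets of $F$), which gives the equality of the strong and the equibounded bornologies. Finally, because $(F,\mathcal{I}^{born})$ is bornological, every bounded seminorm on it is continuous, so an equibounded family of functionals is automatically equicontinuous; as the reverse inclusion is trivial, the equicontinuous bornology of $G_2$ viewed as the strong dual agrees with the equibounded one, completing the identification.
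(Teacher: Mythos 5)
Your cycle establishes the identifications on the $G_1$-side, namely $(b)=(d)=(e)$, by a route that is genuinely shorter than the paper's: the paper reaches ``von Neumann bornology of $\mathcal{I}_{iii}$ $=$ $G_1$-equicontinuous bornology'' by identifying $(\mathcal{D}'_{\gamma,\gamma}(U,\mathcal{C};E),\mathcal{I}_{iii})$ as a bornological dual of a bornological inductive limit (Theorem \ref{duals}), commuting topologification with the inductive limit, proving the resulting topology equals $\mathcal{I}_{iii}$ on $G_1$, and then invoking Hogbe-Nlend, whereas your chain $(e)\subseteq(d)$ (Banach--Steinhaus for the barrelled space $G_1$ from Theorem \ref{DualityBD}) together with $(d)\subseteq(b)\subseteq(e)$ (polars of neighbourhoods are strongly bounded; Mackey--Arens) closes immediately. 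That part is correct and is a real simplification.

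The gap is the step $(d)\subseteq(c)$, and it is not a technicality: it is the only place where the $G_2$-bornologies enter, and your argument for it is circular. To write a $0$-neighbourhood $W$ of $G_1$ as $W'\cap G_1$ with $W'$ a $0$-neighbourhood of $G_2$ you need $\mathcal{I}_{iii}$ on $G_1$ to be the topology \emph{induced} from $(G_2,\mathcal{I}_{iii})$. Lemma \ref{contuinuousdenseInj} gives only that the inclusion $G_1\hookrightarrow G_2$ is continuous, i.e.\ that the topology of $G_1$ is \emph{finer} than the induced one, and Lemma \ref{normality} gives density; neither gives the reverse comparison. That reverse comparison is precisely the statement that $G_2$ is the completion of $G_1$, which in the paper is \emph{deduced from} the present lemma (via nuclearity and the Hogbe-Nlend description of the completion as a bornological dual of the equicontinuous dual, in Corollary \ref{FAClosedOpen2}); the paper is explicit that only after the first part of the lemma is proved can one ``now use that $(\mathcal{D}'_{\lambda,\overline{\lambda}}\dots)$ is the completion'' and apply K\"othe \S 21.4.(5) to identify the $G_1$- and $G_2$-equicontinuous sets. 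Without an independent proof that the inclusion $G_1\hookrightarrow G_2$ is a topological embedding, the assertions involving $G_2$ --- the bornologies $(a)$ and $(c)$ in the first statement, and, in the second statement, the identification of the equibounded sets with the sets equicontinuous for the \emph{strong} topology $\beta(F,G_2)$ (your argument only yields equicontinuity for $\mathcal{I}_{iii}^{born}$, and the equality $\mathcal{I}_{iii}^{born}=\beta(F,G_2)$ is again part of Corollary \ref{FAClosedOpen2}) --- remain unproved. You would need to substitute either the paper's bornological-dual computation or some other independent argument for the completion/embedding statement at this point.
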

 \begin{proof}
 
We start by identifying the von Neumann bornology of  $(\mathcal{D}'_{\gamma,\gamma}(U,\mathcal{C};E),\mathcal{I}_{iii})$ with one of the stated equicontinuous bornologies. From a result of Hogbe-Nlend, we will relate it to an equibounded bornology on a bornological dual which we mostly have to relate to the topology we already considered. This is thus a standard bornological result. 
 From the identification of Theorem \ref{duals} of $(\mathcal{D}'_{\gamma,\gamma}(U,\mathcal{C};E),\mathcal{I}_{iii})$ as a bornological dual of 

 \noindent$\underrightarrow{\lim}_{C\in (\mathscr{O}_\mathcal{C})^{o}} \underrightarrow{\lim}_{\Pi\subset \lambda}\left(\mathcal{D}'_{ \Pi,\Pi}(U,\{C_\eta, \eta<\epsilon(C)\}^{oo};E^*),\mathcal{I}_{iii}^{born}\right)$ (with for every $C\in (\mathscr{O}_\mathcal{C})^{o}$, $\epsilon(C)$ is some fixed value with $C_{\epsilon(C)}\in (\mathscr{O}_\mathcal{C})^{o}$), the von Neumann bornology here is the equibounded bornology from this bornological duality and we note that from a general category theory statement (the associated topology functor being a left adjoint it preserves inductive limit, cf. e.g. \cite{FrolicherKriegel})  the topologification of this bornological inductive limit is merely the von Neumann bornology of the topological inductive limit namely $$(\mathcal{D}'_{\lambda,\lambda}(U,(\mathscr{O}_\mathcal{C})^{o};E^*),\mathcal{I}):=\underrightarrow{\lim}_{C\in (\mathscr{O}_\mathcal{C})^{o}} \underrightarrow{\lim}_{\Pi\subset \lambda}\left(\mathcal{D}'_{ \Pi,\Pi}(U,\{C_\eta, \eta<\epsilon(C)\}^{oo};E^*),\mathcal{I}_{iii}^{born}\right).$$
 We claim that, exploiting the bornologicality in the open cone case, this topology is the same as 
 $$(\mathcal{D}'_{\lambda,\lambda}(U,(\mathscr{O}_\mathcal{C})^o;E^*),\mathcal{I}_{iii})=\underrightarrow{\lim}_{C\in (\mathscr{O}_\mathcal{C})^{o}} \underrightarrow{\lim}_{\Pi\subset \lambda}\left(\mathcal{D}'_{ \Pi,\Pi}(U,\{C_\eta, \eta<\epsilon(C)\}^{oo};E^*),\mathcal{I}_{iii}\right).$$ Indeed, closed cones can be replaced by closed cones of the form $\Pi=\overline{\Lambda_\Pi}$ (since any closed cone is included in such a cone which is still in $\lambda$ 
  and which is the closure of an open cone) and thus even an inductive limit over open cones :\begin{align*}(\mathcal{D}'_{\lambda,\lambda}(U,(\mathscr{O}_\mathcal{C};E^*)^o),\mathcal{I})&=\underrightarrow{\lim}_{C\in (\mathscr{O}_\mathcal{C})^{o}} \underrightarrow{\lim}_{\Lambda\subset\overline{\Lambda}\subset \lambda}\left(\mathcal{D}'_{ \overline{\Lambda},\overline{\Lambda}}(U,\{C_\eta, \eta<\epsilon(C)\}^{oo};E^*),\mathcal{I}_{iii}^{born}\right)\\&\simeq \underrightarrow{\lim}_{C\in (\mathscr{O}_\mathcal{C})^{o}} \underrightarrow{\lim}_{\Lambda\subset\overline{\Lambda}\subset \lambda}\left(\mathcal{D}'_{ {\Lambda},{\Lambda}}(U,\{C_\eta, \eta<\epsilon(C)\}^{oo};E^*),\mathcal{I}_{iii}\right)\end{align*}
  since $\left(\mathcal{D}'_{ {\Lambda},{\Lambda}}(U,\{C_\eta, \eta<\epsilon(C)\}^{oo};E^*),\mathcal{I}_{iii}\right)\to \left(\mathcal{D}'_{ \overline{\Lambda},\overline{\Lambda}}(U,\{C_\eta, \eta<\epsilon(C)\}^{oo};E^*),\mathcal{I}_{iii}^{born}\right)$ is continuous for $\Lambda$ open by lemma \ref{contuinuousdenseInj}  and Theorem \ref{DualityBD} for its bornologicality (one uses also $\{C_\eta, \eta<\epsilon(C)\}^{oo}$ is polar enlargeable by the remark after proposition \ref{EnlargeableStable}). For the converse map, one uses $\overline{\Lambda}$ is a closed cone thus included in some open $\Lambda'$ of the type of $\Lambda$. Reasoning in the same way for $\mathcal{I}_{iii}$, one gets $\mathcal{I}_{iii}=\mathcal{I}$ as claimed, and we now use what we noted that this is the topologification of the bornology whose equibounded dual gives $(\mathcal{D}'_{\gamma,\gamma}(U,\mathcal{C};E),\mathcal{I}_{iii})$ with its von Neumann bornology.

   It follows from \cite[5:2.3 Prop (6) p 74]{HogbeNlend2} that the von Neumann bornology  of $(\mathcal{D}'_{\gamma,\gamma}(U,\mathcal{C};E),\mathcal{I}_{iii})$ is also the equicontinuous bornology (generated by polar of $0$-neighborhoods) from its duality with $(\mathcal{D}'_{\lambda,\lambda}(U,(\mathscr{O}_\mathcal{C})^o;E^*),\mathcal{I}_{iii}).$

\medskip

Since the only used result for identifying the completion above is the one we just proved, we can now use that  $(\mathcal{D}'_{\lambda,\overline{\lambda}}(U,(\mathscr{O}_\mathcal{C})^o;E^*),\mathcal{I}_{iii})$ is the completion of $(\mathcal{D}'_{\lambda,\lambda}(U,(\mathscr{O}_\mathcal{C})^o;E^*),\mathcal{I}_{iii})$ to deduce from \cite[\S 21.4.(5) p261]{Kothe}  they induce the same equicontinuous sets on their common dual $\mathcal{D}'_{\gamma,\gamma}(U,\mathcal{C};E),$ thus identifying the corresponding bornologies.

Finally, the von Neumann bornology of the strong topology of $\mathcal{D}'_{\gamma,\gamma}(U,\mathcal{C};E)$   is a priori coarser than the bornology generated by equicontinuous parts of this space (indeed, being bounded on a zero neighborhood implies being bounded on bounded sets always absorbed by zero neighborhoods). But actually, here, since  $(\mathcal{D}'_{\lambda,\lambda}(U,(\mathscr{O}_\mathcal{C})^o;E^*),\mathcal{I}_{iii}),(\mathcal{D}'_{\lambda,\overline{\lambda}}(U,(\mathscr{O}_\mathcal{C})^o;E^*),\mathcal{I}_{iii})$ are bornological (from Theorem \ref{DualityBD} for the first, from the beginning of corollary \ref{FAClosedOpen2}, independent of the currently proved lemma for the second), using \cite[Corollary of prop (6) \S5.2.3 p 75]{HogbeNlend2} every strongly bounded subset of the dual of a bornological space is equicontinuous, thus the strong bornologies  of $\mathcal{D}'_{\gamma,\gamma}(U,\mathcal{C};E)$ actually coincide with the equicontinuous bornology as claimed.

 Since $(\mathcal{D}'_{\lambda,\overline{\lambda}}(U,(\mathscr{O}_\mathcal{C})^o;E^*),\mathcal{I}_{iii})$ is a nuclear locally convex space
 it is a Schwartz locally convex space.
 By a Theorem of Schwartz (\cite{Schwarz} see also \cite[VII.6 Th 1 p 62]{HogbeNlend}), the strong dual of a complete Schwartz locally convex space is ultrabornological, i.e. so is the strong topology on 
$\mathcal{D}'_{\gamma,\gamma}(U,\mathcal{C};E)$.

Thus similarly to one of our previous results, with the same reasoning, using that $\mathcal{D}'_{\gamma,\gamma}(U,\mathcal{C};E)$ is bornological with its strong topology coming from the completion, on its dual (computed by semi-reflexivity of these Montel spaces) $(\mathcal{D}'_{\lambda,\overline{\lambda}}(U,(\mathscr{O}_\mathcal{C})^o)),\mathcal{I}_{iii})$ , the von Neumann bornology and the equicontinuous bornology also coincide as stated. The identification with the equibounded bornology as a bornological dual with the von Neumann bornology of the strong dual is obvious.
 
\end{proof}
We gather in the next lemma various consequences of computations of equicontinuous sets that are of independent interest and will simplify greatly support condition issues later. Most of the results can be seen as commutation of inductive/projective limits on support with other projective/inductive limits on (dual) wave front set.
\begin{lemma}\label{bornologyIdentitiesEqui}With the notation of the previous corollary, especially for $\gamma$ a closed cone we have on $\mathcal{D}'_{\gamma,\gamma}(U,\mathcal{C};E),$ $ \mathcal{I}_{iii}=\mathcal{I}_{iip}=\mathcal{I}_{ipp}=\mathcal{I}_{ipi}$ and equicontinuous sets of their dual are those bounded in the bornological inductive limit as in theorem \ref{duals}: 
 $$\mathcal{D}'_{\lambda,\lambda}(U,(\mathscr{O}_\mathcal{C})^{o};E^*))=\underrightarrow{\lim}_{C\in (\mathscr{O}_\mathcal{C})^{o}} \underrightarrow{\lim}_{\Pi\subset \lambda}\left(\mathcal{D}'_{ \Pi,\Pi}(U,\{F\in \mathcal{F}, F\subset C\};E^*\right)$$
Likewise, on $\mathcal{D}'_{\lambda,\overline{\lambda}}(U,(\mathscr{O}_\mathcal{C})^o;E^*)$, we have $\mathcal{I}_{iii}= \mathcal{I}_{iip}=\mathcal{I}_{pii}=\mathcal{I}_{pip}=\mathcal{I}_{pmi}=\mathcal{I}_{pmp}$ (we will see identity with $\mathcal{I}_{ppp}$ in proposition \ref{FAGeneral}) and on $\mathcal{D}'_{\gamma,\gamma}(U,\mathcal{C};E)$ we have also
$\mathcal{I}_{iii}=\mathcal{I}_{pii}$ and as a consequence for any $\gamma$ (not necessarily closed) on $\mathcal{D}'_{\gamma,\gamma}(U,\mathcal{C};E)$ we have $\mathcal{I}_{i\alpha}=\mathcal{I}_{p\alpha}$ for any $\alpha\in\{ii,mi,ip,mp,pi,pp\}.$

Moreover, in the general context of the previous theorem,  we have topological identifications (as usual $\lambda_i$ open cones, $\Pi$ closed cones) : \begin{equation}\label{Alternativeppp}(\mathcal{D}'_{\gamma,\Lambda}(U,\mathcal{C};E),\mathcal{I}_{p pp})\simeq\underleftarrow{\lim}_{ \lambda_1\supset \gamma}\underleftarrow{\lim}_{\lambda_2\supset \lambda_1\cup \Lambda}\left(\mathcal{D}'_{\lambda_1,\overline{\lambda_1}\cap \lambda_2}(U,\mathcal{C};E),\mathcal{I}_{pmp}\right),\end{equation}
\begin{equation}\label{Alternativepip}(\mathcal{D}'_{\gamma,\overline{\gamma}}(U,\mathcal{C};E),\mathcal{I}_{p ip})\simeq\underleftarrow{\lim}_{ \lambda_1\supset \gamma}\left(\mathcal{D}'_{\lambda_1,\overline{\lambda_1}}(U,\mathcal{C};E),\mathcal{I}_{pip}\right),\end{equation}
\begin{equation}\label{Alternativeiii}(\mathcal{D}'_{\lambda,\lambda}(U,(\mathscr{O}_\mathcal{C})^o;E^*),\mathcal{I}_{iii})\simeq\underrightarrow{\lim}_{\Pi\subset \lambda}(\mathcal{D}'_{\Pi,\Pi}(U,(\mathscr{O}_\mathcal{C})^o;E^*),\mathcal{I}_{iii}).\end{equation}
As a consequence on $\mathcal{D}'_{\lambda,\lambda}(U,(\mathscr{O}_\mathcal{C})^o;E^*)$ we have  $\mathcal{I}_{iii}=\mathcal{I}_{ppp}$.

Finally, we have a topological isomorphism :
\begin{equation}\label{Alternativei}(\mathcal{D}'_{\lambda,\overline{\lambda}}(U,\mathcal{F};E),\mathcal{I}_{iii})\simeq\underleftarrow{\lim}_n\underrightarrow{\lim}_{\Pi\subset \lambda}(\mathcal{D}'_{n,\Pi}(U,\mathcal{F};E)),\end{equation}
where $(\mathcal{D}'_{n,\Pi}(U,\mathcal{F};E))=\{u\in \mathcal{D}'(U,\mathcal{F};E), DWF_n(u)\subset \Pi\},$ for $\Pi$ a closed cone, is equipped with the projective limit topology from the seminorms of the strong topology of $\mathcal{D'}(U,\mathcal{F})$ and  $P_{i,n,f,V}$ such that  $(d\varphi_i)^*(\text{supp}(f\circ\varphi_i^{-1})\times V)\subset \Pi^c\subset DWF_n(u)^c, \text{supp}(f)\subset U_i$.

\end{lemma}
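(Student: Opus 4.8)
The plan is to funnel everything through one master principle. Since Theorem~\ref{duals} shows that all the topologies $\mathcal{I}_\alpha$ on a fixed underlying space share the same topological dual, each of them is the topology of uniform convergence on its own family of equicontinuous subsets of that common dual. By Lemma~\ref{contuinuousdenseInj} these topologies are totally comparable ($\mathcal{I}_{i\cdots}\supset\mathcal{I}_{p\cdots}$, $\mathcal{I}_{\cdot i\cdot}\supset\mathcal{I}_{\cdot m\cdot}\supset\mathcal{I}_{\cdot p\cdot}$, $\mathcal{I}_{\cdots i}\supset\mathcal{I}_{\cdots p}$, with the all-$i$ topology finest), so to identify two of them it suffices to prove that every equicontinuous set for the finer one is equicontinuous for the coarser one. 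The one genuine computational input is the first part of Lemma~\ref{bornologyIdentities}: the $\mathcal{I}_{iii}$-equicontinuous subsets of the dual are exactly the bounded subsets of the bornological inductive limit of Theorem~\ref{duals}.

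The engine for all the inductive-versus-projective identifications will be a concentration argument. Given an equicontinuous (equivalently, bornologically bounded) set $B$ in the dual, regularity of the support inductive limit (Proposition~\ref{FApropertiesSupport}, Theorem~\ref{DualityBD}) together with regularity of the inductive limit over closed subcones lets me place $B$, bounded, inside a single layer $\mathcal{D}'_{\Pi,\Pi}(U,\{F\in\mathcal{F},F\subset C\};E^*)$ for one closed cone $\Pi\subset\lambda$ and one support bound $C$. Then $-\Pi^c\supset\gamma$ is an open cone, and using enlargeability (Proposition~\ref{EnlargeableStable}) to pass from $\{F\subset C\}$ to a suitable polar $\{O\}^o$, the open/closed duality of Theorem~\ref{DualityBD} shows $B$ is equicontinuous on $(\mathcal{D}'_{-\Pi^c,\overline{-\Pi^c}}(U,\{O\}^o;E),\mathcal{I}_{iii})$. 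But this space is precisely one term of the projective limits defining $\mathcal{I}_{pii}$, $\mathcal{I}_{iip}$, $\mathcal{I}_{ipi}$, $\mathcal{I}_{ipp}$, so $B$ is equicontinuous for each of them. This yields $\mathcal{I}_{iii}=\mathcal{I}_{iip}=\mathcal{I}_{ipp}=\mathcal{I}_{ipi}$ and $\mathcal{I}_{iii}=\mathcal{I}_{pii}$ on $\mathcal{D}'_{\gamma,\gamma}$ with $\gamma$ closed. On $\mathcal{D}'_{\lambda,\overline\lambda}$ the first index additionally collapses, because the projective limit over open cones $\supset\lambda$ terminates at $\lambda$; the middle-index equalities ($m$ versus $i$) follow by the same equicontinuity argument combined with the collapse of the middle limit for $\Lambda=\overline\gamma$ and Theorem~\ref{DualityBD}, giving $\mathcal{I}_{iii}=\mathcal{I}_{iip}=\mathcal{I}_{pii}=\mathcal{I}_{pip}=\mathcal{I}_{pmi}=\mathcal{I}_{pmp}$.

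For the alternative representations I expect mostly formal interchanges. Equations~\eqref{Alternativeppp} and \eqref{Alternativepip} are commutations of the support projective limit with the $\lambda_1,\lambda_2$ projective limits, valid because projective limits commute, with \eqref{Alternativepip} dropping the $\lambda_2$/WF factor since $\Lambda=\overline\gamma$; \eqref{Alternativeiii} is the commutation of the support inductive limit with the closed-cone inductive limit, recognizing the inner limit as $\mathcal{D}'_{\Pi,\Pi}(U,(\mathscr{O}_\mathcal{C})^o;E^*)$. From \eqref{Alternativeiii}, the terminated first index (open $\lambda$) and the same equicontinuity argument I would then read off $\mathcal{I}_{iii}=\mathcal{I}_{ppp}$ on $\mathcal{D}'_{\lambda,\lambda}$. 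The isomorphism \eqref{Alternativei} is the genuine decoupling of the single-sequence inductive limit $\underrightarrow{\lim}_{\delta=(\Gamma_n)}$ into $\underleftarrow{\lim}_n\underrightarrow{\lim}_{\Pi}$, which again uses that a bounded (equicontinuous) set concentrates, so that the coupling of the $\Gamma_n$ across $n$ in a single $\delta$ can be undone.

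Finally, for arbitrary $\gamma$ the identities $\mathcal{I}_{i\alpha}=\mathcal{I}_{p\alpha}$ follow by writing $\mathcal{I}_{p\alpha}$ as the outer projective limit over open cones $\lambda_1\supset\gamma$ (equation~\eqref{EqualConep} and the definitions) and applying the already-established closed/open identifications in each term. The hard part will be the concentration step: one must show that an equicontinuous set lands \emph{simultaneously} in a single cone layer $\Pi$ and a single support layer $C$, and that its equicontinuity transfers to exactly one term of the projective limit. This rests delicately on the regularity of the nested inductive limits and on the compatibility, via enlargeability, of the support families $\mathcal{C}$ and $(\mathscr{O}_\mathcal{C})^o$ when replacing $\{F\subset C\}$ by a polar $\{O\}^o$; everything else is either the equicontinuity principle or a routine interchange of like-typed limits.
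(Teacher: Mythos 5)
Your master principle is the right one and matches the paper's strategy: all these topologies share the same dual, so comparable topologies coincide as soon as their equicontinuous sets do, and the closed/open case of Lemma~\ref{bornologyIdentities} is indeed the computational seed. But the step you yourself flag as ``the hard part'' is where the proposal has a genuine gap, and the way you propose to fill it is circular. You want to place an equicontinuous set $B$ of the dual into a single layer $\mathcal{D}'_{\Pi,\Pi}(U,\{F\subset C\};E^*)$ by invoking ``regularity of the inductive limit over closed subcones.'' That regularity is not available at this point: for $\lambda$ open the inductive limit over closed cones $\Pi\subset\lambda$ is uncountable and non-strict, and its regularity is essentially equivalent to the very statement being proven (the paper only records it afterwards, in Proposition~\ref{FAGeneral2}, as a \emph{consequence} of this lemma). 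Lemma~\ref{bornologyIdentities} gives you equicontinuous $=$ von Neumann bounded, not ``bounded in one term.'' The actual argument is a local-to-global one: cut $B$ off by functions $f_n$ equal to $1$ on an exhaustion $K_n$ (using continuity of $L_{f_n}$ from Lemma~\ref{contuinuousdenseInj}), write the predual $(\mathcal{D}'_{\gamma,\gamma}(U,\mathcal{F};E),\mathcal{I}_{iii})$ as the reduced projective limit $\underleftarrow{\lim}_{\Lambda\supset\gamma}(\mathcal{D}'_{\overline{\Lambda},\overline{\Lambda}}(U,\mathcal{F};E),\mathcal{I}_{iii})$, apply K\"othe's description of equicontinuous sets in duals of reduced projective limits to get, for each $n$, a closed cone $\Pi_n\subset\lambda$ containing $WF$ of $L_{f_n}(B)$, and then \emph{glue} these into the single closed cone $\Pi=\bigcup_n T^*K_n\cap\Pi_n\subset\lambda$. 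Without this cut-and-glue construction (and, in the general-support variants, the completant-Banach-disk/Valdivia closed graph argument used to upgrade containment to boundedness), the concentration step does not go through.

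A second, smaller issue: the alternative representations \eqref{Alternativeppp}, \eqref{Alternativepip}, \eqref{Alternativeiii} are not ``formal interchanges of like-typed limits.'' The support index $p$ is itself a projective limit \emph{of inductive limits} over $\mathcal{C}^o$, and the terms of the support inductive limit carry topologies induced from the full cone-limit rather than intrinsic limit topologies, so commuting the support structure with the cone limits is a genuine inductive-versus-projective interchange. The paper proves each of these by computing equicontinuous sets on both sides (which again runs through the concentration argument), and for \eqref{Alternativeppp} one additionally needs the observation that the equicontinuous sets of $(\mathcal{D}'_{\lambda_1,\overline{\lambda_1}\cap\lambda_2})'$ are independent of $\lambda_2$, sandwiching between the $\lambda_2=\overline{\lambda_1}$ and $\lambda_2=\dot T^*U$ cases. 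Similarly, your treatment of $\mathcal{I}_{iii}=\mathcal{I}_{pii}$ on $\mathcal{D}'_{\gamma,\gamma}$ skips the actual mechanism (identifying $\mathcal{I}_{pii}$ as the completion of the already-complete nuclear $\mathcal{I}_{ipi}$ via the bornological dual of the equicontinuous dual). So the architecture of your plan is sound and faithful to the paper, but the two load-bearing arguments --- the cut-and-glue localization and the non-formal limit interchanges --- are missing rather than merely deferred.
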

 \begin{proof}
 As advertised before the statement, all the topological identifications will come from computation of equicontinuous sets. The point is that equicontinuous sets being a bornological notion, they are easy to compute in duals of topological projective and inductive limits (see \cite[\S 22.7]{Kothe}). This is in sharp contrast with computations of topological duals of topological inductive/projective limits which are really limited in general and basically reduced to strong duals of regular inductive limits and Mackey/Arens duals of projective limits (which are bornological inductive/projective limits in disguise). At the end, we will also use the identification of bounded and equicontinuous sets from the previous lemma in the closed cone case that will be our building block for the computations. At the end we are reduced to a concrete computation on our witnesses : supports and cones. We will thus repeat several times the same kind of arguments. 

Let us first determine equicontinuous sets in $\mathcal{D}'_{\lambda,\lambda}(U,(\mathscr{O}_\mathcal{C})^{o};E^*)$ for the stated duality with $(\mathcal{D}'_{\gamma,\gamma}(U,\mathcal{C};E), \mathcal{I}_{iii})$. 
The fact that those bounded sets are equicontinuous is obvious using our first result above in  lemma \ref{bornologyIdentities} 
since such bounded sets are bounded in $(D'_{\Pi,\Pi}(U,(\mathscr{O}_\mathcal{C})^{o};E^*),\mathcal{I}_{iii})$  and thus they are equicontinuous there from the duality with 

\noindent $(D'_{-\Pi^c,-\Pi^c}(U,\mathcal{C};E),\mathcal{I}_{iii})\leftarrow(\mathcal{D}'_{\Gamma,\Gamma}(U,\mathcal{C};E), \mathcal{I}_{iii})$  from the lemma, and, from the above continuous injection, this concludes, and we thus only have to prove the converse.
Note this is a slight extension of \cite[lemma 6.3]{BrouderDangHelein} coming from \cite[Proposition 24]{BrouderDabrowski}. 

  From the continuous inclusion $\mathcal{E}(U,\mathscr{O}_\mathcal{C};E)\to (\mathcal{D}'_{\gamma,\gamma}(U,\mathcal{C};E),\mathcal{I}_{iii})$, such set is also equicontinuous in $\mathcal{D}'(U,(\mathscr{O}_\mathcal{C})^{o};E^*)$ and from the regular inductive limit definition, since an equicontinuous set $B$ is bounded and thus uniformly supported in some $C\in \mathcal{C}$. Moreover, from the continuous inclusion $(\mathcal{D}'_{\gamma,\gamma}(U,\mathcal{K};E),\mathcal{I}_{iii})\to (\mathcal{D}'_{\gamma,\gamma}(U,\mathcal{C};E),\mathcal{I}_{iii})$, it is also equicontinuous in $\mathcal{D}'_{\lambda,\lambda}(U,\mathcal{F};E^*).$ Taking $f_n$ smooth compactly supported equal to $1$ on a large subset $K_n$, since the multiplication $L_f$ is continuous $(\mathcal{D}'_{\gamma,\gamma}(U,\mathcal{F};E),\mathcal{I}_{iii})\to (\mathcal{D}'_{\gamma,\gamma}(U,\mathcal{K};E),\mathcal{I}_{iii})$ by lemma \ref{contuinuousdenseInj}  $L_{f_n}B$ is equicontinuous in $\mathcal{D}'_{\lambda,\lambda}(U,\mathcal{K};E^*),$  from its duality with $(\mathcal{D}'_{\gamma,\gamma}(U,\mathcal{F};E),\mathcal{I}_{iii})\simeq \underleftarrow{\lim}_{\Lambda\supset\gamma} (\mathcal{D}'_{\overline{\Lambda},\overline{\Lambda}}(U,\mathcal{F};E),\mathcal{I}_{iii})$. We are thus reduced to the dual of a projective limit over open cones above. From the projective limit description and \cite[\S 22.7.(5)p 292]{Kothe}, $L_{f_n}(B)$ is also equicontinuous in $(\mathcal{D}'_{\overline{\Lambda_n},\overline{\Lambda_n}}(U,\mathcal{F};E),\mathcal{I}_{iii})'$ for some $\Lambda_n\supset \gamma$ open cone and thus in $(\mathcal{D}'_{{\Lambda_n},{\Lambda_n}}(U,\mathcal{F};E),\mathcal{I}_{iii})'$ from the continuous inclusion $(\mathcal{D}'_{{\Lambda_n},{\Lambda_n}}(U,\mathcal{F};E),\mathcal{I}_{iii})\to (\mathcal{D}'_{\overline{\Lambda_n},\overline{\Lambda_n}}(U,\mathcal{F};E),\mathcal{I}_{iii}).$ We thus concludes since we determined these equicontinuous sets  in lemma \ref{bornologyIdentities} to be bounded sets in $(\mathcal{D}'_{{-\Lambda_n}^c,{-\Lambda_n}^c}(U,\mathcal{K};E^*),\mathcal{I}_{iii})$ as claimed ($\Pi_n={-\Lambda_n}^c\subset \lambda$ closed).
    Taking $\Pi=\cup_{n\in \N} T^*(K_n)\cap \Pi_n$ for an exhaustive increasing sequence of compacts $K_n$ as above, $\Pi\subset \lambda$ is obviously closed (since convergence of a sequence imply staying in $T^*(K_n)$ thus in $\Pi_n$). From the local definition of boundedness in $(\mathcal{D}'_{\Pi,\Pi}(U,\mathcal{F};E),\mathcal{I}_{iii})$ $B$ is bounded there and we already checked the uniform support property, concluding to the stated result.
  
We moreover determine equicontinuous sets in $\mathcal{D}'_{\lambda,\lambda}(U,(\mathscr{O}_\mathcal{C})^{o};E^*)$ for the stated duality with $(\mathcal{D}'_{\gamma,\gamma}(U,\mathcal{C};E), \mathcal{I}_{iip})$. Since such equicontinuous sets are equicontinuous in the previous sense, it only remains to prove that if $B$ is bounded in  $\mathcal{D}'_{\Pi,\Pi}(U,\mathcal{F};E^*)$ for some closed $\Pi\subset \lambda$ and uniformly supported in some $D\in (\mathscr{O}_\mathcal{C})^{o}$ it is also equicontinuous in this new sense. 
Take $\eta$  with $D_\eta\in (\mathscr{O}_\mathcal{C})^{o}$ our previous result proves it is equicontinuous in $(\mathcal{D}'_{\lambda,\lambda}(U,\{D_\epsilon,\epsilon=\eta(1-1/n),n\in \N\}^{oo};E^*)$ for its duality with $(\mathcal{D}'_{\gamma,\gamma}(U,\{Int(D_\epsilon),\epsilon=\eta(1-1/n),n\in \N\}^{o};E),\mathcal{I}_{iii}),$ (using enlargeablity of $\mathscr{D}=\{D_\epsilon,\epsilon=\eta(1-1/n),n\in \N\}^{oo}$ by the remark after proposition \ref{EnlargeableStable}, and $(\mathscr{O}_\mathscr{D})^{o}=\{Int(D_\epsilon),\epsilon=\eta(1-1/n),n\in \N\}^{o}$
)
 but this exactly implies the equicontinuity for the projective limit $(\mathcal{D}'_{\gamma,\gamma}(U,\mathcal{C};E), \mathcal{I}_{iip}),$ since $Int(D_\eta)\in \mathcal{C}^o,$ and thus $(\mathcal{D}'_{\gamma,\gamma}(U,\mathcal{C};E), \mathcal{I}_{iip})\to(\mathcal{D}'_{\gamma,\gamma}(U,\{Int(D_\eta)\}^{o};E),\mathcal{I}_{iii})\to(\mathcal{D}'_{\gamma,\gamma}(U,\{Int(D_\epsilon),\epsilon=\eta(1-1/n),n\in \N\}^{o};E),\mathcal{I}_{iii})$ is continuous so that our set is also equicontinuous in $(\mathcal{D}'_{\gamma,\gamma}(U,\mathcal{C};E), \mathcal{I}_{iip})'.$ 
 Finally, the identification of both kinds of equicontinuous sets imply the remaining equality of topologies to be proved $\mathcal{I}_{iip}=\mathcal{I}_{iii}$ on $\mathcal{D}'_{\gamma,\gamma}(U,\mathcal{C};E).$ The other ones were checked in Theorem \ref{DualityBD}.

\medskip

For the other statements about identification of topologies in the open/closed case, first note that since $\lambda$ is open, we have by definition $\mathcal{I}_{i\alpha}=\mathcal{I}_{p\alpha}$ for any $\alpha$ with two letters. We thus first check $\mathcal{I}_{iii}=\mathcal{I}_{iip}$. For, we reason as at the end of the proof of  theorem \ref{DualityBD} so that using enlargeability, we only have to prove, for $C\in \mathcal{C}$ with $O'=Int(C_\epsilon),$ that we have a continuous map $$(\mathcal{D}'_{\lambda,\overline{\lambda}}(U,\{O'\}^o;E^*),\mathcal{I}_{iii})\to  (\mathcal{D}'_{\gamma,\gamma}(U,\{C\}^{oo};E),\mathcal{I}_{iii}^{born})_b'.$$ 
 
As in this proof we can take $f$ compactly supported  equal to $1$ on a neighborhood of  the compact $V\cap F$ for $F\in \{C\}^{oo},V\in \{O'\}^o$, thus from regularity of the inductive limit on support, we take $B$ bounded in $(\mathcal{D}'_{\gamma,\gamma}(U,\{C\}^{oo};E),\mathcal{I}_{iii})$
uniformly supported in $F$ and of course thus bounded  in  $(\mathcal{D}'_{\gamma,\gamma}(U,\mathcal{F};E),\mathcal{I}_{iii})$ and we only need to bound $\sup_{u\in B}|\langle u,fv\rangle|$. From the continuity of $L_f:v\mapsto fv,$ explained in lemma \ref{contuinuousdenseInj} 
 $(\mathcal{D}'_{\lambda,\lambda}(U,\mathcal{F};E^*),\mathcal{I}_{iii})\to (\mathcal{D}'_{\lambda,\lambda}(U,\mathcal{K};E^*),\mathcal{I}_{iii})$ and from our previous identification of the completion, we also have continuity $(\mathcal{D}'_{\lambda,\overline{\lambda}}(U,\mathcal{F};E^*),\mathcal{I}_{iii})\to (\mathcal{D}'_{\lambda,\overline{\lambda}}(U,\mathcal{K};E^*),\mathcal{I}_{iii})$ and the final continuity stated then follows from the identification of this last target space, where $vf$ lives, with a strong dual.
  Gathering the results in a projective limit we got, similarly to our proof in Theorem \ref{DualityBD},  continuous maps :
\begin{align*}(\mathcal{D}'_{\lambda,\overline{\lambda}}(U,(\mathscr{O}_\mathcal{C})^{o};E^*),\mathcal{I}_{iip})&=\underleftarrow{\lim}_{O\in(\mathscr{O}_\mathcal{C})^{oo}} (\mathcal{D}'_{\lambda,\overline{\lambda}}(U,\{O\}^{o};E^*),\mathcal{I}_{iii})\\&\to \underleftarrow{\lim}_{C\in \mathcal{C}} (\mathcal{D}'_{\gamma,\gamma}(U,\{C\}^{oo};E),\mathcal{I}_{iii}^{born})_b'=(\mathcal{D}'_{\gamma,\gamma}(U,\mathcal{C};E),\mathcal{I}_{iii}^{born})'_b
\\&=(\mathcal{D}'_{\lambda,\overline{\lambda}}(U,(\mathscr{O}_\mathcal{C})^{o};E^*),\mathcal{I}_{iii})\to (\mathcal{D}'_{\lambda,\overline{\lambda}}(U,(\mathscr{O}_\mathcal{C})^{o};E^*),\mathcal{I}_{iip}),\end{align*}
with again the last equality from our computation of the strong dual, and the last obvious maps concludes to the expected equality $\mathcal{I}_{iii}=\mathcal{I}_{iip}$ in the open cone case (we used that the bornologification of a regular inductive limit is the inductive limit of bornologification, using the topology functor is a left adjoint and thus preserves inductive limits see e.g. \cite[Prop 2.1.10]{FrolicherKriegel}).

We are now ready to also identify $\mathcal{I}_{iii}=\mathcal{I}_{pmi},\mathcal{I}_{iip}=\mathcal{I}_{pmp}$ on $\mathcal{D}'_{\lambda,\overline{\lambda}}(U,(\mathscr{O}_{\mathcal{C}})^o;E^*)$. For it suffices to identify $\mathcal{I}_{iii}=\mathcal{I}_{pmi}$ on $\mathcal{D}'_{\lambda,\overline{\lambda}}(U,\mathcal{F};E^*)$ (same inductive/projective limits afterward) but in this case, there is no inductive limit on $\Pi$ for $\mathcal{I}_{pmi}$ which is only \begin{align*}\underleftarrow{\lim}_{n\to \infty }(\mathcal{D}'_{\lambda_{(n)},\overline{\lambda}_{(n)}}(U,\mathcal{F};E^*),\mathcal{I}_{pii})&=\underleftarrow{\lim}_{n\to \infty }\underleftarrow{\lim}_{\Lambda\supset \lambda_{(n)} }(\mathcal{D}'_{\Lambda,\overline{\Lambda}}(U,\mathcal{F};E^*),\mathcal{I}_{iii})\\&=\underleftarrow{\lim}_{n\to \infty }(\mathcal{D}'_{Int(\lambda_{(n)}),\overline{\lambda}_{(n)}}(U,\mathcal{F};E^*),\mathcal{I}_{pii}=\mathcal{I}_{iii}).\end{align*}
Indeed, the first identity is merely the definition of the topology since $\overline{\lambda}_{(n)}=\overline{\lambda_{(n)}}$, the only problem for us being $\lambda_{(n)}$ may not be open, but any open $\Lambda\supset \lambda_{(n)}$ of course contains $Int(\lambda_{(n)})=\lambda_{(n,1)}\cup Int(T_{(n,2)})$ and conversely $Int(\lambda_{(n)})\supset \lambda_{(n+1)}$ proving by cofinality the equality with the second inductive limit. We can then compute equicontinuous sets in the dual using \cite[\S 22.7.(5)]{Kothe} which implies an equicontinuous set in the dual is equicontinuous in some $(\mathcal{D}'_{Int(\lambda_{(n)}),\overline{\lambda}_{(n)}}(U,\mathcal{F};E^*),\mathcal{I}_{pii}=\mathcal{I}_{iii})'=\mathcal{D}'_{-Int(\lambda_{(n)})^c,-Int(\lambda_{(n)})^c}(U,\mathcal{K};E).$ Of course these equicontinuous sets are also equicontinuous in $(\mathcal{D}'_{\lambda,\overline{\lambda}}(U,\mathcal{F};E^*),\mathcal{I}_{iii})'=(\mathcal{D}'_{-\lambda^c,-\lambda^c}(U,\mathcal{K};E),\mathcal{I}_{iii})$  since $-\lambda^c\supset -Int(\lambda_{(n)})^c$. But since conversely an equicontinuous set of this second space is bounded thus uniformly supported in some compact set $K$, using again continuity of multiplication by a smooth test function from lemma \ref{contuinuousdenseInj} that will be identity on this subspace controlled by the support condition on $K$ we get it has to be equicontinuous in some $\mathcal{D}'_{-Int(\lambda_{(n)})^c,-Int(\lambda_{(n)})^c}(U,\mathcal{K};E)$. Thus, having identified equicontinuous sets in the dual, we have identified $\mathcal{I}_{iii}=\mathcal{I}_{pmi}$.

To prove $\mathcal{I}_{iii}=\mathcal{I}_{pii}$ on $\mathcal{D}'_{\gamma,\gamma}(U,\mathcal{C};E)$, again $\mathcal{C}=\mathcal{F}$ is enough, we only use the first topology is nuclear and described also by Theorem \ref{DualityBD} by $\mathcal{I}_{ipi}$ given by formula  \eqref{EqualConep} but since the topology is complete it is equal to its completion, and it suffices to see $\mathcal{I}_{pii}$ describes the completion given by 
\cite[p.~140 Corollary 3]{HogbeNlendMoscatelli}. For, we have to compute the dual of $\mathcal{I}_{ipi}$ with  equicontinuous bornology which is $\underrightarrow{\lim}_{ \Pi\subset -\gamma^c}\left(\mathcal{D}'_{\Pi,\Pi}(U,\mathcal{K};E^*),\mathcal{I}_{iii}\right)$ with equicontinuous sets exactly bounded sets in one of the terms of the inductive limit (i.e. an inductive limit bornology, the equicontinuous sets in each space again come from lemma \ref{bornologyIdentities}). Then the completion is given by a bornological dual, which is nothing but a projective limit of bornological duals, i.e. $\underleftarrow{\lim}_{ \Pi\subset -\gamma^c}\left(\mathcal{D}'_{-\Pi^c,\overline{-\Pi^c}}(U,\mathcal{F};E),\mathcal{I}_{iii}\right)=\underleftarrow{\lim}_{ \lambda\supset \Gamma}\left(\mathcal{D}'_{\lambda,\overline{\lambda}}(U,\mathcal{F};E),\mathcal{I}_{iii}\right)$ the equality in calling $\lambda=-\Pi^c$. This is the definition of $\mathcal{I}_{pii}$ concluding to the equality $\mathcal{I}_{iii}=\mathcal{I}_{pii}.$ The remaining identities are straightforward from the definitions.

\medskip

We now come back to the general $\gamma,\Lambda$ case and want to determine equicontinuous sets in the dual of $(\mathcal{D}'_{\gamma,\Lambda}(U,\mathcal{C};E),\mathcal{I}_{ppp})$.

We start with the case $\mathcal{C}=\mathcal{F}$ and since $(\mathcal{D}'_{\gamma,\Lambda}(U,\mathcal{F};E),\mathcal{I}_{ppp})$ is a projective limit, the equicontinuous bornology on the dual is as usual \cite[\S 22.7.(5)]{Kothe} the inductive limit bornology (on $\lambda_i$ open cones) $$\underrightarrow{\lim}_{ \lambda_1\supset \gamma}\underrightarrow{\lim}_{\lambda_2\supset \lambda_1\cup \Lambda}\left(\mathcal{D}'_{\lambda_1,\overline{\lambda_1}\cap \lambda_2}(U,\mathcal{F};E),\mathcal{I}_{p mi}\right)'.$$
 But $\left(\mathcal{D}'_{\lambda_1,\overline{\lambda_1}\cap \lambda_2}(U,\mathcal{F};E),\mathcal{I}_{p mi}\right)'=\left(\mathcal{D}'_{-\lambda_1^c,-\lambda_1^c}(U,\mathcal{K};E^*)\right)$ is independent of $\lambda_2$ and we want to check the equicontinuous sets are also independent. But from the continuous inclusions $(\mathcal{D}'_{\lambda_1, \lambda_1}(U,\mathcal{F};E),\mathcal{I}_{p mi}=\mathcal{I}_{iii})\to (\mathcal{D}'_{\lambda_1,\overline{\lambda_1}\cap \lambda_2}(U,\mathcal{F};E),\mathcal{I}_{p mi})\to (\mathcal{D}'_{\lambda_1,\overline{\lambda_1}}(U,\mathcal{F};E),\mathcal{I}_{p mi}=\mathcal{I}_{iii})$ an equicontinuous set with respect to the duality with the last space is equicontinuous for the second, which is equicontinuous for the first, but since the last and the first induce the same equicontinuous sets as explained in lemma \ref{bornologyIdentities} (the identification of topologies written above being checked at the beginning of this proof and in Theorem  \ref{DualityBD}). In any case, the equicontinuous dual is thus $\underrightarrow{\lim}_{ \lambda_1\supset \gamma}\left(\mathcal{D}'_{\lambda_1,\overline{\lambda_1}}(U,\mathcal{F};E),\mathcal{I}_{p mi}\right)'.$ 
  For the general support case, we likewise identify equicontinuous sets in the dual $(\mathcal{D}'_{\gamma,\Lambda}(U,\mathcal{C};E),\mathcal{I}_{ppp})'=\mathcal{D}'_{-\gamma^c,-\gamma^c}(U,(\mathscr{O}_\mathcal{C})^o;E^*)$. Since an (absolutely convex) equicontinuous set $B$ is bounded, it is uniformly supported on some $D\in (\mathscr{O}_\mathcal{C})^o$ and if $K_n$ is an exhaustion of compact and $f_n\in\mathcal{D}(U)$ equal to 1 on a neighborhood of $K_n$, $v\to f_nv$ is continuous 
 $(\mathcal{D}'_{\gamma,\Lambda}(U,\mathcal{F};E),\mathcal{I}_{ppp})\to \mathcal{D}'_{\gamma,\Lambda}(U,\mathcal{C};E),\mathcal{I}_{ppp})$, we thus deduce as above $L_{f_n}(B)$ is equicontinuous in $(\mathcal{D}'_{\gamma,\Lambda}(U,\mathcal{F};E),\mathcal{I}_{ppp})'=\mathcal{D}'_{-\gamma^c,-\gamma^c}(U,\mathcal{K};E^*)$. But we have just seen this means it is bounded in some $\mathcal{D}'_{\Pi_n,\Pi_n}(U,\mathcal{K};E^*)$ with $\Pi_n\subset -\gamma^c$ closed. Especially, $B$ has wave front set uniformly in $\Pi=\cup_n \Pi_n\cap T^*K_n\subset -\gamma^c$ which is also closed (see argument in the first part of the proof). But since $B$ is (absolutely convex) equicontinuous it is also absolutely convex compact in $\mathcal{D}'_{-\gamma^c,-\gamma^c}(U,(\mathscr{O}_\mathcal{C})^o;E^*),$ with its Arens dual topology ( cf. \cite{Schwarz} to check an equicontinuous is compact in the Arens dual).
 Thus $B$ is a completant Banach disk and thus by Valdivia's closed graph Theorem in \cite{ValdiviaQuasiLB} $E_B\to (\mathcal{D}'_{\Pi,\Pi}(U,\mathcal{F};E),\mathcal{I}_{iii})$ is continuous, especially $B$ is bounded in $\mathcal{D}'_{\Pi,\Pi}(U,\mathcal{F};E)$ uniformly supported in $D$ for the above closed cone $\Pi\subset - \gamma^c.$ [One can also reason more elementarily as above in making explicit the boundedness in $\mathcal{D}'_{\Pi_n,\Pi_n}(U,\mathcal{F};E)$].

Moreover, it is obvious there is a continuous map, for $\lambda_i$ as in the projective limits below, $(\mathcal{D}'_{\gamma,\Lambda}(U,\mathcal{F};E),\mathcal{I}_{p p})\to \left(\mathcal{D}'_{\lambda_1,\overline{\lambda_1}\cap \lambda_2}(U,\mathcal{F};E),\mathcal{I}_{pp}=\mathcal{I}_{pm}\right)$ (with equality of topologies by a terminated projective limit)  and thus we have a continuous map we want to  be an isomorphism~:
 $$(\mathcal{D}'_{\gamma,\Lambda}(U,\mathcal{C};E),\mathcal{I}_{p pp})\to\underleftarrow{\lim}_{ \lambda_1\supset \gamma}\underleftarrow{\lim}_{\lambda_2\supset \lambda_1\cup \Lambda}\left(\mathcal{D}'_{\lambda_1,\overline{\lambda_1}\cap \lambda_2}(U,\mathcal{C};E),\mathcal{I}_{pmp}=\mathcal{I}_{ppp}\right).$$
 But in the dual of the later projective limit we can compute the  equicontinuous sets as in the case $\mathcal{C}=\mathcal{F}$, and they are equicontinuous for the duality of the first space and they are exactly elements in the class we found any equicontinuous set of the first space to be. We thus identified the equicontinuous set of the two topologies on the same space separated by the map above on the same dual, this means the map above is an isomorphism. This gives \eqref{Alternativeppp}.
 
\medskip

 To check \eqref{Alternativepip} note both spaces are algebraically the same and have algebraically the same dual  $\mathcal{D}'_{-\gamma^c,-\gamma^c}(U,(\mathscr{O}_\mathcal{C})^o;E^*)=\underrightarrow{\lim}_{\Pi\subset -\gamma^c}\mathcal{D}'_{\Pi,\Pi}(U,(\mathscr{O}_\mathcal{C})^o;E^*).$
 We have to check they induce the same equicontinuous sets. Of course from  \cite[\S 22.7.(5)]{Kothe} a space equicontinuous in the inductive limit is equicontinuous in one of the terms thus in $\mathcal{D}'_{-\gamma^c,-\gamma^c}(U,(\mathscr{O}_\mathcal{C})^o;E^*).$ 
  Conversely, take an equicontinuous set $B$ in the first space, as usual it is uniformly supported in some $D\in (\mathscr{O}_\mathcal{C})^o$ and  for any smooth compactly supported map $f_n$ as before, $L_{f_n}(B)$ is equicontinuous in  $\mathcal{D}'_{-\gamma^c,-\gamma^c}(U,\mathcal{K};E^*)=(\mathcal{D}'_{\gamma,\overline{\gamma}}(U,\mathcal{F};E),\mathcal{I}_{pip})'=\underrightarrow{\lim}_{\Pi\subset -\gamma^c}\mathcal{D}'_{\Pi,\Pi}(U,\mathcal{K};E^*)$ (one uses that the identity we want to check is trivial when $\mathcal{C}=\mathcal{F}$ by definition) where now this is again \cite[\S 22.7.(5)]{Kothe} that identifies the equicontinuous set in the later space. Thus  $L_{f_n}(B)$ bounded in $\mathcal{D}'_{\Pi_n,\Pi_n}(U,\mathcal{F};E^*)$  and reasoning as before to gather $\Pi_n$'s in a closed cone $\Pi$ concludes.
 
Of course, to check \eqref{Alternativeiii} note again  both spaces are algebraically the same and have algebraically the same dual (inductive limit on $\Lambda$ open cone as usual) $\mathcal{D}'_{\gamma,\gamma}(U,\mathcal{C};E)=\underleftarrow{\lim}_{\Lambda\supset \gamma}\mathcal{D}'_{\Lambda,\Lambda}(U,\mathcal{C};E).$
Again,  we have to check they induce the same equicontinuous sets. Of course, an equicontinuous set in the first space is equicontinuous in all the terms of the projective limit, thus it is equicontinuous in the projective limit by \cite[\S 22.7.(4)]{Kothe}.
 Conversely a set $B$ equicontinuous in any $\mathcal{D}'_{\Lambda,\Lambda}(U,\mathcal{C};E)$
 is uniformly supported in some $C\in \mathcal{C}$ and moreover   bounded in $(\mathcal{D}'_{\Pi_\Lambda,\Pi_\Lambda}(U,\mathcal{F};E),\mathcal{I}_H)$ for some closed cone $\Pi_\Lambda\subset \Lambda$ using the computation of equicontinuous sets at the beginning of the proof. But consider the closed cone intersection of closed cones $\Pi=\bigcap_{\Lambda\supset \gamma}\Pi_\Lambda\subset \bigcap_{\Lambda\supset \gamma}\Lambda=\gamma,$  $B$ is also bounded in $(\mathcal{D}'_{\Pi,\Pi}(U,\mathcal{F};E),\mathcal{I}_H)$ by the definition of the normal topology (for, any $\text{supp}(f)\times V\subset \Pi^c$ can be cut by compactness in finitely many pieces of the same form in  some $\Pi_\Lambda^c$, alternatively one can again use completant Banach disk and Valdivia's closed graph theorem). But sets in $\mathcal{D}'_{\Pi,\Pi}(U,\mathcal{C};E)$ equicontinuous for the duality of this space with $(\mathcal{D}'_{-\Pi^c,-\Pi^c}(U,(\mathscr{O}_\mathcal{C})^o;E^*),\mathcal{I}_{iii})$ are exactly the bounded sets in $(\mathcal{D}'_{\Pi,\Pi}(U,\mathcal{C};E),\mathcal{I}_{iii})$ by lemma \ref{bornologyIdentities} and thus are exactly those of the form of $B$. But from the continuous map $(\mathcal{D}'_{\lambda,\lambda}(U,(\mathscr{O}_\mathcal{C})^o;E^*),\mathcal{I}_{iii})\to(\mathcal{D}'_{-\Pi^c,-\Pi^c}(U,(\mathscr{O}_\mathcal{C})^o;E^*),\mathcal{I}_{iii})$ one concludes $B$ is equicontinuous in the dual of $(\mathcal{D}'_{\lambda,\lambda}(U,(\mathscr{O}_\mathcal{C})^o;E^*),\mathcal{I}_{iii})$ concluding the proof of the claim. Now the identification of topologies $\mathcal{I}_{iii}=\mathcal{I}_{ppp}$ follows from the computation of duals and the previous identification of equicontinuous sets in those duals in the proofs of \eqref{Alternativeiii} and \eqref{Alternativeppp}.

\medskip

For the identification \eqref{Alternativei}, the continuous dense range inclusion : $(\mathcal{D}'_{\lambda,\overline{\lambda}}(U,\mathcal{F};E),\mathcal{I}_{iii})\to\underleftarrow{\lim}_n\underrightarrow{\lim}_{\Pi\subset \lambda}(\mathcal{D}'_{n,\Pi}(U,\mathcal{F};E))$ is obvious to obtain. Thus to identify the duals with $(\mathcal{D}'_{-\lambda^c,-{\lambda}^c}(U,\mathcal{K};E^*)$ it suffices to show this defines  continuous linear functionals on the right hand side. This is similar to the proof for the left hand side in Theorem \ref{duals}. To identify the topologies, it only remains to identify equicontinuous sets in the duals, and the only non-obvious direction is to prove that if $B$ is equicontinuous in the dual of the left hand side, it is in the dual of the right hand side. But we know $B$ is uniformly supported in a compact set, and bounded in $\mathcal{D}'_{\Gamma,\Gamma}(U,\mathcal{F};E^*)$ for some $\Gamma\subset -\lambda^c$ from the beginning of the proof. From \cite[\S22.7.(4,5)]{Kothe}, it suffices to show it is equicontinuous for some $n$, for every $\Pi\subset \lambda$ in $(\mathcal{D}'_{n,\Pi}(U,\mathcal{F};E^*))'.$ Take $n=m+d+1$ 
 with $m$ the degree of polynomial boundedness of $\mathcal{F}(M_{f_i}(v)),v\in B,i\in I.$ 
Taking $\Pi=\Gamma_{m+d+1}\subset -\Gamma^c$ in the proof of equation \eqref{BDcountinuous} and using the note after it about uniformity in $v$, we conclude.
\end{proof}
\subsection{Quasi-LB representations.}
Our next goal will be to identify, in the next subsection, the remaining Arens/Mackey duals that are missing in our computations. As a technical result to obtain compactness, we will use the notion of quasi-(LB) spaces (this will give results of independent interest anyway).

Recall that for quasi-(LB)-spaces, we refer to \cite{ValdiviaQuasiLB} and for the (dually related) class $\mathfrak{G}$ we refer to \cite{CascalesOrihuela}.
Our 
last identification in equation 
\eqref{Alternativei} of the previous lemma will be motivated by the stability properties of these spaces.
 We start by a general lemma that explain part of this remark. Recall that on $C=\N^{\N}$ the (partial) order is the pointwise ordering induced by the usual order on $\N$. The following lemma that has not been pointed out in \cite{ValdiviaQuasiLB} shows quasi-(LB) spaces have more inductive limit stability than projective limit stability since certain uncountable inductive limits are allowed.

\begin{lemma}\label{QLBlemma}
 Let $(F_{(n,m)})_{(n,m)\in \N^{2}}$ quasi-LB spaces, $F_{(n,m)}\subset F$ with continuous injections for each $(n,m)$ with $F$ an Hausdorff locally convex space, $F_{(n,n_1)}\subset F_{(n,n_2)}$ for $n_1\leq n_2.$ Define for $\alpha \in\N^{\N},$ $G_\alpha=\bigcap_{n\in\N}F_{(n,\alpha(n))}$ with projective limit topology so that one gets continuous injections $G_\alpha\subset G_\beta$ for $\alpha\leq \beta$. Then $\underrightarrow{\lim}_{\alpha\in \N^{\N}}G_{\alpha}$ is a quasi-LB space.
\end{lemma}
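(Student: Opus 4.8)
The plan is to build, directly from the data, a quasi-(LB)-representation of $E:=\underrightarrow{\lim}_{\alpha\in\N^{\N}}G_\alpha$ in the sense of \cite{ValdiviaQuasiLB}, i.e. a family $(B_\rho)_{\rho\in\N^{\N}}$ of Banach disks in $E$, increasing for the pointwise order ($\rho\leq\rho'\Rightarrow B_\rho\subseteq B_{\rho'}$), with continuous inclusions $E_{B_\rho}\hookrightarrow E$ of the associated Banach spaces and $E=\bigcup_\rho E_{B_\rho}$. The only freedom available is the choice, for each $(n,m)$, of a quasi-(LB)-representation $\{A^{(n,m)}_\sigma\}_{\sigma\in\N^{\N}}$ of the quasi-LB space $F_{(n,m)}$, so the whole proof consists in assembling these compatibly and then transporting the result through an order isomorphism $\N^{\N}\simeq\N^{\N}\times(\N^{\N})^{\N}$ (obtained by interleaving coordinates together with a bijection $\N\times\N\simeq\N$, both preserving the coordinatewise order).

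First I would normalize the representations $\{A^{(n,m)}_\sigma\}$ so that they are monotone not only in $\sigma$ but also in the second index $m$, and so that they absorb scalars. Writing $\N^{\N}\simeq(\N^{\N})^{\N}$ as $\sigma\mapsto(\sigma^{(0)},\sigma^{(1)},\dots)$, set
\[ \tilde A^{(n,m)}_\sigma=\operatorname{acx}\Big(\bigcup_{m'\leq m}A^{(n,m')}_{\sigma^{(m')}}\Big), \]
the absolutely convex hull computed in $F_{(n,m)}$ (legitimate since $F_{(n,m')}\hookrightarrow F_{(n,m)}$ continuously for $m'\leq m$). A finite absolutely convex hull of Banach disks is again a Banach disk, so each $\tilde A^{(n,m)}_\sigma$ is completant; it is increasing in $\sigma$ and, by construction, satisfies $\tilde A^{(n,m)}_\sigma\subseteq\tilde A^{(n,m+1)}_\sigma$, while $\tilde A^{(n,m)}_\sigma\supseteq A^{(n,m)}_{\sigma^{(m)}}$ guarantees $\bigcup_\sigma E_{\tilde A^{(n,m)}_\sigma}=F_{(n,m)}$. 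A further relabelling $\N^{\N}\simeq\N\times\N^{\N}$, $\sigma\mapsto(\sigma(0),\mathrm{sh}(\sigma))$ (with $\mathrm{sh}$ the shift), and the rescaling $\hat A^{(n,m)}_\sigma:=(\sigma(0)+1)\,\tilde A^{(n,m)}_{\mathrm{sh}(\sigma)}$ preserve all these properties and add scalar absorption: for every $\gamma$ and $c>0$ there is $\sigma$ with $\mathrm{sh}(\sigma)=\gamma$ and $\sigma(0)+1\geq c$, whence $c\,\tilde A^{(n,m)}_\gamma\subseteq\hat A^{(n,m)}_\sigma$.

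Then, for an index $(\alpha,(\beta_n)_n)\in\N^{\N}\times(\N^{\N})^{\N}$, I would set
\[ D_{(\alpha,(\beta_n))}=\bigcap_{n\in\N}\hat A^{(n,\alpha(n))}_{\beta_n}, \]
the intersection taken in the common overspace $F$. This is a Banach disk in $E$: it is absolutely convex, it is contained in $G_\alpha$ and bounded there (hence in $E$), because it lies inside the bounded disk $\hat A^{(n,\alpha(n))}_{\beta_n}$ of each $F_{(n,\alpha(n))}$; and the gauge space $E_D$, with norm $\|x\|_D=\sup_n\|x\|_{\hat A^{(n,\alpha(n))}_{\beta_n}}$, is isometric to the diagonal of the $\ell^\infty$-sum of the Banach spaces $(F_{(n,\alpha(n))})_{\hat A^{(n,\alpha(n))}_{\beta_n}}$, which is a closed subspace (since $F$ is Hausdorff), hence complete. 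Monotonicity is where the preparatory step pays off: if $\alpha\leq\alpha'$ and $\beta_n\leq\beta'_n$ for all $n$, then for each $n$ one has $\hat A^{(n,\alpha(n))}_{\beta_n}\subseteq\hat A^{(n,\alpha(n))}_{\beta'_n}\subseteq\hat A^{(n,\alpha'(n))}_{\beta'_n}$ by $\sigma$- then $m$-monotonicity, so the intersections are nested. For the covering, given $x\in E$ choose $\alpha$ with $x\in G_\alpha$; for each $n$ scalar absorption yields $\beta_n$ with $\|x\|_{\hat A^{(n,\alpha(n))}_{\beta_n}}\leq 1$, whence $\sup_n\|x\|_{\hat A^{(n,\alpha(n))}_{\beta_n}}\leq 1$ and $x\in E_{D_{(\alpha,(\beta_n))}}$. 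Finally $E_D\hookrightarrow G_\alpha\hookrightarrow E$ is continuous by construction.

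Transporting the family $(D_{(\alpha,(\beta_n))})$ through the order isomorphism $\N^{\N}\simeq\N^{\N}\times(\N^{\N})^{\N}$ produces the desired representation $(B_\rho)_{\rho\in\N^{\N}}$, so $E$ is a quasi-(LB)-space. The main obstacle is precisely the one handled in the preparatory step: a naive superposition of given representations of the individual $G_\alpha$ fails, since disks coming from different values of the inductive index (equivalently, from $F_{(n,m)}$ and $F_{(n,m')}$ with $m\neq m'$) carry no inclusion relation, so the monotonicity required of a quasi-LB representation cannot be arranged. The remedy is to force compatibility along the second index $m$ by the finite-hull construction while retaining completantness, the covering property, and scalar absorption; once this is secured, the self-similarity of the directed set $\N^{\N}$ does all the remaining bookkeeping.
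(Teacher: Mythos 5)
Your proposal is correct and follows essentially the same route as the paper: normalize the quasi-LB representations of the $F_{(n,m)}$ to be increasing in $m$ by taking absolutely convex hulls, form countable intersections of disks indexed by the pair (inductive index $\alpha$, sequence of representation indices), and reabsorb everything into a single $\N^{\N}$ via an order-preserving interleaving bijection. Your extra rescaling/scalar-absorption step and the explicit $\ell^\infty$-sum verification that the intersection is completant are harmless elaborations of points the paper treats implicitly.
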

\begin{proof}
Let $(A_{(n,m),\alpha})_{\alpha\in \N^{\N}}$ a quasi-LB representation of $F_{(n,m)}$ for which we can assume $A_{(n,n_1),\alpha}\subset A_{(n,n_2),\alpha}$ for $n_1\leq n_2$ (replacing $A_{(n,n_1),\alpha}$ by the absolutely convex hull of $A_{(n,k),\alpha},k\leq n_1$ which is still a Banach disk
). Let $g=g_1\cup g_2:\N\cup \N^2\to \N$ a bijection.
 Let for $\alpha\in \N^{\N}$ then $\alpha\circ g_1\in \N^{\N}, \alpha\circ g_2\in \N^{\N^2}.$ 
Define $B_\alpha=\bigcap_{n\in\N}A_{(n,\alpha\circ g_1(n)),\alpha\circ g_2(n,.)}$ which is a Banach disk in $G_{\alpha\circ g_1}.$ 
Note that if $\alpha\leq \alpha'$ 
then $\alpha\circ g_1\leq \alpha'\circ g_1,\alpha\circ g_2(n,.) \leq \alpha'\circ g_2(n,.)$ so that $A_{(n,\alpha\circ g_1(n)),\alpha\circ g_2(n,.)}\subset A_{(n,\alpha'\circ g_1(n)),\alpha\circ g_2(n,.)}\subset A_{(n,\alpha'\circ g_1(n)),\alpha'\circ g_2(n,.)}$ by the definition of our quasi LB representations. Thus, we have 
 $B_\alpha\subset  B_{\alpha'}.$ But, using distributivity and the quasi-LB representation property for $\supset$ : $$\bigcup_{\alpha}B_\alpha=\bigcap_{n\in\N}\bigcup_{\alpha}A_{(n,\alpha\circ g_1(n)),\alpha\circ g_2(n,.)}\supset \bigcup_{\alpha}\bigcap_{n\in\N}F_{(n,\alpha\circ g_1(n))}=\bigcup_{\alpha}G_{\alpha\circ g_1}$$ concluding the proof that this is indeed a quasi-LB representation.
\end{proof}

\begin{proposition}\label{bornologyIdentitiesQLB}With the notation of the previous corollary, especially for $\gamma$ a closed cone, as soon as either $\mathcal{C}$ or $(\mathscr{O}_\mathcal{C})^{o}$ is countably generated, $(\mathcal{D}'_{\gamma,\gamma}(U,\mathcal{C};E), \mathcal{I}_{iii})$ and its bornologification are quasi-LB-spaces of class $\mathfrak{G}$ and so are $(\mathcal{D}'_{\lambda,\overline{\lambda}}(U,(\mathscr{O}_\mathcal{C})^{o};E^*),\mathcal{I}_{iii}),(\mathcal{D}'_{\lambda,{\lambda}}(U,(\mathscr{O}_\mathcal{C})^{o};E^*),\mathcal{I}_{iii})$.
Moreover, $\mathcal{D}'_{\lambda,\overline{\lambda}}(U,(\mathscr{O}_\mathcal{C})^{o};E^*),\mathcal{D}'_{\lambda,{\lambda}}(U,(\mathscr{O}_\mathcal{C})^{o};E^*)$ are also  quasi-LB spaces when $\lambda$ is a $G_\delta$ -cone for $\mathcal{I}_{iii}$ if $(\mathscr{O}_\mathcal{C})^{o}$ is countably generated and for $\mathcal{I}_{iip}$ if $\mathcal{C}$ countably generated.
\end{proposition}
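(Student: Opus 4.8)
The plan is to exhibit each of the two spaces as an inductive limit indexed by $\N^{\N}$ of countable projective limits of spaces already known to be quasi-LB, and then to quote Lemma \ref{QLBlemma}. Saying that $\lambda$ is a $G_{\delta}$-cone means exactly that its trace on the cosphere bundle is a $G_{\delta}$ set, so coning a $G_{\delta}$ representation there lets me write $\lambda=\bigcap_{m\in\N}O_{m}$ with $O_{m}$ open cones (which I take decreasing). This representation is the only feature of $\lambda$ I would use, so the argument covers the open case ($\gamma$ closed) a fortiori.

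First I would fix the starting representations. For $(\mathcal{D}'_{\lambda,\lambda}(U,(\mathscr{O}_\mathcal{C})^o;E^*),\mathcal{I}_{iii})$ I would use \eqref{Alternativeiii}, that is $\underrightarrow{\lim}_{\Pi\subset\lambda}(\mathcal{D}'_{\Pi,\Pi}(U,(\mathscr{O}_\mathcal{C})^o;E^*),\mathcal{I}_{iii})$; for the completion $(\mathcal{D}'_{\lambda,\overline{\lambda}}(U,(\mathscr{O}_\mathcal{C})^o;E^*),\mathcal{I}_{iii})$ I would use the support-weighted analogue of \eqref{Alternativei}, i.e.\ $\underleftarrow{\lim}_{n}\underrightarrow{\lim}_{\Pi\subset\lambda}\mathcal{D}'_{n,\Pi}(U,(\mathscr{O}_\mathcal{C})^o;E^*)$. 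In either case the closed-cone building blocks $\mathcal{D}'_{\Pi,\Pi}$ (resp.\ $\mathcal{D}'_{n,\Pi}$, for which the projective limit over $n$ rebuilds a $\mathcal{D}'_{\Pi,\overline{\Pi}}$-type space) are quasi-LB by Theorem \ref{DualityBD} and its closed-subspace-of-(FS)-and-(DFS) argument, under the relevant countable generation of the support family. The support limit itself is then countable: inductively for $\mathcal{I}_{iii}$ precisely when $(\mathscr{O}_\mathcal{C})^o$ is countably generated, and projectively for $\mathcal{I}_{iip}$ precisely when $\mathcal{C}$ is countably generated. This is the origin of the dichotomy in the statement, recalling that $\mathcal{I}_{iii}=\mathcal{I}_{iip}$ on both spaces by Lemma \ref{bornologyIdentitiesEqui}.

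The geometric heart is the capture of closed subcones of $\lambda$ by an $\N^{\N}$-indexed cofinal family. Writing $U=\bigcup_{j}K_{j}$ as an increasing compact exhaustion and applying Lemma \ref{openclosed} to each open cone $O_{m}$, I would produce closed cones $\Pi_{m,k}$, increasing in $k$, with compact first projection and $O_{m}=\bigcup_{k}\Pi_{m,k}$, such that every closed cone with compact projection inside $O_{m}$ sits in some $\Pi_{m,k}$. Choosing, for each level $m$ and each shell $K_{j}\setminus K_{j-1}$, how deep into this exhaustion one goes, a multi-index $\alpha\in\N^{\N\times\N}\cong\N^{\N}$ determines a closed cone $\Pi_{\alpha}\subset\lambda$; every closed $\Pi\subset\lambda$ (including those with non-compact projection, which are pinned down by their restrictions to the compact $K_{j}$) lies in some $\Pi_{\alpha}$, and every $\Pi_{\alpha}$ is such a subcone. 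Thus $\{\Pi_{\alpha}\}_{\alpha}$ is cofinal and each $\Pi_{\alpha}$ is a countable intersection of the building-block constraints. Merging the countable indices — the $DWF_{n}$-order, the $G_{\delta}$-level $m$, the shell index $j$, and (in the $\mathcal{I}_{iip}$ case) the countable support index — into one projective coordinate through a bijection, exactly as in the proof of Lemma \ref{QLBlemma}, puts each space in the form $\underrightarrow{\lim}_{\alpha\in\N^{\N}}\bigcap_{n}F_{(n,\alpha(n))}$ with $F_{(n,m)}$ quasi-LB, increasing in $m$, and continuously injected in $\mathcal{D}'(U;E^*)$, whence Lemma \ref{QLBlemma} gives the conclusion, and membership in class $\mathfrak{G}$ follows since quasi-LB spaces lie in $\mathfrak{G}$.

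I expect the main obstacle to be not the quasi-LB bookkeeping but the \emph{topological} identification of the reorganized limit with the original topology: one must verify that interleaving the projective $DWF_{n}$-limit with the genuinely uncountable cone-inductive limit reproduces exactly the shape $\bigcap_{n}F_{(n,\alpha(n))}$, and that the $\N^{\N}$-capture is cofinal as a family of \emph{topological} (not merely set-theoretic) subspaces. For this I would lean on the equicontinuous-set computations of Lemma \ref{bornologyIdentitiesEqui} and on the regularity of the support inductive limit, which together let one compare the two topologies on the common dual rather than directly on the uncountable limits.
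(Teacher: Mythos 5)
Your treatment of the second paragraph (quasi-LB for the two $\lambda$-spaces when $\lambda$ is $G_\delta$) follows the same strategy as the paper — reduce to an $\N^{\N}$-indexed cofinal family of closed subcones of $\lambda$, write each term as a countable intersection of known quasi-LB spaces, and invoke Lemma \ref{QLBlemma} — but your parametrization is needlessly heavy and not quite in the form the lemma requires. The paper writes $\lambda^c=\bigcup_n\Gamma_n$ and sets $\Pi(\alpha)=\bigcap_n\Lambda_{n,\alpha(n)}^c$ for shrinking open neighborhoods $\Lambda_{n,m}\supset\Gamma_n$; cofinality is immediate ($\Pi^c$ is an open cone containing each $\Gamma_n$, hence contains some $\Lambda_{n,\alpha(n)}$), and, crucially, $\mathcal{D}'_{k,\Pi(\alpha)}=\bigcap_n\mathcal{D}'_{k,\Lambda_{n,\alpha(n)}^c}$ is \emph{literally} of the shape $\bigcap_n F_{(n,\alpha(n))}$ with the global spaces $F_{(n,m)}=\mathcal{D}'_{k,\Lambda_{n,m}^c}(U,\mathcal{F};E^*)$ as building blocks. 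Your shell-localized cones $\Pi_\alpha=\bigcup_j\bigl(T^*K_j\cap\bigcap_m\Pi_{m,\alpha(m,j)}\bigr)$ are cofinal, but the corresponding $G_\alpha$ is an intersection of conditions of the form ``$WF(u)$ restricted to a shell lies in $\Pi_{m,k}$'', so you would have to introduce localized wave-front spaces, equip them with topologies, prove they are quasi-LB, and verify that the intersection topology reproduces $\mathcal{I}_{iii}$ — none of which is in the paper and all of which is avoided by taking complements instead of exhausting $\lambda$ from inside. Your worry at the end about the topological identification is well placed, and the complement parametrization is precisely what dissolves it.

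The genuine gap is the class $\mathfrak{G}$ claim. You dispose of it with ``membership in class $\mathfrak{G}$ follows since quasi-LB spaces lie in $\mathfrak{G}$'', but that implication is false: the two notions are \emph{dual} to one another, not nested. Class $\mathfrak{G}$ of $E$ asks for an $\N^{\N}$-resolution of the \emph{dual} $E'$ by sets whose sequences are equicontinuous, whereas quasi-LB asks for a resolution of $E$ itself by Banach disks. The paper accordingly gives two separate arguments: for the barrelled spaces $(\mathcal{D}'_{\lambda,\overline{\lambda}},\mathcal{I}_{iii})$ and $(\mathcal{D}'_{\lambda,\lambda},\mathcal{I}_{iii})$ it applies \cite[Corollary 2.2(i)]{CascalesOrihuela} to the fact that their strong dual — the bornologification of $(\mathcal{D}'_{\gamma,\gamma}(U,\mathcal{C};E),\mathcal{I}_{iii})$, which has the same Banach disks as $\mathcal{I}_{iii}$ and is therefore quasi-LB by Theorem \ref{DualityBD} — is quasi-LB; for the non-barrelled $(\mathcal{D}'_{\gamma,\gamma}(U,\mathcal{C};E),\mathcal{I}_{iii})$ it checks by hand, via the description of equicontinuous sets from the proof of \eqref{Alternativeiii} and the structure of Valdivia's quasi-LB representation of a countable inductive limit, that the dual admits a quasi-LB representation consisting of \emph{equicontinuous} sets. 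Neither of these steps, nor the (easy, but needed) quasi-LB claim for $(\mathcal{D}'_{\gamma,\gamma},\mathcal{I}_{iii})$ and its bornologification in the first paragraph of the statement, appears in your proposal, so the first half of the proposition is left unproved.
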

 \begin{proof}
 
Since a space and its bornologification have the same Banach disks the statement about the quasi-LB bornologification follows from the one in theorem \ref{DualityBD}. Then using \cite[corollary 2.2 (i)]{CascalesOrihuela}, since $(\mathcal{D}'_{\lambda,\overline{\lambda}}(U,(\mathscr{O}_\mathcal{C})^{o};E^*),\mathcal{I}_{iii}),(\mathcal{D}'_{\lambda,{\lambda}}(U,(\mathscr{O}_\mathcal{C})^{o};E^*),\mathcal{I}_{iii}) $
 are barrelled and their strong dual is the above bornologification which is quasi-LB, one concludes they belong to the class $\mathfrak{G}$ (for the computation of the strong dual, one uses the density proven below in proposition \ref{bornologyIdentitiesArens} to show the strong topology induced by the completion is the same). Unfortunately one cannot use directly the same result to get $(\mathcal{D}'_{\gamma,\gamma}(U,\mathcal{C};E),\mathcal{I}_{iii})$ of class $\mathfrak{G}$ since it is not barrelled, but we can still get it from the proofs. Indeed, the equicontinuous sets in its dual are described in the proof of \eqref{Alternativeiii} and the inductive limit is equivalent to a countable one in case $(\mathscr{O}_\mathcal{C})^{o}$ is countably generated. We see from \cite[Prop 4]{ValdiviaQuasiLB} $\left(\mathcal{D}'_{ \Pi,\Pi}(U,\{F\in \mathcal{F}, F\subset C\};E\right)$ is quasi-LB as a closed subspace of a quasi-LB space. But from the proof of the quasi-LB representation of a countable inductive limit in \cite[Prop 1,3]{ValdiviaQuasiLB}, the Banach disks are taken inside the terms of the inductive limit, thus here, from our characterization of equicontinuous sets, they are equicontinuous, and even if there are bounded sets in the dual which are not equicontinuous, the quasi-LB representation is made of equicontinuous sets, giving $(\mathcal{D}'_{\gamma,\gamma}(U,\mathcal{C};E),\mathcal{I}_{iii})$ of class $\mathfrak{G}$ in this case. In the case $\mathcal{C}$ is countably generated, this space is a countable (strict) inductive limit of closed subspaces of $(\mathcal{D}'_{\gamma,\gamma}(U,\mathcal{F};E),\mathcal{I}_{iii})$ which is treated before, thus by the stability properties (countable direct sums, products, subspaces and quotient by closed subspaces) of the class $\mathfrak{G}$ given in \cite[Prop 4,5,6,7]{CascalesOrihuela} we get also this case.

If one proves $(\mathcal{D}'_{\lambda,\overline{\lambda}}(U,(\mathscr{O}_\mathcal{C})^{o};E^*),\mathcal{I}_{iii})$ is a quasi-LB space, then $(\mathcal{D}'_{\gamma,\gamma}(U,\mathcal{C};E), \mathcal{I}_{iii}^{born})$, which has this space as a strong dual for $\lambda$ open and is ultrabornological thus barrelled, will thus be of class $\mathfrak{G}$ by \cite[corollary 2.2 (i)]{CascalesOrihuela} again. When $\lambda$ is a $G_\delta$, write $\lambda^c=\cup_n\Gamma_n$ for $\Gamma_n$ closed cones with $\{(x,\xi)\in\Gamma_n, |\xi|=1\}$ compact. 
Fix for each $n$ a decreasing sequence $\Lambda_{n,m}$ of open cones with $\cap_m \Lambda_{n,m}=\Gamma_n$ as in lemma \ref{openclosed} such that for any open cone $\Lambda\supset \Gamma_n$ we have $\Lambda\supset \Lambda_{n,m}$ for some $m$. Then we claim that $$(\mathcal{D}'_{\lambda,\overline{\lambda}}(U,\mathcal{F};E^*),\mathcal{I}_{iii})\simeq\underleftarrow{\lim}_n\underrightarrow{\lim}_{\alpha\in \N^{\N}}\mathcal{D}'_{n,\Pi(\alpha)}(U,\mathcal{F};E^*),$$

where $\Pi(\alpha)=\cap_{n\in\N} \Lambda_{n,\alpha(n)}^c.$ From \eqref{Alternativei} $\underleftarrow{\lim}_n\underrightarrow{\lim}_{\alpha\in \N^{\N}}\mathcal{D}'_{n,\Pi(\alpha)}(U,\mathcal{F};E^*)\to (\mathcal{D}'_{\lambda,\overline{\lambda}}(U,\mathcal{F};E^*),\mathcal{I}_{iii})$ is obvious from a subinductive limit. To build the converse map note that for $\Pi\subset \lambda$, $\Pi^c\supset \lambda^c\supset \Gamma_n$, thus, since $\Pi^c$ is open, there is $\alpha(n)$ with $\Pi^c\supset \Lambda_{n,\alpha(n)}$, i.e. $\Pi\subset \Pi(\alpha)$, giving the cofinality of the subinductive limit and thus the isomorphism. Note this is where we see why we said before the proof of lemma \ref{openclosed} that what really matters is the equivalence of the general inductive limit to a canonical one, this one being uncountable, but of a form adapted to quasi-LB representations.

Now note that if we call for $k$ fixed $F_{n,m}=\mathcal{D}'_{k,\Lambda_{n,m}^c}(U,\mathcal{F};E^*)$, the assumptions of lemma \ref{QLBlemma} are satisfied, since $\mathcal{D}'_{k,\Lambda_{n,m}^c}(U,\mathcal{F};E^*)$ is a closed subspace of a countable product formed by $\mathcal{D'}(U;E^*)$ known to be quasi-LB and a countable family of Banach spaces (using we can reduce to countably many seminorms as in \cite{BrouderDabrowski}).
Moreover, since $\mathcal{D}'_{k,\Pi(\alpha)}(U,\mathcal{F};E^*)=G_\alpha$ with the notation of the lemma, one gets $(\mathcal{D}'_{\lambda,\overline{\lambda}}(U,\mathcal{F};E^*),\mathcal{I}_{iii})$ is a quasi-LB-space (using also again \cite[Prop 2,4]{ValdiviaQuasiLB} to get a countable projective limit of quasi-LB spaces is quasi-LB). For general support, one takes if $(\mathscr{O}_\mathcal{C})^o$ countably generated, first closed subspaces and countable inductive limits, and if $\mathcal{C}$ countably generated, then countable projective limits, so that in all cases, one remains quasi-LB spaces.
The case $\mathcal{D}'_{\lambda,{\lambda}}(U,(\mathscr{O}_\mathcal{C})^{o};E^*)$ is similar starting, instead, from \eqref{Alternativeiii},  which gives a similar inductive limit :$$(\mathcal{D}'_{\lambda,{\lambda}}(U,\mathcal{F};E^*),\mathcal{I}_{iii})\simeq\underrightarrow{\lim}_{\alpha\in \N^{\N}}\mathcal{D}'_{\Pi(\alpha),\Pi(\alpha)}(U,\mathcal{F};E^*)$$ using this time $F_{n,m}=\mathcal{D}'_{\Lambda_{n,m}^c,\Lambda_{n,m}^c}(U,\mathcal{F};E^*).$
\end{proof}
\subsection{Remaining Mackey duals of non-complete spaces controlled by open cones}
Property $\mathfrak{G}$ will be crucial for computations of Mackey duals but we will also need the following technical lemma with an argument in the spirit of our proof of the non completeness of $(\mathcal{D}'_{\lambda,{\lambda}}(U,(\mathscr{O}_\mathcal{C})^{o}),\mathcal{I}_{iii}) $ in \cite{BrouderDabrowski}.
We will use it several times later, this will be actually the only source of non-trivial computations of Mackey duals, namely not coming from being a Montel space.

We want to produce lots of series like $S$ bellow that converge only in a completion in a systematic way so that some absolutely convex hulls that should contain such limits can not be compact before taking the completion.

\begin{lemma}\label{SequenceWF}Assume $U\subset \R^d$ open.
Let $v_n$ a sequence of distributions such that $P_{k_n,V_n,f_n}(v_n)=\infty$ for all $n$ for $\text{supp}(f_n)\times V_n$ a sequence of closed disjoint cones. Assume $v_k\to 0$ in $\mathcal{D}'(U)$, then there exists a sequence $\lambda_n$ with $\sum_n|\lambda_n|<\infty$ such that the convergent series of distributions $S=\sum_n\lambda_nv_n$ satisfies $P_{k_n,V_n,f_n}(S)=\infty$ for all $n$.

\end{lemma}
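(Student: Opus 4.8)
The plan is to reformulate the conclusion as a Baire-category genericity statement about the coefficient sequence $\lambda=(\lambda_n)$, rather than trying to control cancellations among the $v_m$ directly. I would work in the Banach space $\ell^1$ of absolutely summable sequences. Since $v_k\to 0$ in $\mathcal{D}'(U)$, the family $\{v_m\}$ is bounded in $\mathcal{D}'(U)$, so for every $\lambda\in\ell^1$ and every $g\in\mathcal{D}(U)$ one has $\sum_m|\lambda_m\langle v_m,g\rangle|\le(\sup_m|\langle v_m,g\rangle|)\,\|\lambda\|_{\ell^1}<\infty$; the partial sums of $\sum_m\lambda_m v_m$ thus converge weak-$*$, and by Banach--Steinhaus for the barrelled space $\mathcal{D}(U)$ the limit is again a distribution (and since these partial sums form a bounded set and $\mathcal{D}'(U)$ is Montel, the convergence is in fact in $\mathcal{D}'(U)$). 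This defines a linear map $\Phi:\ell^1\to\mathcal{D}'(U)$, $\Phi(\lambda)=\sum_m\lambda_m v_m=:S$, with $\Phi(e_n)=v_n$ for the canonical basis vector $e_n$. It then suffices to exhibit a single $\lambda\in\ell^1$ with $P_{k_n,V_n,f_n}(\Phi(\lambda))=\infty$ for every $n$.

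First I would introduce, for each $n$, the map $F_n:\ell^1\to[0,\infty]$ given by $F_n(\lambda)=P_{k_n,V_n,f_n}(\Phi(\lambda))=\sup_{\xi\in V_n}(1+|\xi|)^{k_n}\big|\sum_m\lambda_m\,\mathcal{F}[f_n v_m](\xi)\big|$ (the finite sum over bundle components being harmless). For fixed $\xi$ the functional $\ell_\xi:\lambda\mapsto\sum_m\lambda_m(1+|\xi|)^{k_n}\mathcal{F}[f_n v_m](\xi)$ is continuous and linear on $\ell^1$: its coefficients $(1+|\xi|)^{k_n}\mathcal{F}[f_n v_m](\xi)=(1+|\xi|)^{k_n}\langle v_m,f_n e_{-\xi}\rangle$ (with $e_{-\xi}(x)=e^{-i\langle x,\xi\rangle}$) are bounded in $m$, because $f_n e_{-\xi}$ is a fixed element of $\mathcal{D}(U)$ and $\{v_m\}$ is bounded. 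Hence $F_n=\sup_{\xi\in V_n}|\ell_\xi|$ is a lower semicontinuous $[0,\infty]$-valued seminorm, its sublevel sets $\{F_n\le R\}=\bigcap_{\xi\in V_n}\{|\ell_\xi|\le R\}$ being closed; and by hypothesis $F_n(e_n)=P_{k_n,V_n,f_n}(v_n)=\infty$, so $F_n$ is nowhere continuous.

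Then I would show that $D_n:=\{\lambda:F_n(\lambda)<\infty\}$ is meager. If some sublevel set $\{F_n\le R\}$ contained a ball $\lambda_0+\varepsilon B_{\ell^1}$, then for $\|\mu\|\le\varepsilon$ the seminorm inequality $F_n(\mu)\le F_n(\lambda_0+\mu)+F_n(\lambda_0)\le 2R$, together with homogeneity, would force $F_n$ to be finite and bounded on bounded sets, contradicting $F_n(e_n)=\infty$. Thus each closed set $\{F_n\le R\}$ has empty interior, hence is nowhere dense, and $D_n=\bigcup_{R\in\mathbb{N}}\{F_n\le R\}$ is meager. Since $\ell^1$ is a Baire space, $\bigcap_n(\ell^1\setminus D_n)$ is residual, so nonempty; any $\lambda$ in it satisfies $\sum_n|\lambda_n|<\infty$ and $P_{k_n,V_n,f_n}(S)=F_n(\lambda)=\infty$ for all $n$, which is the claim.

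The conceptual point — and the step that replaces what would otherwise be the main obstacle — is that the naive strategy of arranging for the $n$-th term to dominate the series at suitable frequencies in $V_n$ fails: the complementary sum $\sum_{m\ne n}\lambda_m v_m$ need not have finite $P_{k_n,V_n,f_n}$-seminorm, since the compactly supported pieces $f_n v_m$ have Fourier transforms of finite but unboundedly growing polynomial order, and cancellations with the blow-up of $v_n$ in the direction $V_n$ cannot be ruled out term-by-term. The Baire argument bypasses this difficulty by showing that the set of ``bad'' (finite) coefficient sequences is small. I note finally that the only analytic input beyond the blow-up hypotheses is the boundedness of $\{v_m\}$ extracted from $v_k\to 0$, used both for the convergence of $S$ and for the $\ell^1$-continuity of the functionals $\ell_\xi$; the disjointness of the cones $\text{supp}(f_n)\times V_n$ is not needed for this lemma and is relevant only for its later application.
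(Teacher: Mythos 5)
Your proof is correct, and it takes a genuinely different route from the paper's. The paper proves the lemma by an explicit inductive construction: it confines $(\lambda_1,\dots,\lambda_n)$ to finite-dimensional boxes $\prod_k([-\epsilon_k,\epsilon_k]+i[-\epsilon_k,\epsilon_k])$, uses a codimension/volume estimate (the set of coefficient vectors making $P_{k_l,V_l,f_l}(\sum_{k\le n}\lambda_kv_k)$ finite lies near a proper subspace, since $e_l$ is not in it) to produce witnesses $\xi_{l,n}\in V_l$ at which the partial sums exceed $n!/\epsilon$ for a set $B_n$ of coefficients of almost full measure, controls the tail $\sum_{m>n}\lambda_mv_m$ by shrinking $\epsilon_n$ against the previously chosen witnesses, and concludes by a Borel--Cantelli-type bound that $\bigcap_nB_n$ has positive measure in the infinite product, hence is nonempty. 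Your argument replaces this measure-theoretic genericity by topological genericity in $\ell^1$: each $F_n=P_{k_n,V_n,f_n}\circ\Phi$ is a lower semicontinuous $[0,\infty]$-valued seminorm (a supremum over $\xi\in V_n$ of moduli of $\ell^1$-continuous linear functionals, continuity coming from the weak boundedness of $\{v_m\}$), it is infinite at $e_n$, so the standard seminorm argument shows every closed sublevel set has empty interior and the finiteness domain $D_n$ is meager; a point of the residual set $\bigcap_n(\ell^1\setminus D_n)$ does the job. The two proofs are philosophically the same --- the author even announces that ``most of the choices for $\lambda_n$'s give large values'' --- but yours is much shorter, avoids all explicit estimates and the delicate bookkeeping of witnesses, at the price of being non-constructive and giving no quantitative decay of $\lambda_n$ nor explicit frequencies where the blow-up occurs. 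Since the only place the lemma is used (the computation of Mackey/Arens duals in Proposition \ref{bornologyIdentitiesArens} and its sequels) needs only the qualitative conclusion $P_{k_n,V_n,f_n}(S)=\infty$ together with $\sum_n|\lambda_n|<\infty$, your soft version is fully adequate there. Your closing remarks are also accurate: the convergence of $S$ in $\mathcal{D}'(U)$ follows from weak-$*$ convergence of the bounded partial sums exactly as in the paper, and the disjointness of the cones $\text{supp}(f_n)\times V_n$ is indeed not used in either proof of the lemma itself, only in the way the lemma is applied.
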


\begin{proof}
The convergence of $S$ is assured by $v_k\to 0$ and the choice of $\lambda_k$. We look for $\lambda_n$ complex but the real case is similar. The argument producing $\lambda_n$'s will be slightly probabilistic. We will however only need crude volume bounds starting from $P_{k_n,V_n,f_n}(v_n)=\infty$ saying that the space of parameters $\lambda_n$ such that $P_{k_n,V_n,f_n}(\sum_{n\leq N}\lambda_n v_n)<\infty$ has a smaller dimension and thus most of the choices for $\lambda_n$'s give large values. We then only have to cook-up the conditions on the point where we stop the series $N$ for each estimate so that we can take care of all the conditions either inductively or probabilistically.
 
Technically, we will build inductively $\lambda_n$ 
and various witnesses of the infiniteness above enabling to get the desired limit. More precisely, we will build inductively $\epsilon_n\in ]0,1/2^n[$ and $B_n\subset \prod_{k=1}^n([-\epsilon_k,\epsilon_k]+i[-\epsilon_k,\epsilon_k])$ (a constraint on $(\lambda_1,...,\lambda_n)$) 
with $Leb(B_n) \geq (1-\frac{12}{\pi^2n^2})4^n\prod_{k=1}^n \epsilon_k^2$ ($Leb$ is Lebesgue measure on $\C^n=\R^{2n}$) and a sequence $\Xi=(\xi_{k,m})_{m\geq k\geq 1}$ of witnesses with $\xi_{k,m}\in V_k$ and at the $n$-th induction step, we expect to build  the finite triangle $\Xi_n=(\xi_{k,m})_{n\geq m\geq k\geq 1}.$

We start with any $\xi_{1,1}\in V_1$ with $(1+|\xi_{1,1}|)^{k_1}|\mathcal{F}(v_1f_1)(\xi_{1,1})|\geq 1$ any $\epsilon_1=1/3,$  $B_1=]-\epsilon_1,\epsilon_1[+i]-\epsilon_1,\epsilon_1[.$

At the induction step $n\geq 2$, we take $1/\sqrt{2}\epsilon_n=(2^n+1)(1+\max\{\frac{(1+|\xi_{l,m}|)^{k_l}}{m!}|\mathcal{F}(v_nf_l)(\xi_{l,m})|, n-1\geq m\geq l\geq 1\})$ so that especially 
$\sqrt{2}\epsilon_n(1+|\xi_{l,m}|)^{k_l}|\mathcal{F}(v_nf_l)(\xi_{l,m})|\leq \frac{m!}{(2^n+1)},$ and thus assuming $ \lambda_n\in]-\epsilon_n,\epsilon_n[+i]-\epsilon_n,\epsilon_n[$ thus  $|\lambda_n|\leq \sqrt{2}\epsilon_n$ we will have (once all choices made according to this rule) for $m\geq l$: $$(1+|\xi_{l,m}|)^{k_l}|\mathcal{F}([\sum_{n>m}\lambda_nv_n]f_l)(\xi_{l,m})|\leq m! \sum_{n>m}\frac{1}{2^n}=\frac{m!}{2^m}.$$

Note also that $\sqrt{2}\epsilon_n\leq 1/2^n$ insuring the convergence of $\sum_n|\lambda_n|.$

The choice of $B_n$ and $\Xi$ will now control the other parts of the sum. At this induction step, we have to build $\xi_{1,n},...,\xi_{n,n}$ and $B_n$.

Consider for $l\leq n$ the vector space $E_{l,n}=\{\lambda=(\lambda_1,...,\lambda_n)\subset \C^n : \sup_{\xi\in V_l}(1+|\xi|)^{k_l}|\mathcal{F}([\sum_{k=1}^n\lambda_kv_k]f_l)(\xi)|<\infty\}$
Since the basis vector with the $l$-th digit $e_l\not\in E_{l,n}$ by assumption on $v_l$, $E_{l,n}$ has dimension $d(l,n)\leq n-1$.

Consider $\lambda^{(1)}(\xi)$ reaching $\inf_{\lambda=(\lambda_1,...,\lambda_n)\subset E_{l,n}^\perp, |\lambda|\geq 1} (1+|\xi|)^{k_l}|\mathcal{F}([\sum_{k=1}^n\lambda_kv_k]f_l)(\xi)|$
which is the same by linearity as the inf over $|\lambda|=1$ which thus exists by compactness (and could of course be computed quite explicitly...).

Then define inductively $\lambda^{(k)}(\xi)$ reaching $$\inf_{\lambda=(\lambda_1,...,\lambda_n)\subset Vect(E_{l,n},\lambda^{(1)}(\xi),...,\lambda^{(k-1)}(\xi))^\perp, |\lambda|\geq 1} (1+|\xi|)^{k_l}|\mathcal{F}([\sum_{k=1}^n\lambda_kv_k]f_l)(\xi)|.$$
We continue until $K(n,l)=n-d(l,n)$. By definition $(\lambda^{(1)}(\xi),...,\lambda^{(K(n,l))}(\xi))$ form an orthonormal basis with $(1+|\xi|)^{k_l}|\mathcal{F}([\sum_{k=1}^n\lambda_k^{(m)}(\xi)v_k]f_l)(\xi)|$ non-decreasing in $m$. Assume a second for contradiction that $\sup_{\xi\in V_{l}}(1+|\xi|)^{k_l}|\mathcal{F}([\sum_{k=1}^n\lambda_k^{(K(n,l))}(\xi)v_k]f_l)(\xi)|<\infty$ by increasingness, $\sup_{\xi\in V_{l}}(1+|\xi|)^{k_l}|\mathcal{F}([\sum_{k=1}^n\lambda_k^{(m)}(\xi)v_k]f_l)(\xi)|<\infty$ finite for all $m$.
Then writing $e_l=u_1(\xi)\lambda^{(1)}(\xi)+...+u_{K(n,l)}(\xi)\lambda^{(K(n,l))}(\xi)+\lambda, \lambda=Proj_{E_{n,l}}(e_l)$ one would get,  since $\sum_{i=1}^{K(n,l)}|u_i(\xi)|^2\leq 1$ and thus $|u_i(\xi)|\leq 1$,
$$\sup_{\xi\in V_{l}}(1+|\xi|)^{k_l}|\mathcal{F}(v_lf_l)(\xi)|\leq \sum_{m=1}^{K(n,l)}\sup_{\xi\in V_{l}}(1+|\xi|)^{k_l}|\mathcal{F}([\sum_{k=1}^n\lambda_k^{(m)}(\xi)v_k]f_l)(\xi)|+P_{V_l,k_l,f_l}(\sum_{k=1}^n\lambda_kv_k)<\infty$$ contradicting the assumption. We can thus fix the smallest index $M(n,l)\in[1,K(n,l)]$ with \begin{align*}&\sup_{\xi\in V_{l}}(1+|\xi|)^{k_l}|\mathcal{F}([\sum_{k=1}^n\lambda_k^{(M+1)}(\xi)v_k]f_l)(\xi)|\\&=\sup_{\xi\in V_{l}}\inf_{\lambda=(\lambda_1,...,\lambda_n)\subset Vect(E_{l,n},\lambda^{(1)}(\xi),...,\lambda^{(M)}(\xi))^\perp, |\lambda|\geq 1} (1+|\xi|)^{k_l}|\mathcal{F}([\sum_{k=1}^n\lambda_kv_k]f_l)(\xi)|=\infty.\end{align*} As a consequence if $P_M=Proj_{Vect(E_{l,n},\lambda^{(1)}(\xi),...,\lambda^{(M)}(\xi))^\perp}$
and if $\lambda=P_M(\lambda)+(1-P_M)(\lambda)=\lambda^{(a)}(\xi)+\lambda^{(b)}(\xi)$
then \begin{align*}\inf_{\lambda, |P_M(\lambda)|\geq 1, |\lambda|\leq C}(1+|\xi|)^{k_l}&|\mathcal{F}([\sum_{k=1}^n\lambda_kv_k]f_l)(\xi)|\geq\inf_{\lambda^{(a)}=P_M(\lambda^{(a)}), |(\lambda^{(a)})|\geq 1} (1+|\xi|)^{k_l}|\mathcal{F}([\sum_{k=1}^n\lambda_k^{(a)}v_k]f_l)(\xi)|\\&-\sup_{\lambda^{(b)}=(1-P_M)(\lambda^{(b)}), |(\lambda^{(b)})|\leq C}(1+|\xi|)^{k_l}|\mathcal{F}([\sum_{k=1}^n\lambda_k^{(b)}v_k]f_l)(\xi)|.\end{align*}
Since by definition of $M(n,l)$ we have  $$\sup_{\xi\in V_l}\sup_{\lambda^{(b)}=(1-P_M)(\lambda^{(b)}), |(\lambda^{(b)})|\leq C}(1+|\xi|)^{k_l}|\mathcal{F}([\sum_{k=1}^n\lambda_k^{(b)}v_k]f_l)(\xi)|<\infty,$$
 we deduce $\sup_{\xi\in V_l}\inf_{\lambda, |P_M(\lambda)|\geq 1, |\lambda|\leq C}(1+|\xi|)^{k_l}|\mathcal{F}([\sum_{k=1}^n\lambda_kv_k]f_l)(\xi)|=\infty$ for any $C$, for instance $C=\sqrt{2}\max(\epsilon_1,...,\epsilon_n)/\epsilon.$

Thus we define $\xi_{l,n}\in V_l$ such that $$\inf_{\lambda=(\lambda_1,...,\lambda_n),|P_M(\lambda)|\geq 1, |\lambda|\leq C} (1+|\xi_{l,n}|)^{k_l}|\mathcal{F}([\sum_{k=1}^n\lambda_kv_k]f_l)(\xi)|\geq \frac{n!}{\epsilon}$$
 for an $\epsilon$ to be fixed soon.
 
Consider now $A_{l,n}=\{\lambda=(\lambda_1,...,\lambda_n),|P_M(\lambda)|\geq 1, |\Im(\lambda_i)|,|\Re(\lambda_i)|\leq \epsilon_i/\epsilon\}$ we want to estimate its Lebesgue measure independently of $\xi=\xi_{l,n}$ (appearing in the dependence in $P_M$) thus using only dimensionality. Let $B_{l,n}=\epsilon A_{l,n}$, $Leb (B_{l,n})= (2\epsilon_1)^2...(2\epsilon_n)^2 - Leb(\lambda=(\lambda_1,...,\lambda_n),|P_M(\lambda)|< \epsilon, |\Im(\lambda_i)|,|\Re(\lambda_i)|\leq \epsilon_i)$. If we call $C_n$ a bound on the maximum of the surface of a (complex) hyperplane cutting $[-\epsilon_1-1,\epsilon_1+1]^2\times ...\times [-\epsilon_n-1,\epsilon_n+1]^2$ (which exists by compactness) since $\{\lambda=(\lambda_1,...,\lambda_n),|P_M(\lambda)|\leq \epsilon, |\Im(\lambda_i)|,|\Re(\lambda_i)|\leq \epsilon_i\}$
 is always included in the product of such an intersection of an hyperplane and a product of two dimensional ball of radius $\epsilon<1/10$ (even if there are boundary effects taken care by the enlarged cubes) we have thus $Leb(B_{l,n})\geq 4^n\epsilon_1^2...\epsilon_n^2 -\epsilon^2 C_n.$
 We thus choose $\epsilon^2<\frac{12}{\pi^2n^3}4^n\prod_{k=1}^n \epsilon_k^2/ C_n$. We thus obtained for $B_n=\cap_{l=1}^n B_{l,n},$ $Leb(B_n) \geq (1-n\frac{12}{\pi^2n^3})4^n\prod_{k=1}^n \epsilon_k^2$ and moreover $\forall \lambda\in B_n$ we have $(1+|\xi_{l,n}|)^{k_l}|\mathcal{F}([\sum_{k=1}^n\lambda_kv_k]f_l)(\xi_{l,n})|\geq n!.$

This concludes our inductive construction. We now let $B=\cap_{n=1}^\infty B_n\times\prod_{k=n+1}^\infty[-\epsilon_{k},\epsilon_k]+i[-\epsilon_{k},\epsilon_k].$
Putting on $\Omega=\prod_{k=1}^\infty[-\epsilon_{k},\epsilon_k]+i[-\epsilon_{k},\epsilon_k]$ the product of uniform probability measures, one obtains $P(B)\geq 1-\sum_{n=1}^\infty P(B_n^c)\geq 1-\sum_{n=1}^\infty\frac{12}{\pi^2n^2}=\frac{1}{2}>0$ so that $B$ is not empty. Moreover, for any $\lambda\in B$ one gets $(1+|\xi_{l,n}|)^{k_l}|\mathcal{F}([\sum_{k=1}^\infty\lambda_kv_k]f_l)(\xi_{l,n})|\geq n!-n!/2^n\geq n!/2.$
Thus for $S=\sum_{k=1}^\infty\lambda_kv_k$ since all $\xi_{l,n}\in V_l$ we have $P_{k_l,V_l,f_l}(S)\geq sup_n n!/2=\infty$.
\end{proof}

\begin{proposition}\label{bornologyIdentitiesArens}With the notation of the previous corollary, especially for $\gamma$ a closed cone  $(\mathcal{D}'_{\gamma,\gamma}(U,\mathcal{C};E), \mathcal{I}_{iii})$ is also the Mackey and Arens dual of $(\mathcal{D}'_{\lambda,\lambda}(U,(\mathscr{O}_\mathcal{C})^{o};E^*)),\mathcal{I}_{iii}).$  
Moreover, 
we can identify 
 the bornologification of $\mathcal{D}'_{-\lambda^c,-\lambda^c}(U,\mathcal{C};E)$,as $\mathcal{I}_{iii}^{born}=\mathcal{I}_{ibi}=\mathcal{I}_{pbp}.$

\end{proposition}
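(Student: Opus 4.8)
The plan is to treat the two assertions separately, both resting on an analysis of the absolutely convex compact sets of the "$\mathcal E'$-type'' partner and its completion. Throughout, write $\gamma=-\lambda^c$ so that $\mathcal{D}'_{-\lambda^c,-\lambda^c}(U,\mathcal{C};E)=\mathcal{D}'_{\gamma,\gamma}(U,\mathcal{C};E)$, and recall from Theorem \ref{duals} that $\langle\mathcal{D}'_{\gamma,\gamma}(U,\mathcal{C};E),\mathcal{D}'_{\lambda,\lambda}(U,(\mathscr{O}_\mathcal{C})^o;E^*)\rangle$ is a dual pair whose topological dual on the first factor equipped with $\mathcal{I}_{iii}$ is exactly $\mathcal{D}'_{\lambda,\lambda}(U,(\mathscr{O}_\mathcal{C})^o;E^*)$. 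First I would record the free inclusions: since $\mathcal{I}_{iii}$ is compatible with this duality, Mackey--Arens gives $\mathcal{I}_{iii}\subseteq\kappa\subseteq\tau$ where $\kappa,\tau$ denote the Arens and Mackey topologies; and since $\mathcal{D}'_{\lambda,\lambda}\subseteq\mathcal{D}'_{\lambda,\overline\lambda}$ with the same pairing against $\mathcal{D}'_{\gamma,\gamma}$, every $\sigma$-compact disk of the smaller space is one of the larger, whence $\tau\subseteq\tau(\mathcal{D}'_{\gamma,\gamma},\mathcal{D}'_{\lambda,\overline\lambda})=\mathcal{I}_{iii}^{born}$ by Corollary \ref{FAClosedOpen2}. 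For the inclusion $\mathcal{I}_{iii}\subseteq\kappa$ I would realize each generating seminorm $P_{i,k,f,V}$ (with $(\mathrm{supp}(f\circ\varphi_i^{-1})\times V)\cap\gamma=\emptyset$) as uniform convergence on a set $B'$ which, by \cite[Proposition 24]{BrouderDabrowski} and lemma \ref{bornologyIdentities}, is bounded in some $\mathcal{D}'_{\Pi,\Pi}(U,\mathcal{K};E^*)$ with $\Pi\subseteq\lambda$ closed; these closed-cone pieces being nuclear and quasi-complete are semi-Montel, so $B'$ is relatively compact there and hence, under the continuous injection, an absolutely convex compact subset of $\mathcal{D}'_{\lambda,\lambda}$.

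The decisive and hardest step is the reverse $\tau\subseteq\mathcal{I}_{iii}$, i.e. that no $\sigma$-compact disk of $\mathcal{D}'_{\lambda,\lambda}$ produces more than an $\mathcal{I}_{iii}$-seminorm. I would argue by contradiction: let $K$ be absolutely convex and weakly compact, hence bounded, with uniform support in some $C\in(\mathscr{O}_\mathcal{C})^o$; if $K$ were \emph{not} bounded in a single $\mathcal{D}'_{\Pi,\Pi}(U,\{F\in\mathcal F,F\subset C\};E^*)$ for a closed $\Pi\subseteq\lambda$, then its members would exhibit Fourier non-decay in directions $(f_n,V_n)$ with $\mathrm{supp}(f_n)\times V_n$ disjoint closed cones accumulating at $\partial\lambda$, giving witnesses $v_n\in K$ with $P_{k_n,V_n,f_n}(v_n)=\infty$; after extracting so that $v_n\to 0$ in $\mathcal{D}'(U)$, Lemma \ref{SequenceWF} furnishes scalars $\lambda_n$, $\sum|\lambda_n|\le 1$, with $S=\sum_n\lambda_n v_n$ satisfying $P_{k_n,V_n,f_n}(S)=\infty$ for all $n$, so that $DWF(S)$ reaches the boundary and $S\in\mathcal{D}'_{\lambda,\overline\lambda}\setminus\mathcal{D}'_{\lambda,\lambda}$. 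Since the partial sums lie in $K$ and, by Proposition \ref{bornologyIdentitiesQLB}, $K$ is of class $\mathfrak{G}$ so its weakly compact subsets are weakly metrizable (hence weakly sequentially compact), a subsequence of partial sums converges weakly in $\mathcal{D}'_{\lambda,\lambda}$ to some $S'\in K$; testing against $\mathcal{D}(U)\subseteq\mathcal{D}'_{\gamma,\gamma}$ forces $S'=S$, contradicting $S\notin\mathcal{D}'_{\lambda,\lambda}$. Thus $K$ is bounded in one such closed-cone step, uniform convergence on it is dominated by an $\mathcal{I}_{iii}$-seminorm (Theorem \ref{DualityBD}), and $\tau\subseteq\mathcal{I}_{iii}$. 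Combining, $\mathcal{I}_{iii}\subseteq\kappa\subseteq\tau\subseteq\mathcal{I}_{iii}$, so $\mathcal{I}_{iii}=\kappa=\tau$, which is the Mackey/Arens statement.

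For the second assertion I would identify the bornologification with the Mackey topology of the completion. By Corollary \ref{FAClosedOpen2}, $\mathcal{I}_{iii}^{born}=\tau(\mathcal{D}'_{\gamma,\gamma},\mathcal{D}'_{\lambda,\overline\lambda})$, the topology of uniform convergence on the absolutely convex compact subsets of $\mathcal{D}'_{\lambda,\overline\lambda}$, which, this space being Montel, are precisely its bounded disks. By Lemma \ref{bornologyIdentitiesEqui} such a bounded set has support in a compact $K$, dual wave front in a closed $\Pi\subseteq\lambda$, and a \emph{uniform} polynomial bound $|\mathcal F[(fu)\circ\varphi_i^{-1}]|\le c(1+|\xi|)^{k}$ on $K$; uniform convergence against it is therefore controlled exactly by the seminorms $P_{i,-k,f,\R^d}$ together with the closed-cone data, which is the content of the middle index $b$ in the inductive-over-$k$, projective-over-$K$ definitions of $\mathcal{I}_{ibi}$ and $\mathcal{I}_{pbp}$. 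This matching gives $\mathcal{I}_{iii}^{born}\subseteq\mathcal{I}_{pbp}$ and $\mathcal{I}_{iii}^{born}\subseteq\mathcal{I}_{ibi}$. For the converse inclusions I would invoke that, by Theorem \ref{duals}, $(\mathcal{D}'_{\gamma,\gamma}(U,\mathcal{C};E),\mathcal{I}_{pbp})'=(\cdot)^{\times}=\mathcal{D}'_{\lambda,\overline\lambda}(U,(\mathscr{O}_\mathcal{C})^o;E^*)=\mathcal{I}_{iii}^{\times}$, and that $\mathcal{I}_{pbp}$ and $\mathcal{I}_{ibi}$ have the same von Neumann bornology as $\mathcal{I}_{iii}$ (the $b$-index leaves bounded sets unchanged, as already noted for $\mathcal{I}_{pbp}$ versus $\mathcal{I}_{pmp}$ in Theorem \ref{duals} and by lemma \ref{bornologyIdentities}); since the bornologification is the finest locally convex topology with that bornology, both are $\subseteq\mathcal{I}_{iii}^{born}$, forcing equality $\mathcal{I}_{iii}^{born}=\mathcal{I}_{ibi}=\mathcal{I}_{pbp}$.

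The main obstacle is squarely the middle paragraph: controlling the absolutely convex weakly compact sets of the \emph{non-complete} space $\mathcal{D}'_{\lambda,\lambda}$, where the regularity of the uncountable inductive limit over closed subcones is not available a priori. The entire force of Lemma \ref{SequenceWF} (manufacturing a series that converges only in the completion and whose dual wave front set escapes $\lambda$) together with the $\mathfrak{G}$-class metrizability of weakly compact sets from Proposition \ref{bornologyIdentitiesQLB} is needed precisely to rule out a disk whose wave front directions fill $\lambda$ up to its boundary; every other inclusion is either a Mackey--Arens generality or a bookkeeping identification of seminorms.
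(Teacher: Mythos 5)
Your proposal is correct and follows essentially the same route as the paper: the same contradiction built from witnesses $v_n\in K$ whose wave front directions accumulate on $\lambda^c$, the same use of class-$\mathfrak{G}$ metrizability to extract a subsequence with $v_n\to 0$, the same invocation of Lemma \ref{SequenceWF} to produce a series $S\in K$ with $WF(S)\not\subset\lambda$, and the same comparison of bounded sets together with the estimate \eqref{BDbounded} for the identification of the bornologification. The one step you elide is that mere set-inclusion of the Banach disk $K$ into a closed-cone stage $\mathcal{D}'_{\Pi,\Pi}(U,\{F\in\mathcal{F},F\subset D\};E^*)$ must still be upgraded to boundedness there, which the paper obtains from De Wilde's closed graph theorem using that this stage is quasi-LB.
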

 \begin{proof}

Let us summarize the main idea of the proof. For each statement, we will have to compute a Mackey/Arens dual in identifying absolutely convex (weakly) compact sets to equicontinuous sets from the dual so that uniform convergence on the former is the same as on the latter, and Mackey/Arens topologies on the dual will be the original topology we computed equicontinuous sets with. To reach this goal, we will use our description of equicontinuous sets to get a contradiction in assuming the contrary. After some reductions (using a closed graph theorem for quasi-LB spaces in a mild way), we will have to prove an absolutely convex (weakly) compact set has to be uniformly controlled by a support condition and a closed cone for its (dual) wave front set. By contradiction we have a sequence with a wave front set condition implying it goes closer to the completion. This is the point where we use class $\mathfrak{G}$ crucially to get metrizability of compact sets and converging extract subsequences (and not only subnets) to apply our previous lemma (which uses crucially countability of the index set of a sequence in the probabilistic argument). This lemma gives a point in the absolutely convex hull, built from a series with coefficients scalar multiple of  our sequence that is in the completion and not in the space, thus the absolutely convex set we started with could not be compact, it is even not closed in the completion. This is our desired contradiction (in the spirit of our non-completeness proof in \cite{BrouderDabrowski}). Let us make all of this more explicit now.

To compute the Mackey/Arens dual, let us fix an absolutely convex (weakly) compact set $K$ in $(\mathcal{D}'_{\lambda,{\lambda}}(U,(\mathscr{O}_\mathcal{C})^{o};E^*),\mathcal{I}_{iii}) $, from the regular strict inductive limit, we know that $K$ is uniformly supported in some $D\in (\mathscr{O}_\mathcal{C})^{o}.$ If we prove that $K$ is included in $(\mathcal{D}'_{\Pi_n,\Pi_n}(U,\{F\in \mathcal{F}, F\subset D\};E^*)$, this will be enough to get it is bounded there, since from weak compactness, $K$ is a Banach disk (since it is $\ell^1$-disked) thus the identity map $E_K\to (\mathcal{D}'_{\Pi_n,\Pi_n}(U,\{F\in \mathcal{F}, F\subset D\};E^*)$ which is obviously closed (from the continuous inclusion in the Hausdorff space $\mathcal{D}'(U)$) and since $\mathcal{D}'_{\Pi_n,\Pi_n}(U,\{F\in \mathcal{F}, F\subset D\};E^*)$ is a quasi-LB space (as a closed subspace of the quasi-LB space $\mathcal{D}'_{\Pi_n,\Pi_n}(U, \mathcal{F};E^*)$) we get this identity map is continuous (apply for instance the closed graph theorem of De Wilde \cite[\S 35.2.(2)]{Kothe2} since a quasi-LB space is strictly webbed \cite{ValdiviaQuasiLB} 
 and a Banach space is ultrabornological), and thus the image of the bounded set $K$ has to be bounded in $\mathcal{D}'_{\Pi_n,\Pi_n}(U,\{F\in \mathcal{F}, F\subset D\};E^*)$. We thus conclude then that $K$ is actually an equicontinuous set in $(\mathcal{D}'_{\Lambda,{\Lambda}}(U,(\mathscr{O}_\mathcal{C})^{o};E),\mathcal{I}_{iii}) $ from our description of those sets and thus the Mackey and Arens topologies are the original normal topology.

Moreover, if for any $f\in \mathcal{D}(U)$ equal to 1 on a large compact $L_f(K)$ is included in some $\mathcal{D}'_{\Pi_f,\Pi_f}(U,\{F\in \mathcal{F}, F\subset D\};E^*)$, then letting $f_n$ equal to 1 on an exhaustion of compact $K_n$, and as usual $\Pi=\cup_nT^*K_n\cap \Pi_{f_n}$ then gives a closed cone with $K\subset  \mathcal{D}'_{\Pi,\Pi}(U,\{F\in \mathcal{F}, F\subset D\};E^*)$. We are thus reduced to $D$ compact since $L_f(K)$ is also absolutely convex (weakly) compact.

To complete the proof, it thus remains to get a contradiction if $K$ is not included in any $\mathcal{D}'_{\Pi_n,\Pi_n}(U,\{F\in \mathcal{F}, F\subset D\};E^*)$ with $\Pi_n\nearrow \Lambda$ in which case there is a sequence $v_n\in K$ and a sequence $(x_n,\xi_n)\in \dot{T}^*U$ such that $(x_n,\xi_n)\in WF(v_n)$ $(x_n,\xi_n)\in\Pi_{n+1}-\Pi_n,WF(v_n)\subset \Pi_{n+1}, $ and say $|\xi_n|=1$ $(x_n,\xi_n)\to (x,\xi)\in \lambda^c$ (the convergence of $x_n$ up to extraction coming from the reduction to $D$ compact). We can fix $i$ with $x\in U_i$ and thus assume $x_n\in U_i$ for all $n$.

If we replace $K\subset (\mathcal{D}'_{\lambda,{\lambda}}(U,\mathcal{F};E^*),\mathcal{I}_{iii})$ and (weakly) compact there by the (weak) closure of the absolutely convex hull of $v_n$ (in this space), one gets $K$ is separable  (in its relevant topology  induced by $(\mathcal{D}'_{\lambda,{\lambda}}(U,\mathcal{F};E^*),\mathcal{I}_{iii})$ making it compact), thus metrizable either from \cite[Theorem 2]{CascalesOrihuela} in the compact case, or from \cite[Corollary 1.12]{CascalesOrihuela} in the weakly compact case because                                                                  
$F=(\mathcal{D}'_{\lambda,{\lambda}}(U,\mathcal{F};E^*),\mathcal{I}_{iii}) $ is of class $\mathfrak{G}$ (and in the second case because $K$ is now separable). Thus we can now replace $v_n$ by a converging subsequence converging to $v$ (in $F$ but from the a priori support property also in $K$ with its original topology), and since $\frac{1}{2}(v_n-v)$ is still in $K$ which is absolutely convex, one can assume $v_n\to 0$ (weakly in the weakly compact case, one also uses $v$ has a wave front set in some $\Pi_n$ and thus does not change the wave front set condition for $n$ large enough). It only remains to get a series $S_N=\sum_{n=0}^N \lambda_n v_n$ converging to a point $S$ in the completion. 
Using $M_i,N_i$ of lemma \ref{contuinuousdenseInj}, it even suffices to assume $U=\varphi_i(U_i)$ what we now do.

As soon as $\sum |\lambda_n|<\infty$ it is obvious to get $S_N$ converges weakly in the completion since $v_n$ is weakly convergent thus weakly bounded. Now take a sequence of cones $(\text{supp}(f_n)\times V_n)$ around $(x_n,\xi_n)$ ($x_n\in U_{i_n}$) of diameter going to $0$ (so that any sequence of points taken in this sequence converge to $(x,\xi)$
) such that $P_{k_n,V_n,f_n}(v_n)=\infty.$
Thus for $S=\sum_{k=1}^\infty\lambda_kv_k$ given in lemma \ref{SequenceWF}
, we have $P_{k_l,V_l,f_l}(S)=\infty$ and thus $\text{supp}(f_l)\times V_l\cap WF(S)\neq\emptyset$ and thus by our choices $(x,\xi)\in WF(S)$ contradicting the fact that $S\in \mathcal{D}'_{\lambda,{\lambda}}(U,(\mathscr{O}_\mathcal{C})^{o};E^*)$. 
 This contradiction implies the absolutely convex (weakly) compact $K$ has to be bounded in some $\mathcal{D}'_{\Pi,{\Pi}}(U,(\mathscr{O}_\mathcal{C})^{o};E^*)$ and thus equicontinuous in $\mathcal{D}'_{\lambda,{\lambda}}(U,(\mathscr{O}_\mathcal{C})^{o};E^*)$ via our characterization of equicontinuous sets.

\medskip

Let us now fix an absolutely convex (weakly) compact set $K$ in $(\mathcal{D}'_{\lambda,\overline{\lambda}}(U,(\mathscr{O}_\mathcal{C})^{o};E^*),\mathcal{I}_{iii}) $.
We want to check that for each $n$, there is a closed cone $\Pi_n\subset \lambda$ such that $K$ is included in $\mathcal{D}'_{n,\Pi_n}(U,\mathcal{F};E^*)$. From this we will get as above it is necessarily bounded in this quasi-LB space. We can also assume as before $\mathcal{C}=\mathcal{F}$ i.e. elements of $K$ are supported on a compact.
Assuming for contradiction this is not the case for some order $k$, one gets as above $v_n\in K$ and a sequence $(x_n,\xi_n)\to (x,\xi)\in \lambda^c$ (one uses the distance to a closed cone is continuous) with $(x_n,\xi_n)\in DWF_k(v_n)$. Since $x\in U_i$ we are even reduced as before to $U=\varphi_i(U_i)$.

From the definition of dual wave front set  there exists sequences $V_n, f_n$ with  $\text{supp}(f_n)\times V_n$
disjoint (we first assume disjoint of previously built ones and from the tail of the sequence and then work by induction) tending to $(x,\xi)\in \lambda^c$
with $P_{k,V_n,f_n}(v_n)=\infty.$ 
As before since $(\mathcal{D}'_{\lambda,\overline{\lambda}}(U,\mathcal{F};E^*),\mathcal{I}_{iii}) $ is of class $\mathfrak{G}$ one can extract a sequence $v_n$ and assume its limit is $0$.
Applying lemma \ref{SequenceWF} with the constant sequence $k_n=k$, one gets $S=\sum \lambda_nv_n$ converging in $(\mathcal{D}'_{\lambda,\overline{\lambda}}(U,\mathcal{F};E^*),\mathcal{I}_{iii}) $ and with $P_{k,V_n,f_n}(S)=\infty$. Now if $(x,\xi)$ were in $DWF_k(S)^c$ there would be a neighborhood $U$ and a cone $V$, and from the hypothesis  $V_n\subset V, \text{supp}(f_n)\subset U$ for $n$ large enough. Thus $(x,\xi)\in DWF_k(S)$ contradicting $DWF_k(S)\subset  \lambda$. One thus deduces $K$ bounded in some $\mathcal{D}'_{n,\Pi_n}(U,\mathcal{F};E^*).$

We are now ready to check the identification of the agreeing  Mackey, Arens and strong dual of $(\mathcal{D}'_{\lambda,\overline{\lambda}}(U,(\mathscr{O}_\mathcal{C})^{o};E^*),\mathcal{I}_{iii})$, i.e. to compute $(\mathcal{D}'_{-\lambda^c,-\lambda^c}(U,\mathcal{C}),\mathcal{I}_{iii}^{born})$. Note the identity $\mathcal{I}_{ibi}=\mathcal{I}_{pbp}$ is obvious from the definition and lemma \ref{bornologyIdentitiesEqui} (case of equality $\mathcal{I}_{iii}=\mathcal{I}_{ppp}$).

We already know by definition that $\mathcal{I}_{ibi}$ is stronger than $\mathcal{I}_{iii}$ and have the same bounded sets. From the computation in Theorem \ref{duals}, we also know the space with topology $\mathcal{I}_{ibi}$ has the same continuous and bornological dual, thus to check it is bornological and conclude this is the expected bornologification, it suffices to check it carries its Mackey topology.

We only have to check the topology for $\mathcal{I}_{ibi}$ above is stronger than the Mackey topology of its dual thus equal to it. 
But the stated continuity is easy from \eqref{BDbounded} since the seminorms involved for $u$ are among the seminorms as in the proof of the continuous dual, and the seminorms in $v$ are uniform on a bounded set of $(\mathcal{D}'_{\lambda,\overline{\lambda}}(U,(\mathscr{O}_\mathcal{C})^{o};E^*),\mathcal{I}_{iii})$ by our computation of bounded sets, included in absolutely convex compact sets (by completeness and nuclearity for compactness).

 \end{proof}

\section{Functional analytic properties in the general case}
The first result builds on the choice of definitions of topologies made to obtain nuclearity from the closed and open case.

\begin{proposition}\label{FAGeneral}Let $\gamma\subset \Lambda\subset \overline{\gamma}$ cones  and  $\mathcal{C}=\mathcal{C}^{oo}$ an enlargeable polar family of closed sets in $U$ and let $\lambda=-\gamma^c.$
 $(\mathcal{D}'_{\gamma,\Lambda}(U,\mathcal{C};E),\mathcal{I}_{ppp})$ is nuclear and $(\mathcal{D}'_{\gamma,\overline{\gamma}}(U,\mathcal{C};E),\mathcal{I}_{\alpha}), \alpha=ppp,pmp,pip$ (or even $\alpha=pmi,pii$ when $\mathcal{C}$ is countably generated) is also complete, nuclear thus semireflexive (even completely reflexive) and semi-Montel and, as a consequence, on its dual the strong, Mackey and Arens topologies  coincide and moreover they coincide when $\alpha=pip$ with the following limit on closed cones of bornologifications :\begin{align*}(\mathcal{D}'_{\lambda,\lambda}(U,(\mathscr{O}_\mathcal{C})^o;E^*),\mathcal{I}_{b}):=\underrightarrow{\lim}_{ \Pi\subset \lambda}\left(\mathcal{D}'_{\Pi,\Pi}(U,(\mathscr{O}_\mathcal{C})^o;E^*),\mathcal{I}_{iii}^{born}\right),\end{align*}
 which is thus ultrabornological 
  (and 
 stronger than $\mathcal{I}_{iii}$ 
 with equality in the case $\lambda$ open
 ). Actually, we have $\mathcal{I}_{ppp}=\mathcal{I}_{pmp}=\mathcal{I}_{pip}$ on $\mathcal{D}'_{\gamma,\overline{\gamma}}(U,\mathcal{C};E)$ and more generally, it is the completion of $(\mathcal{D}'_{\gamma,\Lambda}(U,\mathcal{C};E),\mathcal{I}_{ppp}).$ Finally, on $\mathcal{D}'_{\lambda,\lambda}(U,(\mathscr{O}_\mathcal{C})^o;E^*)$ we have $\mathcal{I}_{iii}= \mathcal{I}_{ppp}$, and  $(\mathcal{D}'_{\lambda,\lambda}(U,(\mathscr{O}_\mathcal{C})^o;E^*),\mathcal{I}_{iii})$ is also the  Mackey and Arens dual of $(\mathcal{D}'_{\gamma,\gamma}(U,\mathcal{C};E),\mathcal{I}_{ppp}).$  If moreover, either $\mathcal{C}$ or $(\mathscr{O}_\mathcal{C})^o$ is countably generated and $\lambda$ is a $G_\delta$-cone (i.e. $\gamma$ an $F_\sigma$-cone), $\mathcal{D}'_{\lambda,\lambda}(U,(\mathscr{O}_\mathcal{C})^o;E^*)$ for both $\mathcal{I}_{iii},\mathcal{I}_{b}$ is a quasi-LB space 
  and $(\mathcal{D}'_{\gamma,\Lambda}(U,\mathcal{C};E),\mathcal{I}_{ppp})$ is of class $\mathfrak{G}$, 
  and if, instead, $\lambda$ is an $F_\sigma$-cone (i.e. $\gamma$ a $G_\delta$-cone), $\mathcal{D}'_{\lambda,\lambda}(U,(\mathscr{O}_\mathcal{C})^o;E^*)$ for both $\mathcal{I}_{iii},\mathcal{I}_{b}$ is of class $\mathfrak{G}$ and $(\mathcal{D}'_{\gamma,\Lambda}(U,\mathcal{C};E),\mathcal{I}_{ppp})$ is a quasi LB space for both $\Lambda=\gamma,\overline{\gamma}.$
\end{proposition}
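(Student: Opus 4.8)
The plan is to reduce everything to the following principle: the space whose controlling cone is $G_\delta$ is a quasi-LB space, and this forces its dual (whose controlling cone is then the $F_\sigma$ cone) to be of class $\mathfrak{G}$. Throughout I would use the duality $(\mathcal{D}'_{\gamma,\Lambda}(U,\mathcal{C};E),\mathcal{I}_{ppp})'=\mathcal{D}'_{\lambda,\lambda}(U,(\mathscr{O}_\mathcal{C})^o;E^*)$ from Theorem \ref{duals}, the descriptions of equicontinuous sets from Lemma \ref{bornologyIdentitiesEqui} (and from Lemma \ref{bornologyIdentities} in the closed cone case, where bounded $=$ equicontinuous), and the countable generation of $\mathcal{C}$ or $(\mathscr{O}_\mathcal{C})^o$ to make all support limits countable, so that the stability of quasi-LB and of class $\mathfrak{G}$ under closed subspaces and countable products/inductive/projective limits (\cite{ValdiviaQuasiLB}, \cite{CascalesOrihuela}) removes the support variable at the very end. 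I write ``Case A'' for $\lambda$ a $G_\delta$ cone (so $\gamma$ is $F_\sigma$) and ``Case B'' for $\gamma$ a $G_\delta$ cone (so $\lambda$ is $F_\sigma$).

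First I would treat the quasi-LB assertions, which in both cases concern the space carrying the $G_\delta$ cone. For $\mathcal{D}'_{\lambda,\lambda}(U,(\mathscr{O}_\mathcal{C})^o;E^*)$ with $\lambda$ a $G_\delta$ cone this is exactly the argument of Proposition \ref{bornologyIdentitiesQLB}: only the $G_\delta$ character of $\lambda$, not the closedness of $\gamma$, was used there, so via \eqref{Alternativeiii}, the cofinal $\N^{\N}$ family $\Pi(\alpha)=\cap_n\Lambda_{n,\alpha(n)}^c$ built from Lemma \ref{openclosed}, and Lemma \ref{QLBlemma} applied to $F_{n,m}=\mathcal{D}'_{\Lambda_{n,m}^c,\Lambda_{n,m}^c}(U,\mathcal{F};E^*)$, one gets the quasi-LB representation, which I then carry through the countable support limits. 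For $(\mathcal{D}'_{\gamma,\Lambda}(U,\mathcal{C};E),\mathcal{I}_{ppp})$ with $\gamma$ a $G_\delta$ cone (Case B, both $\Lambda=\gamma,\overline{\gamma}$) I would establish the dual, $DWF$-side analogue of \eqref{Alternativei}, namely a topological isomorphism $(\mathcal{D}'_{\gamma,\overline{\gamma}}(U,\mathcal{F};E),\mathcal{I}_{pip})\simeq\underleftarrow{\lim}_n\underrightarrow{\lim}_{\Pi\subset\gamma}\mathcal{D}'_{n,\Pi}(U,\mathcal{F};E)$ (and the corresponding statement with the extra condition $WF\subset\gamma$ for $\Lambda=\gamma$), proved by the same equicontinuous-set computation as \eqref{Alternativei}. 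Since $\gamma$ is $G_\delta$ the inner inductive limit over closed $\Pi\subset\gamma$ is cofinally $\N^{\N}$-indexed, so Lemma \ref{QLBlemma} makes it quasi-LB, while the outer limit over $n$ is countable and preserves quasi-LB; countable generation then disposes of the support.

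Next I would deduce the class $\mathfrak{G}$ assertions for the $F_\sigma$-cone spaces from the quasi-LB representations just obtained, exactly as in the non-barrelled part of Proposition \ref{bornologyIdentitiesQLB}. The key point is that the Banach disks produced by Lemma \ref{QLBlemma} lie inside the terms $\mathcal{D}'_{\Pi(\alpha),\Pi(\alpha)}$ (resp. $\mathcal{D}'_{n,\Pi(\alpha)}$), and in the closed-cone case bounded sets there are equicontinuous by Lemma \ref{bornologyIdentities} and the equicontinuous-set descriptions in the proofs of \eqref{Alternativeppp}, \eqref{Alternativepip}, \eqref{Alternativeiii}; hence this increasing $\N^{\N}$ family is simultaneously a quasi-LB representation of the $G_\delta$-cone space and a $\mathfrak{G}$-representation of its dual, the $F_\sigma$-cone space. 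This yields $(\mathcal{D}'_{\gamma,\Lambda}(U,\mathcal{C};E),\mathcal{I}_{ppp})$ of class $\mathfrak{G}$ in Case A and $\mathcal{D}'_{\lambda,\lambda}(U,(\mathscr{O}_\mathcal{C})^o;E^*)$ of class $\mathfrak{G}$ in Case B. For the bornological variant $\mathcal{I}_{b}$ I would note that a space and its bornologification, and more generally a finer topology with the same bounded sets, have the same Banach disks, so quasi-LB persists verbatim; and since $\mathcal{I}_{iii}\subset\mathcal{I}_{b}$, a $\mathfrak{G}$-representation whose members are $\mathcal{I}_{iii}$-equicontinuous is a fortiori $\mathcal{I}_{b}$-equicontinuous, so class $\mathfrak{G}$ transfers upward to $\mathcal{I}_{b}$. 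Alternatively, for the ultrabornological, hence barrelled, $(\mathcal{D}'_{\lambda,\lambda},\mathcal{I}_{b})$ one may invoke \cite[corollary 2.2 (i)]{CascalesOrihuela} once its strong dual is identified with the quasi-LB space $(\mathcal{D}'_{\gamma,\Lambda},\mathcal{I}_{ppp})$ through the main body of the proposition.

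The step I expect to be the main obstacle is the quasi-LB property of the projective-side space $(\mathcal{D}'_{\gamma,\Lambda}(U,\mathcal{C};E),\mathcal{I}_{ppp})$ in Case B. The difficulty is that it is a priori presented as a projective limit over the \emph{uncountable} directed set of open cones $\lambda_1\supset\gamma$ (equations \eqref{Alternativeppp}, \eqref{Alternativepip}), whereas quasi-LB is not stable under uncountable projective limits; the resolution is precisely to replace that presentation by the $\underleftarrow{\lim}_n\underrightarrow{\lim}$ form in which the $\N^{\N}$-indexed, Lemma \ref{QLBlemma}-type inductive limit sits \emph{inside} a countable projective limit. Verifying this alternative representation (the $DWF$-analogue of \eqref{Alternativei}), checking that the $\N^{\N}$-cofinal family of closed subcones of the $G_\delta$ cone $\gamma$ satisfies the monotonicity hypotheses of Lemma \ref{QLBlemma}, and confirming the equicontinuity of the resulting disks needed for the $\mathfrak{G}$ consequence, is where the genuine work lies; the remaining bookkeeping on support via countable generation is routine.
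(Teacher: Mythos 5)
You address only the final, conditional clauses of the proposition --- the quasi-LB and class $\mathfrak{G}$ statements --- and for those your plan is broadly sound and close to the paper's own argument (quasi-LB representations via Lemma \ref{QLBlemma} and the $\N^{\N}$-cofinal families $\Pi(\alpha)$ built from Lemma \ref{openclosed}; equicontinuity of the resulting Banach disks, via Lemma \ref{bornologyIdentities} and the equicontinuous-set computations of Lemma \ref{bornologyIdentitiesEqui}, giving class $\mathfrak{G}$ of the dual). But the bulk of the proposition is left unproved: nuclearity of $(\mathcal{D}'_{\gamma,\Lambda}(U,\mathcal{C};E),\mathcal{I}_{ppp})$; completeness and the semi-Montel property for $\alpha=ppp,pmp,pip$; the identifications $\mathcal{I}_{ppp}=\mathcal{I}_{pmp}=\mathcal{I}_{pip}$ on $\mathcal{D}'_{\gamma,\overline{\gamma}}$ and $\mathcal{I}_{iii}=\mathcal{I}_{ppp}$ on $\mathcal{D}'_{\lambda,\lambda}$; the identification of $(\mathcal{D}'_{\gamma,\overline{\gamma}},\mathcal{I}_{ppp})$ as the completion of $(\mathcal{D}'_{\gamma,\Lambda},\mathcal{I}_{ppp})$; the computation of the strong $=$ Mackey $=$ Arens dual as $\mathcal{I}_{b}$ and its ultrabornologicity; and the identification of $(\mathcal{D}'_{\lambda,\lambda},\mathcal{I}_{iii})$ as the Mackey and Arens dual of $(\mathcal{D}'_{\gamma,\gamma},\mathcal{I}_{ppp})$. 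These are not routine bookkeeping: the completion statement requires computing the equicontinuous bornology on the dual and invoking the Hogbe-Nlend--Moscatelli characterization of completions of nuclear spaces, and the Mackey/Arens dual of $(\mathcal{D}'_{\gamma,\gamma},\mathcal{I}_{ppp})$ rests on K\"othe's \S 22.7.(9) applied to \eqref{Alternativeppp} together with the hard identification of absolutely convex (weakly) compact sets in Proposition \ref{bornologyIdentitiesArens}. Your appeal to ``the main body of the proposition'' to identify the strong dual of $(\mathcal{D}'_{\lambda,\lambda},\mathcal{I}_{b})$ is therefore circular as the proposal stands.

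A secondary point: you single out the quasi-LB property of $(\mathcal{D}'_{\gamma,\Lambda}(U,\mathcal{C};E),\mathcal{I}_{ppp})$ for $\gamma$ a $G_\delta$-cone as the main obstacle, proposing to establish a new $DWF$-side analogue of \eqref{Alternativei} for $\mathcal{I}_{pip}$ and to feed it into Lemma \ref{QLBlemma}. The paper disposes of this case in one line: $\mathcal{I}_{ppp}$ is a Hausdorff topology weaker than $\mathcal{I}_{iii}$ and $\mathcal{I}_{iip}$, which are quasi-LB by Proposition \ref{bornologyIdentitiesQLB} when $\gamma$ is $G_\delta$, and the quasi-LB property passes to weaker Hausdorff topologies (\cite[Prop 1]{ValdiviaQuasiLB}), since a quasi-LB representation is just an increasing $\N^{\N}$-indexed family of Banach disks covering the space and Banach disks survive a weakening of the topology that remains Hausdorff. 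Your heavier route would presumably work, but the genuine difficulties of the proposition lie elsewhere, in the parts you have not treated.
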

\begin{proof}

The nuclearity of $(\mathcal{D}'_{\gamma,\Lambda}(U,\mathcal{C};E),\mathcal{I}_{ppp})$  follows by taking countable inductive limits and projective limits from the open case in the previous corollary since the building block are the spaces $(\mathcal{D}'_{\Lambda,\overline{\Lambda}}(U,\mathcal{F};E),\mathcal{I}_{iii}).$ 

For $(\mathcal{D}'_{\gamma,\overline{\gamma}}(U,\mathcal{C};E),\mathcal{I}_{\alpha}),\alpha=pmp,pip$, there is no inductive limit over ${\Pi}\subset \overline{\gamma}$, thus since $(\mathcal{D}'_{\Lambda,\overline{\Lambda}}(U,\mathcal{F};E),\mathcal{I}_{iii}),$ is complete nuclear, there are only projective limits, and a countable strict inductive limit on support which is also complete and nuclear. In the countably generated case, the inductive limits on support are equivalent to (strict) countable ones, explaining the variant for inductive topologies.
The agreement of strong, Mackey and Arens topologies is a known result for semi-Montel spaces \cite[p.~235]{Horvath}. We will deduce the completeness in the case $\alpha=ppp$ from the last statement identifying it with the case $\alpha=pmp.$

To compute the Mackey dual of $(\mathcal{D}'_{\gamma,\overline{\gamma}}(U,\mathcal{C};E),\mathcal{I}_{pip})$ we use \cite[\S 22.7.(9) p 294]{Kothe} on \eqref{Alternativepip}. 
Using this (the densities involved in the reduced form assumption being checked by our general density lemma \ref{normality}) and the computation of the Mackey duals in the open case in the previous corollary \ref{FAClosedOpen2} and also lemma \ref{bornologyIdentitiesEqui} for identifying the topologies $\mathcal{I}_{iii}=\mathcal{I}_{pip}$ on appropriate spaces, we get  the stated result identifying the Mackey dual with $\mathcal{I}_b.$

 We now prove the completion of $(\mathcal{D}'_{\gamma,\Lambda}(U,\mathcal{C};E),\mathcal{I}_{ppp})$ is $(\mathcal{D}'_{\gamma,\overline{\gamma}}(U,\mathcal{C};E),\mathcal{I}_{pip})$ (this will imply the equality in the case $\Lambda=\overline{\gamma}$ since the completion map gives then the converse map which was not obvious by definition from $\mathcal{I}_{pip}\to\mathcal{I}_{pmp}\to \mathcal{I}_{ppp}$). Since $(\mathcal{D}'_{\gamma,\Lambda}(U,\mathcal{F};E),\mathcal{I}_{ppp})$ is nuclear we can use again \cite[p.~140 Corollary 3]{HogbeNlendMoscatelli}. Note that most our computations of completions came from this lemma and motivated our management of topologies which are by construction nuclear. We computed in the proof of lemma \ref{bornologyIdentitiesEqui} the equicontinuous bornology on the dual, which is also obvious from formula \eqref{Alternativeppp} which is as usual \cite[\S 22.7.(5)]{Kothe} the inductive limit bornology $\underrightarrow{\lim}_{ \lambda_1\supset \gamma}\underrightarrow{\lim}_{\lambda_2\supset \lambda_1\cup \Lambda}\left(\mathcal{D}'_{\lambda_1,\overline{\lambda_1}\cap \lambda_2}(U,\mathcal{C};E),\mathcal{I}_{p mi}\right)'$
 But $\left(\mathcal{D}'_{\lambda_1,\overline{\lambda_1}\cap \lambda_2}(U,\mathcal{C};E),\mathcal{I}_{p mi}\right)'=\left(\mathcal{D}'_{-\lambda_1^c,-\lambda_1^c}(U,(\mathscr{O}_\mathcal{C})^o;E^*)\right)$  we  checked the equicontinuous sets are also independent of $\lambda_2$. 
 
  In any case, the equicontinuous dual is thus $\underrightarrow{\lim}_{ \lambda_1\supset \gamma}\left(\mathcal{D}'_{\lambda_1,\overline{\lambda_1}}(U,\mathcal{C};E),\mathcal{I}_{p mi}\right)'.$ Again from corollary \ref{FAClosedOpen2}, the bornological dual is then $\underleftarrow{\lim}_{ \lambda_1\supset \gamma}\left(\mathcal{D}'_{\lambda_1,\overline{\lambda_1}}(U,\mathcal{C};E),\mathcal{I}_{iii}\right),$ i.e. nothing but \eqref{Alternativepip} for $\mathcal{I}_{pip}$ in this case on $\mathcal{D}'_{\gamma,\overline{\gamma}}(U,\mathcal{C};E).$ 
 
 It remains to compute the Mackey/Arens duals of \eqref{Alternativeppp} : $$(\mathcal{D}'_{\gamma,\gamma}(U,\mathcal{C};E),\mathcal{I}_{ppp})=\underleftarrow{\lim}_{ \lambda_1\supset \gamma}\left(\mathcal{D}'_{\lambda_1,\lambda_1}(U,\mathcal{C};E),\mathcal{I}_{pmp}=\mathcal{I}_{iii}\right).$$ From \cite[\S 22.7.(9)]{Kothe} and its variant for Arens duals\footnote{which uses only the same behaviour of weakly compact and compact sets for intersection with closed sets, products and the inductive limit is topological since the adjoint map is continuous between Arens duals by the proof of \cite[\S 21.4.(6)]{Kothe} (exchanging $A,A'$ to know from continuity of $A$ that it preserves absolutely convex compact sets).}, it suffices to compute the inductive limit of corresponding duals. But from proposition \ref{bornologyIdentitiesArens}, we computed the Arens/Mackey duals of the terms in the projective limit
 , so that one concludes from the topological isomorphism in \eqref{Alternativeiii}.

 First assume $\lambda$ is a $G_\delta$-cone and either $\mathcal{C}$ or $(\mathscr{O}_\mathcal{C})^o$ countably generated. The fact that $(\mathcal{D}'_{\lambda,\lambda}(U,(\mathscr{O}_\mathcal{C})^o;E^*),\mathcal{I}_{iii})$ is a quasi-LB space is stated in proposition \ref{bornologyIdentitiesQLB} since we now know $\mathcal{I}_{iii}=\mathcal{I}_{iip}.$ For its Mackey dual $(\mathcal{D}'_{\gamma,\gamma}(U,\mathcal{F};E),\mathcal{I}_{ppp})$, in case $\mathcal{F}=\mathcal{C}$ the bounded sets of the quasi-LB structure of its dual are  bounded sets in $\mathcal{D}'_{\Pi(\alpha),\Pi(\alpha)}(U,\mathcal{K};E)$, thus equicontinuous there, and from the computation in the proof of lemma \ref{bornologyIdentitiesEqui}, equicontinuous in the dual of $(\mathcal{D}'_{\gamma,\gamma}(U,\mathcal{F};E),\mathcal{I}_{ppp}).$ Thus, $(\mathcal{D}'_{\gamma,\gamma}(U,\mathcal{F};E),\mathcal{I}_{ppp})$ is of class $\mathfrak{G}.$ By subspaces, countable inductive and projective limit, this is the same for every support condition stated. The case of $\lambda$ intersection of closed and open is easy since then formula \eqref{Alternativeiii} is equivalent to a countable inductive limit and likewise from formula \eqref{Alternativepip} in case $\gamma$ is a union of a closed and an open set.
 
 From \cite[Prop 8]{CascalesOrihuela}, its completion $(\mathcal{D}'_{\gamma,\overline{\gamma}}(U,\mathcal{C};E),\mathcal{I}_{ppp})$ is also of class $\mathfrak{G},$ and also all the spaces with the same completion $(\mathcal{D}'_{\gamma,\Lambda}(U,\mathcal{C};E),\mathcal{I}_{ppp})$ as subspaces.

 Finally, for $(\mathcal{D}'_{\lambda,\lambda}(U,(\mathscr{O}_\mathcal{C})^o;E^*),\mathcal{I}_{b})$ we know it is the Mackey dual of the completion, thus the equicontinuous (thus bounded sets) giving property $\mathfrak{G}$ also give the quasi -LB representation  of this dual.
 
Finally, assume $\lambda$ is a $F_\sigma$-cone instead. From \cite[Prop 1]{ValdiviaQuasiLB} and our proposition \ref{bornologyIdentitiesQLB}, since $\mathcal{I}_{ppp}$ is weaker than both $\mathcal{I}_{iip},\mathcal{I}_{iii}$ which are proved there to be quasi-LB spaces if $\gamma$ is a $G_\delta$, we deduce that  $(\mathcal{D}'_{\gamma,\Lambda}(U,\mathcal{C};E),\mathcal{I}_{ppp})$ is a quasi LB  space for both $\Lambda=\gamma,\overline{\gamma}.$
Now  $(\mathcal{D}'_{\lambda,\lambda}(U,(\mathscr{O}_\mathcal{C})^o;E^*)$ for the topology $\mathcal{I}_{b}$ 
have a quasi-LB dual $(\mathcal{D}'_{\gamma,\overline{\gamma}}(U,\mathcal{C};E),\mathcal{I}_{ppp})$ 
and it suffices to see the quasi-LB representation above is formed of equicontinuous sets to get the statement about class $\mathfrak{G}.$ But from the inductive limit definition of $\mathcal{I}_b$ an equicontinuous set in the dual is a set equicontinuous  in all $(\mathcal{D}'_{\Lambda,\overline{\Lambda}}(U,\mathcal{C};E),\mathcal{I}_{ppp})$, $\Lambda\supset \gamma$ open cone, but from lemma \ref{bornologyIdentities}, bounded and equicontinuous sets coincide in this space, thus they also coincide in $(\mathcal{D}'_{\gamma,\overline{\gamma}}(U,\mathcal{C};E),\mathcal{I}_{ppp}).$

The corresponding statement for $\mathcal{I}_{iii}$ was already checked before since $\mathcal{I}_{iii}=\mathcal{I}_{ppp}$ on this space.
 \end{proof}

The following result completes our understanding of functional analytic properties of generalized H\"ormander spaces of distributions. This especially completes the proof of Theorem \ref{FAMain} from the introduction. It emphasizes two topologies $\mathcal{I}_{ppp}$ which was the one built to keep nuclearity as we just seen, and $\mathcal{I}_{ibi}$ which is made to be a description of bornologifications. 

\begin{proposition}\label{FAGeneral2}Let $\gamma$ 
a cone  and  $\mathcal{C}=\mathcal{C}^{oo}$ an enlargeable polar family of closed sets in $U$ and let $\lambda=-\gamma^c.$
 The bounded sets on $\mathcal{D}'_{\gamma,\overline{\gamma}}(U,\mathcal{C};E)$ coincide for $\mathcal{I}_{ppp}$ and $\mathcal{I}_{iii}$ and this last inductive limit is regular. 
Moreover on $\mathcal{D}'_{\lambda,\lambda}(U,(\mathscr{O}_\mathcal{C})^o;E^*)$ we have $\mathcal{I}_{b}=\mathcal{I}_{ibi}=\mathcal{I}_{pbp}.$ 
  $(\mathcal{D}'_{\lambda,\overline{\lambda}}(U,(\mathscr{O}_\mathcal{C})^o;E^*),\mathcal{I}_{ibi})$ is the strong and Mackey dual of $(\mathcal{D}'_{\gamma,\overline{\gamma}}(U,\mathcal{C};E), \mathcal{I}_{iii}^{born}=\mathcal{I}_{ppp}^{born})$, the bornologification $\mathcal{I}_{ppp}^{born},$ the completion of $(\mathcal{D}'_{\lambda,{\lambda}}(U,(\mathscr{O}_\mathcal{C})^o;E^*),\mathcal{I}_{b})$ and also for any cone $\lambda\subset \Lambda\subset \overline{\lambda}$ the completion of $(\mathcal{D}'_{\lambda,{\Lambda}}(U,(\mathscr{O}_\mathcal{C})^o;E^*),\mathcal{I}_{ibi})$, which is also nuclear. 
  
  Thus, $(\mathcal{D}'_{\lambda,\overline{\lambda}}(U,(\mathscr{O}_\mathcal{C})^o;E^*),\mathcal{I}_{ibi})$ is  complete ultrabornological nuclear, 
especially a Montel space. Likewise, $(\mathcal{D}'_{\lambda,\lambda}(U,(\mathscr{O}_\mathcal{C})^o;E^*), \mathcal{I}_b)$ is the bornologification of $\mathcal{I}_{iii},$ so that the situation is summarized in the two following commuting diagrams where $i$ is the canonical injection from a space to its completion, $b$ the canonical map  with bounded inverse between a bornologification and the original space :
$$\begin{diagram}[inline]
&&(\mathcal{D}'_{\gamma,\overline{\gamma}}(U,\mathcal{C};E),\mathcal{I}_{ibi})
\\
&\ldTo^b&\uInto^i 
\\
(\mathcal{D}'_{\gamma,\overline{\gamma}}(U,\mathcal{C};E),\mathcal{I}_{ppp})& &(\mathcal{D}'_{\gamma,{\gamma}}(U,\mathcal{C};E),\mathcal{I}_{b})
\\
\uInto^i&\ldTo^b& 
\\
(\mathcal{D}'_{\gamma,{\gamma}}(U,\mathcal{C};E),\mathcal{I}_{iii})& &
\end{diagram}\quad \quad \quad \begin{diagram}[inline]
(\mathcal{D}'_{\lambda,\overline{\lambda}}(U,(\mathscr{O}_\mathcal{C})^o;E^*),\mathcal{I}_{ibi})&&
\\
\dTo^b& \luInto^i&
\\
(\mathcal{D}'_{\lambda,\overline{\lambda}}(U,(\mathscr{O}_\mathcal{C})^o;E^*),\mathcal{I}_{ppp})& &(\mathcal{D}'_{\lambda,{\lambda}}(U,(\mathscr{O}_\mathcal{C})^o;E^*),\mathcal{I}_{b})
\\
&\luInto^i&\dTo^b
\\& & 
(\mathcal{D}'_{\lambda,{\lambda}}(U,(\mathscr{O}_\mathcal{C})^o;E^*),\mathcal{I}_{iii})
\end{diagram}$$

Spaces symmetric with respect to the middle vertical line are Mackey duals of one another. All the spaces involved are nuclear locally convex spaces, and in each of them, bounded sets which are closed in the completion are metrisable compact sets and are equicontinuous sets from the stated dualities. When $\lambda,\gamma$ are $\mathbf{\Delta_2^0}$-cones (i.e. both $F_\sigma$,$G_\delta$) and if we assume
either $\mathcal{C}$ or $(\mathscr{O}_\mathcal{C})^o$  countably generated, all the space involved are moreover quasi-LB spaces of class $\mathfrak{G}$.  

\end{proposition}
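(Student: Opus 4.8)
The bulk of this proposition repackages facts already in hand: the nuclearity, completeness and semireflexivity of the $\mathcal{I}_{ppp}$ and $\mathcal{I}_{pip}$ topologies and the Mackey/Arens dual computations of Proposition~\ref{FAGeneral}, together with the open and closed cone identities of Corollary~\ref{FAClosedOpen2} and Lemmas~\ref{bornologyIdentities}, \ref{bornologyIdentitiesEqui} and Propositions~\ref{bornologyIdentitiesQLB}, \ref{bornologyIdentitiesArens}. The plan is therefore to fill in four points and to assemble the rest by functoriality: (a) the regularity of $\mathcal{I}_{iii}$ and the coincidence of its bounded sets with those of $\mathcal{I}_{ppp}$; (b) the identifications $\mathcal{I}_b=\mathcal{I}_{ibi}=\mathcal{I}_{pbp}$ and the completion/bornologification relations encoded in the two diagrams; (c) the nuclearity of the bornologified topologies, which is the one genuinely new and hardest ingredient; and (d) the metrizability and equicontinuity of bounded sets closed in the completion, and the $\mathbf{\Delta_2^0}$ quasi-LB and class $\mathfrak{G}$ assertions.

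For (a), one inclusion is free since $\mathcal{I}_{iii}\supset\mathcal{I}_{ppp}$. For the converse I would take $B$ bounded for $\mathcal{I}_{ppp}$ and first invoke regularity of the support inductive limit (Proposition~\ref{FApropertiesSupport}, read off through the duality with the open cone side) to place $B$ uniformly in a single $C\in\mathcal{C}$; then, using Lemma~\ref{openclosed} to replace the uncountable $\Delta(\gamma)$-inductive limit by a cofinal countable strict inductive limit of the complete nuclear blocks $(\mathcal{D}'_{\overline{\gamma}}(U,\mathcal{F}:\delta;E),\mathcal{I}_H)$, together with the semi-Montel character of the completion $(\mathcal{D}'_{\gamma,\overline{\gamma}}(U,\mathcal{C};E),\mathcal{I}_{pip})$ from Proposition~\ref{FAGeneral}, to obtain uniform control of the dual wave front set by one $\delta\in\Delta(\gamma)$. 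Each intermediate limit being strict, countable and of complete spaces is regular, whence $\mathcal{I}_{iii}$ is itself regular and has the announced bounded sets.

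For (b), the identities $\mathcal{I}_{ibi}=\mathcal{I}_{pbp}$ and $\mathcal{I}_b=\mathcal{I}_{ibi}$ follow as the analogous identities at the end of Proposition~\ref{bornologyIdentitiesArens}: the supplementary seminorms $P_{i,-k,f,\R^d}$ encode exactly the von Neumann bornology, and the associated-topology functor, being a left adjoint, commutes with the regular inductive limits over $\Pi\subset\lambda$ and over support used in Lemma~\ref{bornologyIdentitiesEqui}, so that $\mathcal{I}_b$ is the inductive limit of the closed cone bornologifications $\mathcal{I}_{iii}^{born}$. That $\mathcal{I}_{ibi}$ really is the bornologification is then seen, as in the last part of \ref{bornologyIdentitiesArens}, by checking via estimate~\eqref{BDbounded} that it carries its Mackey topology while keeping the bornological dual of Theorem~\ref{duals}. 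Since $\mathcal{I}_b$ is nuclear (point (c)), its completion is computed by the scheme of \cite{HogbeNlendMoscatelli} already used in Corollary~\ref{FAClosedOpen2} and is identified with $(\mathcal{D}'_{\lambda,\overline{\lambda}}(U,(\mathscr{O}_\mathcal{C})^o;E^*),\mathcal{I}_{ibi})$; as this last space is simultaneously a bornologification (hence bornological) and a completion (hence complete), it is ultrabornological, and the two diagrams commute by functoriality of $i$ and $b$. The symmetry of Mackey duals across the middle line then comes from Theorem~\ref{duals} combined with the Arens/Mackey computations of \ref{bornologyIdentitiesArens} and \ref{FAGeneral}.

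The main obstacle is (c): no stability theorem yields nuclearity of a bornologification from nuclearity of $\mathcal{I}_{ppp}$, since the Mackey dual of a nuclear space need not be nuclear, so this must be done by hand. The key is to identify all completant bounded disks: running the metrizability-of-compacts and Lemma~\ref{SequenceWF} argument of Proposition~\ref{bornologyIdentitiesArens}, every completant bounded disk of $(\mathcal{D}'_{\lambda,\overline{\lambda}}(U,(\mathscr{O}_\mathcal{C})^o;E^*),\mathcal{I}_{iii})$ is uniformly supported in some $D\in(\mathscr{O}_\mathcal{C})^o$ and controlled by a single closed cone $\Pi\subset\lambda$, hence bounded in a closed cone block $\mathcal{D}'_{\Pi,\Pi}(U,(\mathscr{O}_\mathcal{C})^o;E^*)$. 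On each such block the von Neumann and equicontinuous bornologies agree (Lemma~\ref{bornologyIdentities}), and this equicontinuous bornology, being that of the dual of a nuclear complete space by Corollary~\ref{FAClosedOpen2}, is a nuclear convex bornology, its nuclear linking maps between a cofinal family of polar disks being the transposes of the nuclear linking maps of that nuclear space; its associated topology $\mathcal{I}_{iii}^{born}$ is thus nuclear, and so is the inductive limit $\mathcal{I}_b$ and, passing to the completion, $\mathcal{I}_{ibi}$. This yields the complete ultrabornological nuclear, hence Montel, conclusion. Finally, for (d), a bounded set closed in the completion is compact there by the Montel property, and by the same disk identification sits in a closed cone block, where metrizability of compacts and coincidence with equicontinuous sets are available; the $\mathbf{\Delta_2^0}$ quasi-LB and class $\mathfrak{G}$ claims are obtained by propagating Propositions~\ref{bornologyIdentitiesQLB} and \ref{FAGeneral} through the operations (same disks under bornologification, completion, closed subspaces, countable limits) under which these two classes are stable.
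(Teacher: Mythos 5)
There are two genuine gaps, and they sit exactly at the two places where the paper has to work hardest. First, in your step (a): Lemma \ref{openclosed} does not let you replace the inductive limit over $\Delta(\gamma)$ by a cofinal countable one. $\Delta(\gamma)$ is the set of \emph{sequences} of closed cones in $\gamma$ ordered by termwise inclusion; even when $\gamma$ is the intersection of an open and a closed cone this directed set behaves like $\N^{\N}$ under domination and has no countable cofinal subset (this is precisely why the paper needs Lemma \ref{QLBlemma} with index set $\N^{\N}$ for quasi-LB representations). An appeal to the semi-Montel character of the completion does not by itself place a compact set inside a single term $\mathcal{D}'_{\overline{\gamma}}(U,\mathcal{F}:\delta;E)$ --- ruling that failure out is the whole content of Lemma \ref{SequenceWF}. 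The paper's actual mechanism is: for each order $k$ and each open $\lambda_1\supset\gamma$, the contradiction argument of Proposition \ref{bornologyIdentitiesArens} (metrizability of compacts via class $\mathfrak{G}$ plus Lemma \ref{SequenceWF}) confines $B$ to some $\mathcal{D}'_{k,\Pi_{k,\lambda_1}}$ with $\Pi_{k,\lambda_1}\subset\lambda_1$ closed; one then intersects over all $\lambda_1\supset\gamma$ to get $\Pi_k\subset\gamma$ and assembles $\delta=(\cup_{l\le k}\Pi_l)_k\in\Delta(\gamma)$. Your proposal does not supply this.

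Second, and more seriously, your step (c) does not prove nuclearity of the bornologifications. The reduction ``every completant bounded disk of $(\mathcal{D}'_{\lambda,\overline{\lambda}}(U,(\mathscr{O}_\mathcal{C})^o;E^*),\mathcal{I}_{iii})$ is controlled by a single closed cone $\Pi\subset\lambda$'' is false: Proposition \ref{bornologyIdentitiesArens} only yields, for each order $n$ separately, a closed cone $\Pi_n\subset\lambda$, i.e. control by some $\delta\in\Delta(\lambda)$; elements of the completion generically have full wave front set meeting $\overline{\lambda}\setminus\lambda$ and cannot be confined to one $\mathcal{D}'_{\Pi,\Pi}$. Even granting some block structure, your inference ``the von Neumann bornology of the block is nuclear, hence its associated topology $\mathcal{I}_{iii}^{born}$ is nuclear'' conflates nuclearity with co-nuclearity (nuclearity of a locally convex space is equivalent to nuclearity of the equicontinuous bornology on its \emph{dual}, not on the space itself), and the subsequent passage ``so is the inductive limit $\mathcal{I}_b$'' uses a stability property that fails for the uncountable inductive limit over closed cones $\Pi\subset\lambda$. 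The paper's proof instead establishes, by an explicit construction (Pietsch's characterization of absolutely summing maps, with Radon measures built from the seminorms $P_{i,2n,f,V}$, a Sobolev embedding on $V\cap S^{d-1}$ and a summation of the resulting measures with weights $2^{-j-n-m-|\alpha|}$), that $(\mathcal{D}'_{\gamma,\overline{\gamma}}(U,\mathcal{C};E),\mathcal{I}_{ppp})$ is a nuclear convex bornological space for its von Neumann bornology, which coincides with the equicontinuous bornology coming from $\mathcal{I}_b$; nuclearity of $\mathcal{I}_b$ and then of its completion $\mathcal{I}_{ibi}$ follows from the dual criterion. This hands-on construction is the genuinely new ingredient of the proposition (the introduction explicitly flags that no known stability result yields it), and it is entirely absent from your proposal; the remaining assembly in your steps (b) and (d) is broadly in line with the paper but depends on these two missing pieces.
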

Note that our statement about metrizability of compact sets didn't use the assumption to be of class $\mathfrak{G}$, even though it ultimately follows from this property in the full support closed/open cone case.
\begin{proof}
From the continuity of the map from $\mathcal{I}_{iii}$ to $\mathcal{I}_{ppp}$, a bounded set in the first space is bounded in the second. We thus start with $B$ bounded in $\mathcal{D}'_{\gamma,\overline{\gamma}}(U,\mathcal{C};E)$ for $\mathcal{I}_{ppp}$, and we can assume it absolutely convex and closed, so that by nuclearity, it is also compact in $\mathcal{D}'_{\gamma,\overline{\gamma}}(U,\mathcal{F};E)$. We have to show that $B$ is uniformly supported in some $C\in \mathcal{C}$  and bounded for $\delta=(\Gamma_n)\in \Delta(\gamma)$ in $\mathcal{D}'_{\overline{\gamma}}(U,\mathcal{F}:\delta;E)=\{u\in \mathcal{D}'(U;E)\ | \ 
\forall n ,DWF_n(u)\subset \Gamma_n \}.$ In this way, we also show $\mathcal{I}_{iii}$ is a regular inductive limit. The support condition is already known. From the projective limit representation \eqref{Alternativepip}, we know $B$ is absolutely convex compact in $\mathcal{D}'_{\lambda_1,\overline{\lambda_1}}(U,\mathcal{F};E)$ for any open $\lambda_1\supset \gamma.$ In the course of the proof of proposition \ref{bornologyIdentitiesArens}, we showed that for any $k$, it is bounded in some $(\mathcal{D}'_{k,\Pi_{k,\lambda_1}}(U,\mathcal{F};E))$, for some closed cone $\Pi_{k,\lambda_1}\subset \lambda_1$. Define the closed cone $\Pi_k=\cap_{\lambda_1\supset \gamma}\Pi_{k,\lambda_1}\subset \cap_{\lambda_1\supset \gamma}\lambda_1=\gamma.$ From the projective definition of the topology on  $(\mathcal{D}'_{k,\Pi_k}(U,\mathcal{F};E))$, defined in lemma \ref{bornologyIdentitiesEqui}, $B$ is bounded there, and we can also replace $\Pi_k$ by $\Gamma_k=\cup_{l=1}^k\Pi_l$ and since this is for all $k$, we have $\delta=(\Gamma_n)\in \Delta(\gamma)$ and we found our $\delta$ such that $B$ bounded in $\mathcal{D}'_{\overline{\gamma}}(U,\mathcal{F}:\delta;E).$


To relate the defining formula for $\mathcal{I}_b$ with our other definitions note that since in closed cone case, we already checked $\mathcal{I}_{ibi}=\mathcal{I}_{iii}^{born}$, we have by an inductive limit argument $\mathcal{I}_b$ stronger than $\mathcal{I}_{ibi}$. But as in proposition \ref{bornologyIdentitiesArens}, $\mathcal{I}_{ibi}$ stronger than $\mathcal{I}_{pbp}$ is obvious, and based this time on our computation of compact=(bounded closed)=(weakly compact) sets in the dual for $\mathcal{I}_{pbp}$ namely $\mathcal{D}'_{\gamma,\overline{\gamma}}(U,\mathcal{C};E)$, one gets from estimate \eqref{BDbounded}, that $\mathcal{I}_{pbp}$ is stronger than the strong=Mackey topology 
 coming from the dual which was computed to be $\mathcal{I}_b$ in the previous proposition \ref{FAGeneral}. 

Similarly from \eqref{BDbounded} we deduce that on $\mathcal{D}'_{\lambda,\overline{\lambda}}(U,(\mathscr{O}_\mathcal{C})^o;E^*)$, $\mathcal{I}_{ibi}$ is stronger than the strong dual topology coming from $\mathcal{I}_{iii}^{born}=\mathcal{I}_{ppp}^{born}.$ 
 Again from Arens-Mackey Thm, $\mathcal{I}_{ibi}$ is weaker that the Mackey topology, thus all are equal.

Let us now show that $(\mathcal{D}'_{\gamma,\overline{\gamma}}(U,\mathcal{C};E),\mathcal{I}_{ppp})$ is a nuclear convex bornological space. The remaining results will become easy once this proved. By definition, we have to take any Banach disk $B$ and get a Banach disk $A$ with the canonical map $E_B\to E_A$ absolutely summing (so that as usual by composition of two absolutely summing maps, one will get the same property with nuclear). 

As described above, one can take $B$ uniformly supported in some $C\in \mathcal{C}$ and absolutely convex compact, and thus say bounded for $\delta=(\Gamma_n)\in \Delta(\gamma)$ in $\mathcal{D}'_{\overline{\gamma}}(U,\mathcal{F}:\delta;E)=\{u\in \mathcal{D}'(U;E)\ | \ 
\forall n ,DWF_n(u)\subset \Gamma_n \}=\cap_{n\geq 1} \mathcal{D}_{n,\Gamma_n}'(U,\mathcal{F};E).$ As in \cite[section 10.1]{BrouderDangHelein}, take $f_{i,n,m},V_{i,n,m}$, $\text{supp}(f_{i,n,m})\subset U_i$ such that $(P_{i,2n,f_{i,n,m},V_{n,m}},)_{m\in \N,i\in I}$ is a countable family of seminorms of $\mathcal{D}_{2n,\Gamma_{2n}}'(U,\mathcal{F};E)$ defining with the seminorms of the strong topology of $\mathcal{D}'(U;E)$ its topology. We can also assume $V_{i,n,m}$ regular closed i.e. $V_{i,n,m}=\overline{Int(V_{i,n,m})}$ and with intersection to the sphere having a $C^1$ boundary and bounded stereographic projection. For our convenience we will even use a family with more seminorms than needed, in also considering $f_{i,n,m}^{\alpha}\circ\varphi_i^{-1}=x_1^{\alpha_1}...x_d^{\alpha_d}|f_{i,n,m}\circ\varphi_i^{-1}]$, $\alpha\in \N^d.$

We can assume $B=\{u\in\mathcal{D}'(U;E), \text{supp}(u)\subset C\}\cap \bigcap_{n,m\geq 1,\alpha\in \N^n,k\geq 0,i\in I}B_{i,n,m}^{\alpha,k}\cap B'$ with $B'$ a closed bounded set in $\mathcal{D}'(U;E)$, $B_{i,n,m}^{\alpha,k}=\{u, P_{i,2n,f_{i,n+k,m}^{\alpha},V_{i,n+k,m}}(u)\leq M_{n,m}^{|\alpha|,k,i}\},B_{i,n,m}^{\alpha,0}=B_{i,n,m}^{\alpha}.$ (Indeed every $B$ is included in a $B$ of that form, using $\Gamma_{2n}\subset \Gamma_{2n+2k}$ so that the seminorms written are seminorms for $\mathcal{D}_{2n,\Gamma_{2n}}'(U,\mathcal{F};E)$, and a $B$ of that form is indeed absolutely convex bounded closed, thus compact in $(\mathcal{D}'_{\gamma,\overline{\gamma}}(U,\mathcal{C};E),\mathcal{I}_{ppp})$ and thus completant there). We will build $A$ of the same form and even without index $k$ since it is obviously redundant. The reader should see this redundancy as a way of using $\Gamma_{k}\subset\Gamma_{k+1}$ without worrying about the compatibility of our decompositions of cones using $f_{i,N,m},V_{i,N,m}$ at various indices $N$. We also assume $|I|=\N$ for convenience.

Since $\mathcal{D}'(U;E)$ is a nuclear convex bornological space as any equicontinuous dual of a nuclear locally convex space (and since $B'$ is compact there thus completant) there exists a Banach disk $A'$ such that $E_{B'}\to E_{A'}$ is nuclear thus absolutely summing. Thus by Pietsch's characterization (see \cite[Th 2.3.3]{Pietsch}, cf. also \cite[Th1 p 97]{HogbeNlendMoscatelli}) there is a positive Radon-measure on the closed unit ball of $E_{B'}'$ such that for all $x\in E_{B'}$, we can bound the jauge norm $p_{A'}$ of $E_{A'}$  by: $$p_{A'}(x)\leq \int |\langle x,x'\rangle|d\mu'(x').$$ 

To get the same situation for $B_{i,n,m}^{\alpha}$, one uses the standard bounds to prove $\mathscr{S}$ is nuclear (see e.g. \cite[Th 6.2.5 p 101]{Pietsch}).
Note first that $p_{B_{i,n,m}^{\alpha}}=\frac{1}{M_{n,m}^{|\alpha|,0,i}}P_{i,2n,f_{i,n,m}^{\alpha},V_{n,m}}$
and $(1+|\xi|^2)^n\leq (1+|\xi|)^{2n}\leq 2^n(1+|\xi|^2)^n$ in order to use a slightly smoother expression.
We use the elementary application of the fundamental theorem of calculus :
\begin{align*}&|(1+|\xi|^2)^n\mathcal{F}((f_{j,N,m}^{\alpha}u)\circ\varphi_j^{-1})(\xi)|=\left|\mathcal{F}((f_{j,N,m}^{\alpha}u)\circ\varphi_j^{-1}))(0)+\int_0^{|\xi|}ds\right.\\&\left.\ 2sn(1+s^2)^{n-1}\mathcal{F}((f_{j,N,m}^{\alpha}u)\circ\varphi_j^{-1}))(s\xi/|\xi|)+(1+s^2)^n\sum_{i=1}^d(-2i\pi)\xi_i/|\xi|\mathcal{F}(x_i(f_{j,N,m}^{\alpha}u)\circ\varphi_j^{-1}))(s\xi/|\xi|)\right|
\\&\leq |\mathcal{F}((f_{j,N,m}^{\alpha}u)\circ\varphi_j^{-1}))(0)|\\&+\int_0^{\infty}ds(1+s^2)^n\left(n|\mathcal{F}((f_{j,N,m}^{\alpha}u)\circ\varphi_j^{-1}))(s\xi/|\xi|)|+\sum_{i=1}^d 2\pi|\mathcal{F}(x_i(f_{j,N,m}^{\alpha}u)\circ\varphi_j^{-1}))(s\xi/|\xi|)|\right)\end{align*}

Note that from the assumption on $V_{n,m}$ one can use a Sobolev embedding theorem (in the form of \cite[Corol iX.15 p 169]{Brezis} $W^{d,1}(\Omega)\subset C^0(\overline{\Omega})$, $\Omega$ bounded open with $C^1$ boundary in $\R^{d-1}$) to get :
$$\sup_{\xi\in V_{j,N,m},|\xi|=1}|\mathcal{F}((f_{j,N,m}^{\alpha}u)\circ\varphi_j^{-1})(s\xi)|\leq  c_{j,N,m,d}\sum_{|\beta|\leq d}\int_{V_{j,N,m}\cap S^{d-1}} dLeb_{S^{d-1}}(\eta)s^{|\beta|}|\mathcal{F}((f_{j,N,m}^{\alpha+\beta}u)\circ\varphi_j^{-1})(s\eta)|.$$


Thus, summing up, one obtains :
\begin{align*}\sup_{\xi\in V_{j,N,m}}&|(1+|\xi|^2)^n\mathcal{F}((f_{j,N,m}^{\alpha}u)\circ\varphi_j^{-1})(\xi)|\\&\leq|\mathcal{F}((f_{j,N,m}^{\alpha}u)\circ\varphi_j^{-1})(0)|+c_{j,n,m,d}(n+2\pi)\int_0^{\infty}\frac{ds}{1+s^2}\int_{V_{j,N,m}\cap S^{d-1}} dLeb_{S^{d-1}}(\eta)
\\& \left(\sum_{|\beta|\leq d}|(1+s^2)^{n+|\beta|+1}|\mathcal{F}((f_{j,N,m}^{\alpha+\beta}u)\circ\varphi_j^{-1})(s\eta)|+\sum_{|\beta|\leq d+1}(1+s^2)^{n+|\beta|} |\mathcal{F}((f_{j,N,m}^{\alpha+\beta}u)\circ\varphi_j^{-1})(s\eta)|\right)\end{align*}

Let $N=n+d+1$, $B\subset B^{\prime\ \alpha}_{j,n,m}=\cap_{|\beta|\leq d+1}B^{\alpha+\beta,N-(n+|\beta|+1_{\{|\beta|\leq d\}})}_{j,n+|\beta|+1_{\{|\beta|\leq d\}},m}$ (note that only $f_{j,n+d+1,m},V_{j,n+d+1,m}$ are involved in those spaces) and $$A_{j,n,m}^{\alpha}=\{u, P_{j,2n,f_{j,n+d+1,m},V_{j,n+d+1,m}}(u)\leq M_{n,m}^{|\alpha|,d+1}\}\supset B.$$

 Note that if $s>0,\eta \in V_{N,m}\cap S^{d-1},k\leq d+1$, then  $$\langle u,\epsilon_{s,j,\eta}^{k,\alpha+\beta}\rangle=\frac{(1+s^2)^{n+k+1}}{M^{|\alpha+\beta[,N-(n+k+1_{\{k\leq d\}})}_{n+k+1_{\{k\leq d\}},m}} \mathcal{F}(f_{j,N,m}^{\alpha+\beta}u)\circ\varphi_j^{-1})(s\eta)$$ defines an element of the unit ball of $(E_B)'$, and likewise $\langle u,\delta_{j,n,m}^\alpha\rangle=|\mathcal{F}(f_{j,N,m}^{\alpha}u)\circ\varphi_j^{-1})(0)|$ so that  we can  now define a positive Radon measure on this unit ball by defining its value at a continuous function $\varphi$ on this unit ball :

\begin{align*}\int_{(E_B)'_1}&d\nu_{j,n,m}^\alpha(a)\varphi(a):=
\varphi(\delta_{j,n,m}^\alpha)+c_{j,n,m,d}(n+2\pi)\int_0^{\infty}\frac{ds}{1+s^2}\int_{V_{j,N,m}\cap S^{d-1}} dLeb_{S^{d-1}}(\eta)
\\& \left(\sum_{|\beta|\leq d}\varphi(\epsilon_{s,j,\eta}^{|\beta|,\alpha+\beta})M^{|\alpha+\beta|,N-(n+|\beta|+1_{\{|\beta|\leq d\}})}_{n+|\beta|+1_{\{|\beta|\leq d\}},m}+\sum_{|\beta|\leq d+1} \varphi(\epsilon_{s,j,\eta}^{|\beta|,\alpha+\beta})M^{|\alpha+\beta|,N-(n+|\beta|+1_{\{|\beta|\leq d\}})}_{n+|\beta|+1_{\{|\beta|\leq d\}},m}\right)\end{align*}

and then the probability measure $\mu_{j,n,m}^\alpha=\nu_{j,n,m}^\alpha/c_{j,n,m}^\alpha, c_{j,n,m}^\alpha=\nu_{j,n,m}^\alpha((E_B)'_1)<\infty.$

We can thus rewrite our estimate :
$$p_{A_{j,n,m}^\alpha}(u)\leq C_{j,n,m}^\alpha\int d\mu_{j,n,m}^\alpha(a) |\langle u,a\rangle|,  C_{j,n,m}^\alpha=\max(1,c_{j,n,m}^\alpha \frac{2^n}{M_{j,n,m}^{|\alpha|,d+1}}). $$

Finally, define $$A=\{u\in\mathcal{D}'(U), \text{supp}(u)\subset C\}\cap \bigcap_{n,m\geq 1,\alpha\in \N^d,j\in\N}A_{j,n,m}^{\alpha}2^{j+n+m+[\alpha|}C_{j,n,m}^{\alpha}\cap A'.$$ By construction we have $B\subset A$ and $A$ is bounded in  $\mathcal{D}'_{\overline{\gamma}}(U,\mathcal{F}:\delta'),$ with  $\delta'=(\Gamma'_n)\in \Delta(\gamma)$, $\Gamma'_{2k}=\Gamma'_{2k-1}=\Gamma_{2(k+d+1)}, $ thus a Banach disk in $(\mathcal{D}'_{\gamma,\overline{\gamma}}(U,\mathcal{C}),\mathcal{I}_{ppp})$ as above for $B$.

Note that $p_A(x)=\max(p_{A'}(x),\sup_{j,n,m,\alpha}\frac{ p_{A_{n,m}^\alpha}(x)}{2^{j+n+m+|\alpha|}C_{n,m}^\alpha})$ on $\{u\in\mathcal{D}'(U), \text{supp}(u)\subset C\}\supset E_{A}$ 

But for $x\in E_B, B
\subset A_{n,m}^{\alpha}$ $p_{A_{n,m}^{\alpha}}(x)\leq p_B(x)$ so that $$p_A(x)\leq p_{A'}(x)+\sum_{n,m=1}^\infty\sum_{\alpha\in \N^d} \frac{p_{A_{n,m}^{\alpha}}(x)}{2^{n+m+|\alpha|}C_{n,m}^\alpha}\leq p_B(x)+\sum_{n,m=1}^\infty\sum_{\alpha\in \N^d} \frac{p_{B}(x)}{2^{n+m+|\alpha|}}<\infty$$
and thus, seeing $\mu'$ as a measure on $(E_{B'})'\subset (E_{B})'$ since $E_{B}\subset E_{B'}$ and thus even $\mu'$ supported on the unit ball of $(E_{B})'$, and likewise $\mu_{j,n,m}^{\alpha}$ probability measures on $ (E_{B})'$, we have from our inequalities : $$p_A(x)\leq \int |\langle x,x'\rangle|d\mu(x')\leq p_B(x)\mu'((E_B')_1)+\sum_{n,m=1}^\infty\sum_{\alpha\in \N^d} \frac{p_{B}(x)}{2^{j+n+m+|\alpha|}}<\infty.$$

with $\mu=\mu'+\sum_{n,m=1,\alpha\in \N^d}^\infty \frac{1}{2^{j+n+m+|\alpha|}}\mu_{j,n,m}^\alpha$ which is a positive Radon measure (of finite mass)
 we conclude by Pietsch's characterization again that $E_{B}\to E_{A}$ is absolutely summing.

This concludes the proof of $(\mathcal{D}'_{\gamma,\overline{\gamma}}(U,\mathcal{C};E),\mathcal{I}_{ppp})$  a nuclear convex bornological space. Thus $(\mathcal{D}'_{\lambda,{\lambda}}(U,(\mathscr{O}_\mathcal{C})^o;E^*),\mathcal{I}_{b})$ having this space as equicontinuous dual (since bounded and equicontinuous sets of its dual coincide as for any bornological space), is a nuclear locally convex space, and so is its completion, concluding to all the stated nuclearities.

From \cite[p.~14 Corollary 2]{HogbeNlendMoscatelli}, since $(\mathcal{D}'_{\lambda,{\lambda}}(U,(\mathscr{O}_\mathcal{C})^o;E^*),\mathcal{I}_{b})$ is nuclear, its completion is the bornological dual of its equicontinuous dual which is here $(\mathcal{D}'_{\gamma,\overline{\gamma}}(U,\mathcal{C};E), \mathcal{I}_{ppp})$ with (strongly) bounded sets equal to equicontinuous sets from $\mathcal{I}_{b}$ bornological again \cite[p 75]{HogbeNlend2}. One can also compute the equicontinuous explicity from the inductive limit defining $\mathcal{I}_b$ and as usual \cite{Kothe} and proposition \ref{FAClosedOpen2} give exactly the bounded sets for $\mathcal{I}_{ppp}$ from \eqref{Alternativeppp}. The identification of the completion thus follows from our computation of the bornological dual. From the inclusions from lemma \ref{contuinuousdenseInj} : $$(\mathcal{D}'_{\lambda,{\lambda}}(U,(\mathscr{O}_\mathcal{C})^o;E^*),\mathcal{I}_{ibi}=\mathcal{I}_{b})\subset (\mathcal{D}'_{\lambda,{\Lambda}}(U,(\mathscr{O}_\mathcal{C})^o;E^*),\mathcal{I}_{ibi})\subset (\mathcal{D}'_{\lambda,\overline{\lambda}}(U,(\mathscr{O}_\mathcal{C})^o;E^*),\mathcal{I}_{ibi})$$
and since the last space is the completion of the first, we deduce that so it is for the middle space, thus this one is also nuclear as a consequence. From the identification of closed bounded sets of $(\mathcal{D}'_{\gamma,\overline{\gamma}}(U,\mathcal{C};E), \mathcal{I}_{ppp})$ as closed equicontinuous sets from its dual, which is nuclear, we see they are metrizable compact \cite[Prop 50.2 p 519]{Treves} for the strong topology of the dual, which we will check below to be the same as the strong topology of the completion of the dual (see proposition \ref{QuasiCompletion}) thus it is metrizable compact for $\mathcal{I}_{pbp}$ thus a fortiori for $\mathcal{I}_{ppp}$ (since the bijective continuous map between those compacts is necessarily an homeomorphism). Of course we can say the same for closed bounded sets for $\mathcal{I}_{pbp}$ since by nuclearity they are compact, thus closed and bounded for $\mathcal{I}_{ppp}$. Then for the two other non-complete spaces, closed bounded sets are also so in their completion, thus metrizable compact as before as stated. For $\mathcal{D}'_{\lambda,\overline{\lambda}}(U,(\mathscr{O}_\mathcal{C})^o;E^*)$, with $\mathcal{I}_{ibi}$  which have its strong topology from its dual which is bornological, it is obvious that those sets coincide, bounded closed sets which are strongly bounded coincide with equicontinuous closed sets (as in lemma \ref{bornologyIdentities}), and for $\mathcal{I}_{ppp}$ we just checked this. For $\mathcal{D}'_{\lambda,{\lambda}}(U,(\mathscr{O}_\mathcal{C})^o;E^*)$, with $\mathcal{I}_{iii}$, the same has been checked in proposition \ref{bornologyIdentitiesArens} and it will only remain to do the case $\mathcal{I}_{b}$ at the end of the proof.

Since $(\mathcal{D}'_{\lambda,\overline{\lambda}}(U,(\mathscr{O}_\mathcal{C})^o;E^*),\mathcal{I}_{ibi})$ has its Mackey topology and same continuous and bornological duals by theorem \ref{duals}, it is bornological, and to identify it as the bornologification of $\mathcal{I}_{ppp}$ it suffices to see they have the same bounded sets described before, which is obvious since the supplementary polynomial boundedness is obtained automatically for bounded sets. The nuclearity of $\mathcal{I}_{ibi}$ has thus been checked in identifying it to the completion and from completeness, we deduce the dual of the same form is ultrabornological, thus barrelled, and it only remains to check closed bounded sets are compact to obtain a Montel space. We could do this directly, but this is obvious since it is complete nuclear.

Moreover a bounded set in $(\mathcal{D}'_{\lambda,{\lambda}}(U,(\mathscr{O}_\mathcal{C})^o;E^*),\mathcal{I}_{iii})$ is bounded in the completion, thus in the bornologification of this completion, thus in the restriction of this one to our space, i.e. in $\mathcal{I}_{ibi}$, so that they have the same bounded sets, and since we know one to be bornological, it is the bornologification of the other.

In the $\mathbf{\Delta_2^0}$-cones case with either $\mathcal{C}$ or $(\mathscr{O}_\mathcal{C})^o$  countably generated, the only space not yet known to be a quasi-LB space of class $\mathfrak{G}$, is the bornologification of the completion, but since quasi-LB spaces are stable by bornologification and class $\mathfrak{G}$ by completion, this follows from the diagram and the previous results. The only Mackey dual not yet known is the one for $\mathcal{I}_b$, but 
since any absolutely convex (weakly) compact is also  compact for $\mathcal{I}_{iii}$ computed in the proof of proposition \ref{bornologyIdentitiesArens} and thus has to be in some term of the defining inductive limit and then bounded (weakly compact) there using the bornologification involved is quasi-LB to apply De Wilde's closed graph theorem, so that (weakly) compact absolutely convex sets coincide with closed equicontinuous sets. In this way we also check that bounded closed sets which are included in absolutely convex (bounded closed thus) compact sets as seen before coincide with closed equicontinuous sets also in this case as stated.
\end{proof}

For completeness we also study some more properties of mixed spaces with control of both $DWF$ and $WF.$ We will mostly use $\mathcal{D}'_{\gamma,\Lambda}(U,\mathcal{C};E)=\mathcal{D}'_{\gamma,\overline{\gamma}}(U,\mathcal{C};E)\cap \mathcal{D}'_{\Lambda,\Lambda}(U,\mathcal{C};E).$

\begin{proposition}\label{FAGeneral3}Let $\gamma\subset \Lambda\subset \overline{\gamma}$ 
a cone  and  $\mathcal{C}=\mathcal{C}^{oo}$ an enlargeable polar family of closed sets in $U$ and let $\lambda=-\gamma^c.$
 All the spaces below symmetric with respect to the middle line are Mackey duals of one another ( undefined topologies $\mathcal{I}_1,\mathcal{I}_2$ are defined this way). They are all nuclear and completion maps $i$, bornologification maps $b$ are indicated as in the last proposition :
$$\begin{diagram}[inline]
&&(\mathcal{D}'_{\gamma,\overline{\gamma}}(U,\mathcal{C};E),\mathcal{I}_{ibi})
\\
&\ldTo^b&\uInto^i 
\\
(\mathcal{D}'_{\gamma,\overline{\gamma}}(U,\mathcal{C};E),\mathcal{I}_{ppp})& &(\mathcal{D}'_{\gamma,{\Lambda}}(U,\mathcal{C};E),\mathcal{I}_{ibi})
\\
\uInto^i&\ldTo^b& 
\\
(\mathcal{D}'_{\gamma,{\Lambda}}(U,\mathcal{C};E),\mathcal{I}_{ppp})& &
\end{diagram}\quad \quad \quad \begin{diagram}[inline]
(\mathcal{D}'_{\lambda,\overline{\lambda}}(U,(\mathscr{O}_\mathcal{C})^o;E^*),\mathcal{I}_{ibi})&&
\\
\dTo^b& \luInto^i&
\\
(\mathcal{D}'_{\lambda,\overline{\lambda}}(U,(\mathscr{O}_\mathcal{C})^o;E^*),\mathcal{I}_{2})& &(\mathcal{D}'_{\lambda,{\lambda}}(U,(\mathscr{O}_\mathcal{C})^o;E^*),\mathcal{I}_{b})
\\
&\luInto^i&\dTo^b
\\& & 
(\mathcal{D}'_{\lambda,{\lambda}}(U,(\mathscr{O}_\mathcal{C})^o;E^*),\mathcal{I}_{1})
\end{diagram}$$

\end{proposition}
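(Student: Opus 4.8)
The plan is to obtain both diagrams as a consequence of the extreme-cone results, exploiting that the whole setup is invariant under the involution $(\gamma,\mathcal{C},E)\mapsto(\lambda,(\mathscr{O}_\mathcal{C})^o,E^*)$: indeed $-\lambda^c=\gamma$ and, by Lemma \ref{opengenlemma}, $(\mathscr{O}_{(\mathscr{O}_\mathcal{C})^o})^o=\mathcal{C}$, so every assertion of Propositions \ref{FAGeneral} and \ref{FAGeneral2} may be read with the two sides exchanged. First I would record that, by Theorem \ref{duals}, the topological dual of $(\mathcal{D}'_{\gamma,\Lambda}(U,\mathcal{C};E),\mathcal{I}_{ppp})$ is $\mathcal{D}'_{\lambda,\lambda}(U,(\mathscr{O}_\mathcal{C})^o;E^*)$ and the bornological dual of $(\mathcal{D}'_{\gamma,\Lambda}(U,\mathcal{C};E),\mathcal{I}_{ibi})$ (recall $\mathcal{I}_{ibi}=\mathcal{I}_{pbp}$, as in Proposition \ref{FAGeneral2}) is $\mathcal{D}'_{\lambda,\overline{\lambda}}(U,(\mathscr{O}_\mathcal{C})^o;E^*)$, the pairing being the one built in Theorem \ref{duals} and depending only on $\gamma$, not on $\Lambda$. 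I then \emph{define} $\mathcal{I}_1$ and $\mathcal{I}_2$ to be the Mackey topologies of these two dualities; with this convention the two inner symmetric pairs are Mackey duals tautologically, while the two outer pairs (top, and far-middle) are precisely the Mackey dualities already established in Proposition \ref{FAGeneral2}.

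For the left diagram, the column over $\mathcal{I}_{ppp}$ is immediate: $(\mathcal{D}'_{\gamma,\Lambda},\mathcal{I}_{ppp})$ is nuclear with completion $(\mathcal{D}'_{\gamma,\overline{\gamma}},\mathcal{I}_{ppp})$ by Proposition \ref{FAGeneral}. For the column over $\mathcal{I}_{ibi}$ I would run the sandwich argument of Proposition \ref{FAGeneral2}: the continuous inclusions (Lemma \ref{contuinuousdenseInj}) $(\mathcal{D}'_{\gamma,\gamma},\mathcal{I}_{ibi}=\mathcal{I}_b)\subset(\mathcal{D}'_{\gamma,\Lambda},\mathcal{I}_{ibi})\subset(\mathcal{D}'_{\gamma,\overline{\gamma}},\mathcal{I}_{ibi})$, together with the fact (symmetric form of Proposition \ref{FAGeneral2}) that the last space is the completion of the first and that $\mathcal{D}(U;E)$ is dense throughout (Lemma \ref{normality}), force $(\mathcal{D}'_{\gamma,\overline{\gamma}},\mathcal{I}_{ibi})$ to be the completion of the middle space; since the outer space is complete nuclear, the middle one is nuclear as a subspace. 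That $\mathcal{I}_{ibi}$ is the bornologification $b$ of $\mathcal{I}_{ppp}$ on $\mathcal{D}'_{\gamma,\Lambda}$ then follows exactly as in Proposition \ref{FAGeneral2}: by Theorem \ref{duals} it is bornological with equal topological and bornological duals, and it shares the bounded sets of $\mathcal{I}_{ppp}$ since the extra polynomial-boundedness seminorms hold automatically on bounded sets. This fixes all arrows of the left diagram.

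The substance is the right diagram, i.e.\ the nuclearity of $\mathcal{I}_1,\mathcal{I}_2$ and the maps $i,b$ there, and this is where nuclearity has to be transported across a Mackey topology, which it does not do for free. Rather than an abstract argument I would compute $\mathcal{I}_1$ from the projective representation \eqref{Alternativeppp} of $(\mathcal{D}'_{\gamma,\Lambda},\mathcal{I}_{ppp})$, applying K\"othe's rule for the Mackey dual of a reduced projective limit \cite[\S 22.7.(9)]{Kothe} exactly as in the proof of Proposition \ref{FAGeneral}, the reducedness coming from Lemma \ref{normality}. Each term of \eqref{Alternativeppp} has open first cone $\lambda_1$, so by Corollary \ref{FAClosedOpen2} its Mackey dual is one of the already-understood nuclear spaces; hence $\mathcal{I}_1$ is an inductive limit of nuclear spaces and is nuclear by stability of nuclearity under inductive limits \cite{Treves}. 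Then $(\mathcal{D}'_{\lambda,\overline{\lambda}},\mathcal{I}_2)$, being the Mackey dual of the bornological nuclear space $(\mathcal{D}'_{\gamma,\Lambda},\mathcal{I}_{ibi})$, is identified with the completion of $(\mathcal{D}'_{\lambda,\lambda},\mathcal{I}_1)$ through the Hogbe-Nlend--Moscatelli description of the completion of a nuclear space as the bornological dual of its equicontinuous dual \cite[p.~14 Corollary 2]{HogbeNlendMoscatelli}, which simultaneously furnishes the completion arrow $i$ and the nuclearity of $\mathcal{I}_2$. Finally, that $\mathcal{I}_{ibi}$ (resp.\ $\mathcal{I}_b$) is the bornologification $b$ of $\mathcal{I}_2$ (resp.\ $\mathcal{I}_1$) follows, as on the left, from coincidence of bounded sets together with the bornological character of $\mathcal{I}_{ibi},\mathcal{I}_b$ proved in Propositions \ref{FAGeneral2} and \ref{FAGeneral}.

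The main obstacle is thus the right diagram: a Mackey topology does not inherit nuclearity or completeness merely from being a coarser compatible topology, so one cannot argue by quoting the finer nuclear topologies on the same space. The resolution I propose is to realize $\mathcal{I}_1$ \emph{concretely} as the inductive limit dual to \eqref{Alternativeppp}, which at once delivers nuclearity and pins down that the topology produced is genuinely the Mackey one; the completion and bornologification statements then ride on the same Hogbe-Nlend--Moscatelli completion formula that already powered Propositions \ref{FAGeneral} and \ref{FAGeneral2}, so no new hard estimate (in particular no new instance of the nuclear-convex-bornological-space computation) should be required beyond verifying the reducedness hypotheses via Lemma \ref{normality}.
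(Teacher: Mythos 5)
Your overall strategy---define $\mathcal{I}_1,\mathcal{I}_2$ as Mackey topologies and reduce everything to the extreme-cone propositions---founders on two points, and they are exactly where the paper's actual proof spends its effort. First, on the left diagram: the assertion that $(\mathcal{D}'_{\gamma,\Lambda}(U,\mathcal{C};E),\mathcal{I}_{ibi})$ ``is bornological with equal topological and bornological duals by Theorem \ref{duals}'' is not a valid inference. Equality of the topological and bornological duals is necessary but not sufficient for bornologicality: one must also know the topology is the Mackey topology of that duality (the weak topology of a bornological space has the same dual and the same bounded sets, hence the same bornological dual, without being bornological). In Proposition \ref{FAGeneral2} the Mackey property was available because for $\Lambda=\overline{\lambda}$ the space was identified with the strong dual of a complete nuclear, hence Schwartz, space; for an intermediate $\Lambda$ no such identification is at hand, and this is precisely the new difficulty of Proposition \ref{FAGeneral3}. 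The paper resolves it by first proving that $(\mathcal{D}'_{\gamma,\Lambda}(U,\mathcal{C};E),\mathcal{I}_{ibi})$ is a \emph{nuclear convex bornological space}, via the diagonal embedding $\Delta:u\mapsto(u,u)$ into $(\mathcal{D}'_{\gamma,\overline{\gamma}}(U,\mathcal{C};E),\mathcal{I}_{ppp})\times(\mathcal{D}'_{\Lambda,\Lambda}(U,\mathcal{C};E),\mathcal{I}_{ppp})$, the bornological closedness of its image, and the fact that nuclear c.b.s.\ form a bornological ultra-variety; only then do the nuclearity and completeness of $\mathcal{I}_2$ follow (as a bornological dual of a nuclear c.b.s.), and the bornologicality of $\mathcal{I}_{ibi}$ is finally extracted from Schwartz's theorem on strong duals of complete Schwartz spaces. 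None of this appears in your proposal.

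Second, your computation of $\mathcal{I}_1$ is flawed. The inductive limit dual to \eqref{Alternativeppp} runs over \emph{all} open cones $\lambda_1\supset\gamma$, an uncountable family with no cofinal countable subfamily in general, and nuclearity is not stable under uncountable inductive limits---this is the very reason the paper keeps trading inductive for projective representations. Moreover the terms $\left(\mathcal{D}'_{\lambda_1,\overline{\lambda_1}\cap \lambda_2}(U,\mathcal{C};E),\mathcal{I}_{pmp}\right)$ of \eqref{Alternativeppp} are themselves intermediate-cone spaces, so Corollary \ref{FAClosedOpen2} does not hand you their Mackey duals; identifying absolutely convex weakly compact sets in such spaces is exactly what requires the class-$\mathfrak{G}$/quasi-LB/Lemma \ref{SequenceWF} machinery. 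The paper avoids all of this by showing instead that $\mathcal{I}_1$ is the topology \emph{induced} by $\mathcal{I}_2$ on the subspace $\mathcal{D}'_{\lambda,\lambda}(U,(\mathscr{O}_\mathcal{C})^o;E^*)$, whence its nuclearity, and by a separate comparison of weakly compact sets for $\mathcal{I}_1$ and $\mathcal{I}_b$ that $(\mathcal{D}'_{\gamma,\Lambda}(U,\mathcal{C};E),\mathcal{I}_{ppp})$ is indeed the Mackey dual of $(\mathcal{D}'_{\lambda,\lambda}(U,(\mathscr{O}_\mathcal{C})^o;E^*),\mathcal{I}_1)$---a direction of the ``Mackey duals of one another'' claim that your tautological definition of $\mathcal{I}_1$ does not address.
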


\begin{proof}
We already know the completion properties from the last proposition on the left diagram. On $\mathcal{D}'_{\gamma,{\Lambda}}(U,\mathcal{C};E),$ bounded sets for $\mathcal{I}_{ibi}$ are the same as for  $\mathcal{I}_{ppp},$ (using they are the same as in the completion, which is the bornologification of the completion of $\mathcal{I}_{ppp}$). To show the last bornologification missing on this side, it remains to show $(\mathcal{D}'_{\gamma,{\Lambda}}(U,\mathcal{C};E), \mathcal{I}_{ibi})$ is bornological. For this we start by showing it is a nuclear convex bornological space (either for its von Neumann bornology or its equicontinuous bornology coming from its dual with Mackey topology, namely the bornology generated by absolutely convex weakly compact sets). Indeed, $\Delta:(\mathcal{D}'_{\gamma,{\Lambda}}(U,\mathcal{C};E), \mathcal{I}_{ibi})\to (\mathcal{D}'_{\gamma,\overline{{\gamma}}}(U,\mathcal{C};E), \mathcal{I}_{ppp})\times (\mathcal{D}'_{\Lambda,{\Lambda}}(U,\mathcal{C};E), \mathcal{I}_{ppp})$ sending $u$ to $(u,u)$ is easily seen to be a bornological isomorphism \footnote{we replaced  $\mathcal{I}_{ibi}$ by $\mathcal{I}_{ppp}$ on the right hand side since they have same bounded sets; for the equicontinuous case, we described a $B$ absolutely convex weakly compact sets jointly in both targets to be bounded for the bornological inductive limit associated to $\mathcal{I}_{iii}$ on $\mathcal{D}'_{\gamma,{\Lambda}}(U,\mathcal{C};E)$ thus bounded in some $(\mathcal{D}'_{\gamma\cap \Pi,\overline{\gamma\cap \Pi}}(U,\mathcal{C};E),\mathcal{I}_{iii})$ $\Pi\subset \Lambda$ closed cone which is complete nuclear, thus $B$ compact there thus also in $(\mathcal{D}'_{\gamma,{\Lambda}}(U,\mathcal{C};E), \mathcal{I}_{iii}).$ From weak compactness implying closed and bounded in $(\mathcal{D}'_{\gamma,\overline{{\gamma}}}(U,\mathcal{C};E), \mathcal{I}_{ppp})$ we checked in the proof of proposition \ref{FAGeneral2} it has to be (metrizable) compact for  $\mathcal{I}_{pbp}$, thus take a net in $B$ it has a subnet converging for $\mathcal{I}_{pbp}$ which has a subsubnet converging for ($\mathcal{D}'_{\gamma,{\Lambda}}(U,\mathcal{C};E), \mathcal{I}_{iii})$ so that the limit of the first subnet was in our original space and for the topology $\mathcal{I}_{ibi}$ which is the one induced from the completion.}. Moreover, we know that the right hand side is a nuclear convex bornological space. Indeed by finite product, we have to see this for both spaces, and from the proof of proposition \ref{FAGeneral2} we know this for the first space (for von Neumann bornology, the equicontinuous bornology follows from the nuclearity of its Mackey dual). For the second, take $B$ completant Banach disk in $(\mathcal{D}'_{\Lambda,{\Lambda}}(U,\mathcal{C};E), \mathcal{I}_{ppp})$, the proof of proposition \ref{bornologyIdentitiesArens} applies verbatim (we didn't really use $K$ compact, only it is a Banach disk so it is $\ell^1$ disked) to prove that it is equicontinuous. But the equicontinuous bornology is known to be nuclear (this is by definition equivalent to the dual being nuclear as a locally convex space). 
 Thus since nuclear c.b.s form a bornological ultra-variety \cite[4:1 Th 1 p 200]{HogbeNlendMoscatelli} it suffices to check the image of $\Delta$ is bornologically closed. This follows from the fact that both our spaces are (boundedly) included in $\mathcal{D}'(U,E)$ which is separated (bornologically). From this nuclearity of its equicontinuous bornology, we deduce the Mackey dual $(\mathcal{D}'_{\lambda,\overline{\lambda}}(U,(\mathscr{O}_\mathcal{C})^o;E^*),\mathcal{I}_{2})$ is nuclear. Moreover, let us explain why it is the bornological dual  of the equicontinuous bornology of $\mathcal{D}'_{\gamma,{\Lambda}}(U,\mathcal{C};E).$ Indeed, a bounded linear functional is also bounded for the equicontinuous bornology (coming from its Mackey dual) of $(\mathcal{D}'_{\gamma,{\gamma}}(U,\mathcal{C};E),\mathcal{I}_{ibi})$
. This Mackey dual is known to be $(\mathcal{D}'_{\lambda,\overline{\lambda}}(U,(\mathscr{O}_\mathcal{C})^o;E^*),\mathcal{I}_{ppp})$ so that the bornological dual of the equicontinuous dual is known to be the completion (since it is nuclear \cite[corol 3 p 140]{HogbeNlendMoscatelli}), thus the bounded linear functional is in  $\mathcal{D}'_{\lambda,\overline{\lambda}}(U,(\mathscr{O}_\mathcal{C})^o;E^*)$. Conversely, any element of this space is known to define a bounded linear functional on $(\mathcal{D}'_{\gamma,\overline{\gamma}}(U,\mathcal{C};E),\mathcal{I}_{ibi})$ thus on our original space by the continuous injection. Thus we identify the bornological dual, and as any bornological dual of separated convex bornological space, $\mathcal{I}_2$ is thus complete. Since it is also nuclear, by Schwartz Theorem again, its strong dual is ultrabornological \cite[7:2.4 corol p 54]{HogbeNlend2}. But a complete nuclear space is semi-Montel thus its strong and Mackey dual coincide, and it suffices to prove this strong dual is $(\mathcal{D}'_{\gamma,{\Lambda}}(U,\mathcal{C};E),\mathcal{I}_{ibi}),$ to finally check this space is bornological (finishing the proof on the left side of the diagram). But on $(\mathcal{D}'_{\gamma,\overline{\gamma}}(U,\mathcal{C};E)$ $\mathcal{I}_{ibi})$ and $\mathcal{I}_2$ have the same bounded sets from the continous maps  $\mathcal{I}_{ibi}\to \mathcal{I}_{2}\to \mathcal{I}_{ppp}$ induced by duality from the known ones with $\mathcal{I}_{ibi}$, and since the first and last spaces have the same bounded sets. Note that since $\mathcal{I}_{ibi}$ is bornological, it is also the bornologification of  $\mathcal{I}_{2}.$ Thus the strong topology is the same as the topology induced by the dual of $(\mathcal{D}'_{\gamma,\overline{\gamma}}(U,\mathcal{C};E),\mathcal{I}_{ibi})$ namely by the completion property, this is indeed $(\mathcal{D}'_{\gamma,{\Lambda}}(U,\mathcal{C};E),\mathcal{I}_{ibi}),$ which is thus bornological.

It remains to study $(\mathcal{D}'_{\lambda,{\lambda}}(U,(\mathscr{O}_\mathcal{C})^o;E^*),\mathcal{I}_{1})$.

The topology induced by $\mathcal{I}_{2}$ having this topology as completion, has the same dual, and is thus weaker than the Mackey topology from the duality with $(\mathcal{D}'_{\gamma,{\Lambda}}(U,\mathcal{C};E),\mathcal{I}_{ppp}),$ namely what we called $\mathcal{I}_{1}$. Conversely, if a net converges for $\mathcal{I}_{2}$, it converges uniformly on absolutely convex weakly compact sets $(\mathcal{D}'_{\gamma,{\Lambda}}(U,\mathcal{C};E),\mathcal{I}_{ibi}),$ but an analogous set $B$ for $\mathcal{I}_{ppp}$ is of the same type in its completion $(\mathcal{D}'_{\gamma,\overline{\gamma}}(U,\mathcal{C};E),\mathcal{I}_{ppp}),$
thus bounded and (weakly closed thus) closed, and also so for $\mathcal{I}_{ibi}$ (inverse image by $b$ for closedness, and same bounded sets), thus it is compact since  we are in a Montel space. But this is thus an absolutely convex compact set included in $\mathcal{D}'_{\gamma,{\Lambda}}(U,\mathcal{C};E)$ for which $\mathcal{I}_{ibi}$ is induced from the completion above, thus our $B$ is compact thus weakly compact there and this is exactly the kind of spaces on which we know uniform convergence with $\mathcal{I}_2$ . Thus $\mathcal{I}_1$ is indeed the topology induced by $\mathcal{I}_2$, explaining it nuclearity, and explaining  its bornologification is $\mathcal{I}_b$ once again by diagram chasing (to see they have same bounded sets since this last space is known to be bornological).

It only remains to compute the Mackey dual for $\mathcal{I}_1$, but a net converging for $(\mathcal{D}'_{\gamma,\Lambda}(U,\mathcal{C};E),\mathcal{I}_{ppp}),$ converges in the completion $(\mathcal{D}'_{\gamma,\overline{\gamma}}(U,\mathcal{C};E),\mathcal{I}_{ppp}),$ thus, since it has its Mackey topology, uniformly on absolutely convex weakly compacts of $(\mathcal{D}'_{\lambda,{\lambda}}(U,(\mathscr{O}_\mathcal{C})^o;E^*),\mathcal{I}_{b})$. But an absolutely convex weakly compact for $\mathcal{I}_{1}$ is closed bounded, thus also for $\mathcal{I}_{b}$, thus also in its completion, thus compact, thus compact for $\mathcal{I}_{b}$ and the net converges uniformly on it, thus  $\mathcal{I}_{ppp}$ is stronger than the Mackey topology, the converse being obvious by Arens-Mackey Thm.
\end{proof}

\section{Pull-back and manifold case}
The following result is now an easy consequence of \cite[Prop 6.1]{BrouderDangHelein} and we can even prove it in only using \cite{Duistermaat}.
\begin{proposition}\label{pullback}Let $\gamma\subset \Lambda\subset \overline{\gamma}$ be cones on $U_2, E\to U_2$ a vector bundle and $f:U_1\to U_2$ a smooth map (between open sets in manifolds of the type of $U$ before fixed at the beginning of part 1). Define the cone  $df^*\gamma=\{(x,df^{*}(x)(\xi)):(f(x),\xi)\in\gamma\}$ and  for an enlargeable polar family of closed sets $\mathcal{C}$ define $f^{-1}(\mathcal{C})=\{(f^{-1}(C), C\in \mathcal{C},\},$ and its polar enlargeable variant $f_e^{-1}(\mathcal{C})=\{(f^{-1}(C))_{(1-1/n)\epsilon(C)}, C\in \mathcal{C},\epsilon_i(C)>0\}^{oo}$ (depending on any function $\epsilon:\mathcal{C}\to ]0,1[^I$)

Assume $df^*\gamma\subset \dot{T}^*U_1$, and $ df^*\Lambda\subset \dot{T}^*U_1$. Then we have  continuous maps for $\mathcal{I}$ either $\mathcal{I}_{ppp}$ or $\mathcal{I}_{ibi}$ : $$f^*:(\mathcal{D}'_{\gamma,{\Lambda}}(U_2,\mathcal{C};E),\mathcal{I})
\to (\mathcal{D}'_{f^*\gamma,f^*{\Lambda}}(U_1,f_e^{-1}(\mathcal{C});f^*E),\mathcal{I}),$$
$$f^*:(\mathcal{D}'_{\gamma,\overline{\gamma}}(U_2,\mathcal{C};E),\mathcal{I})
\to (\mathcal{D}'_{df^*\gamma,\overline{df^*{\gamma}}}(U_1,f_e^{-1}(\mathcal{C});f^*E),\mathcal{I}).$$
Especially, $DWF(f^*u)\subset df^*(DWF(u)).$
\end{proposition}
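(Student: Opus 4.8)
The plan is to deduce both the microlocal inclusion and the continuity from the quantitative $H^s$-pullback theorem (in the form of \cite[Prop 6.1]{BrouderDangHelein}, or directly from \cite{Duistermaat}) together with the inductive/projective limit machinery of Section 3. First I would reduce to the Euclidean trivial-bundle situation: using the localization maps $M_g$ and $N_i$ of Lemma \ref{contuinuousdenseInj}, both the continuity of $f^*$ and the wave front inclusion are local statements in the charts $\varphi_i$, so it suffices to treat $f:U_1\to U_2$ with $U_1,U_2\subset\R^d$ open and $E$ trivial, whence $f^*E$ is trivial and the fibrewise part of $f^*$ is harmless. The two hypotheses are used as follows. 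Writing $N_f:=\{(f(x),\eta): df^*(x)(\eta)=0\}$ for the set of conormals to the fibres of $f$, the assumption $df^*\Lambda\subset\dot{T}^*U_1$ says exactly that $\Lambda\cap N_f=\emptyset$, hence $WF(u)\cap N_f=\emptyset$ for every $u$ in our spaces and $f^*u$ is genuinely defined with $WF(f^*u)\subset df^*(WF(u))$ by the classical pullback theorem; the assumption $df^*\gamma\subset\dot{T}^*U_1$ ensures that $df^*$ collapses no covector of $\gamma$ to zero, so that on cones with compact first projection (such as the exhausting $\Lambda_n$ of Lemma \ref{openclosed}) the conic map $(x,\xi)\mapsto(x,df^*(x)\xi)$ is proper, carrying closed cones to closed cones and open cones to open cones.

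For the set-theoretic inclusion $DWF(f^*u)\subset df^*(DWF(u))$ I would argue order by order. The quantitative $H^s$-pullback estimate yields, for each Sobolev order $s$, an inclusion $WF_s(f^*u)\subset df^*(WF_{\sigma(s)}(u))$ for some nondecreasing $\sigma$ with $\sigma(s)\to\infty$ (the order being shifted by a fixed amount depending only on $f$ and $d$; e.g. the $1/2$-loss already seen for restriction). Since $s\mapsto WF_s(u)$ is increasing and $df^*$ commutes with unions,
\[
DWF(f^*u)=\bigcup_{s>0}WF_s(f^*u)\subset\bigcup_{s>0}df^*\big(WF_{\sigma(s)}(u)\big)=df^*\Big(\bigcup_{s>0}WF_{\sigma(s)}(u)\Big)=df^*(DWF(u)),
\]
using that $\{\sigma(s):s>0\}$ is cofinal in $]0,\infty[$, so that the inner union is all of $DWF(u)$. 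This is precisely where controlling the union of Sobolev wave front sets, rather than a single one, is essential: the fixed order shift is absorbed in the union.

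The continuity statements I would establish first in the closed and open building-block cases, where \cite[Prop 6.1]{BrouderDangHelein} provides, for each target seminorm $P_{i,k,f,V}$ with $\text{supp}(f)\times V$ avoiding $df^*\gamma$ (resp. the relevant closed cone), a bound by a finite combination of source seminorms $P_{j,k',g,W}$ of order $k'$ controlled by $k$, together with the strong seminorms. The support condition is handled by $f^{-1}$ and its enlargeable variant $f_e^{-1}(\mathcal{C})$, whose polar enlargeability is guaranteed by the remark after Proposition \ref{EnlargeableStable}: one has $\text{supp}(f^*u)\subset f^{-1}(\text{supp}(u))$, and the enlargement absorbs the discrepancy between $f^{-1}(\mathcal{C})$ and a polar enlargeable family. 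I would then propagate to $\mathcal{I}_{ppp}$ and $\mathcal{I}_{ibi}$ by universal properties alone: by the properness noted above $df^*$ respects the projective limits over open cones $\lambda_1\supset\gamma$, the inductive limits over closed subcones $\Pi\subset\Lambda$, the projective limit over the order $n$, and the support limits, so the building-block maps assemble into continuous maps for the full topologies (with $\mathcal{I}_{ibi}$ treated through the identification of bounded and equicontinuous sets of Theorem \ref{DualityBD} and the bornological duals computed earlier).

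The main obstacle is the interaction between the fixed Sobolev-order shift and the diagonal indexing of the topologies. Recall $\mathcal{I}_{ppp}$ is built, through $\mathcal{I}_H$, from the conditions $DWF_n(u)\subset\Gamma_n$ along sequences $\delta=(\Gamma_n)\in\Delta(\gamma)$; since $f^*$ carries order-$n$ information on the target only into order-$\sigma^{-1}(n)$ information on the source, the map does not respect this indexing on the nose but only after a reindexing $\delta\mapsto\delta'$ of the exhausting sequences (a diagonal shift exactly of the kind already used in the definition of $\delta'$ in the proof of Proposition \ref{FAGeneral2}). Verifying that this reindexing is simultaneously compatible with the order, the cone, and the support limits, so that $f^*$ is continuous for $\mathcal{I}_{ppp}$ and $\mathcal{I}_{ibi}$ with no loss, is the delicate bookkeeping step; it is feasible precisely because these topologies were designed as full projective limits over $n$ and inductive limits over subcones, both of which are cofinality-stable under a bounded shift.
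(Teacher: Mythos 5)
Your proposal reaches the same conclusions but by a genuinely different route, and the difference is worth spelling out. You propose to prove the microlocal inclusion $DWF(f^*u)\subset df^*(DWF(u))$ directly, order by order, from a quantitative $H^s$-pullback estimate $WF_s(f^*u)\subset df^*(WF_{\sigma(s)}(u))$ with a fixed order shift, and then to thread that shift through the diagonal indexing of $\mathcal{I}_{ppp}$. The paper does essentially the opposite: it never proves any $H^s$ estimate for the pullback. It starts from the closed-cone case (Duistermaat, or \cite[Prop 6.1]{BrouderDangHelein}), upgrades sequential continuity to boundedness, obtains the open-cone case as the \emph{adjoint} (push-forward) map between the Mackey/bornological duals computed in Part 2, with Duistermaat's push-forward wave front theorem identifying the target space; the support condition is handled roughly as you indicate. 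The map for $\mathcal{I}_{ppp}$ on $\mathcal{D}'_{\gamma,\gamma}$ is then extended to the completion, and since Proposition \ref{FAGeneral} identifies that completion as $\mathcal{D}'_{\gamma,\overline{\gamma}}$ --- i.e.\ precisely the space cut out by the $DWF$ condition --- the inclusion $DWF(f^*u)\subset df^*\gamma$ falls out for free; the final statement is obtained by taking $\gamma=DWF(u)$. In other words, the machinery of duals and completions is used as a substitute for any quantitative estimate, and the two bookkeeping problems you identify as ``the main obstacle'' simply do not arise in the paper's argument.

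Measured on its own terms, your route has two real soft spots. First, the estimate $WF_s(f^*u)\subset df^*(WF_{\sigma(s)}(u))$ is not what \cite[Prop 6.1]{BrouderDangHelein} provides (that result controls the ordinary wave front set and the continuity on the H\"ormander spaces); you would have to prove the $H^s$ version yourself, and that is exactly the analytic content your argument presupposes rather than establishes. It is true (factor $f^*$ through a submersion pullback and a restriction, each with controlled Sobolev loss), but it is not a citation. Second, the reindexing of the sequences $\delta=(\Gamma_n)\in\Delta(\gamma)$ against the order shift, and its simultaneous compatibility with the inductive limits on cones and on supports, is asserted to be ``feasible'' but not carried out; in your scheme this is where the proof actually lives, so as written the argument is a plan rather than a proof. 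A minor slip: $df^*$ does not carry open cones to open cones (images of open sets under a continuous map need not be open); what you need, and what is true, is that $(df^*)^{-1}$ of an open cone containing $df^*\gamma$ is an open cone containing $\gamma$, which is what makes the projective limits over open cones commute with $f^*$.
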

Note that from this result, one deduces as in lemma \ref{contuinuousdenseInj} that all our topologies don't depend of the various choices of charts. We will also need this result in our study of composition of operators in the second paper of this series.

\begin{proof}
We leave the details of the general bundle case to the reader.

From \cite[Prop 6.1]{BrouderDangHelein} and taking inductive limits over closed cone $\Pi\subset\gamma$ so that $df^*\Pi\subset df^*\gamma$ is a closed cone
, we have a continuous map $$f^*:(\mathcal{D}'_{\gamma,{\gamma}}(U_2,\mathcal{F}),\mathcal{I}_{iii})\to (\mathcal{D}'_{df^*\gamma,df^*{\gamma}}(U_1,\mathcal{F})),\mathcal{I}_{iii}).$$

If we don't want to use \cite{BrouderDangHelein} (which is written for $U_i$ open in $\R^d$), we can reason as follows in the closed cone case.
From \cite[Prop 1.3.3 p 19]{Duistermaat}, we know the map  with $\gamma$ closed is defined (and sequentially continuous since every continuity statement in Duistermaat is implicitly only sequential continuity as in \cite{Hormander}). From sequential continuity, one deduces as in \cite[lemma 21]{BrouderDabrowski} that it is Mackey sequentially continuous thus bounded, thus gives a map $f^*:(\mathcal{D}'_{\gamma,{\gamma}}(U_2,\mathcal{F}),\mathcal{I}_{ibi})\to (\mathcal{D}'_{df^*\gamma,df^*{\gamma}}(U_1,\mathcal{F})),\mathcal{I}_{ibi})$ and we have a  Mackey-continuous adjoint map : $$(f^*)^*:(\mathcal{D}'_{-(df^*\gamma)^c,\overline{-(df^*{\gamma})^c}}(U_1,\mathcal{K})),\mathcal{I}_{ibi})\to(\mathcal{D}'_{-\gamma^c,\overline{-\gamma^c}}(U_2,\mathcal{K}),\mathcal{I}_{ibi}) .$$

By restriction, to get a continuous map $$(f^*)^*:(\mathcal{D}'_{-(df^*\gamma)^c,{-(df^*{\gamma})^c}}(U_1,\mathcal{K})),\mathcal{I}_{ibi}=\mathcal{I}_{iii})\to(\mathcal{D}'_{-\gamma^c,{-\gamma^c}}(U_2,\mathcal{K}),\mathcal{I}_{iii}) ,$$
it suffices to check the space of value since we know our previous target space was the completion and thus induces the right topology. But this is the statement of  \cite[Prop 1.3.4 p 20]{Duistermaat} that computes the wave front set of the adjoint map called push-forward.

To get the general case with support in $\mathcal{C}$, note that if $\text{supp}(u)\subset C$, $\text{supp}(f^*u)\subset f^{-1}(C_\epsilon)$ for some $\epsilon$ (since $u$ is approximated by smooth maps $\varphi$ with support in $C_\epsilon$ so that $f^*\phi$ has support in $f^{-1}(C_\epsilon)$). Thus again, the map restricts to $f^*:(\mathcal{D}'_{\gamma,{\gamma}}(U_2,\{F\in\mathcal{F}, F\subset C\}),\mathcal{I}_{iii})\to (\mathcal{D}'_{df^*\gamma,df^*{\gamma}}(U_1,\{F\in\mathcal{F}, F\subset f^{-1}(C_\epsilon)\}),\mathcal{I}_{iii}).$ This gives the continuous map at the inductive limit level. Now composing with the completion map, and from equality of topologies both from corollary \ref{FAGeneral}, we get a continuous map $$f^*:(\mathcal{D}'_{\gamma,{\gamma}}(U_2,\mathcal{C}),\mathcal{I}_{ppp})\to  (\mathcal{D}'_{df^*\gamma,\overline{df^*{\gamma}}}(U_1,f_e^{-1}(\mathcal{C})),\mathcal{I}_{ppp}),$$ which thus extends to the completion. By considering bornologifications we also get the result for $\mathcal{I}_{ibi}.$  Then general cases $\Lambda$ come from the fact the topology at the target is then induced by the topology of the completion and we thus only have to check the space of value is the stated one. For, we only have to test wave front sets (since $DWF$ does not change while going to the completion) and thus this follows from \cite[Prop 6.1]{BrouderDangHelein} again. For the last statement we only have to take $\gamma=DWF(u)$ in order to deduce get $DWF(f^*u)\subset df^*\gamma.$
\end{proof}

\end{document}